\documentclass[a4paper, 10pt, reqno]{amsart}

\usepackage{ amssymb, amsmath, amsthm}
\usepackage{graphicx, psfrag, setspace, subfigure, gensymb}
\usepackage{amsfonts}
\usepackage{bbm}
\usepackage{fix-cm}
\usepackage{comment}
\usepackage{enumitem}
\usepackage[mathcal]{eucal}
\usepackage{tikz,tikz-cd}
\usepackage{pgfplots}
\usetikzlibrary{pgfplots.fillbetween}
 \usetikzlibrary{tikzmark, arrows.meta}
\usetikzlibrary{matrix,arrows,decorations.pathmorphing,decorations.pathreplacing}
\tikzset{commutative diagrams/diagrams={baseline=-2.5pt},commutative diagrams/arrow style=tikz}
\let\amsamp=&

\usepackage{xcolor}
\usepackage[
  colorlinks=true,
  citecolor=blue!50!black,
  linkcolor=purple!50!black  
]{hyperref}

\usepackage{float}
\usepackage{setspace} 
\usepackage{enumitem}
\usepackage[
  backend=biber,
  style=alphabetic
]{biblatex}
\usepackage{float}
\usepackage{caption}
\usepackage{mathtools}
\usepackage{listings}
\usepackage{quiver}
\usetikzlibrary{nfold}
\usetikzlibrary{braids,positioning}
\usetikzlibrary{decorations.markings}
\usepackage{svg}

\usepackage{courier}
\usepackage{color}

\definecolor{mygreen}{rgb}{0,0.6,0}
\definecolor{mygray}{rgb}{0.5,0.5,0.5}
\definecolor{mymauve}{rgb}{0.58,0,0.82}

\graphicspath{ {./images/} }
\usepackage[export]{adjustbox}

\newcommand\R{\mathbb R}

\newcommand{\beq}[1]{\begin{equation}\label{#1} }
\newcommand{\eeq}{\end{equation}}

\theoremstyle{plain}
\newtheorem{prop}[equation]{Proposition}
\newtheorem*{prop*}{Proposition}
\newtheorem{thm}[equation]{Theorem}

\newtheorem*{lemma*}{Lemma}
\newtheorem{cor}[equation]{Corollary}

\newtheorem{obs}[equation]{Observation}

\theoremstyle{remark}
\newtheorem{rem}[equation]{Remark}

\theoremstyle{definition}
\newtheorem{defn}[equation]{Definition}

\newtheorem{eg}[equation]{Example}
\newtheorem{constr}[equation]{Construction}

\makeatletter \@addtoreset{equation}{section} \makeatother

\let\oldtocsection=\tocsection
\let\oldtocsubsection=\tocsubsection
\let\oldtocsubsubsection=\tocsubsubsection
\renewcommand{\tocsection}[3]{\hspace{0em}\oldtocsection{#1}{#2}{#3}}
\renewcommand{\tocsubsection}[3]{ \hspace{1em} \oldtocsubsection{#1}{\small{#2}}{\small{#3}} }
\renewcommand{\tocsubsubsection}[3]{\hspace{2em}\oldtocsubsubsection{#1}{\small{#2}}{\small{#3}}}

\let\oldmarginpar\marginpar
\renewcommand\marginpar[1]{\-\oldmarginpar[\framebox{\setstretch{\marginparstretch}\begin{minipage}{\marginparwidth}{\raggedleft\scriptsize #1}\end{minipage}}]{\framebox{\setstretch{\marginparstretch}\begin{minipage}{\marginparwidth}{\raggedright\scriptsize #1}\end{minipage}}}}

\let\oldmarginpar\marginpar
\renewcommand\marginpar[1]{\-\oldmarginpar[\raggedleft\footnotesize #1]%
	        {\raggedright\footnotesize #1}}

\newskip\stdskip                      
\title[Homological Mirror Symmetry for orbifold log Calabi-Yau surfaces]{Homological Mirror Symmetry for orbifold log Calabi-Yau surfaces}
\author{Bogdan Simeonov}
\date{}

\begin{document}
\maketitle
\begin{abstract}
    We prove homological mirror symmetry for orbifold log Calabi-Yau surfaces at the large complex structure limit by constructing an abstract Lefschetz fibration associated to each pair $(\mathcal{X},\mathcal{D})$ with $\mathcal{X}$ a projective rational surface with isolated cyclic quotient orbifold points and $\mathcal{D}$ a stacky anticanonical divisor. We describe a Lefschetz stabilization procedure which, on the mirror, corresponds to the special McKay correspondence of \cite{ishii_special_2013}. Moreover, we relate our abstract construction to an explicit Laurent polynomial mirror in an example consisting of a family of orbifold del Pezzo surfaces.
    \end{abstract}
\setcounter{tocdepth}{1}
\hypersetup{
    bookmarksopen=true,
    bookmarksdepth=2
}
\tableofcontents
\addtocounter{section}{-1}
\section{Introduction}\label{sec:intro}
\subsection{Main results}
  In this paper, we study homological mirror symmetry for log Calabi-Yau orbifold surfaces, building on earlier work of \cite{hacking_homological_2023} on smooth log Calabi-Yau surfaces. The main objects of study are a class of log Calabi-Yau orbifold surfaces which we call \textit{effective} (Definition \ref{def:effectivelogcy}): these are the projective rational surfaces with isolated cyclic orbifold points which admit an effective anticanonical nodal stacky cycle $\mathcal{D}$ whose singular set contains all of the orbifold points of $\mathcal{X}$. Equivalently, these are the surfaces which can be constructed by taking a smooth log Calabi-Yau pair $(Y,D)$ with $D\in |-K_Y|$ a reduced nodal curve, contracting disjoint chains $\mathcal{E}_i=\bigcup E_{i,j}$ of rational curve components $\mathbb{P}^1\simeq E_{i,j}\subset D$ satisfying $E_{i,j}^2\leq -2$ and considering the resulting surface with cyclic quotient singularities as an orbifold. This produces a different compactification $(\mathcal{X}, \mathcal{D})$ of the same open Calabi-Yau variety $U=Y\setminus D = \mathcal{X}\setminus \mathcal{D}$.
  
  Associated to such a log Calabi-Yau pair $(\mathcal{X}, \mathcal{D})$ we construct an abstract mirror Lefschetz fibration $w_\mathcal{X}:W'\rightarrow \mathbb{C}$ (Construction \ref{constr:Lefschetz}). 
  \begin{thm}[Homological Mirror Symmetry at the large complex structure limit, Theorems \ref{thm:destabilization}, \ref{prop:restrictandvarphi}]\label{thm:mainthmabstract}
    Suppose $(\mathcal{X},\mathcal{D})$ is an effective log Calabi-Yau surface. Assume that $Y$, the minimal resolution of the coarse space of $\mathcal{X}$, is equipped with the distinguished complex structure as in \cite[Section 2.1.1]{hacking_homological_2023}. Consider the exact Lefschetz fibration $w_\mathcal{X}:W'\rightarrow \mathbb{C}$ from construction \ref{constr:Lefschetz} and the Hacking-Keating Lefschetz fibration $w_Y:W\rightarrow \mathbb{C}$ from \cite{hacking_homological_2023} which is mirror to $(Y,D)$. Then:
    \begin{itemize}
      \item The exact Lefschetz fibration $w_\mathcal{X}:W'\rightarrow \mathbb{C}$ can be obtained from $w_Y:W\rightarrow \mathbb{C}$ by a successive sequence of Lefschetz stabilizations. Hence, $W\simeq W'$ as symplectic manifolds.
      \item There is an equivalence of $A_\infty$ categories $D^b(\mathcal{X})\simeq \mathcal{F}(W',w_\mathcal{X})$ and a commutative diagram \[\begin{tikzcd}
D^b(Y) \arrow[d, "\Phi"', hook] \arrow[r, "\simeq"] & {\mathcal{F}(W,w_Y)} \arrow[d, hook] \\
D^b(\mathcal{X}) \arrow[r, "\simeq"']               & {\mathcal{F}(W',w_\mathcal{X})}              
\end{tikzcd}\]
intertwining the fully faithful embedding $\Phi$ defined via a Fourier-Mukai kernel by Ishii-Ueda \cite{ishii_special_2013} and the fully faithful inclusion induced by Lefschetz stabilization.
    \end{itemize}
  
  \end{thm}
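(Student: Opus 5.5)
The plan is to reduce everything to the smooth case of \cite{hacking_homological_2023}, which gives an equivalence $D^b(Y)\simeq\mathcal{F}(W,w_Y)$ matching a full exceptional collection on $Y$ with the distinguished basis of vanishing cycles of $w_Y$. The reduction goes through the special McKay correspondence of \cite{ishii_special_2013}. For each contracted chain $\mathcal{E}_i$, the orbifold point of $\mathcal{X}$ is of type $\frac{1}{n}(1,q)$, with $n/q=[b_1,\dots,b_r]$ read off from the self-intersections $-b_j=E_{i,j}^2$. Ishii--Ueda show that $D^b(\mathcal{X})$ admits a semiorthogonal decomposition
\[ D^b(\mathcal{X})=\langle \mathcal{O}_{p}\otimes\rho_{k}\ (\text{non-special } k),\ \Phi(D^b(Y))\rangle, \]
possibly up to reordering or mutation. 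Here $\Phi$ is the fully faithful Fourier--Mukai functor and the extra pieces are exceptional objects indexed by the non-special irreducible representations of $\mu_n$. Combining this with a full exceptional collection on $Y$ gives a full exceptional collection on $\mathcal{X}$ of length $\chi(Y)+\sum_i(\#\text{non-special reps}_i)$.

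On the symplectic side, I would check directly from Construction \ref{constr:Lefschetz} that $w_\mathcal{X}$ is obtained from $w_Y$ by adding, for each non-special representation, one Lefschetz thimble whose vanishing cycle is a stabilization-type curve. In the fibre (a punctured torus-type surface, as in Hacking--Keating), this curve meets the vanishing cycle of a chosen previous critical point exactly once and is disjoint from the others. This is a standard Lefschetz stabilization, which does not change the total space up to exact deformation (a Weinstein handle cancellation). That gives $W\simeq W'$ and the first bullet, which is Theorem \ref{thm:destabilization}. Since adding a critical value to the directed collection enlarges the directed category, the inclusion of the old thimbles gives a fully faithful $\mathcal{F}(W,w_Y)\hookrightarrow\mathcal{F}(W',w_\mathcal{X})$.

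For the equivalence, I would compare the directed $A_\infty$ categories of both full exceptional collections. Both sides are formal, or at least determined by their cohomology together with the $\mu_2$ products. This holds on the B-side for surfaces via Ext computations in a quiver with relations, and on the A-side by the grading and intersection-count arguments of Hacking--Keating. So it suffices to match morphism spaces and compositions among the new objects and between new and old objects. The B-side Homs $\mathrm{Ext}^*(\mathcal{O}_p\otimes\rho_k,\Phi(E))$ can be computed by adjunction as Ext groups of the special/non-special sheaves on the exceptional curves $E_{i,j}$. These must be matched with the intersection points of the new vanishing cycles with the old ones, with gradings. Commutativity of the diagram then follows by construction, since the old objects are matched exactly as in Hacking--Keating.

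I expect the main obstacle to be this matching of Ext data and $\mu_2$ compositions between the non-special skyscraper objects and the image of $\Phi$, uniformly across all continued fractions. I would first treat the case of a single chain with $n/q=[b_1,\dots,b_r]$ via the Wunram description of special representations, reducing to an induction on $r$ where each step adds one stabilization. I would then check that the resulting quiver with relations agrees with the Fukaya--Seidel directed category, possibly after a sequence of Hurwitz moves and mutations on both sides.
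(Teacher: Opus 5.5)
There are two genuine gaps, one in each bullet.

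\textbf{Stabilization.} The criterion you state is not Lefschetz stabilization. Adding a critical point whose vanishing cycle lies in the \emph{same} punctured-torus fibre and meets one earlier vanishing cycle once just attaches a $2$-handle, and this changes the total space: for a fibration over the disc, $\chi(W)=\chi(F)+\#\mathrm{Crit}$ goes up by one. Handle cancellation requires two things. First, a $1$-handle is attached to the fibre. Second, the new vanishing cycle meets an arc (the cocore, or more generally a destabilizing arc) exactly once, and that arc is disjoint from all other vanishing cycles.

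Correspondingly, the fibre of $w_\mathcal{X}$ is not $F$ but the Lekili--Polishchuk surface $\Sigma^0$, i.e.\ $T^0\cong F$ with one extra handle per non-special representation. So an induction on chain length with one stabilization per step has the wrong count: a $\frac{1}{k}(1,1)$ point has one exceptional curve but needs $k-2$ stabilizations.

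The real content of Theorem \ref{thm:destabilization} is producing destabilizing arcs. The paper takes the arcs $A_{m,d}$ mirror to the sheaves $B_{m,d}$ with $\iota_{\mathcal{D}*}B_{m,d}=N^{q_m}_d$. Guided by $\mathrm{Ext}_\mathcal{X}(N_d,N_{d'})=\mathrm{Ext}_\mathcal{D}(\iota^*\iota_*B_d,B_{d'})$ and exceptionality, Proposition \ref{prop:intersections} shows three things:
\begin{itemize}
\item $A_{m,d'}\cap L_{m,d}$ is empty for $d'<d$ and a single point for $d'=d$;
\item the arcs miss the special meridians;
\item cutting $\Sigma^0$ along all the $A_{m,d}$ returns exactly $T^0$.
\end{itemize}
The last point is crucial. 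It needs an Ishii--Ueda-style induction, because the $A_{m,d}$ are in general not the cocores of the non-special handles, and it is what identifies the destabilized fibration with $w_Y$.

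\textbf{The equivalence.} Your reduction to matching $\mu^2$ assumes both directed categories are formal. Nothing makes the combined collection (Hacking--Keating line bundles plus Ishii--Ueda sheaves) strong, even after shifts, so higher products cannot be ruled out by degree. Nor is this how Hacking--Keating argue. Counting all holomorphic polygons among the $L_{m,d}$ and $\mathcal{V}_i$ in $\Sigma^0$, uniformly in the continued fraction, is exactly the obstacle you flag, and the proposal has no mechanism for it. The paper only does such a computation for $X_{k+1}$, where a regrading makes the collection strong.

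The missing idea is restriction to the anticanonical boundary. The map $\mathrm{id}\to\iota_{\mathcal{D}*}\iota^*_\mathcal{D}$ sits in a triangle with $-\otimes\omega_\mathcal{X}$, which is killed by Serre duality. Hence, by Proposition \ref{prop:restobdry}, $\iota^*_\mathcal{D}$ is an isomorphism on directed Ext groups between exceptional objects. So $D^b(\mathcal{X})$ is modelled by the directed subcategory of $\mathrm{Perf}(\mathcal{D})$ on the restricted collection. On the other side, $\mathcal{F}(W',w_\mathcal{X})$ is by definition the directed subcategory of $\mathcal{F}(\Sigma^0)$ on the vanishing cycles.

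The Lekili--Polishchuk $A_\infty$-equivalence then identifies these, with all higher operations at once, provided two object-level identifications hold:
\begin{itemize}
\item $\varphi(\iota^*N_d)\simeq L_{m,d}$, via the push--pull triangle and surgery on $A_{m,d},A'_{m,d}$;
\item $\varphi(\iota^*\Phi\mathcal{L})\simeq\mathcal{V}$, via the boundary functor $\Phi^\partial$ with $\iota^*_\mathcal{D}\Phi=\Phi^\partial\iota^*_D$ (mirror to $T^0\hookrightarrow\Sigma^0$) plus a global local system.
\end{itemize}
The second identification is also what makes the square commute; this is not automatic ``by construction''.

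Finally, the non-special skyscrapers $\mathcal{O}_p\otimes\rho_k$ are not exceptional in general. For $\frac{1}{8}(1,3)$ there are morphisms both ways between the $\rho_2$ and $\rho_6$ ones, and no reordering fixes this, so you need Ishii--Ueda's $N_d$.
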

We will review the Lefschetz stabilization procedure in Section \ref{pointer:stabilization}. Notably, Lefschetz stabilization does not change the total space, up to Liouville deformation equivalence. This is consistent with the expectation that compactifying $U$ by adding a divisor is mirror to equipping the SYZ mirror $U^\lor$ with a function, with different divisors corresponding to different functions on the same space.
  \subsection{Mirror symmetry for smooth log Calabi-Yau surfaces}
  Mirror symmetry for log Calabi-Yau surfaces was studied by Gross-Hacking-Keel  \cite{gross_mirror_2015} in their proof of Looijenga's conjecture, which states that a cusp singularity is smoothable if and only if the fundamental cycle of the minimal resolution of the dual cusp appears as an anticanonical divisor in a smooth log Calabi-Yau surface. Those authors used a tropicalized version of SYZ, in line with the Gross-Siebert program, to construct an intrinsic mirror to $(Y,D)$ which forms a formal family over $\mathrm{Spec}\,\mathbb{C}[[t]]$ smoothing the cusp singularity.
Later on, Hacking and Keating in \cite{hacking_homological_2023} used a different approach to prove homological mirror symmetry for smooth log CY surfaces. Rather than using the intrinsic mirror SYZ construction, they instead built an abstract Lefschetz fibration associated to $(Y,D)$ and showed that the total space of this Lefschetz fibration is equivalent to the total space of the almost toric fibration associated to $(Y,D)$. This, in turn, is expected to be the same as the general fiber of the Gross-Hacking-Keel family.

We briefly sketch the construction from \cite{hacking_homological_2023}. A guiding heuristic in mirror symmetry is that the anticanonical compactifying divisor $D$ should be mirror to the fiber of the mirror Landau-Ginzburg model. By work of Lekili-Polishchuk \cite{lekili_arithmetic_2017}, the mirror to a cycle of rational curves $D$ is a punctured torus, with as many punctures as there are nodes on $D$. Given a full exceptional collection of line bundles $\mathcal{L}_i$ on $D^b(Y)$, one can restrict them to line bundles on $D$ and find the corresponding mirror Lagrangian $S^1$ in the punctured torus. In particular, in the Lekili-Polishchuk equivalence the mirror to the structure sheaf $\mathcal{O}$ is identified with a reference longitude $\mathcal{V}_0$, whereas the mirror to a skyscraper sheaf on $D_i$ is identified with a meridian $M_i$. The mirror to a line bundle $\mathcal{L}|_D$ is constructed by Dehn twisting the longitude along this collection of meridians: $\mathcal{V}:=\prod_s \tau_{M_s}^{\mathcal{L}\cdot D_s} \mathcal{V}_0$. 

Given the data of a Riemann surface $F$ and an ordered sequence of vanishing cycles on it, one can construct (as in \cite[Lemma 16.9]{seidel_fukaya_2008}) an exact Lefschetz fibration $w_Y:W\rightarrow \mathbb{C}$ with fiber $F$ by attaching Weinstein $2$-handles to $\mathbb{D}^2\times F$ whose attaching $S^1$s are the vanishing cycles. Such a Lefschetz fibration has a categorical invariant, namely its Fukaya-Seidel category: \begin{defn}\label{def:FukayaCategory}
  The category of Lagrangian vanishing cycles $\mathcal{F}(W,w|\boldsymbol{\gamma})$ is defined to be the $\mathbb{Z}$-graded $A_\infty$ category whose objects are the Lagrangian vanishing cycles $\Gamma_i$ of $w:W\rightarrow \mathbb{C}$ corresponding to an ordered collection of vanishing paths $\boldsymbol{\gamma}$, together with morphisms spaces and $A_\infty$ operations defined in the compact Fukaya category of a reference fiber $\Sigma^0$: $$\mathrm{Hom}^\bullet_{\mathcal{F}(W,w)}(\Gamma_i,\Gamma_j):=\begin{cases}
    CF^\bullet_{\mathcal{F}(\Sigma^0)}(\Gamma_i, \Gamma_j), \quad i<j\\
  \mathbb{C}\langle \mathrm{id}\rangle, \quad i=j\\
  0,\quad i>j
  \end{cases}$$ 

  Note that while $\mathcal{F}(W,w|\boldsymbol{\gamma})$ depends on the choice of vanishing paths $\boldsymbol{\gamma}$, the category of twisted complexes $\mathrm{Tw}\,\mathcal{F}(W,w|\boldsymbol{\gamma})$ does not, by \cite[Theorem 18.24]{seidel_fukaya_2008}. We will denote this triangulated category by $\mathcal{F}(W,w)$.

  \begin{rem}
    There are various alternative definitions of the Fukaya-Seidel category in the exact setting, including the partially wrapped Fukaya categories of Sylvan \cite{sylvan_partially_2019} and Ganatra-Pardon-Shende \cite{ganatra_covariantly_2019}. In the case of an exact Lefschetz fibration, these categories are known to be generated by the Lefschetz thimbles (see \cite[Corollary 1.17]{ganatra_sectorial_2023}).
  \end{rem}
  Hacking and Keating showed, in \cite[Theorem 1.1]{hacking_homological_2023}, that under a special complex structure assumption, there is an $A_\infty$ equivalence $$D^b(Y)\simeq \mathcal{F}(W,w_Y)$$Moreover, this is compatible with localization: quotienting $D^b(Y)$ by the image of $D^b(D)$ results in $D^b(U)$ which is equivalent to $\mathcal{W}(W)$, which in turn is the result of a stop removal localization as defined in \cite{ganatra_sectorial_2023}. The special complex structure assumption corresponds to the exactness of the mirror: non-exact symplectic deformations of $W$ should correspond to certain deformations of the complex structure on $Y$.

  \subsection{Mirror symmetry and orbifold del Pezzo surfaces}

  In a different context, mirror symmetry has been applied in an attempt to classify orbifold del Pezzo surfaces. While the smooth del Pezzo surfaces fall into exactly $10$ deformation families, an analogous classification for del Pezzo surfaces admitting at most cyclic quotient singularities remains an open problem. A promising classification approach was laid out in \cite{akhtar_mirror_2015}, which describes a conjectural correspondence between, on one hand, certain $\mathbb{Q}$-Gorenstein deformation classes of orbifold del Pezzo surfaces which admit $\mathbb{Q}$-Gorenstein toric degenerations and, on the other hand, mutation-equivalence classes of Fano polygons. Additionally, under this conjectural correspondence, the classical periods of maximally-mutable Laurent polynomials whose Newton polytope produces a Fano polygon should recover the regularized quantum period of the mirror orbifold del Pezzo surface.
  The orbifold del Pezzo surfaces admitting $\mathbb{Q}$-Gorenstein toric degenerations have an effective anticanonical cycle and hence two different mirror LG models can be associated to them: an abstract one, as developed in this paper, and an explicit Laurent polynomial one, as developed in \cite{akhtar_mirror_2015} (see also \cite{oneto_quantum_2018}). In sections \ref{sec:explicitcase} and \ref{sec:Lagvc} we compare these two constructions in an explicit example: the family of orbifold del Pezzo surfaces which are the hypersurfaces of degree $k+1$ in $\mathbb{P}(1,1,1,k)$.  We sketch this comparison in Section \ref{sec:introexample} below.
  
  However, it must be mentioned that there are notable examples of orbifold del Pezzo surfaces which do not admit qG toric degenerations. For example, the anticanonical quasismooth wellformed log del Pezzo surfaces in weighted projective 3-spaces were classified by Johnson and Kollár in \cite{johnson_kahler-einstein_2001}. A series of such anticanonical log del Pezzo surfaces were studied extensively from the viewpoint of mirror symmetry in \cite{corti_hyperelliptic_2021} and their derived categories were described in \cite{gugiatti_full_2023}. Proving homological mirror symmetry for the Johnson-Kollár surfaces remains an interesting open problem.

  \subsection{The derived category of an orbifold surface}
  When $\mathcal{X}$ is a surface with $N$ orbifold points $\{q_m\}_{m=1}^N$ admitting local charts of the form $[\mathbb{C}^2/G_m]$ with $G_m\subset GL_2(\mathbb{C})$ a small finite subgroup, it was proved by Ishii and Ueda \cite[Theorem 1.4]{ishii_special_2013} (see also \cite{gugiatti_full_2023},\cite{ishii_mckay_nodate},\cite{ito_special_2001}) that there is a semiorthogonal decomposition of the form $$D^b(\mathcal{X})=\langle \mathbf{e}_{q_1}, \dots, \mathbf{e}_{q_N}, \Phi D^b(Y)\rangle$$
  This decomposition has a component which is the fully faithful image under a Fourier-Mukai functor $\Phi$ of the derived category of a smooth surface $Y$, the minimal resolution of the coarse space $X$ of $\mathcal{X}$. Moreover, for each orbifold point $q_m\in\mathcal{X}$, there is a component $\mathbf{e}_{q_m}$ admitting a full exceptional collection $N^{q_m}_d$ of sheaves\footnote{We use the notation $N_d$ instead of the notation $E_d$ which appears in \cite[Proposition 8.1]{ishii_special_2013} so as to not confuse these with the exceptional curves in the resolution $Y$. } with support at the orbifold point which are indexed by the non-special representations $\rho_d$ of $G_m$ (the notion of special representations was introduced in \cite{wunram_reflexive_1988}, where it is shown that the nontrivial special representations correspond to the exceptional curves of the minimal resolution of the quotient singularity).

  \subsection{The construction of the mirror abstract Lefschetz fibration}
  As mentioned before, mirror symmetry heuristics suggest that the general fiber $\Sigma^0$ of $w_\mathcal{X}:W'\rightarrow \mathbb{C}$ should be mirror to the nodal stacky cycle $\mathcal{D}$. Homological mirror symmetry for such nodal stacky cycles is proved in \cite{lekili_auslander_2018} (see also \cite{habermann_homological_2023}) using the following construction: first, consider the disjoint union of cylinders $C_m$ indexed by the components of $\mathcal{D}$. Then, for each $\frac{1}{r}(1,l)$ singularity of $\mathcal{X}$ appearing as a node on $\mathcal{D}$ where two orbifold rational curves $\mathcal{D}_m, \mathcal{D}_{m+1}$ meet\footnote{These could be the same, in the case that $\mathcal{D}$ is a reduced nodal orbifold curve with one component.}, Lekili and Polishchuk attach $n$ handles to the two cylinders $C_m, C_{m+1}$. Identifying the circles $\partial_+C_m$ and $\partial_- C_{m+1}$  with $\mathbb{R}/\mathbb{Z}$, the handle attachment is constructed so as to join together the point $\frac{a}{n}\in \partial_+ C_m$ with the point $\frac{-\overline{l^{-1}}a}{n}\in \partial_- C_{m+1}$, where $\overline{l^{-1}}$ denotes the residue modulo $r$. The resulting Riemann surface $\Sigma^0$ (potentially of high genus) will be the fiber of our Lefschetz fibration. Lekili and Polishchuk show in \cite{lekili_auslander_2018} that $D^b(\mathcal{D})\simeq \mathcal{W}(\Sigma^0)$ (see Section \ref{sec:LCSL} for a detailed description of this equivalence).
  
  To construct a collection of vanishing cycles on $\Sigma^0$, one has to understand the restrictions of $\Phi \mathcal{L}_i \in D^b(\mathcal{X})$ and $N^{q_i}_d$ to $\mathcal{D}$: this is slightly challenging, as these are not line bundles and hence they are not determined simply by a sequence of degrees on each $\mathcal{D}_m$, as is the case for smooth log CY surfaces. 

  \subsubsection{The vanishing cycles corresponding to the $\Phi D^b(Y)$ component}

  Now consider the special I-series $\{i_1=l, i_2, \dots, i_{n+1}=0\}$ associated to a $\frac{1}{r}(1,l)$ singularity, as in \cite[Definition 2.5]{gugiatti_full_2023}. These form the set of weights of the special representations from \cite{wunram_reflexive_1988}. We make the following elementary but crucial observation:
  \begin{obs} \label{ob:observation} The map $\{i_1, i_2, \dots, i_{n+1}\} \rightarrow \{ 0,1,\dots, r-1\}$ defined by $i_j\mapsto -i_j l^{-1} \mod r$ is order-preserving.
  \end{obs}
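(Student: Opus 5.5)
The plan is to make the residues $-i_jl^{-1} \bmod r$ explicit through a second sequence that satisfies the same linear recurrence as the I-series. Write $r/l=[b_1,\dots,b_n]$ for the Hirzebruch–Jung continued fraction of the $\frac1r(1,l)$ singularity, so that every $b_k\ge 2$. Recall from \cite[Definition 2.5]{gugiatti_full_2023} that the special I-series is given by
$$i_0=r,\quad i_1=l,\quad i_{k+1}=b_ki_k-i_{k-1}\quad (1\le k\le n),$$
with $i_{n+1}=0$. Since $b_k\ge2$, we have $i_{k}-i_{k+1}=(b_k-1)i_k-i_{k-1}\ge i_k-i_{k-1}$, so the differences $i_k-i_{k+1}$ are non-decreasing in $k$. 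As $i_0-i_1=r-l>0$, the series is strictly decreasing:
$$r=i_0>i_1>\dots>i_{n+1}=0.$$
Order-preservation is therefore equivalent to showing that $k\mapsto -i_kl^{-1}\bmod r$, with residues taken in $\{0,\dots,r-1\}$, is strictly decreasing for $1\le k\le n+1$.

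Next I introduce the companion sequence
$$p_0=0,\quad p_1=1,\quad p_{k+1}=b_kp_k-p_{k-1}.$$
I will establish three facts about it.

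First, the sequences $i_k$ and $l\,p_k$ satisfy the same linear recurrence, and their initial terms agree modulo $r$: $i_0=r\equiv 0=l\,p_0$ and $i_1=l=l\,p_1$. By induction, $i_k\equiv l\,p_k \pmod r$ for all $k$. Since $l$ is a unit modulo $r$, this gives
$$-i_kl^{-1}\equiv -p_k \pmod r.$$

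Second, the same argument as for the $i_k$ (now with $p_1-p_0=1>0$) shows that the differences $p_{k+1}-p_k$ are non-decreasing and positive. Hence $0=p_0<p_1<\dots<p_{n+1}$.

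Third, I claim $p_{n+1}=r$. The quantity $i_kp_{k+1}-i_{k+1}p_k$ is invariant under the recurrence, since it is the determinant of a product of matrices of determinant $1$. At $k=0$ it equals $r\cdot1-l\cdot 0=r$. At $k=n$, using $i_{n+1}=0$, it becomes $i_np_{n+1}=r$. Also $i_n=1$, because $i_n$ divides $\gcd(i_{n},i_{n+1})=\gcd(r,l)=1$. Hence $p_{n+1}=r$.

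Combining these, for $1\le k\le n$ we have $1\le p_k\le r-1$, so the residue of $-i_kl^{-1}$ is exactly $r-p_k$. For $k=n+1$ the residue is $0$, which is consistent with $r-p_{n+1}=0$. As $k$ runs from $1$ to $n+1$, the values $r-p_k$ strictly decrease from $r-1$ to $0$, while the $i_k$ strictly decrease from $l$ to $0$. The map is therefore order-preserving, and in fact strictly so.

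The only step requiring care is the third one, identifying the residue as $r-p_k$ rather than as some other representative. This needs both $p_k<r$ for $k\le n$ and $p_{n+1}=r$, and the determinant invariant is the cleanest route to both. Everything else reduces to routine induction on the recurrence.
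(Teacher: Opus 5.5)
Your argument is the paper's argument made self-contained. Your $p_k$ is exactly the J-series $j_k$, and the congruence $i_k\equiv l\,p_k \pmod r$ is Wunram's lemma $j_t\equiv l^{-1}i_t$, which the paper simply cites. The determinant invariant supplies the endpoint $j_{n+1}=r$ that the paper takes from the definition. Your treatment of the $p_k$ is correct: they increase strictly, $p_1=1$, $p_{n+1}=r$, and hence the residue is $r-p_k$.

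One step is wrong as written: the justification that the I-series strictly decreases.
\begin{itemize}
\item From $i_{k+1}=b_ki_k-i_{k-1}$ one gets $i_k-i_{k+1}=i_{k-1}-(b_k-1)i_k$. The expression $(b_k-1)i_k-i_{k-1}$ that you wrote is $i_{k+1}-i_k$.
\item With $b_k\ge 2$ and $i_k\ge 0$ the correct identity gives $i_k-i_{k+1}\le i_{k-1}-i_k$. So the descending differences are non-increasing, and positivity of $i_0-i_1$ does not propagate forward.
\item Your claimed monotonicity of differences is in fact false. For $\frac15(1,3)$ we have $5/3=[2,3]$, the I-series is $5,3,1,0$, and its differences are $2,2,1$.
\item The ``same argument'' works for the $p_k$ only because that sequence is increasing.
\end{itemize}

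The conclusion $r=i_0>i_1>\dots>i_{n+1}=0$ is nevertheless true and standard; the paper's definition already calls the I-series descending. You can repair the step in either of two ways.
\begin{itemize}
\item Use that the Hirzebruch--Jung coefficients are $b_k=\lceil i_{k-1}/i_k\rceil$. This gives $0\le i_{k+1}<i_k$ directly.
\item Run your difference argument downwards, starting from $i_{n+1}=0$ and $i_n=1$. If $i_{k+1}\ge 0$ and $i_k-i_{k+1}>0$, then $i_k>0$ and $i_{k-1}-i_k=(i_k-i_{k+1})+(b_k-2)i_k>0$.
\end{itemize}
With either repair your proof is complete and coincides with the paper's.
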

  This is because multiplication by $l^{-1}$ identifies the descending $I$-series with the ascending $J$-series, by \cite[Lemma 2]{wunram_reflexive_1987}.

  Hence, during the construction of the Lekili-Polishchuk mirror to $\mathcal{D}$, if one only attaches handles associated to the special I-series instead of all of $\{0,1,\dots, r-1\}$ for each $\frac{1}{r}(1,l)$ point of $\mathcal{X}$, the handles will not overlap and the resulting Riemann surface will be (after completion) a punctured torus. If we denote this Riemann surface by $T^0$, there is a natural identification between $T^0$ and $F$, the fiber of the Hacking-Keating mirror $w_Y:W\rightarrow \mathbb{C}$ to $(Y,D)$. This can be visualized by rotating the punctures of $F$ to be distributed vertically instead of horizontally. Under this identification, the meridian which is mirror to a skyscraper sheaf on $E\subset \mathcal{E}\subset D$ passes through exactly one of the handles. This is the handle associated to the special representation which corresponds to the exceptional curve $E$ under Wunram's correspondence \cite[Theorem 1.2]{wunram_reflexive_1988}.

  \begin{prop}[Propositions \ref{prop:fullyfaithfulbdry} and \ref{prop:fullyfaithfulspecialtorus}]
        There is a fully faithful functor $\Phi^\partial: \mathrm{Perf}(D) \rightarrow \mathrm{Perf}(\mathcal{D})$ which commutes with $\Phi$ under restriction and, under the equivalences $\mathrm{Perf}(D)\simeq \mathcal{F}(T^0), \mathrm{Perf}(\mathcal{D})\simeq \mathcal{F}(\Sigma^0)$, corresponds to a functor on the Fukaya category which is induced (on the level of underlying Lagrangians) by the inclusion of the special torus $T^0$ into $\Sigma^0$. In other words, there is a commutative diagram of functors: \[\begin{tikzcd}
        D^b(Y) \arrow[r, "\Phi", hook] \arrow[d]                                        & D^b(\mathcal{X}) \arrow[d]                              \\
        \mathrm{Perf}(D) \arrow[r, "\Phi^\partial", hook] \arrow[d, "\simeq"', no head] & \mathrm{Perf}(\mathcal{D}) \arrow[d, "\simeq", no head] \\
        \mathcal{F}(T^0) \arrow[r, hook]                                               & \mathcal{F}(\Sigma^0)                                 
        \end{tikzcd}\]
              \end{prop}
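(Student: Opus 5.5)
We plan to define $\Phi^\partial$ on the level of kernels, mirroring Ishii--Ueda's definition of $\Phi$. Recall that $\Phi$ is the Fourier--Mukai transform with kernel $\mathcal{O}_{\mathcal{Z}}$, where $\mathcal{Z}\subset Y\times\mathcal{X}$ is the universal family of the moduli space of $G$-clusters (locally, $Y$ is the $G$-Hilbert scheme over each orbifold chart). We restrict this kernel to $D\times\mathcal{D}$ and set $\Phi^\partial$ to be the Fourier--Mukai functor with kernel $\mathcal{O}_{\mathcal{Z}}|_{D\times\mathcal{D}}$. The compatibility $\Phi(F)|_{\mathcal{D}}\simeq \Phi^\partial(F|_D)$ then follows from base change. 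The relevant fiber square is $\mathcal{Z}_D := \mathcal{Z}\times_{Y} D \to \mathcal{D}$, and we need $\mathcal{Z}_D = \mathcal{Z}\times_{\mathcal{X}}\mathcal{D}$. This holds because the preimage of $\mathcal{D}$ under $Y\to X$ is exactly $D$, including the exceptional chains $\mathcal{E}_i$. Tor-independence should follow since $\mathcal{Z}$ is flat over $Y$ and the divisors are Cartier on the smooth stacks.

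Full faithfulness we would check locally near each node, since away from the orbifold points $\Phi^\partial$ is the identity. Near a $\frac1r(1,l)$ node, $\mathcal{D}$ is $[\{xy=0\}/\mu_r]$ and $D$ is the chain of $\mathbb{P}^1$'s $E_{i,j}$ together with the two branch germs. Here we would compute $\Phi^\partial$ on generators of $\mathrm{Perf}(D)$: structure sheaves of components, skyscrapers, and the line bundles $\mathcal{O}_{E}(-1)$. We expect the skyscraper $\mathcal{O}_p$ with $p\in E_{i,j}$ to go to the $G$-cluster sheaf $\mathcal{O}_{Z_p}$ on the node. By Wunram/Ishii--Ueda, its isotypic content is governed by the special representation $\rho_{i_j}$ attached to $E_{i,j}$. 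We would then compute morphisms between the images in $\mathrm{Perf}(\mathcal{D})$, using the description of $D^b$ of the stacky node as modules over the Auslander-type order of \cite{lekili_auslander_2018}, and compare with $\mathrm{Hom}$ on $D$. Alternatively, and more cleanly, we would deduce full faithfulness on the Fukaya side in the next step.

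For the mirror statement we use the Lekili--Polishchuk dictionary. Under $\mathrm{Perf}(\mathcal{D})\simeq\mathcal{F}(\Sigma^0)$, line bundles and equivariant twists correspond to longitudes threading the handles, where the handle labelled by $a$ corresponds to the character $a$ of $\mu_r$, and skyscrapers twisted by characters correspond to arcs or meridians through individual handles. Observation \ref{ob:observation} guarantees that the special handles embed $T^0\hookrightarrow\Sigma^0$ without crossings. We then verify object by object on generators that $\Phi^\partial$ sends the reference longitude $\mathcal{V}_0$ and the meridians $M_s$ of $T^0$ to the same curves viewed in $\Sigma^0$. For $\mathcal{O}_D\mapsto\mathcal{O}_{\mathcal{D}}$ this is immediate. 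For a skyscraper on $E_{i,j}$, the image is the cluster sheaf, whose mirror is the meridian through the handle labelled $i_j$ (or $-i_jl^{-1}$ depending on conventions). Since $\Sigma^0\setminus T^0$ consists of handles and $T^0$ is an exact subsurface with compatible gradings, the Floer complexes and $A_\infty$ products among curves contained in $T^0$ coincide whether computed in $T^0$ or $\Sigma^0$: by the maximum principle or an open mapping argument, holomorphic polygons with boundary on $T^0$ cannot enter the extra handles. Hence the inclusion induces a fully faithful functor $\mathcal{F}(T^0)\to\mathcal{F}(\Sigma^0)$. Since it agrees with $\Phi^\partial$ on a generating set, including morphisms in low degree, the commutative diagram follows, and full faithfulness of $\Phi^\partial$ follows as well. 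One still has to match morphisms and not just objects; we would do this via formality or intrinsic uniqueness of the relevant $A_\infty$ structures, as in Lekili--Polishchuk.

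The main obstacle is the local computation identifying $\Phi^\partial$ of skyscrapers and of $\mathcal{O}_{E_{i,j}}(-1)$ with explicit equivariant modules on the stacky node. In particular, one must pin down exactly which handle, with which orientation and grading, each image corresponds to. Our first attempt would be to use the explicit toric description of the $G$-Hilbert scheme of $\frac1r(1,l)$ via the I-series monomials $x^{i_j}$, $y^{j_j}$. With this description the cluster over a point of $E_{i,j}$ is spanned by monomials whose characters run between consecutive special weights, and its restriction to the axes can be read off directly.
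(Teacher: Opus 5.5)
\paragraph{Where the proposal breaks.} Your overall architecture is sound: restrict the kernel to $D\times\mathcal{D}$, use base change, localize at the stacky nodes, and use that $T^0\subset\Sigma^0$ is a subdomain. Your A-side claim that $\mathcal{F}(T^0)\to\mathcal{F}(\Sigma^0)$ is fully faithful is also correct, because polygons bounded by curves in $T^0$ cannot enter the $1$-handles.

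The genuine gap is the central local computation. $\Phi^\partial\mathcal{O}_p$ for $p\in E$ is \emph{not} the $G$-cluster sheaf.
\begin{itemize}
\item Outside the Gorenstein case a cluster over a point of an exceptional curve need not lie on $\mathcal{D}$. Take $\frac{1}{5}(1,3)$, with $u,v$ of weights $1,3$, and the cluster $I=(u^5,\,v-\alpha u^3)$ with $\alpha\neq0$. Then $uv\equiv\alpha u^4\not\equiv 0$, so $\mathcal{O}_{Z_p}$ is not even pushed forward from $\mathcal{D}$, and it contains every character once.
\item Since $L\iota_D^*\mathcal{O}^Y_p=\mathcal{O}_p\oplus\mathcal{O}_p[1]$, the identity $\iota_{\mathcal D}^*\Phi=\Phi^\partial\iota_D^*$ shows that $\Phi^\partial\mathcal{O}_p$ is the cut-down $\mathcal{O}_{Z_p}/(uv)$. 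In the example this is $\mathbb{C}[u,v]/(uv,\,v-\alpha u^3)$, of length $4$.
\item The paper reaches this without $G$-Hilb. From $\mathcal{R}^\vee_{\rho_i}|_D=\mathcal{O}_D(-p_i)$ and $\Phi\mathcal{R}^\vee_{\rho_i}=\mathcal{O}\otimes\rho_i$ it gets $\Phi^\partial\mathcal{O}_{p_i}=\mathrm{Cone}(\mathcal{O}_{\mathcal D}\otimes\rho_i\xrightarrow{s}\mathcal{O}_{\mathcal D})$.
\item It then uses $\mathrm{Ext}^\bullet_{\mathcal D}(\mathcal{O}_{\mathcal D},\Phi^\partial\mathcal{O}_{p_i})=\mathbb{C}[0]$, which comes from the left adjoint via $\Psi^\partial\mathcal{O}_{\mathcal D}=\mathcal{O}_D$. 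This forces $s=\lambda_1u^i+\lambda_2v^j$ with \emph{both} coefficients nonzero.
\item That nonvanishing is exactly what makes the mirror a closed curve. Rewriting the cone in Lekili--Polishchuk generators and doing surgery gives a loop through the two special handles $H^-_{m,0}$ and $H^-_{m,-i}$; a vanishing coefficient would give an arc. Your ``meridian through the handle labelled $i_j$'' has no derivation behind it.
\end{itemize}
Also, $\mathcal{O}_E(-1)$ and structure sheaves of components are not perfect on the nodal cycle $D$. $\mathrm{Perf}(D)$ is generated by $\mathcal{O}_D$ and one smooth-point skyscraper per component.

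\paragraph{Full faithfulness does not transfer from the A-side.} Matching objects of a generating set gives at most $\dim\mathrm{Ext}_D(A,B)=\dim\mathrm{Ext}_{\mathcal D}(\Phi^\partial A,\Phi^\partial B)$. It does not show that the map induced by $\Phi^\partial$ is an isomorphism. Formality or intrinsic uniqueness of $A_\infty$-structures does not help, since it concerns abstract isomorphism of algebras, not the specific functor. You still need injectivity of $\Phi^\partial$ on these Ext groups. The paper supplies this on the B-side:
\begin{itemize}
\item $\Phi^\partial$ preserves $\mathrm{Ext}^\bullet(\mathcal{O}_D,\mathcal{O}_D)$.
\item The generator of $\mathrm{Ext}^1(\mathcal{O}_D,\mathcal{O}_D)$ factors as $n_p\circ m_p$ through $\mathcal{O}_p$, so $\Phi^\partial m_p\neq0$ and $\Phi^\partial n_p\neq0$.
\item The target spaces are $\mathbb{C}[0]$ and $\mathbb{C}[1]$, by adjunction and Serre duality on $\mathcal{D}$.
\end{itemize}
With this and the correct object-level identification, the commutative square follows.
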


    The upshot of this proposition is that one can take the vanishing cycles from Hacking and Keating's construction and push them forward from $T^0$ into $\Sigma^0$.
   \subsubsection{The vanishing cycles corresponding to the Ishii-Ueda exceptional collection}
  It remains to find the vanishing cycles $L_d\subset \Sigma^0$ which are mirror to the Ishii-Ueda exceptional collection $N_d$ for the orthogonal to $\Phi D^b(Y)$ in $D^b(\mathcal{X})$ (we will omit the superscript $q_m$ when it is not important).

  The sheaves $N_d$ have support at an orbifold point $q$ which is a nodal point of $\mathcal{D}$. In fact, there is a sheaf $B_d\in D^b(\mathcal{D})$ with $\iota_{\mathcal{D}*} B_d=N_d$. Restricting this back to the boundary results in a push-pull exact triangle (as in \cite[Lemma 2.8]{kuznetsov_serre_2021}) $$\underbrace{\iota_{\mathcal{D}}^*N_d}_{\iota_\mathcal{D}^*\iota_{\mathcal{D}*} B_d}\rightarrow B_d \rightarrow B_d \otimes \omega_\mathcal{X}|_{\mathcal{D}}[2]$$
  Using this exact triangle, one can replace $\iota_{\mathcal{D}}^*N_d$ by a two-term complex. On the A-side, we use the equivalence $D^b(\mathcal{D})\simeq \mathcal{W}(\Sigma^0)$ from \cite{lekili_auslander_2018} to show in Propositions \ref{prop:mirrortoBd}, \ref{prop:mirrortoLd} that $B_d$ is mirror to a Lagrangian arc and the two-step complex, after performing surgery, is a Lagrangian $S^1$ that we denote by $L_d$.
  For each orbifold point $q\in \mathcal{D}$, one thus gets an ordered collection of Lagrangian $S^1$'s which we denote by $\mathfrak{N}_{q}:=\{L_{d_{min}}, \dots, L_{d_{max}}\}$. Finally, the abstract Weinstein Lefschetz fibration $w_\mathcal{X}:W'\rightarrow \mathbb{C}$ is determined by the data $$\{ \Sigma^0, (\mathfrak{N}_{q_1}, \dots, \mathfrak{N}_{q_N}, \mathcal{V}_1, \mathcal{V}_2, \dots, \mathcal{V}_n)\}$$ where the $\mathcal{V}_i$ are Lagrangian vanishing cycles contained in $T^0\subset \Sigma^0$ determined by the Hacking-Keating construction.

  \begin{figure}[H]
    \usetikzlibrary{backgrounds}
  \pgfdeclarelayer{base}
            \pgfsetlayers{base,background,main}
  \begin{tikzpicture}[scale=1.3,
    gap/.style={draw, very thick},
    band/.style={fill=white, draw=black, line width=0.6pt},
    line cap=round
  ]

  \begin{pgfonlayer}{background}
    \draw[thick, dashed] (-1,0) to (1,-8/10);
    \draw[thick,dashed] (-1,-4/10) to (1,-12/10);
    
    \draw[thick,dashed] (-1,12/10) to (1,-4/10);
    \draw[thick,dashed] (-1,16/10) to (1,0);
  \draw[thick,red!50!black, dotted] (-1,13/10) to (1,-3/10);
  \draw[thick,blue!50!black, dotted] (-1,15/10) to (1,-1/10);
  \draw[thick,dotted, red!50!black] (-1,-2/10) to (1,-10/10);

  \end{pgfonlayer}

  \path[band, draw=none] (-1,-2+8/10) -- (1,4/10) -- (1,8/10) -- (-1,-2+12/10)  -- cycle;
    \draw[very thick] (-1,-2+8/10) to (1,4/10);
    \draw[very thick] (-1,-2+12/10) to (1,8/10);
  \path[band, draw=none] (-1,-2+24/10) -- (1,-2+32/10) -- (1,-2+36/10) -- (-1,-2+28/10)  -- cycle;
    \draw[very thick] (-1,-2+24/10) to (1,-2+32/10);
    \draw[very thick] (-1,-2+28/10) to (1,-2+36/10);

  \draw[very thick] (-2,-2)  to (2,-2);
  \draw[very thick] (-2,2.4)  to (2,2.4);
  \draw[very thick] (-1,-2+4/10)  to (1,-2+4/10);
  \draw[very thick] (-1,2)  to (1,2);
  \draw[thick, red!50!black] (-1,-2+26/10) to (1,14/10);
  \draw[thick, red!50!black] (-1,-2+43/10) to (1,-2+43/10);
  \draw[thick,red!50!black] (-1,-2+43/10) to[in=180, out=180,looseness=0.4] (-1,-2+33/10);
  \draw[thick,red!50!black] (1,-2+10/10) to[in=0, out=0,looseness=0.4] (1,-2+17/10);
  \draw[thick,red!50!black] (-1,-2+18/10) to[in=180, out=180,looseness=0.4] (-1,-2+26/10);
  \draw[thick,red!50!black] (1,-2+43/10) to[in=0, out=0,looseness=0.4] (1,-2+34/10);
  \draw[thick,blue!50!black] (-1,-2+10/10) to (1,6/10);
  \draw[thick,blue!50!black] (-1,-2+2/10) to (1,-2+2/10);
  \draw[thick,blue!50!black] (-1,-2+41/10) to (1,-2+41/10);
  \draw[thick,blue!50!black] (1,-2+26/10) to[in=0, out=0,looseness=0.3] (1,-2+41/10);
  \draw[thick,blue!50!black] (1,-2+2/10) to[in=0, out=0,looseness=0.3] (1,-2+19/10);
  \draw[thick,blue!50!black] (-1,-2+2/10) to[in=180, out=180,looseness=0.3] (-1,-2+10/10);
  \draw[thick,blue!50!black] (-1,-2+41/10) to[in=180, out=180,looseness=0.3] (-1,-2+35/10);


  \draw[very thick] (-1,-2+4/10)  to (-1,-2+8/10);
  \draw[very thick] (-1,-2+12/10)  to (-1,-2+16/10);
  \draw[very thick] (-1,-2+20/10)  to (-1,-2+24/10);
  \draw[very thick] (-1,-2+28/10)  to (-1,-2+32/10);
  \draw[very thick] (-1,-2+36/10)  to (-1,-2+40/10);
  \begin{scope}[xshift=2cm]
  \draw[very thick] (-1,-2+4/10)  to (-1,-2+8/10);
  \draw[very thick] (-1,-2+12/10)  to (-1,-2+16/10);
  \draw[very thick] (-1,-2+20/10)  to (-1,-2+24/10);
  \draw[very thick] (-1,-2+28/10)  to (-1,-2+32/10);
  \draw[very thick] (-1,-2+36/10)  to (-1,-2+40/10);
  \end{scope}

  \draw[very thick, dashed] (-1,-2+32/10)  to (-1,-2+36/10);
  \draw[very thick, dashed] (-1,-2+16/10)  to (-1,-2+20/10);
  \draw[very thick, dashed] (1,-2+8/10)  to (1,-2+12/10);
  \draw[very thick, dashed] (1,-2+16/10)  to (1,-2+20/10);

  \node[fill, circle, inner sep =1pt] at (1.1063,1.8104) {};
  \draw[ postaction={decorate},
            decoration={markings, mark=at position .5 with {\arrow{>}}}] (-2,-2) node (v1) {} -- (2,-2) node (v3) {};
              \draw[ postaction={decorate},
            decoration={markings, mark=at position .5 with {\arrow{>}}}] (-2,2.4) node (v2) {} -- (2,2.4) node (v4) {};

              \draw[dotted, postaction={decorate},
            decoration={markings, mark=at position .5 with {\arrow{>>}}}] (v1) -- (v2);
            
              \draw[dotted, postaction={decorate},
            decoration={markings, mark=at position .5 with {\arrow{>>}}}] (v3) -- (v4);
  \end{tikzpicture}\caption{\label{fig:1/5(1,3)}The general fiber of the Lefschetz fibration associated to a surface with a $\frac{1}{5}(1,3)$ orbifold point. The non-special handles are drawn with a dash. The blue curve describes the Lagrangian $L_2$ and the red one describes $L_4$.}
  \end{figure}
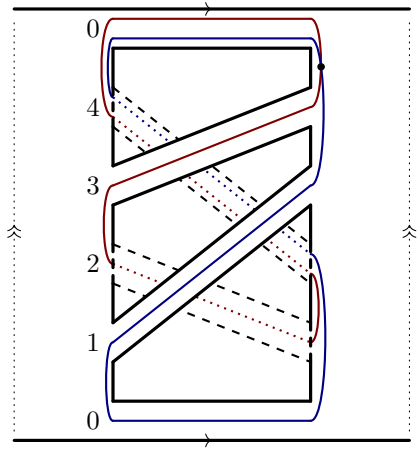

  \subsubsection{Example}
Consider $\mathbb{P}^2$ with its toric boundary. Then, blow up the point $[0:1:-1]$ three times in a row and the point $[1:0:-1]$ four times in a row. The result is a log Calabi-Yau surface $(Y,D)$ such that $D=D_1\cup D_2\cup D_3, D_1^2=1, D_2^2=-2, D_3^2=-3$. The chain $D_2\cup D_3$ can be contracted to a $\frac{1}{5}(1,3)$ singularity, producing an orbifold surface $(\mathcal{X},\mathcal{D})$.

Associated to this singularity are the special representations $\rho_0,\rho_1, \rho_3$ and the non-special representations $\rho_2, \rho_4$. There is a semiorthogonal decomposition $$D^b(\mathcal{X})=\langle \underbrace{N_2, N_4}_{\mathbf{e}_{q}}, \Phi D^b(Y)\rangle$$
The construction \ref{constr:Lefschetz} produces a Lefschetz fibration whose general fiber is a genus three Riemann surface $\Sigma^0$ with one boundary component, which can be thought of as a torus with three boundary components $T^0\simeq F$ to which two non-special handles have been attached, as in Figure \ref{fig:1/5(1,3)}.

The proof of homological mirror symmetry in Theorem \ref{thm:mainthmabstract} proceeds along in the same manner as in \cite{hacking_homological_2023}: one finds a model of $D^b(\mathcal{X})$ as a directed subcategory of $\mathrm{Perf}(\mathcal{D})$, as well as a model of $\mathcal{F}(W',w_\mathcal{X})$ inside $\mathcal{F}(\Sigma^0)$, which are identified under the Lekili-Polishchuk equivalence.
\subsection{Lefschetz stabilization}
 \label{pointer:stabilization}There is an operation on Lefschetz fibrations called Lefschetz stabilization (see \cite{giroux_existence_2016}, also \cite[Section 2]{keating_homological_2017}, \cite{koert_lecture_2017}, \cite{cieliebak_stein_2012}) which modifies the general fiber of a Lefschetz fibration and adds a new vanishing cycle, yet keeps the total space the same. This is a form of the Morse handle cancellation lemma: a $1$-handle (modifying the fiber) and $2$-handle (adding a Lefschetz thimble) cancel when the belt sphere of the $1$-handle intersects the attaching sphere of the $2$-handle at a single point transversely. In the language of Lefschetz fibrations, this takes the following shape: begin with an exact Lefschetz fibration on an exact symplectic $4$-manifold $(W, d\theta)$ with fiber $F$ and a set of vanishing paths determining a collection of vanishing cycles. Given an arc $\gamma \subset F$ with $\partial\gamma \subset \partial F, [\theta]=0\in H^1(\gamma, \partial \gamma)$, one can construct a new Lefschetz fibration $W'$ with fiber $F'$ given by attaching a Weinstein handle to $F$ along $\partial \gamma$. This process adds a new  critical point with associated vanishing cycle given by gluing the core of the Weinstein handle to $\partial \gamma$. The total space of $W'$ is Weinstein deformation equivalent to $W$.

 Conversely, if one finds an arc in the fiber $F$ which intersects a vanishing cycle $L$ in a single point (and no other vanishing cycles), then one can cut the arc from the general fiber and remove that vanishing cycle, resulting again in a Lefschetz fibration on the same total space. This procedure is known as \textit{destabilization}.

 Notice now that since the sheaves $N_d$ form an exceptional collection, $$\mathrm{Ext}_\mathcal{X}(N_d, N_{d'})=\mathrm{Ext}_\mathcal{D}(\iota^*\iota_*B_d, B_{d'})=\begin{cases}
 \mathbb{C}, d=d'\\
 0, d'<d
 \end{cases}$$
  To show that $W'$ and $W$ are related by Lefschetz stabilization (as is claimed in Theorem \ref{thm:mainthmabstract}), one uses the arcs $A_d$ mirror to $B_d$ as destabilizing arcs for the Lagrangian $S^1$'s $L_d$ mirror to $\iota^*N_d$. Moreover, the effect of cutting these arcs is the same as removing the non-special $1$-handles in the surface $\Sigma^0$ (note that the arcs $A_d$ need not be the cocores of the non-special handles!).

 \subsection{An explicit case study}\label{sec:introexample}
The main theorem of this paper concerns varieties at the large complex structure limit, to which an abstract Lefschetz fibration is associated. In practice, many interesting examples (e.g. orbifold del Pezzo surfaces) are not at the large complex structure limit (which tends to have lots of $-2$ curves), but rather come equipped with a deformation of the special complex structure. Deforming the complex structure on the B-side sometimes corresponds to partially compactifying and modifying the symplectic structure on the A-side. Moreover, rather than an abstract Lefschetz fibration, one can associate explicit Laurent polynomial mirrors to many orbifold del Pezzo surfaces by using toric degeneration techniques.

Consider for example the family of quasismooth hypersurfaces with a single $\frac{1}{k}(1,1)$ point $X_{k+1}\subset \mathbb{P}(1,1,1,k)$  (with $k$ assumed to be odd\footnote{This is a simplifying assumption: the mirror Laurent polynomial when $k$ is even has one more term, but we prefer to keep things uniform.}). These are orbifold del Pezzo surfaces which can be geometrically realized by taking $\mathbb{P}^2$, blowing up $k+1$ distinct points on a line (resulting in a smooth surface $Y$) and contracting the strict transform of the line. The mirror to $X_{k+1}$ is an explicit Landau-Ginzburg model arising from a toric degeneration of $X_{k+1}$. We verify in Proposition \ref{prop:abstractandexplicit} that the Lefschetz fibration induced by this Landau-Ginzburg model coincides, topologically, with the abstract one from Construction \ref{constr:Lefschetz} in the case where all the $k+1$ blowups happen at the same point. However, unlike in the assumption of Theorem \ref{thm:mainthm}, a generic $Y$ does not come equipped with the distinguished complex structure since the points blown up on $\mathbb{P}^2$ are distinct: there is a nontrivial moduli of complex structures depending on the positions of these points. An explicit mirror map is constructed, identifying an open subset of this moduli of complex structures with a space of non-exact symplectic structures on the total space of the mirror Landau-Ginzburg model. This mirror map is defined by computing intrinsic quantities in a non-exact Fukaya-Seidel category, in a similar vein as \cite{auroux_mirror_2006},\cite{auroux_mirror_2004}.
  \begin{rem}
  The orbifold del Pezzo surfaces with a single $\frac{1}{k}(1,1)$ point have been classified completely by \cite{cavey_pezzo_2020}: all but one exceptional case are given by blowing up a collection of points on $\mathbb{P}(1,1,k)$. In the notation of \cite{cavey_pezzo_2020}, the exceptional case consists of a family denoted $B_k^{(k)}$ which coincides with the family of hypersurfaces $X_{k+1}$.  \end{rem}
  \subsubsection{The derived category of $X_{k+1}$}

  By applying Ishii and Ueda's theorem \cite{ishii_special_2013} to a member $\mathcal{X}$ of the family of hypersurfaces $X_{k+1}$, we obtain a semiorthogonal decomposition of $D^b(\mathcal{X})$ into torsion sheaves $e_d:=\mathcal{O}_q\otimes \rho_d$ supported at the orbifold point, each of which corresponds to a non-special representation of the subgroup $\{\begin{pmatrix}
    \chi & 0 \\ 0 & \chi
    \end{pmatrix}, \chi^k=1\}\subset GL(2,\mathbb{C})$, as well as the fully faithful image under a Fourier-Mukai functor $\Phi$ of the derived category of the smooth surface $Y$ which is the minimal resolution of the coarse space of $\mathcal{X}$: $$D^b(\mathcal{X})=\langle \underbrace{e_2, \dots, e_{k-1}}_{\mathbf{e}_q}, \Phi D^b(Y)\rangle$$
\subsubsection{The mirror LG model}
  The task now is to produce an analogous semiorthogonal decomposition on the Fukaya-Seidel category of the mirror LG model.

  The mirror manifold $M^0$ is defined to be, topologically, the total space of the manifold defined by the algebraic equation $$\{ zx=P(y)\}\subset \mathbb{C}_x\times \mathbb{C}_z\times \mathbb{C}^\times_y$$
  where $P(y)=\prod_{i=1}^{k+1} (y+\mathbf{q}_i)$ is a polynomial of degree $k+1$. We will take $P=(1+y)^{k+1}+\epsilon$ for a small constant $\epsilon$. We will remark later on the significance of the parameters $\mathbf{q}_i$.
  In other words, $M^0$ is diffeomorphic to the complement of a cylinder in the $A_k$ Milnor fiber. As a symplectic manifold, $M^0$ will come equipped with a symplectic form of the following type:
    \begin{equation}\label{def:symplecticform}
    \omega^{\underline{\varepsilon}}:=\underbrace{dx\wedge d\overline{x}+d\log y \wedge d\log{\overline{y}}+dz\wedge d\overline{z}}_{_{\omega^{ex}}}+\sum_{i=1}^k \varepsilon_i \eta_i, \qquad \varepsilon_i\neq 0\end{equation} where $\eta_i$ are Thom forms supported near a chain of $-2$ spheres in $M^0$. The constants $\varepsilon$ are assumed to be all distinct and sufficiently small so that the form is non-degenerate.
    There is a function $\mathbf{f}:M^0\rightarrow \mathbb{C}$ which in the coordinates $x,y,z$ is written as $$\mathbf{f}=\frac{\prod_{i=1}^{k+1} (y+\mathbf{q}_i)}{xy}+x+(\tau_1y+\dots \tau_{\frac{k-1}{2}}y^{\frac{k-1}{2}})=\frac{z}{y}+x+(\tau_1y+\dots \tau_{\frac{k-1}{2}}y^{\frac{k-1}{2}})$$
  This function defines a fibration whose general fiber is a twice-punctured Riemann surface of genus $\frac{k+1}{2}$.

  \begin{figure}[H]
    \centering
      \begin{tikzpicture}[scale=0.33]
        \centering
        \def\n{13}

         \pgfdeclareradialshading{annulus1}{\pgfpoint{0cm}{0cm}}%
          {rgb(0cm)=(1,1,1);
          rgb(0.2cm)=(1,1,1);
          rgb(0.3cm)=(1,0.92,0.8);
          rgb(0.4cm)=(1,1,1)
          }
              
        \pgfdeclareradialshading{annulus2}{\pgfpoint{0cm}{0cm}}%
          {rgb(0cm)=(1,1,1);
          rgb(0.2cm)=(1,1,1);
          rgb(0.3cm)=(1,0.8,0.8);
          rgb(0.4cm)=(1,1,1)
          }
          
        \pgfdeclareradialshading{annulus3}{\pgfpoint{0cm}{0cm}}%
          {rgb(0cm)=(1,1,1);
          rgb(0.2cm)=(1,1,1);
          rgb(1cm)=(0.6,0.6,1);
          rgb(1.5cm)=(1,1,1)
          }

        \pgfdeclareradialshading{annulus4}{\pgfpoint{0cm}{0cm}}%
          {rgb(0cm)=(1,1,1);
          rgb(0.60cm)=(1,1,1);
          rgb(0.7cm)=(0.8,0.6,0.8);
          rgb(0.80cm)=(1,1,1)
          }
          
              
            
            
            \shade[shading=annulus4,even odd rule] (0,0) (0,0) circle (7.5);
            \shade[shading=annulus3,even odd rule] (0,0) (0,0) circle (0.43);
            \shade[shading=radial, outer color = orange!40, inner color = white] (-1,0) circle (0.2);


        \def\reference{4.5}

        \draw  plot[smooth, tension=.7] coordinates {(\reference,0) (6,0)};
        \draw  plot[smooth, tension=.7] coordinates {(\reference,0) (5.2972,-2.7948)};
        \draw  plot[smooth, tension=.7] coordinates {(\reference,0) (3.3979,-4.8988)};
        \draw (\reference,-0) .. controls (2.4205,-4.4426) and (3.0212,-5.5418) .. (0.7236,-5.9524);
        \draw (\reference,-0) .. controls (1.887,-4.8836) and (1.752,-5.2546) .. (-2.1022,-5.5734);
        \draw (\reference,-0) .. controls (1.1141,-5.6189) and (-0.4,-4.2) .. (-4.4807,-3.9649);
        \draw (\reference,-0) .. controls (0.7686,-5.6295) and (-1.7,-3.7) .. (-5.8026,-1.4526);
        \draw (\reference,0) .. controls (0.3345,-5.6209) and (-3.1,-3.3) .. (-5.8,1.4);
        \draw (\reference,0) .. controls (-0.113,-5.719) and (-4.5,-2.4) .. (-4.5,3.9);
        \draw (\reference,0) .. controls (-0.4651,-5.3938) and (-3.9859,-1.9715) .. (-2.1044,5.5953);
        \draw (\reference,-0) .. controls (-0.4395,-4.8153) and (-4.653,-1.5284) .. (0.6874,5.935);
        \draw (\reference,-0) .. controls (-0.6774,-4.6207) and (-5.5001,0.3428) .. (3.4171,4.9186);
        \draw (\reference,-0) .. controls (-0.9097,-4.461) and (-5.6,1.9) .. (5.3337,2.7407);

        \coordinate[circle,fill, inner sep = 1pt,orange] (v4) at (-1,0) {};
        \coordinate[circle,fill, inner sep = 1pt,blue] (v2) at (-0.15,0.25) {};
        \coordinate[circle,fill, inner sep = 1pt,blue] (v3) at (0.25,0) {};
        \coordinate[circle,fill, inner sep = 1pt,teal] (v1) at (\reference,0) {} {};

        \node[circle,fill, inner sep = 1pt,blue] (v7) at (-0.15,0.25) {};
        \node[circle,fill, inner sep = 1pt,orange] (v8) at (-1,0) {};
        \node[circle,fill, inner sep = 1pt,blue] (v5) at (-0.15,-0.25) {};
        \node[circle,fill, inner sep = 1pt,blue] (v6) at (0.25,0) {};
          
        \draw  (v1) edge (v5);

        \draw  (v1) edge (v6);

        \draw  (v1) edge (v7);

        \draw (\reference,0) .. controls (3.6605,1.8643) and (1.5446,1.8043) .. (-1,0);
        \node[circle,fill, inner sep = 0.5pt,orange] at (-1,0) {};
        \node[circle,fill, inner sep = 2pt,teal] at (\reference,0) {} {};
        \foreach \k in {0,...,\numexpr\n-1} {
            \node[circle,purple,opacity=1,fill,inner sep=1pt,
                  label=above:] 
              at ({360*\k/\n}:6) {};
          }
        \node[circle,fill, inner sep = 1pt,orange] at (-1,0) {};
        \end{tikzpicture}
        \caption{\label{fig:vanishingpaths} A reference fiber (green), $k-2$ critical values which go to infinity as $s\rightarrow 0$ (purple), three critical values close to $0$ in blue, as well as a critical value at $-1$ (orange) with multiplicity $k+1$, in the case $k=15, \mathbf{q}_i=1, \tau_1=1, \tau_j=0,j>1$.}
      \end{figure}
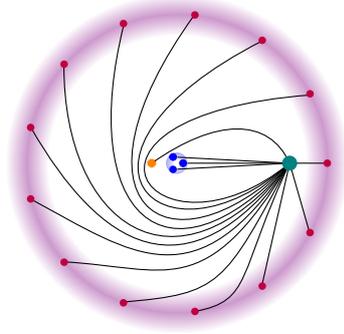

  The parameters $\mathbf{q}_i$ and $\tau_i$ have a precise interpretation in terms of holomorphic curve invariants of $\mathcal{X}$, namely the disk potential and the quantum orbifold cohomology: $\mathbf{q}_i$ is a Novikov parameter associated to a class in $H^2(\mathcal{X})$ , whereas the $\tau_i$ correspond to twisted sectors in the Chen-Ruan cohomology of $\mathcal{X}$. As such, they depend on the symplectic structure of $\mathcal{X}$ which in turn determines the complex structure of $M^0$. However, our main goal is to study instead the symplectic structure of $M^0$ so these values will not be of particular importance. In particular, there exists a deformation $\mathbf{f}_s:=\frac{\prod (y+\mathbf{q}_i)}{xy}+sx+(\tau_1y+\dots \tau_{\frac{k-1}{2}}y^{\frac{k-1}{2}})$ which, for $s$ very small, $\tau_1=1, \tau_j=0,j>1$ and $\mathbf{q}_i$ all equal to $1$, has critical values distributed as in Figure \ref{fig:vanishingpaths}. There are $k-2$ critical values which shoot off to infinity as $s\rightarrow 0$, three critical values near $0$ (in blue) and a degenerate $k+1$-fold critical value at $-1$. After perturbing $P$ from $(1+y)^{k+1}$ to $(1+y)^{k+1}+\epsilon$, the degenerate critical value splits off into $k+1$ non-degenerate ones. It is crucial to take $s$ very small, otherwise the distribution of critical values will be quite different than the one depicted in Figure \ref{fig:vanishingpaths}.

    The total space of the Landau-Ginzburg model is a non-exact symplectic manifold.  Following Auroux-Katzarkov-Orlov in \cite{auroux_mirror_2004}, we will define our non-exact Fukaya-Seidel category to be generated by the vanishing cycles as in \ref{def:FukayaCategory} with the various pseudoholomorphic polygons weighted by their symplectic area in $M^0$, as well as the holonomy associated to a B-field (this, as well as the grading data and spin structure, is explained in more detail in Section \ref{sec:Lagvc}). The structure constants of multiplication depend on a set of Novikov parameters\footnote{Not to be confused with the parameters $\mathbf{q}_i$ which depend on the symplectic structure of $\mathcal{X}$ rather than $M^0$.} $q_{1,j}:=\exp(2\pi i [B+i\omega](S_{1,j})), j=2,\dots, k+1$, where $S_{1,j}$ are spheres in $M^0$. Under the topological identification of $M^0$ as the complement of a cylinder in the $A_k$ Milnor fiber, these are the generators of $H_2$ of the Milnor fiber.
  \end{defn}

  \subsubsection{Homological mirror symmetry for $X_{k+1}$}
    A natural collection of vanishing paths (like the ones depicted in Figure \ref{fig:vanishingpaths}, after perturbing $P$) produces a collection of Lagrangian vanishing cycles that we denote by:
  $$\color{purple}\tilde{L}_{k-1}, \tilde{L}_{k-2}, \dots, \tilde{L}_2, \color{blue} P_{-1},P_0,P_1, \color{orange} B_1,\dots, B_{k+1}$$To compare this to the semiorthogonal decomposition on the B-side, a further step is required, mutating the subcollection $\langle \tilde{L}_{k-1},\dots, \tilde{L}_2\rangle$ into its left dual $\langle L_2, \dots, L_{k-1}\rangle$. After this is done\footnote{And also mutating $P_{-1}$ past $P_0$}, one obtains Lagrangian vanishing cycles which can be graded so that all morphism spaces are concentrated in degree $0$, provided that all $\varepsilon_i\neq 0$ in \ref{def:symplecticform}. As such, the only non-trivial $A_\infty$ operation is the product $\mu^2$. We compute the endomorphism algebra of this exceptional collection and match it with the algebra of a full, strong exceptional collection on the B-side, proving Theorem \ref{thm:mainthm}.

  \begin{thm}\label{thm:mainthm}
    Let $\omega$ be as in \ref{def:symplecticform}. There is an equivalence of $A_\infty$ categories $$D^b(\mathcal{X})\simeq \mathcal{F}(M^0, \mathbf{f}_s, \omega)$$ where $\mathcal{X}$ is the orbifold del Pezzo surface in the family of hypersurfaces $X_{k+1}$ obtained by blowing up the $k+1$ points $$pt_1=[1:-1:0], pt_2=[1:-q_{1,2}:0], \dots, pt_{k+1}=[1:-q_{1,k+1}:0]$$
        on $\mathbb{P}^2$ and contracting the strict transform of $\{z=0\}$.
  \end{thm}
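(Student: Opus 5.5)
The plan is to compare two full exceptional collections through their endomorphism algebras. Both should be strong, with all morphisms in degree $0$, so that $\mu^2$ is the only nonzero $A_\infty$ operation; the equivalence then follows by formality. On the B-side I will build a full strong exceptional collection of $D^b(\mathcal{X})$ from the Ishii--Ueda decomposition $D^b(\mathcal{X})=\langle e_2,\dots,e_{k-1},\Phi D^b(Y)\rangle$. For $\Phi D^b(Y)$ I use that $Y$ is $\mathbb{P}^2$ blown up at $k+1$ collinear points. Take Orlov's collection $\mathcal{O}(-1),\mathcal{O},\mathcal{O}(1)$ on $\mathbb{P}^2$ together with the exceptional line bundles $\mathcal{O}_{E_j}(-1)$, or, better, their line-bundle replacements $\mathcal{O}(E_j)$. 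Push this forward by the Fourier--Mukai functor $\Phi$. Since $\Phi$ is fully faithful, the morphisms within $\Phi D^b(Y)$ are computed on $Y$. The morphisms $\mathrm{Hom}(e_d,\Phi\mathcal{L})$ are computed locally at the orbifold point $q$. There $\Phi\mathcal{L}$ restricts to a reflexive sheaf attached to a special representation, and the $\mathrm{Ext}$ groups out of $\mathcal{O}_q\otimes\rho_d$ can be read off from the Koszul resolution on $[\mathbb{C}^2/\mu_k]$ with weights $(1,1)$. I will order the collection (possibly after mutating $\Phi D^b(Y)$ inside itself) so that everything sits in degree $0$. The resulting quiver with relations will have coefficients depending on the blown-up points. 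With the normalisation $pt_1=[1:-1:0]$ these coefficients are exactly $1,q_{1,2},\dots,q_{1,k+1}$, entering through the linear forms cutting out the points on the line $\{z=0\}$.

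On the A-side I take the vanishing cycles $\tilde L_{k-1},\dots,\tilde L_2,P_{-1},P_0,P_1,B_1,\dots,B_{k+1}$ for the paths of Figure \ref{fig:vanishingpaths}, after perturbing $P$. I then perform the mutations described above: replace $\langle\tilde L_{k-1},\dots,\tilde L_2\rangle$ by its left dual $\langle L_2,\dots,L_{k-1}\rangle$, and mutate $P_{-1}$ past $P_0$. Because the fiber is a twice-punctured genus $\frac{k+1}{2}$ surface, all Floer complexes are combinatorial. I will draw the cycles explicitly on the fiber, as the Lefschetz fibration is identified topologically with Construction \ref{constr:Lefschetz} by Proposition \ref{prop:abstractandexplicit}, and count intersection points. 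Gradings and spin structures are fixed so that every generator has degree $0$. This is where $\varepsilon_i\neq0$ enters: the classes $\eta_i$ break the symmetry that would otherwise force degree shifts or extra cancellations among the $B_j$. Then $\mu^2$ is computed by counting immersed triangles in the fiber, weighted by $\exp(2\pi i\int(B+i\omega))$. The relative areas of the triangles involving $B_1$ and $B_j$ differ by the area of the matching sphere $S_{1,j}$, which produces the weights $q_{1,j}$.

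The last step is to match the two quivers with relations. The $L_d$ should match the $e_d$; this follows from, or at least is guided by, Propositions \ref{prop:mirrortoBd} and \ref{prop:mirrortoLd} and the stabilization picture of Theorem \ref{thm:mainthmabstract}. The $P_i$ should match $\Phi$ of the $\mathbb{P}^2$ line bundles, and the $B_j$ should match the bundles attached to the exceptional curves. Rescaling generators should then absorb every weight except the $q_{1,j}$, which become the point positions. Fullness on both sides plus formality then gives $D^b(\mathcal{X})\simeq\mathcal{F}(M^0,\mathbf{f}_s,\omega)$.

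The main obstacle is the A-side computation of the mixed morphisms between the mutated $L_d$ and the $P_i,B_j$, and of the corresponding products. Left duals of twisted complexes have to be identified with honest curves on a high-genus fiber, and all the area weights have to be tracked. My first approach is to realise each $L_d$ geometrically as an iterated Dehn twist, or surgery, of the $\tilde L$'s. I then compare it with the curves $L_d$ of Construction \ref{constr:Lefschetz}, where Proposition \ref{prop:mirrortoLd} already identifies the exact ($\varepsilon=0$) morphisms. Only the non-exact weights would then remain to be checked.
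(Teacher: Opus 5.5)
Your architecture matches the paper's: strong full exceptional collections with everything in degree $0$ (hence formal), $\mu^2$ from area-weighted triangle counts, the $q_{1,j}$ arising as cross-ratios, and a final rescaling. The problem is the B-side gluing between $\langle e_2,\dots,e_{k-1}\rangle$ and $\Phi D^b(Y)$, which carries the new content and does not work as you describe it.

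Your premise that $\Phi\mathcal{L}$ is, near $q$, a reflexive sheaf $\mathcal{O}\otimes\rho_i$ with $\rho_i$ special holds only when $\mathcal{L}\cdot E\in\{0,-1\}$. Locally $\mathcal{Z}=[\mathrm{Bl}_0\mathbb{C}^2/\mu_k]$. Let $\tilde E$ be the exceptional curve of $\mathrm{Bl}_0\mathbb{C}^2$ and $\mathfrak{m}$ the maximal ideal of the origin. If $\mathcal{L}\cdot E=1$ (e.g.\ $\mathcal{O}(H)$ or $\mathcal{O}(E_j)$), then $\mu^*\mathcal{L}$ is $\mathcal{O}(-\tilde E)$ twisted by a character, so $\Phi\mathcal{L}$ is locally $\mathfrak{m}\otimes\rho_{k-1}$. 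This is not reflexive. Its reflexive hull $\mathcal{O}\otimes\rho_{k-1}$ would give a morphism out of $e_{k-3}$ and none out of $e_{k-2},e_{k-1}$, which is the opposite of the truth.

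Next, $\Phi\mathcal{O}(-H),\Phi\mathcal{O},\Phi\mathcal{O}(H)$ cannot all sit in degree $0$ alongside $e_{k-1}$. Indeed $\mathrm{Ext}^\bullet(e_{k-1}[-1],\Phi\mathcal{L})\cong\mathrm{Ext}^\bullet_E(\mathcal{O}(1),\iota_E^*\mathcal{L})$ is $\mathbb{C}[-1]$ for $\mathcal{L}=\mathcal{O}(-H)$ but $\mathbb{C}[0]$ for $\mathcal{L}=\mathcal{O}(H)$, so no shift of $e_{k-1}$ works. This is why the paper uses $\mathcal{O},\mathcal{T}(-H),\mathcal{O}(H),\mathcal{O}_{B_1},\dots,\mathcal{O}_{B_{k+1}}$ as in \cite{auroux_mirror_2006}.

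Your own mutation of $P_{-1}$ past $P_0$ is the mirror of $\mathcal{O}(-H)\mapsto\mathcal{T}(-H)$, so the $P$'s do not all match line bundles. The mutually orthogonal $B_j$, placed last, must match $\mathcal{O}_{B_j}$ (your $\mathcal{O}_{E_j}$). They cannot match $\mathcal{O}_{E_j}(-1)$, which must precede the pullback of $D^b(\mathbb{P}^2)$. Nor can they match $\mathcal{O}(E_j)$, which has nonzero morphisms to $\mathcal{O}(H)$.

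Finally, you give no mechanism for the mixed compositions. These are the relations of Proposition \ref{prop:quiverwithrelsB}: $x_1\circ\tilde{\epsilon}=y_1\circ\tilde{\epsilon}=0$, $x_0\circ\epsilon'=\tilde{\epsilon}\circ p_{2,k-2}$, $y_0\circ\epsilon'=\tilde{\epsilon}\circ p_{1,k-2}$, and $\epsilon'\circ p_{1,k-3}=\epsilon'\circ p_{2,k-3}=0$. They are exactly what the A-side gluing products must reproduce.

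The missing idea is the left adjoint $\Psi$ of $\Phi$. By \cite{gugiatti_full_2023}, $\Psi e_d=0$ for $d\le k-3$, $\Psi e_{k-2}=\mathcal{O}_E(-k)[1]$ and $\Psi e_{k-1}=\mathcal{O}_E(-k+1)$, so only $e_{k-2},e_{k-1}$ have morphisms to $\Phi D^b(Y)$. Proposition \ref{prop:Psiisiso} shows $\Psi$ is an isomorphism on $\mathrm{Hom}(e_{k-2}[-2],e_{k-1}[-1])$. Naturality of $\Psi\dashv\Phi$ and $\iota_{E*}\dashv\iota_E^!$ (Corollary \ref{cor:naturality}) then converts every mixed morphism and composition into one among $\mathcal{O},\mathcal{O}(1),\iota_E^*\mathcal{L}$ on $E\cong\mathbb{P}^1$. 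There the relations are read off from the splitting $\iota_E^*\mathcal{T}(-H)=\mathcal{O}\oplus\mathcal{O}(1)$. A purely local computation at $q$ could replace this, but only with the correct local models (e.g.\ $\mathfrak{m}^n\otimes\rho_{-n}$ for $\mathcal{L}\cdot E=n\ge 0$) and the germs at $q$ of the global maps.

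On the A-side, Proposition \ref{prop:mirrortoLd} only identifies the object $\varphi(\iota_\mathcal{D}^*\iota_{\mathcal{D}*}B_{m,d})$ with the curve $L_{m,d}$ at the large complex structure limit. It gives no Floer products with $P_0,\tilde P,P_1,B_j$, and the abstract theorem concerns a different collection at a different complex structure. So there are no exact morphisms to "reweight"; the paper counts the triangles directly, projecting to $\mathbb{C}^\times_y$ through the branched double cover. Also, the specific role of $\varepsilon_i\neq 0$ is to displace the $B_j$. For exact $\omega$ they are Hamiltonian isotopic and meet in two points of degrees $0$ and $1$.
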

\subsection{Organization of the paper}
  The structure of the paper is divided as follows: in Section \ref{sec:LCSL}, we prove HMS at the large complex structure limit in the form of Theorem \ref{thm:mainthmabstract}. We do so by explicitly computing the restrictions to $D^b(\mathcal{D})$ of the full exceptional collection of $D^b(\mathcal{X})$ induced by the special McKay correspondence of \cite{ishii_special_2013} and constructing a mirror abstract Lefschetz fibration using the Lekili-Polishchuk equivalence from \cite{lekili_auslander_2018}. In Sections \ref{sec:explicitcase} and \ref{sec:Lagvc}, we study the explicit example $X_{k+1}$ and relate it to our abstract construction. We introduce the mirror Landau-Ginzburg model to $X_{k+1}$, which is constructed using a toric degeneration. We compute the $A_\infty$ structure on the category of vanishing cycles associated to the Landau-Ginzburg model mirror to $X_{k+1}$. This is done by showing there is grading data with all intersection points in degree $0$, after which we compute the $\mu^2$ product. The disks are enumerated using a bifibration method, realizing the general fiber $\Sigma^0$ as a branched double cover. This results in the proof of Theorem \ref{thm:mainthm}.

\subsection{Acknowledgements}
The author would like to thank his supervisor Yankı Lekili for suggesting this project and for numerous invaluable discussions. Moreover, the author would like to thank Matthew Habermann for reading an early draft of this paper and for his many helpful suggestions, as well as Calum Crossley for his insights on derived categories. The author is funded by a London School of Geometry and Number Theory–Imperial College London PhD studentship, which is supported by the Engineering and Physical Sciences Research Council [EP/S021590/1].

  \section{Homological mirror symmetry at the large complex structure limit}\label{sec:LCSL}
  \subsection{The derived category of an orbifold surface}
  \subsubsection{Surfaces with cyclic quotient singularities}
  Recall that the cyclic quotient singularity $\mathbb{C}^2/\frac{1}{r}(1,l)$ has a minimal resolution whose fundamental cycle consists of a chain of curves $E_1, \dots, E_n$ satisfying $E_i\cdot E_i=-b_i$ with the constants $b_i$ appearing in the Hirzebruch-Jung continued fraction expansion $$\frac{r}{l}=b_1-\frac{1}{b_2-\frac{1}{b_3-\dots}}=[b_1,\dots, b_n]$$

  The I-series $I(r,l)=\{i_0=r,i_1=l, \dots, i_n=1, i_{n+1}=0\}$ of the cyclic quotient singularity is a descending sequence defined recursively by $$i_0=r, \quad i_1=l, \quad i_0=b_1i_1-i_2, \dots$$
  The dual J-series $J(r,l)=\{j_0=0, j_1=1,\dots, j_{n+1}=r\}$ is defined via the recursion $j_t=b_{t-1}j_{t-1}-j_{t-2}$. By \cite[Lemma 2]{wunram_reflexive_1987}, $j_t\equiv l^{-1}i_t \mod r$.
   \begin{defn}\label{def:effectivelogcy}
     An effective log Calabi-Yau orbifold surface $(\mathcal{X},\mathcal{D})$ is defined to be any surface which can be obtained by the following construction:\begin{itemize}
  \item Start with a smooth log Calabi-Yau surface $(Y,D)$ with maximal boundary. This means that $Y$ is a smooth, rational projective surface and $D\in |-K_Y|$ is either an irreducible nodal rational curve, or a cycle of at least $2$ rational curves.
  \item Contract a (possibly empty) collection $\mathcal{C}=\{ \mathcal{E}_1, \dots, \mathcal{E}_m\}, \mathcal{E}_i=E_{i,1}\cup \dots \cup E_{i,r_i}$ of chains of rational curves $E_{i,j}\simeq \mathbb{P}^1, E_{i,j}\cdot E_{i,j}\leq -2$ contained in $D$ which are pairwise disjoint: $E_{i,j}\cdot E_{i',j'}=0$ whenever $i\neq i'$. The contraction map $(Y,D)\rightarrow (X,D^{orb})$ sends $D$ to a cycle $D^{orb}\subset X$. The result upon considering $X$ as an orbifold is the effective log Calabi-Yau orbifold pair $(\mathcal{X},\mathcal{D})$.
  \end{itemize} 
      \end{defn}
  Contractibility of the chains $\mathcal{E}_i$ is guaranteed by Artin's theorem \cite{artin_numerical_1962}. The cycle $\mathcal{D}$ is a nodal stacky curve in the sense of \cite{lekili_auslander_2018}.

  In other words: the effective orbifold surfaces are the rational orbifold surfaces with cyclic quotient orbifold points admitting an effective, rational anticanonical cycle $\mathcal{D}$ which contains all of the orbifold points of $\mathcal{X}$ in its singular locus.
  \subsubsection{The special McKay correspondence}
  The special McKay correspondence is a generalization of the derived McKay correspondence to non-Gorenstein quotient singularities. It was proved in Ishii-Ueda \cite{ishii_special_2013} which builds on earlier work of Wunram \cite{wunram_reflexive_1988}, where the notion of special representation is defined. In the cyclic quotient case that we will be interested, the weights of the special representations are just given by the I-series.

  \begin{thm}[Wunram]
    There is a correspondence between the exceptional curves in the minimal resolution $Y\rightarrow \mathbb{C}^2/\frac{1}{r}(1,l)$ and the non-trivial special representations $\rho_i$ of $G$.
    Moreover, there is a collection of line bundles $\mathcal{R}_{\rho_i}$ on $Y$ indexed by the special representations of $G$ such that $$c_1(\mathcal{R}_{\rho_i})\cdot E_j=\delta_{ij}$$
  \end{thm}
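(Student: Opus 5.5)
Wunram's theorem has two parts, and I would prove them in this order: first construct the line bundles $\mathcal{R}_{\rho}$ from the representations, then identify which representations are special and match the nontrivial ones with the exceptional curves. I work throughout on the minimal resolution $\pi: Y \to \mathbb{C}^2/G$ with $G=\frac{1}{r}(1,l)$ cyclic.

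\textbf{Step 1: the sheaves $\mathcal{R}_\rho$.} To each irreducible (here one-dimensional) representation $\rho$ of weight $i$, attach the reflexive module $M_\rho=(\mathbb{C}[x,y]\otimes\rho)^G$, the module of $\rho$-semi-invariants. Set $\mathcal{R}_\rho:=\pi^*M_\rho/\mathrm{torsion}$. Because $G$ is cyclic, $M_\rho$ has rank one. By a theorem of Esnault/Artin–Verdier type, $\mathcal{R}_\rho$ is a full sheaf, and in rank one with $Y$ smooth this makes it a line bundle generated by global sections.

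\textbf{Step 2: intersection numbers via the I- and J-series.} The monomials $x^ay^b$ with $a+lb\equiv i \pmod r$ generate $M_\rho$. I would use the toric description of $Y$: its fan is the subdivision of the cone by the lattice points $v_t=(j_t,i_t)/r$ coming from the I- and J-series, and the $E_t$ are the rays for $t=1,\dots,n$. The line bundle $\mathcal{R}_\rho$ is toric with support function $\phi_i(v)=\min\{\langle (a,b),v\rangle : a+lb\equiv i\}$. Its degree on $E_t$ is the failure of linearity of $\phi_i$ across the ray $v_t$:
$$c_1(\mathcal{R}_\rho)\cdot E_t=\phi_i(v_{t-1})+\phi_i(v_{t+1})-b_t\phi_i(v_t).$$
From the recursions $i_{t-1}+i_{t+1}=b_ti_t$ and the same recursion for $j$, I would show the following. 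When $i=i_s$, the minimizing monomial of $\phi_{i_s}$ is $x^{i_s}$ on one side of $v_s$ and $y^{j_s}$ on the other. Hence $\phi_{i_s}$ is linear at every ray except $v_s$, where it bends by exactly $1$. This gives $c_1(\mathcal{R}_{\rho_{i_s}})\cdot E_t=\delta_{st}$.

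\textbf{Step 3: special versus non-special, and the correspondence.} Define $\rho$ to be special when $H^1(Y,\mathcal{R}_\rho^\vee)=0$, equivalently when $\mathcal{R}_\rho\cdot E_t\le 1$ summed appropriately and $M_\rho$ needs only $\mathrm{rank}+1$ generators. For $i=i_s$, Step 2 shows $M_\rho$ is minimally generated by the two monomials $x^{i_s}, y^{j_s}$, so these representations are special. For $i$ not in the I-series, a continued-fraction argument shows that the generators include a middle monomial $x^ay^b$ with $a,b>0$ that is strictly minimal on some cone. That produces either a degree $\ge 2$ on some $E_t$ or a degree-$1$ contribution on two curves, so $\rho$ is not special. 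The assignment $\rho_{i_s}\mapsto E_s$ is then the desired bijection between nontrivial special representations and exceptional curves.

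The main obstacle is the piecewise-linearity computation in Step 2. Identifying the minimizing monomial on each cone $\langle v_t,v_{t+1}\rangle$ requires the inequalities between consecutive I- and J-terms, namely $i_tj_{t+1}-i_{t+1}j_t=r$. I would establish these first by induction on $t$.
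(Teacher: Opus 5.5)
The paper does not prove this statement: it quotes it from Wunram \cite{wunram_reflexive_1988}, and takes the identification of special weights with the I-series from \cite{wunram_reflexive_1987}. So your toric argument has to stand on its own.

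\textbf{Steps 1 and 2.} These are essentially right, up to three corrections.
\begin{itemize}
\item Local freeness of $\pi^*M_\rho/\mathrm{torsion}$ is Esnault's theorem on the minimal resolution. It does not follow from having rank one on a smooth surface.
\item Since $\phi_i$ is a minimum, it is concave, so $c_1(\mathcal R_\rho)\cdot E_t=b_t\phi_i(v_t)-\phi_i(v_{t-1})-\phi_i(v_{t+1})$. Your formula has the opposite sign. With the correct sign, the bend of $\phi_{i_s}$ at $v_s$ is $(i_sj_{s+1}-j_si_{s+1})/r=1$, as you want.
\item Comparing $x^{i_s}$ with $y^{j_s}$ is not enough; you need that they generate $M_{\rho_{i_s}}$. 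That is, no monomial $x^ay^b$ of weight $i_s$ may have $a<i_s$ and $b<j_s$. Such a monomial would give a nonzero point $\frac1r(b,i_s-a)\in N\cap\sigma$, distinct from $v_s$ and coordinatewise dominated by it. This is impossible. Any lattice point of a smooth cone $\langle v_t,v_{t+1}\rangle$ is a non-negative integral combination of $v_t,v_{t+1}$, so it dominates one of them. That $v_u$ would then lie below $v_s$, but the $v_t$ are pairwise incomparable.
\end{itemize}

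\textbf{The gap is Step 3.} Writing ``special, equivalently $\mathcal R_\rho\cdot E_t\le 1$ summed appropriately and $\mu(M_\rho)=\mathrm{rk}+1$'' hides the real content of Wunram's theorem. That content is the link between the definition $H^1(Y,\mathcal R_\rho^\vee)=0$ and the combinatorics, and it needs proof in both directions.

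Moreover, your mechanism for the non-special weights fails. It is not true that some middle generator is strictly minimal on a cone. For $\frac1k(1,1)$ and $i=2$ (the paper's running example), $M_{\rho_2}=\langle x^2,xy,y^2\rangle$, but $xy$ is never strictly below both $x^2$ and $y^2$. There $\phi_2$ has only the two pieces $x^2,y^2$, with a single bend of size $c_1(\mathcal R_{\rho_2})\cdot E=2$.

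A correct argument has two parts.
\begin{itemize}
\item[(i)] Let $i=i_s$. By Step 2 the sections $x^{i_s},y^{j_s}$ generate $\mathcal R$ everywhere, giving an exact sequence $0\to\mathcal R^{-1}\to\mathcal O_Y^{2}\to\mathcal R\to 0$. The map $H^0(\mathcal O_Y^2)\to H^0(\mathcal R)=M_\rho$ is onto, and $H^1(\mathcal O_Y)=0$ because quotient singularities are rational. Hence $H^1(\mathcal R^\vee)=0$.
\item[(ii)] For arbitrary $i$, the end pieces of $\phi_i$ are $x^i$ (near $v_0$) and $y^{\bar j}$ (near $v_{n+1}$), where $\bar j\equiv l^{-1}i$. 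If the total degree $c_1(\mathcal R_\rho)\cdot\sum_tE_t$ is $0$, then $\phi_i$ is linear and $i=0$. If the total degree is $1$, with the bend at $v_s$, then $(i,-\bar j)\in M$ is $0$ on $v_s$ and $1$ on $v_{s+1}$. So it equals the dual-basis vector $(i_s,-j_s)$, and $i=i_s$.
\end{itemize}

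For every other $i$ there are two cases:
\begin{itemize}
\item some $E_t$ has degree $\ge 2$; or
\item two curves $E_t,E_{t'}$ have degree $1$ and every curve strictly between them has degree $0$.
\end{itemize}
Restrict $\mathcal R^\vee$ to $E_t$ in the first case, or to the chain $C=E_t+\dots+E_{t'}$ in the second. The restriction has Euler characteristic $1-d<0$, respectively $-2+1<0$, so its $H^1$ is nonzero. The map $H^1(Y,\mathcal R^\vee)\to H^1(C,\mathcal R^\vee|_C)$ is onto, because $H^2$ of coherent sheaves on $Y$ vanishes (the fibres of $\pi$ have dimension at most $1$ and the base is affine). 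Hence $\rho$ is not special.

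With (i) and (ii), the assignment $\rho_{i_s}\mapsto E_s$ is the required bijection.
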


  Given $\mathcal{X}$ an orbifold surface with quotient singularities of the form $\mathbb{C}^2/G$ with $G\subset GL_2(\mathbb{C})$ a small finite subgroup whose underlying singular coarse space $X$ admits a minimal resolution $Y$, one can consider the correspondence \[\begin{tikzcd}
\mathcal{Z} \arrow[r, "\mu"] \arrow[d, "\nu"'] & Y \arrow[d, "f"] \\
\mathcal{X} \arrow[r, "s"']                    & X               
\end{tikzcd}\]
Here, $\mathcal{Z}$ is the stack $(\mathcal{X}\times_X Y)_{red}$. It can be thought of as $Y$ with a root stack structure along the exceptional curves of the resolution. This correspondence induces a fully faithful Fourier-Mukai functor $$\Phi:D^b(Y)\rightarrow D^b(\mathcal{X})$$ by pulling back along $\mu$ and pushing down along $\nu$. Furthermore, there is a semiorthogonal decomposition $$D^b(\mathcal{X})=\langle \mathbf{e}_{q_1},\dots, \mathbf{e}_{q_N}, \Phi D^b(Y)\rangle $$where the $\mathbf{e}_{q_j}$ are themselves generated by sheaves with support at the orbifold points of $\mathcal{X}$.

When $\mathcal{X}=[\mathbb{C}^2/\frac{1}{r}(1,l)]$, the functor $\Phi$ sends $\mathcal{R}_{\rho_i}^\lor$ to $\mathcal{O}\otimes \rho_i$ (see \cite[Section 3.1]{gugiatti_full_2023}) and the remaining category is generated by the non-special skyscraper sheaves $\mathcal{O}_0\otimes \rho_d$. However, these do not always form a full exceptional collection: for example, in the case $\frac{1}{8}(1,3)$ there is a morphism from $\mathcal{O}_0\otimes \rho_2$ to $\mathcal{O}_0\otimes \rho_6$ and vice versa. To circumvent this issue, Ishii and Ueda  \cite[Theorem 1.4]{ishii_special_2013} defined an alternative collection of sheaves supported at the orbifold point defined as follows: for non-special $d$, the sheaf $N_d$ is associated to the equivariant module $$\mathbb{C}[u,v]/(u,v^{j_t})\otimes \rho_{d-(j_t-1)l},\qquad i_t <d< i_{t-1}$$

\begin{thm}[Ishii-Ueda]
Let $\mathcal{X}$ be an orbifold surface with isolated cyclic quotient orbifold points and let $Y$ be the minimal resolution of its coarse space. Then there is a semiorthogonal decomposition $$D^b(\mathcal{X})=\langle \mathbf{e}, \Phi D^b(Y)\rangle$$Moreover, the category $\mathbf{e}$ admits a full exceptional collection consisting of the sheaves $N_d$ which are local to the orbifold points of $\mathcal{X}$ and are indexed by the non-special representations.
\end{thm}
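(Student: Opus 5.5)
Since the statement is local to the orbifold points, I will first reduce to a single chart. The orbifold points $q_1,\dots,q_N$ are isolated, so $D^b(\mathcal{X})$ splits its torsion part over the points. Sheaves supported at distinct points have no Exts between them, and the analysis near each $q_m$ reduces to $[\mathbb{C}^2/\tfrac{1}{r}(1,l)]$ by flat base change along an étale chart, since the functor $\Phi$ is compatible with étale localisation. Globally, I take the semiorthogonal decomposition $D^b(\mathcal{X})=\langle \mathbf{e}, \Phi D^b(Y)\rangle$ from Ishii--Ueda's general theorem for small $G\subset GL_2$, stated earlier. Here $\Phi=\nu_*\mu^*$ is fully faithful because $\mu_*\mathcal{O}_{\mathcal{Z}}=\mathcal{O}_Y$ and the adjunction unit is an isomorphism (Ishii--Ueda). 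The orthogonal $\mathbf{e}=(\Phi D^b(Y))^\perp$ consists of objects supported at the orbifold points, because away from them $\nu$ and $\mu$ are isomorphisms and $\Phi$ is an equivalence there. So the remaining task is to identify $\mathbf{e}$ locally with the category generated by the $N_d$.

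Locally, I will work with $G$-equivariant $\mathbb{C}[u,v]$-modules and compute $\mathrm{RHom}(N_d,N_{d'})$ using the equivariant Koszul resolution of $N_d=\mathbb{C}[u,v]/(u,v^{j_t})\otimes\rho_{d-(j_t-1)l}$. This resolution is the two-step complex $0\to \mathcal{O}\otimes\rho_{a+1+j_tl}\to \mathcal{O}\otimes\rho_{a+1}\oplus\mathcal{O}\otimes\rho_{a+j_tl}\to\mathcal{O}\otimes\rho_a\to 0$, with $a=d-(j_t-1)l$ and weights taken so that $u$ has weight $1$ and $v$ has weight $l$. Taking $G$-invariants of $\mathrm{Hom}$ into $N_{d'}$ then reduces each Ext group to a count of weights of monomials $v^s$ with $s<j_{t'}$ in a fixed residue class mod $r$. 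The combinatorial input is Observation \ref{ob:observation} together with $j_t\equiv l^{-1}i_t \bmod r$. The monomials $v^0,\dots,v^{j_t-1}$ twisted by $\rho_a$ have weights $d-(j_t-1)l,\dots,d$, and in the ordering by $-l^{-1}(\cdot)$ these fill exactly the interval of non-special weights between consecutive special weights $i_t<d<i_{t-1}$. From this interval description I will show that $\mathrm{Ext}^\bullet(N_d,N_d)=\mathbb{C}$ and that $\mathrm{Ext}^\bullet(N_d,N_{d'})=0$ in one direction of the order. I will also show $\mathrm{RHom}(\Phi F,N_d)=0$: by adjunction this equals $\mathrm{RHom}(F,\Phi^!N_d)$, and $\Phi^! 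N_d=0$ because the special representations $\rho_{i_t}$ never occur in the relevant invariant parts.

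For fullness, the skyscraper sheaves $\mathcal{O}_0\otimes\rho_d$, over all $d$, together with the $\mathcal{O}\otimes\rho_d$ generate the local category. The $\mathcal{O}\otimes\rho_{i_t}$ lie in $\Phi D^b(Y)$, and the non-special $\mathcal{O}_0\otimes\rho_d$ can be obtained from $N_d$ and special objects by induction on the filtration $v^{j}$ of $N_d$. Hence $\langle N_d,\Phi D^b(Y)\rangle$ contains all generators.

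I expect the main obstacle to be the Ext-vanishing combinatorics, in particular the cross terms between $N_d$ and $N_{d'}$ lying in different intervals $(i_t,i_{t-1})$. The $\tfrac{1}{8}(1,3)$ example shows that naive skyscrapers fail exactly there. I would handle these by tracking the weights in $J$-series order, where by Observation \ref{ob:observation} the intervals become consecutive blocks.
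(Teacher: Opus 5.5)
\paragraph{Verdict.} The paper contains no proof of this statement; it quotes it from \cite[Theorem 1.4]{ishii_special_2013}. Later arguments borrow two pieces of that proof: \cite[Lemma 3.9, Corollary 3.8]{ishii_special_2013} (for $0<a<j_t$ the residue of $d-al$ is non-special and larger than $d$), and the induction of \cite[Proposition 3.5]{ishii_special_2013}. Like the paper, you import the semiorthogonal decomposition and the full faithfulness of $\Phi$. (That full faithfulness does not follow from $\mu_*\mathcal{O}_{\mathcal{Z}}=\mathcal{O}_Y$, which only makes $\mu^*$ fully faithful.) What you actually argue is the refinement of $\mathbf{e}$ by the $N_d$, and there the combinatorial claim the argument rests on is false.

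\paragraph{The interval claim.} Under $w\mapsto -l^{-1}w$, the weights $d-al$ ($0\le a<j_t$) of $N_d$ become a run of $j_t$ consecutive residues starting at $-l^{-1}d$. This run is neither the set of non-special weights in $(i_t,i_{t-1})$ nor a whole gap between images of consecutive special weights, and runs for different $d$ overlap.

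Take your own example $\tfrac18(1,3)$, where $I=\{8,3,1,0\}$, $J=\{0,1,3,8\}$ and $-l^{-1}\equiv 5$. For $d=2$ one has $j_2=3$, and $N_2=\mathbb{C}[u,v]/(u,v^3)\otimes\rho_4$ has weights $4,7,2$, while $(i_2,i_1)=(1,3)$ contains only $2$. After multiplying by $5$ the weights become $\{2,3,4\}$. This is a proper part of the gap $\{1,2,3,4\}$ between the images $0,5$ of the specials $0,1$, and it contains the images of $N_7=\mathcal{O}_0\otimes\rho_7$ and $N_4=\mathcal{O}_0\otimes\rho_4$.

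\paragraph{Consequences for your Ext computations.} The Hom complexes from your Koszul resolutions do not vanish termwise. For instance, $\mathrm{Ext}^\bullet(N_4,N_2)=0$ only because its degree $0$ and degree $1$ terms are both $\mathbb{C}$ and the differential (precomposition with $v$) is an isomorphism. So no ``count of weights'' gives the vanishing.

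The $J$-order also does not organise the collection. We have $\mathrm{Ext}^1(N_2,N_5)=\mathbb{C}^2$ and $\mathrm{Ext}^2(N_2,N_6)=\mathbb{C}$, while $N_5,N_6$ have images $1$ and $6$ on either side of $\{2,3,4\}$. Hence neither direction of that order is exceptional; the correct statement is $\mathrm{Ext}^\bullet(N_d,N_{d'})=0$ for $d'<d$ in the natural order.

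The missing input is exactly the Ishii--Ueda lemma above together with \cite[Proposition 3.10]{ishii_special_2013}. Even $\mathrm{Ext}^2(N_d,N_d)=0$ needs $j_t+k<r$ with $k\equiv l^{-1}$, which is Proposition~\ref{propappendix:overlap}. That same lemma, not the interval picture, shows all weights of $N_d$ are non-special. This gives $\Phi^!N_d=0$ only together with the fact, which you should state, that locally $\Phi D^b(Y)$ is generated by the $\mathcal{O}\otimes\rho_i$ with $i$ special.

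\paragraph{Fullness.} Your fullness step does not close either. The $v$-adic filtration of $N_d$ has only non-special composition factors, so no special objects enter. Descending induction on $d$ via the lemma gives only $\langle N_d\rangle=\langle \mathcal{O}_0\otimes\rho_d : d \text{ non-special}\rangle$; this is the induction the paper cites in the proof of Proposition~\ref{prop:intersections}. Among the generators you list, the special skyscrapers and the $\mathcal{O}\otimes\rho_d$ with $d$ non-special remain unaccounted for. So ``hence $\langle N_d,\Phi D^b(Y)\rangle$ contains all generators'' does not follow.

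What is needed is that $(\Phi D^b(Y))^\perp$ is generated by the non-special skyscrapers, and this is not a formal support argument. Already for $\tfrac1k(1,1)$, the skyscraper $e_0=\mathcal{O}_0\otimes\rho_0$ lies in neither piece:
\begin{itemize}
\item it is not in $\mathbf{e}$, since it receives a nonzero map from $\mathcal{O}=\Phi\mathcal{O}_Y$;
\item it is not in $\Phi D^b(Y)$, since it lies in $\ker\Psi$ and is therefore left orthogonal to $\Phi D^b(Y)$.
\end{itemize}
One must therefore show that the projection of $e_0$ to $\mathbf{e}$ is built from non-special skyscrapers. That generation statement is the genuinely nontrivial part of Ishii--Ueda's theorem. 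The paper quotes it (``the remaining category is generated by the non-special skyscraper sheaves''), and you would need either to prove it or to cite it.
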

\subsubsection{Restricting to the boundary}
  From now on, assume $\mathcal{X}$ is an effective orbifold surface with cyclic quotient singularities, admitting an anticanonical cycle $\mathcal{D}$. 

  \begin{prop}\label{prop:restobdry} Let $A,B$ be exceptional objects in $D^b(\mathcal{X})$ with $\mathrm{Ext}_\mathcal{X}^\bullet(B,A)=0$. The restriction functor $\iota^*_{\mathcal{D}}:D^b(\mathcal{X})\rightarrow \mathrm{Perf}\,(\mathcal{D})$ induces an isomorphism $$\mathrm{Ext}_\mathcal{X}^\bullet(A,B)\simeq \mathrm{Ext}_\mathcal{D}^\bullet(\iota^*_{\mathcal{D}}A,\iota^*_{\mathcal{D}}B)$$
\end{prop}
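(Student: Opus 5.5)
Our approach is the standard adjunction argument, as in \cite{hacking_homological_2023}, adapted to the stacky setting. Since $\mathcal{D}$ is an effective Cartier divisor on the smooth Deligne--Mumford stack $\mathcal{X}$ (it is anticanonical, so $\mathcal{O}_\mathcal{X}(-\mathcal{D})\simeq\omega_\mathcal{X}$), we have the short exact sequence
$$0\rightarrow \mathcal{O}_\mathcal{X}(-\mathcal{D})\rightarrow \mathcal{O}_\mathcal{X}\rightarrow \iota_{\mathcal{D}*}\mathcal{O}_\mathcal{D}\rightarrow 0.$$
Tensoring with $B$ gives an exact triangle $B\otimes\omega_\mathcal{X}\rightarrow B\rightarrow \iota_{\mathcal{D}*}\iota_\mathcal{D}^*B$. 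Because $\iota_\mathcal{D}^*$ is left adjoint to $\iota_{\mathcal{D}*}$ (on the level of derived categories of the stacks, using that $\iota_\mathcal{D}$ is a closed embedding of tame DM stacks), we get $\mathrm{Ext}_\mathcal{D}^\bullet(\iota^*_\mathcal{D}A,\iota^*_\mathcal{D}B)\simeq\mathrm{Ext}^\bullet_\mathcal{X}(A,\iota_{\mathcal{D}*}\iota^*_\mathcal{D}B)$. Applying $\mathrm{Hom}_\mathcal{X}(A,-)$ to the triangle then yields a long exact sequence
$$\cdots\rightarrow\mathrm{Ext}^\bullet_\mathcal{X}(A,B\otimes\omega_\mathcal{X})\rightarrow \mathrm{Ext}^\bullet_\mathcal{X}(A,B)\rightarrow \mathrm{Ext}^\bullet_\mathcal{D}(\iota^*_\mathcal{D}A,\iota^*_\mathcal{D}B)\rightarrow\mathrm{Ext}^{\bullet+1}_\mathcal{X}(A,B\otimes\omega_\mathcal{X})\rightarrow\cdots$$
One must also check that the middle map is exactly the one induced by the functor $\iota_\mathcal{D}^*$. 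This is the usual naturality: the unit of the adjunction is the restriction map $B\rightarrow\iota_{\mathcal{D}*}\iota^*_\mathcal{D}B$.

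It therefore suffices to show that $\mathrm{Ext}^\bullet_\mathcal{X}(A,B\otimes\omega_\mathcal{X})=0$. For this we use Serre duality on the smooth proper two-dimensional orbifold $\mathcal{X}$, whose Serre functor is $S_\mathcal{X}=-\otimes\omega_\mathcal{X}[2]$; for tame smooth proper DM stacks with projective coarse space this is standard. It gives
$$\mathrm{Ext}^i_\mathcal{X}(A,B\otimes\omega_\mathcal{X})\simeq\mathrm{Ext}^{i}_\mathcal{X}(A,S_\mathcal{X}B[-2])\simeq \mathrm{Ext}^{2-i}_\mathcal{X}(B,A)^\vee,$$
which vanishes by the hypothesis $\mathrm{Ext}^\bullet_\mathcal{X}(B,A)=0$. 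The long exact sequence then collapses to the desired isomorphism. When $A=B$, the hypothesis is not literally satisfied unless one reads the statement as applying to distinct objects of an exceptional collection. For $A=B$ exceptional, the same argument needs $\mathrm{Ext}^\bullet(A,A\otimes\omega)\simeq\mathrm{Ext}^{2-\bullet}(A,A)^\vee=\mathbb{C}[-2]$. Here we would instead observe that the connecting map is dual to the trace. Since we only apply the statement to pairs in a semiorthogonal/exceptional sequence, we treat only the case stated.

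The main obstacle is not the homological algebra but justifying the foundational inputs in the stacky setting. These are that $\mathcal{D}$ is Cartier with $\mathcal{O}(\mathcal{D})\simeq\omega_\mathcal{X}^{-1}$ on the stack, not merely on the coarse space; that $\iota^*_\mathcal{D}$ lands in $\mathrm{Perf}(\mathcal{D})$; and that Serre duality holds in the stated form. The first follows because $\mathcal{X}$ is smooth as a stack, so every Weil divisor is Cartier, and $\mathcal{D}$ is anticanonical by Definition \ref{def:effectivelogcy} (the contraction is crepant along $D$ in the stacky sense, since the orbifold canonical class pulls back to $K_Y+\sum a_iE_i$ with $D$ mapping to $\mathcal{D}$). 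The second holds because $\mathcal{X}$ is smooth, so every object of $D^b(\mathcal{X})$ is perfect and pullback preserves perfection. The third we would quote from the literature on Serre duality for tame DM stacks.
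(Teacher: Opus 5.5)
Your proposal is correct and is the paper's proof. Like the paper, you use the triangle $B\otimes\omega_{\mathcal{X}}\to B\to\iota_{\mathcal{D}*}\iota^*_{\mathcal{D}}B$ coming from $\mathcal{O}(-\mathcal{D})\simeq\omega_{\mathcal{X}}$, the $\iota^*_{\mathcal{D}}\dashv\iota_{\mathcal{D}*}$ adjunction, and Serre duality to kill $\mathrm{Ext}^\bullet_{\mathcal{X}}(A,B\otimes\omega_{\mathcal{X}})$.

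You can drop the $A=B$ aside. The hypothesis already excludes that case, and there the map is only injective: the long exact sequence produces an extra one-dimensional $\mathrm{Ext}^1_{\mathcal{D}}(\iota^*_{\mathcal{D}}A,\iota^*_{\mathcal{D}}A)$, so no trace argument rescues it. This is why the paper works with directed models.
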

\begin{proof}
The proof is the same as in \cite[Lemma 4.2]{hacking_homological_2023}, using the exact triangle $$C_{\iota_\mathcal{D}^*}\rightarrow \mathrm{id}\rightarrow \iota_{\mathcal{D}*}\iota^*_{\mathcal{D}}$$
with $C_{\iota^*_{\mathcal{D}}}\simeq -\otimes \mathcal{I}_{\mathcal{D}}\simeq -\otimes \omega_{\mathcal{X}}$ as in \cite[Lemma 2.8]{kuznetsov_serre_2021}. Namely, the map $$\mathrm{Ext}^\bullet_\mathcal{X}(A,B)\rightarrow \mathrm{Ext}^\bullet_{\mathcal{D}}(\iota^*A,\iota^*B)$$ is identified with the composition with the counit $B\rightarrow \iota_{\mathcal{D}*}\iota^*_\mathcal{D}B$, giving a long exact sequence $$\dots \rightarrow \underbrace{\mathrm{Ext}^\bullet_\mathcal{X}(A,B\otimes \omega_\mathcal{X})}_{0 \text{ by Serre duality}}\rightarrow \mathrm{Ext}^\bullet_\mathcal{X}(A,B)\rightarrow \mathrm{Ext}^\bullet_{\mathcal{X}}(A,\iota_{\mathcal{D}*}\iota^*_\mathcal{D}B)\rightarrow \dots$$
\end{proof}

  \subsection{Mirror symmetry for nodal stacky cycles and the Lekili-Polishchuk equivalence}
  \subsubsection{The derived category of a nodal stacky curve}
  We recall briefly the equivalence from \cite{lekili_auslander_2018}: consider a cycle $\mathcal{D}=\cup \mathcal{D}_m$ of weighted projective lines, with each $\mathcal{D}_m\simeq \mathbb{P}^1_{k_m, k_{m+1}}$ such that $\mathcal{D}_m\cap \mathcal{D}_{m+1}=q_m$ is an orbifold point, glued so that locally near $q_m$, $\mathcal{D}$ is identified with the quotient of $\{uv=0\}$ by the cyclic group action of $\mu_{r_m}$ such that $\zeta\cdot (u,v)=(\zeta^{k_m}u, \zeta v)$. This is analytically equivalent to the quotient by the action $\zeta \cdot (u,v)=(\zeta u, \zeta^{l_m}v)$ where $k_ml_m\equiv 1 \mod r_m$. 
  
  To this cycle there are three associated categories: $\mathrm{Perf}\, \mathcal{D}$ (which is proper but not smooth), $D^b(\mathcal{D})$ (which is smooth but not proper) and finally $D^b(\mathcal{A}_\mathcal{D}-\mathrm{mod})$, the derived category of modules over the Auslander order, which is both smooth and proper and is a categorical resolution of singularities in the sense of Kuznetsov. In other words, there is a diagram of functors \[\begin{tikzcd}
D^b(\mathcal{A}_\mathcal{D}) \arrow[d, "\pi_*"'] & \mathrm{Perf}(\mathcal{D}) \arrow[l, "\pi^*"', hook] \arrow[ld, hook] \\
D^b(\mathcal{D})                                 &                                                                      
\end{tikzcd}\]
with $\pi^*, \pi_*$ satisfying an adjoint property.

  The normalization of $\mathcal{D}$ consists of a disjoint union $\coprod \pi_m:\coprod \tilde{\mathcal{D}}_m\rightarrow \mathcal{D}$ with $q_{m,+}\in \tilde{\mathcal{D}}_m$ projecting to $q_m$ and $q_{m,-}\in \tilde{\mathcal{D}}_m$ projecting to $q_{m-1}$. The Auslander order is defined as $$\mathcal{A}_\mathcal{D}:=\begin{pmatrix}
  \tilde{\mathcal{O}} & \mathcal{I} \\ \tilde{\mathcal{O}} & \mathcal{O}_\mathcal{D}
  \end{pmatrix}$$ 
  There are $\mathcal{A}_\mathcal{D}$-modules $$\mathcal{P}_m(j_1,j_2):=\begin{pmatrix}
    \pi_{m*} \mathcal{O}(j_1[q_{m,-}]+j_2[q_{m,+}])\\
    \pi_{m*} \mathcal{O}(j_1[q_{m,-}]+j_2[q_{m,+}])
  \end{pmatrix}$$ which project to pushforwards of line bundles from $\mathcal{D}_m$ in $D^b(\mathcal{D})$. There is moreover, for each node $q_m$, a simple module $$\mathcal{S}_{q_m}:=\begin{pmatrix}
    0 \\
    \mathcal{O}_{q_m}
  \end{pmatrix}$$
  Each stacky node $q_m$ with $\mathrm{Aut}(q_m)=\mu_{r_m}$ (where $r_m=r_{m,+}=r_{m+1,-}$) is such that $\mu_{r_m}$ acts on $\mathcal{O}(-q_{m+1,-})$ at $q_{m+1,-}$ via multiplication by $\zeta$, whereas the action on $\mathcal{O}(-q_{m,+})$ at $q_{m,+}$ is through the character $\zeta\mapsto \zeta^{k_m}$. Moreover, for each character of $\mu_{r_m}$ there is a twist operation for modules supported at the stacky node. In \cite{lekili_auslander_2018}, this twisting operation is denoted by $\mathcal{M}\mapsto \mathcal{M}\{c\}$. We will sometimes denote it by $\mathcal{M}\mapsto \mathcal{M}\otimes \rho^{(k_m,1)}_c$. 
  \begin{rem}
  If one instead uses the presentation of a neighbourhood of the node via the action $\zeta\cdot (u,v)=(\zeta u, \zeta^{l_m}v)$, the twist operation is equivalent to $\mathcal{M}\mapsto \mathcal{M}\otimes \rho^{(1,l_m)}_{l_mc}$: $$-\{c\}=-\otimes \rho^{(k_m,1)}_c=-\otimes \rho^{(1,l_m)}_{l_mc}$$
  \end{rem}
  
  Moreover, on $\tilde{\mathcal{D}}_i$ there are morphisms \[\begin{tikzcd}[column sep=5em]
{\mathcal{O}(-r_{m-1}q_{m,-})} \arrow[r, "x_m(-(r_{m-1}-1))"] \arrow[d, "="'] & {\mathcal{O}(-(r_{m-1}-1)q_{m,-})} \arrow[r] & \dots \arrow[r, "x_m(-1)"]  & {\mathcal{O}(-q_{m,-})} \arrow[r, "x_m(0)"]  & \mathcal{O} \arrow[d, "="] \\
{\mathcal{O}(-r_mq_{m,+})} \arrow[r, "y_m(-(r_{m}-1))"']                      & {\mathcal{O}(-(r_i-1)q_{i,+})} \arrow[r]     & \dots \arrow[r, "y_m(-1)"'] & {\mathcal{O}(-q_{m,+})} \arrow[r, "y_m(0)"'] & \mathcal{O}               
\end{tikzcd}\]
  It is shown in \cite[Theorem 1.2.3]{lekili_auslander_2018} that the derived category of modules over the Auslander has a generating strong exceptional collection consisting of the modules \begin{itemize}
  \item $\mathcal{S}_{q_m}\{c\}[-1], \qquad c=0,1,\dots, r_m-1$
  \item $\mathcal{P}_m(j, -1), \qquad j=0,1,2,\dots, r_{m-1}$
  \item  $\mathcal{P}_i(0,j-1), \qquad j=0,1,2,\dots, r_m$.
  \end{itemize} 
  Moreover, the authors show that the functor $\pi_*:D^b(\mathcal{A}_\mathcal{D}-\mathrm{mod})\rightarrow D^b(\mathcal{D})$ is a Serre quotient by the category generated by a collection of exceptional sheaves $\mathcal{E}^\pm_m(j)$.

  \subsubsection{The mirror Riemann surface}

  To a nodal stacky cycle $\mathcal{D}$ one can associate a Riemann surface obtained as follows: for each component $\mathcal{D}_m$, take a cylinder $C_m$. Moreover, for each stacky node $q_m\in \mathcal{D}_m\cap \mathcal{D}_{m+1}$, attach $r_m$ $1$-handles $\{H_{m,a}^-\}_{a=0}^{r_m-1}$ from $\partial_{-}C_m$ to $\partial_{+} C_{m+1}$ whose attaching $S^0$ consists of the two points $\frac{a}{r_m}\in \mathbb{R}/\mathbb{Z}\simeq \partial_{-}C_m,-\frac{k_m a}{r_m}\in \mathbb{R}/\mathbb{Z}\simeq \partial_{+}C_{m+1}$. The resulting surface we denote by $\Sigma^0$. Each $1$-handle is furthermore equipped with two stops, with the total stopping set denoted by $\Lambda$. If one only attaches the $1$-handles $H^-_{r_m-a}$ with $a$ being a special weight for $\mathbb{C}^2/\frac{1}{r_m}(1,l_m)$, the resulting Riemann surface is a torus $T^0$ which we call the \textit{special torus}.
  
  \begin{defn}[Special torus] \label{def:specialtorus} The torus obtained by taking the cylinders $C_m$ and attaching only the special $1-$handles $H^-_{r_m-i}$ with $i\in I(r_m,l_m)$ is denoted $T^0$. After attaching the non-special $1$-handles, one obtains the Riemann surface with boundary $\Sigma^0$.
    
  \end{defn}
  Recall that the partially wrapped Fukaya category $\mathcal{W}(\Sigma^0, \Lambda)$ has objects arcs and circles and whose morphism spaces are intersection points, as well as boundary paths following the orientation of $\partial\Sigma^0$. It is shown in \cite{lekili_auslander_2018} that $\mathcal{W}(\Sigma^0;\Lambda^0)$ admits generators in the form of a strong exceptional collection consisting of objects \begin{itemize}
    \item $S_{m,j}, j=0,1,\dots, r_{m}-1$. These are the cocores of the handles $H^-_{m,j}$.
    \item $P^+_{m,j}, j=0,1,\dots, r_m$. These correspond to $\mathcal{P}_m(0,j-1)$.
     \item $P^-_{m,j}, j=0,1,\dots, r_{m-1}$. These correspond to $\mathcal{P}_m(j,-1)$.
  \end{itemize}
  By general results about stop removal (see for example \cite{ganatra_sectorial_2023}), the wrapped Fukaya category $\mathcal{W}(\Sigma^0)$ is equivalent to the quotient of $\mathcal{W}(\Sigma^0; \Lambda)$ by the collection of linking disks $E^\pm_{m,j}$.

  \subsubsection{Mirror symmetry at the level of the boundary}

  \begin{thm}[Lekili-Polishchuk] There is an equivalence of $A_\infty$ categories $$D^b(\mathcal{A}_\mathcal{D}-\mathrm{mod})\simeq \mathcal{W}(\Sigma^0; \Lambda)$$ which identifies the strong exceptional collections \begin{gather*}
    \mathcal{S}_{q_m}\{-k_mj\}[-1]\leftrightarrow S_{m,j} , \qquad
    \mathcal{P}_m(j,-1)\leftrightarrow P^-_{m,j} , \qquad
    \mathcal{P}_m(0,j-1)\leftrightarrow P^+_{m,j}
  \end{gather*}
and moreover identifies the exceptional objects $$\mathcal{E}^+_m(j)\leftrightarrow E^+_{m,j}[-1], \qquad \mathcal{E}^-_m(j)\leftrightarrow E^-_{m,j-1}[-1]$$ which induces an equivalence $$\varphi:D^b(\mathcal{D})\simeq \mathcal{W}(\Sigma^0)$$

Furthermore, both of these equivalences identify the fully faithful images of $\mathrm{Perf}(\mathcal{D})$ in $D^b(\mathcal{A}_\mathcal{D}-\mathrm{mod})$ (resp. $D^b(\mathcal{D})$) with the fully faithful image of $\mathcal{F}(\Sigma^0)$ in $\mathcal{W}(\Sigma^0;\Lambda)$ (resp. $\mathcal{W}(\Sigma^0)$).

  \end{thm}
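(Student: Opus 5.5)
The equivalence is imported from \cite{lekili_auslander_2018}. I would reprove it in three stages: match generators and their algebras, pass to the stop-removed quotients, and then identify the perfect/compact subcategories through an intrinsic characterization.

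\emph{Stage 1: the resolved categories.} I would compare the two strong exceptional collections listed above. On the B-side, the Hom spaces between $\mathcal{S}_{q_m}\{c\}[-1]$, $\mathcal{P}_m(j,-1)$ and $\mathcal{P}_m(0,j-1)$ are computed directly.
\begin{itemize}
\item Between projectives $\mathcal{P}_m$ they are sections of line bundles on $\tilde{\mathcal{D}}_m\simeq\mathbb{P}^1_{k_m,k_{m+1}}$; these are generated by the maps $x_m(\cdot)$, $y_m(\cdot)$ and the two coordinate sections.
\item Between a projective and a simple at a node, they are one-dimensional exactly when the $\mu_{r_m}$-character of the fiber matches the twist $\{c\}$.
\end{itemize}
This gives a quiver with relations, concentrated in degree $0$.

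On the A-side, morphisms in $\mathcal{W}(\Sigma^0;\Lambda)$ between the arcs $S_{m,j}$, $P^\pm_{m,j}$ are spanned by interior intersection points and by boundary Reeb chords not crossing stops. Composition is concatenation of chords plus counts of immersed polygons, which are determined combinatorially on a surface. I would check that the handle attaching data $\frac{a}{r_m}\mapsto -\frac{k_m a}{r_m}$ reproduces exactly the character rule at the node. This matching is why the identification reads $\mathcal{S}_{q_m}\{-k_mj\}[-1]\leftrightarrow S_{m,j}$.

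Since both collections are strong exceptional with all morphisms in degree $0$, each $A_\infty$ endomorphism algebra is formal. The equivalence of the two graded quivers with relations then gives $D^b(\mathcal{A}_\mathcal{D}\text{-mod})\simeq\mathcal{W}(\Sigma^0;\Lambda)$ by Morita theory. One still has to grade the arcs on the A-side (a line field on $\Sigma^0$) so that all morphisms sit in degree $0$. Choosing that line field compatibly with the handle attachments is the first technical point.

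\emph{Stage 2: localization.} I would compute the image of $\mathcal{E}^\pm_m(j)$ in terms of the exceptional collection, as iterated cones of projectives and simples. I would then match these with the linking disks $E^\pm_{m,j}$ of the stops, which are likewise cones of adjacent arcs in $\mathcal{W}(\Sigma^0;\Lambda)$, with the stated index shift and $[-1]$. Both $\pi_*$ and stop removal \cite{ganatra_sectorial_2023} are Verdier quotients by the subcategories these objects generate. Hence the equivalence from Stage 1 descends to $\varphi:D^b(\mathcal{D})\simeq\mathcal{W}(\Sigma^0)$.

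\emph{Stage 3: perfect and compact subcategories.} I would use intrinsic characterizations rather than chasing objects.
\begin{itemize}
\item In $D^b(\mathcal{D})$, I would characterize $\mathrm{Perf}(\mathcal{D})$ as the objects $F$ with $\bigoplus_i\mathrm{Hom}^i(G,F)$ finite-dimensional for all $G$, i.e. the homologically finite objects.
\item In $\mathcal{W}(\Sigma^0)$, I would use the corresponding statement that the compact Fukaya category $\mathcal{F}(\Sigma^0)$ is the subcategory of objects with finite-dimensional morphisms from every object, the wrapped arcs being the infinite ones.
\end{itemize}
An equivalence preserves these subcategories, and the statement for the resolutions follows via $\pi^*$.

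I expect the main obstacle to be the last A-side characterization. Showing that every proper module over $\mathcal{W}(\Sigma^0)$ is represented by a compact Lagrangian with local system needs the classification of objects in surface Fukaya categories. Alternatively, I would verify directly that cones of arcs which are finite against everything are supported on closed curves.
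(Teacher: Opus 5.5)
The paper does not prove this statement. It quotes it from \cite{lekili_auslander_2018}, adding only the remark that $\varphi$ is obtained by inverting the stop-removal equivalence $\mathcal{W}(\Sigma^0;\Lambda)/E\to\mathcal{W}(\Sigma^0)$. Your Stages 1 and 2 have the right architecture and use the same ingredients the paper recalls from there: the two generating strong exceptional collections, formality forced by degree-$0$ concentration, $\pi_*$ as the Verdier quotient by $\langle\mathcal{E}^\pm_m(j)\rangle$, and stop removal as the quotient by linking disks \cite{ganatra_sectorial_2023}.

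One input is missing from Stage 1. Morita theory needs the arcs $S_{m,j},P^\pm_{m,j}$ to generate $\mathcal{W}(\Sigma^0;\Lambda)$; matching endomorphism algebras alone only gives a fully faithful functor. Also, the line field is not yours to choose. It is the horizontal one fixed in the setup, and you must check that the arcs admit gradings for it with all morphisms in degree $0$.

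The genuine gap is Stage 3, for two reasons.

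\emph{The key A-side claim is not proved.} The whole content of Stage 3 is that the homologically finite objects of $\mathcal{W}(\Sigma^0)$ are exactly those of $\mathcal{F}(\Sigma^0)$, and you leave this open.

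\emph{The step via $\pi^*$ is not formal.} Write $\varphi_{\mathcal{A}}$ for your Stage 1 equivalence. Knowing that $\varphi_{\mathcal{A}}(\pi^*P)$ becomes a closed curve after stop removal does not show that it is one in $\mathcal{W}(\Sigma^0;\Lambda)$. The two may differ by pieces built from linking disks.

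Both problems go away if you do Stage 3 upstairs, using left orthogonals.
\begin{itemize}
\item \emph{B-side.} You have ${}^\perp\langle\mathcal{E}^\pm_m(j)\rangle=\pi^*\mathrm{Perf}(\mathcal{D})$. For $M$ in this orthogonal, $\mathrm{Hom}(M,N)\cong\mathrm{Hom}(\pi_*M,\pi_*N)$ for all $N$. Since $\pi_*$ is essentially surjective and $D^b(\mathcal{A}_\mathcal{D}\text{-mod})$ is proper, Orlov's criterion makes $\pi_*M$ perfect. The counit $\pi^*\pi_*M\to M$ then has cone in $\ker\pi_*\cap{}^\perp\ker\pi_*=0$.
\item \emph{Transport.} Your Stage 1--2 equivalence carries this orthogonal onto ${}^\perp\langle E^\pm_{m,j}\rangle$ automatically.
\item \emph{A-side.} You are now in the stopped category, where the Haiden--Katzarkov--Kontsevich classification of indecomposables by curves with local systems applies. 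A nontrivial arc has a nonzero Reeb-chord morphism to the linking disk of a stop adjacent to one of its endpoints. Closed curves in the interior are disjoint from every linking disk. So ${}^\perp\langle E^\pm_{m,j}\rangle$ is generated by closed curves with local systems.
\item \emph{Descent.} The statement for $D^b(\mathcal{D})$ and $\mathcal{W}(\Sigma^0)$ follows by applying the two quotient functors, which are fully faithful on these orthogonals.
\end{itemize}

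Two smaller remarks.
\begin{itemize}
\item Your B-side criterion puts $F$ in the second slot. That is fine because $\mathcal{D}$ is Gorenstein, but Orlov's criterion is usually stated with $F$ in the first slot.
\item A generator-chasing alternative would need care. Once the nodes are stacky, $\mathcal{O}_\mathcal{D}$ and skyscrapers at smooth points do not split-generate $\mathrm{Perf}(\mathcal{D})$, because their derived fibres at $q_m$ carry only the trivial character. You would also have to compute the mirrors of orbifold line bundles.
\end{itemize}
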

  \begin{rem}
  To get the functor $\varphi$, one needs to invert the $A_\infty$ equivalence $\mathcal{W}(\Sigma^0;\Lambda)/E\rightarrow \mathcal{W}(\Sigma^0)$, as in \cite[Theorem 2.9]{seidel_fukaya_2008} and \cite[Theorem 8.6]{fukaya_floer_nodate}.
  \end{rem}
  \begin{figure}[H]
    \input{./TikzFiles/LekiliPolishchukGenerators.tikz}
            \caption{Example when $\mathcal{D}_{m+1}\simeq \mathbb{P}^1_{5,5}$ with $k_m=2, k_{m+1}=1$. }

  \end{figure}
  \begin{rem}
  The labelling for the handles $H^-_{m,j}$ is the same as the one for $S_{m,j}$ namely $m$ is the index corresponding to the point on the left lying on $\partial_+ C_m$.
  It will be convenient to denote the same handle using the index on the right, lying on $\partial_- C_{m+1}$. We set $H^+_{m, -ka}:=H^-_{m,a}$.
  \end{rem}
  \begin{rem}
  The grading data for the Riemann surface $\Sigma^0$ is induced by the horizontal line field.
  \end{rem}
  \begin{rem}[Incorporating all line bundles on the $\mathcal{D}_m$]\label{rem:alllinebundles}
  In this equivalence, the skyscraper sheaf at a smooth point on $\mathcal{D}_m$ is identified with the meridian of the cylinder $C_m$ (with a local system). To see this, note that there is an exact sequence $\mathcal{P}_{m}(0,-1)\rightarrow \mathcal{P}_m(r_{m-1},-1)\rightarrow \mathcal{O}_{pt}$. Applying $\varphi$, the skyscraper sheaf becomes a twisted complex built out of $P^-_{m,0}$ and $P^-_{m, r_{m-1}}$ which is quasi-isomorphic to the meridian of $C_m$. Hence, while a priori one uses only the generators $P^-_{m,j}, 0\leq j \leq r_{m-1}$ mirror to $\mathcal{P}_{m}(j,-1)$, one can also incorporate any $j$: on the B-side, this corresponds to twisting $\mathcal{P}_{m}(j,-1)$ (with $0\leq j \leq r_{m-1})$ multiple times by $\mathcal{O}_{pt}$, whereas on the A-side, this corresponds to applying multiple positive or negative Dehn twists on $P^-_{m,j}$ along the meridian of $C_m$. The same holds for the Lagrangians $P^+_{m,j}$ and the corresponding $\mathcal{P}_m(0,j-1)$.
  \end{rem}
  \subsection{The special torus and a fully faithful embedding}
  The functor $\Phi:D^b(Y)\rightarrow D^b(\mathcal{X})$ is a priori defined on the level of the surfaces $Y$ and $\mathcal{D}$. We show that there is a boundary-level analogue of $\Phi$.
  \subsubsection{A fully faithful embedding on the B-side}
  \begin{prop}\label{prop:fullyfaithfulbdry}
    There is a fully faithful functor $\Phi^\partial$ fitting into a commutative diagram
 \[\begin{tikzcd}
D^b(Y) \arrow[r, hook, "\Phi"] \arrow[d, "\iota_D^*"'] & D^b(\mathcal{X}) \arrow[d, "\iota_{\mathcal{D}}^*"] \\
\mathrm{Perf}(D) \arrow[r, hook, "\Phi^\partial"]      & \mathrm{Perf}(\mathcal{D})                         
\end{tikzcd}\]
\end{prop}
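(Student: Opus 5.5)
The plan is to construct $\Phi^\partial$ as a Fourier–Mukai functor from the restriction of the Ishii–Ueda correspondence to the boundary. Restrict the square $\mathcal{Z}\to Y$, $\mathcal{Z}\to\mathcal{X}$ to the preimage of the anticanonical cycles. Concretely, let $\mathcal{Z}_D:=(\mathcal{D}\times_X D)_{red}$, which maps by $\mu_D$ to $D$ and by $\nu_D$ to $\mathcal{D}$. Over a smooth point of $\mathcal{D}$ this is an isomorphism. Near an orbifold node $q$, it is the chain $\mathcal{E}$ of exceptional curves with root-stack structure, glued to the two branches of $\mathcal{D}$ through $q$. Define $\Phi^\partial:=\nu_{D*}\mu_D^*$.

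Commutativity follows from base change. Since $\mu$ is flat, or at least Tor-independent against $\iota_D$ (on the root stack, $D$ pulls back to a Cartier divisor, the preimage of the anticanonical divisor), one has $\iota_{\mathcal{Z}_D}^*\mu^*=\mu_D^*\iota_D^*$. For $\nu$, one needs $\iota_\mathcal{D}^*\nu_*\simeq\nu_{D*}\iota_{\mathcal{Z}_D}^*$. This holds because $\mathcal{Z}_D=\nu^{-1}(\mathcal{D})$ scheme-theoretically: $\nu^*\mathcal{D}$ is the anticanonical divisor on $\mathcal{Z}$ and equals the reduced total transform of $D$, since $\mathcal{E}\subset D$ and discrepancies vanish for log CY contractions of boundary chains. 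Tor-independence then holds because $\mathcal{D}$ is Cartier on $\mathcal{X}$ and $\mathcal{O}_\mathcal{Z}$ has no $\mathcal{O}_\mathcal{D}$-torsion. The same argument, or the projection formula, shows that $\Phi^\partial$ preserves perfect complexes.

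For full faithfulness I would avoid a direct computation and reduce to generators. $\mathrm{Perf}(D)$ is split-generated by restrictions $\iota_D^*\mathcal{L}$ of a full exceptional collection of line bundles $\mathcal{L}_i$ on $Y$, as in \cite{hacking_homological_2023}; alternatively, use line bundles of all degrees on the components together with the structure sheaf. By Proposition \ref{prop:restobdry} applied on $Y$ and on $\mathcal{X}$ (with $\Phi$ fully faithful), for $i\le j$ we get
\[\mathrm{Ext}_D(\iota_D^*\mathcal{L}_i,\iota_D^*\mathcal{L}_j)\simeq\mathrm{Ext}_Y(\mathcal{L}_i,\mathcal{L}_j)\simeq\mathrm{Ext}_\mathcal{X}(\Phi\mathcal{L}_i,\Phi\mathcal{L}_j)\simeq\mathrm{Ext}_\mathcal{D}(\Phi^\partial\iota^*\mathcal{L}_i,\Phi^\partial\iota^*\mathcal{L}_j),\]
and these identifications are compatible with $\Phi^\partial$ by commutativity.

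The main obstacle is that the restrictions are not exceptional on the boundary. The directed ($i>j$) Ext groups on $D$ are nonzero (Serre duality on $D$ with $\omega_D$ trivial), so Proposition \ref{prop:restobdry} does not cover all pairs. To handle this I would use adjunction. $\Phi^\partial$ has a right adjoint $\mu_{D*}\nu_D^!$, and it suffices to show that the unit $\mathrm{id}\to\mu_{D*}\nu_D^!\nu_{D*}\mu_D^*$ is an isomorphism. This is a local question near each node $q$. There it reduces to the statement that $\mu_{D*}\mathcal{O}_{\mathcal{Z}_D}=\mathcal{O}_D$ with no higher direct images: the root-stack chain over $\mathcal{E}$ is a tree of stacky rational curves, so $H^1(\mathcal{O})=0$ and $H^0(\mathcal{O})=\mathbb{C}$. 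One also needs $\nu_D$ to satisfy $\nu_D^!\nu_{D*}\simeq\mathrm{id}$ on the image of $\mu_D^*$, using that $\nu_{D*}$ contracts $\mathcal{E}$ onto the stacky point with the correct equivariant structure. This is the analogue of Ishii–Ueda's local argument, restricted to the curve. If this local computation proves hard, the fallback is to verify full faithfulness on the generators $\mathcal{O}_D$ and skyscrapers at smooth points, where $\Phi^\partial$ is visibly an isomorphism away from the nodes, and to check the finitely many Ext groups near the nodes by hand.
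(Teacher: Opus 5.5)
Your outline (a kernel over the boundary, base change for commutativity, then a check on generators) follows the same overall shape as the paper's proof. However, the construction as written fails, because $\mathcal{Z}_D$ is taken to be the \emph{reduced} fibre product. You justify base change by asserting $\mathcal{Z}_D=\nu^{-1}(\mathcal{D})$ scheme-theoretically, and this is false. Near an orbifold node, $\nu^*u$ and $\nu^*v$ both vanish along every exceptional curve $\mathcal{E}_t\subset\mathcal{Z}$. Hence $\nu^*\mathcal{D}=\nu^*(uv)$ has multiplicity at least $2$ along each of them. For $\tfrac{1}{k}(1,1)$ one has $\mathcal{Z}=[\mathrm{Bl}_0\mathbb{C}^2/\mu_k]$ and $\nu^*\mathcal{D}=D'+2\mathcal{E}$. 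Log crepancy is a statement about coefficients on $Y$; it does not say the pullback is reduced on the root stack. With the reduced $\mathcal{Z}_D$ the square is not Cartesian, and commutativity genuinely fails. In the local $\tfrac{1}{k}(1,1)$ model, take $\mathcal{L}=\mathcal{R}^\vee_{\rho_1}$, which has degree $-1$ on $E$. Then $\mu_D^*\iota_D^*\mathcal{L}$ restricts to $\mathcal{O}(-1)$ on the reduced curve $\mathcal{E}\cong\mathbb{P}^1\times B\mu_k$, which has no cohomology. So $\nu_{D*}\mu_D^*\iota_D^*\mathcal{L}$ is the module of sections on the two branches vanishing at the node, i.e.\ a twist of the pushforward of the normalization. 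That module is not even perfect, whereas $\iota_\mathcal{D}^*\Phi\mathcal{L}=\mathcal{O}_\mathcal{D}\otimes\rho_1$. The paper instead uses the honest fibre product $\tilde D=\mathcal{Z}\times_{Y\times\mathcal{X}}(D\times\mathcal{D})$. Since minimal-resolution discrepancies are $\le 0$, one has $\nu^*\mathcal{D}\le\mu^*D$, so $\tilde D=\nu^{-1}(\mathcal{D})$. Your Tor-independence argument then applies to the $\nu$-square, and on the $\mu$-side only functoriality of pullback is needed.

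Full faithfulness is also not established. Proposition~\ref{prop:restobdry} gives your displayed chain only for $i<j$. For $i=j$ its hypothesis fails: $\mathrm{Ext}^\bullet_Y(\mathcal{L},\mathcal{L})=\mathbb{C}$, while $\mathrm{Ext}^\bullet_D(\iota_D^*\mathcal{L},\iota_D^*\mathcal{L})=\mathbb{C}\oplus\mathbb{C}[-1]$. The adjunction step for the remaining pairs is circular. The unit being an isomorphism \emph{is} full faithfulness. Moreover, ``$\nu_D^!\nu_{D*}\simeq\mathrm{id}$ on the image of $\mu_D^*$'' is exactly the nontrivial content, the boundary analogue of Ishii--Ueda. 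The fact $R\mu_{D*}\mathcal{O}=\mathcal{O}_D$ only makes $\mu_D^*$ fully faithful. The missing idea, which the paper supplies, is an explicit computation on the generators $\mathcal{O}_D$ and one skyscraper per component. For $p_i\in E_i$, write $\mathcal{O}_{p_i}=\iota_D^*[\mathcal{R}^\vee_{\rho_i}\to\mathcal{O}_Y]$. Commutativity together with $\Phi\mathcal{R}^\vee_{\rho_i}=\mathcal{O}\otimes\rho_i$ gives $\Phi^\partial\mathcal{O}_{p_i}=[\mathcal{O}_\mathcal{D}\otimes\rho_i\xrightarrow{s}\mathcal{O}_\mathcal{D}]$. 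Then $\mathrm{Ext}^\bullet_\mathcal{D}(\mathcal{O}_\mathcal{D},\Phi^\partial\mathcal{O}_{p_i})=\mathrm{Ext}^\bullet_D(\Psi^\partial\mathcal{O}_\mathcal{D},\mathcal{O}_{p_i})=\mathbb{C}[0]$ forces $s=\lambda_1u^i+\lambda_2v^j$ with $\lambda_1\lambda_2\neq 0$. This gives perfectness and reduces the Ext comparison to a hand computation. If you prefer to keep your line-bundle generators, you can recover the reverse-direction groups from the $i<j$ case by Serre duality on $D$ and $\mathcal{D}$, both of which have trivial dualizing sheaf. This works only after you check that $\Phi^\partial$ is nonzero on each $\mathrm{Ext}^1(\iota_D^*\mathcal{L}_i,\iota_D^*\mathcal{L}_i)\cong H^1(\mathcal{O}_D)$, so that it respects the Serre pairings.
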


\begin{proof}
    First of all, we define the functor $\Phi^\partial$ on $D^b(D)$ and later show that it respects perfect complexes. Consider $\tilde{D}$ defined via the outer pullback square (in the category of Deligne-Mumford stacks) \[\begin{tikzcd}
\tilde{D} \arrow[d, "\iota_{\tilde{D}}"', hook] \arrow[r] & D\times \mathcal{D} \arrow[d,"\iota_{D\times \mathcal{D}}", hook] \arrow[r] & \mathcal{D} \arrow[d, "\iota_{\mathcal{D}}", hook] \\
\mathcal{Z} \arrow[r]           & Y\times \mathcal{X} \arrow[r]       & \mathcal{X}      
\end{tikzcd}\]
By a version of the pasting law, since $D\times \mathcal{D} \subset Y\times \mathcal{D}$ is a monomorphism, this implies that the left square is also Cartesian.
By construction, $\tilde{D}$ is the substack of $\mathcal{Z}$ lying over $\mathcal{D}$ which is the same as the substack of $\mathcal{Z}$ lying over $D$ i.e. the following diagram is also Cartesian: \[\begin{tikzcd}
\tilde{D} \arrow[d, "\iota_{\tilde{D}}"', hook] \arrow[r] & D \arrow[d, "\iota_D", hook] \\
\mathcal{Z} \arrow[r]                                       & Y                           
\end{tikzcd}\]

Recall that the functor $\Phi$ is defined as $\Phi(-)=\pi_{\mathcal{X}*}(\pi_Y^*-\otimes \,\mathcal{O}_\mathcal{Z})$ which is the same as $\nu_*\mu^*$ by the projection formula. We define a functor \begin{gather*}
    \Phi^\partial: D^b(D)\rightarrow D^b(\mathcal{D})\\
    \Phi^\partial(-):=\pi_{\mathcal{D}*}(\pi_D^*- \otimes \,\mathcal{O}_\mathcal{D})=\tilde{\nu}_{*}\tilde{\mu}^{ *}(-)
\end{gather*}
with the arrows as in the diagram \[\begin{tikzcd}
\tilde{D} \arrow[rdddd, "\tilde{\mu}"', bend right] \arrow[rrrrd, "\tilde{\nu}", bend left] \arrow[rd, hook] &         &         &            &        \\
 &  D\times \mathcal{D} \arrow[ddd, "\pi_D"'] \arrow[rrr, "\pi_{\mathcal{D}}"] &     &     & \mathcal{D} \arrow[dd, "\iota_{\mathcal{D}}"] \\
 &     & \mathcal{Z} \arrow[rd, hook] \arrow[rdd, "\mu"', bend right] \arrow[rrd, "\nu", bend left] &    &    \\
 &     &      &  Y\times \mathcal{X} \arrow[d, "\pi_Y"'] \arrow[r, "\pi_\mathcal{X}"] & \mathcal{X}   \\
 & D \arrow[rr, "\iota_D"'] &    & Y     &                                      
\end{tikzcd}\]

First of all, a straightforward application of base change leads to
\begin{gather*}
    \Phi^\partial \iota_D^* = \tilde{\nu}_*\tilde{\mu}^*\iota_D^*=\tilde{\nu}_* \iota_{\tilde{D}}^* \mu^* \underbrace{=}_{\text{base change}} \iota_{\mathcal{D}}^*\nu_* \mu^*=\iota_{\mathcal{D}}^* \Phi
\end{gather*} and similarly 
\begin{gather*}
    \Phi \iota_{D*}=\nu_* \mu^* \iota_{D*} \underbrace{=}_{\text{base change}} \nu_* \iota_{\tilde{D}*}\tilde{\mu}^*=\iota_{\mathcal{D}*}\tilde{\nu}_*\tilde{\mu}^*=\iota_{\mathcal{D}*}\Phi^\partial
\end{gather*}

The functor $\Phi^\partial$ is given by a Fourier-Mukai kernel hence admits a left adjoint $\Psi^\partial$ \cite[Proposition 5.9]{huybrechts_fourier-mukai_2006}. The fact that $\Phi^\partial$ and $\Phi$ commute with restriction and pushforward implies that the left adjoints $\Psi^\partial$ and $\Psi$ also commute with the pushforwards and pullbacks: the functor $\iota_D^*\Psi$ is left adjoint to $\Phi \iota_{D*}$ which is equal to $\iota_{\mathcal{D}*}\Phi^\partial$ which has a left adjoint $\Psi^\partial \iota_D^*$, concluding that $\iota_D^*\Psi=\Psi^\partial \iota_{\mathcal{D}}^*$. The same kind of argument implies that $\Psi \iota_{\mathcal{D}*}=\iota_{D*}\Psi^\partial$.

It remains to show that $\Phi^\partial$ respects perfect complexes and is fully faithful on Perf. We recall that $\mathrm{Perf}(D)$ is generated by $\mathcal{O}_D$ and a set of skyscraper sheaves $\mathcal{O}_p$, one for each component of $D$. We proceed to compute $\Phi^\partial$ on this generating set.

The functor $\Phi^\partial$ acts as the identity outside the fundamental cycle $F$ of $Y\rightarrow X$. Moreover, $\Phi^\partial \mathcal{O}_D=\iota_{\mathcal{D}}^*\Phi \mathcal{O}_Y=\mathcal{O}_{\mathcal{D}}$ - this is because $\nu_*\mathcal{O}_\mathcal{Z}=\mathcal{O}_\mathcal{X}$ ($\nu$ is birational, proper with connected fibers) and therefore $\Phi^\partial$ gives an isomorphism $\mathrm{Ext}^0_D(\mathcal{O}_D, \mathcal{O}_D)\simeq \mathrm{Ext}^0_{\mathcal{D}}(\mathcal{O}_\mathcal{D}, \mathcal{O}_\mathcal{D})$ since both are generated by the identity morphism. The generator of $\mathrm{Ext}^1_D(\mathcal{O}_D, \mathcal{O}_D)$ is represented by the bundle which is the pullback of $\mathcal{O}_D\rightarrow \mathcal{O}_D(p)\rightarrow \mathcal{O}_p$ along the map $\mathcal{O}_D\rightarrow \mathcal{O}_p$, which is in other words the composition of $m_p\in\mathrm{Ext}^0_D(\mathcal{O}_D,\mathcal{O}_p)$ and $n_p\in\mathrm{Ext}^1_D(\mathcal{O}_p, \mathcal{O}_D)$. Applying $\Phi^\partial$, we get that the composition $\Phi^\partial m_p\in\mathrm{Ext}^0_\mathcal{D}(\mathcal{O}_\mathcal{D},\Phi^\partial\mathcal{O}_p)=\mathbb{C}$ with $\Phi^\partial n_p\in\mathrm{Ext}^1_\mathcal{D}(\Phi^\partial\mathcal{O}_p, \mathcal{O}_\mathcal{D})=\mathbb{C}$ is given similarly by the extension given by the pullback $\mathcal{O}_\mathcal{D}\times_{\mathcal{O}_p}\mathcal{O}_\mathcal{D} (p)$ which is the generator of $\mathrm{Ext}^1_{\mathcal{D}}( \mathcal{O}_\mathcal{D},\mathcal{O}_\mathcal{D})$. This shows that $\Phi$ is an isomorphism on the morphism spaces involving $\mathcal{O}_D$ and $\mathcal{O}_p$ with $p\notin F$.

It remains to compute $\Phi^\partial$ on the points which are on $F$. This is a purely local computation, so for the rest of the proof, we will assume $\mathcal{X}=[\mathbb{C}^2/\frac{1}{r}(1,l)]$ and $\mathcal{D}$ is the locus $[uv=0/\mu_r]$, which consists of two copies of $[\mathbb{C}/\mathbb{Z}_r]$ glued at a $\frac{1}{r}(1,l)$ point.
Let $E_i$ be the exceptional curve corresponding to the special representation $\rho_i$. By Wunram's theorem and \cite[Section 3.1]{gugiatti_full_2023}, $\Phi\mathcal{R}_{\rho_i}^\lor=\mathcal{O}\otimes \rho_i$ for special $\rho_i$, where $\mathcal{R}_{\rho_i}^\lor$ is a line bundle on $Y=\widetilde{\mathbb{C}^2/\frac{1}{r}(1,l)}$ whose first Chern class satisfies $$c_1(\mathcal{R}_{\rho_i}^\lor)\cdot E_j=-\delta_{i,j}$$

The restriction of such a line bundle to $D$ is exactly $\mathcal{O}(-p_i)$ for some $p_i\in E_i$. Hence, $$\Phi^\partial(\mathcal{O}_{p_i})=\Phi^\partial [\mathcal{O}_D(-p_i)\rightarrow \mathcal{O}_D]=\Phi^\partial \iota_D^* [\mathcal{R}^\lor_{\rho_i}\rightarrow \mathcal{O}_Y]=\iota_{\mathcal{D}}^* \Phi [\mathcal{R}^\lor_{\rho_i}\rightarrow \mathcal{O}_Y]=[\mathcal{O}_{\mathcal{D}}\otimes \rho_i \rightarrow \mathcal{O}_{\mathcal{D}}]$$
We will now describe the morphism $\mathcal{O}_\mathcal{D}\otimes \rho_i\rightarrow \mathcal{O}_\mathcal{D}$. First of all, since $\Phi \mathcal{R}^\lor_{\rho_0}=\mathcal{O}_{\mathcal{X}}$ and $\Psi \Phi \simeq \mathrm{id}$, we know that $\Psi \mathcal{O}_{\mathcal{X}}=\mathcal{O}_Y$ and hence $\Psi^\partial \mathcal{O}_{\mathcal{D}}=\mathcal{O}_D$. Therefore $$\mathbb{C}[0]=\mathrm{Ext}^\bullet_D(\Psi^\partial\mathcal{O}_{\mathcal{D}},\mathcal{O}_{p_i})=\mathrm{Ext}^\bullet_{\mathcal{D}}(\mathcal{O}_{\mathcal{D}}, [\mathcal{O}_{\mathcal{D}}\otimes \rho_i \xrightarrow{s} \mathcal{O}_{\mathcal{D}}])$$

The morphism $s$ is a sum of weight $i$ monomials on the node which are elements of the set  $\{ u^i, v^j, u^{r+i}, v^{r+j},\dots \}$, where $jl\equiv i \mod r$. We will show that $s$ has to be of the form $\lambda_1 u^i+\lambda_2 v^j$ with $\lambda_1, \lambda_2$ nonzero constants.

The cohomology long exact sequence associated to the cone of $s$ involves the cohomology groups $H^\bullet(\mathcal{D}, \mathcal{O}_\mathcal{D}\otimes \rho_i), H^\bullet(\mathcal{D}, \mathcal{O}_\mathcal{D})$ which are both concentrated in degree zero. As such, the long exact sequence takes the shape of a short exact sequence $$0 \rightarrow \mathbb{C}\langle u^{r-i}, v^{r-j},u^{2r-i},v^{2r-j},\dots \rangle \xrightarrow{s} \mathbb{C}\langle 1,u^r,v^r,u^{2r},v^{2r},\dots\rangle\rightarrow \mathbb{C}\rightarrow 0$$
Write $$s=\alpha u^{i}+\beta v^j$$ where $\alpha \in \mathbb{C}[u^r], \beta \in \mathbb{C}[v^r]$.
Then $$(u^{r-i} \gamma + v^{r-j}\delta)s=u^r(\alpha\gamma)+v^r(\beta \delta), \quad \gamma \in \mathbb{C}[u^r], \delta \in \mathbb{C}[v^r]$$
The morphism $s$ has no kernel precisely when both $\alpha$ and $\beta$ are nonzero. Moreover, the cokernel always contains $1$. It is one-dimensional precisely when $u^r, v^r\in \mathrm{im}\, s$. This is only possible when $s=\lambda_1 u^i+\lambda_2 v^j$ with $\lambda_1, \lambda_2 \in \mathbb{C}^\times$.

In conclusion, the result upon applying $\Phi^\partial$ on a skyscraper sheaf $\mathcal{O}_{p_i}$ is the sheaf supported at the orbifold point associated to the module $$\mathbb{C}[u,v]/(uv, \lambda_1 u^i+\lambda_2 v^j)$$ which is perfect. The constants $\lambda_1, \lambda_2$ depend on the point $p_i\in E_i$.

It remains to show that $\Phi^\partial$ is fully faithful on the hom spaces involving $\mathcal{O}_D$ and $\mathcal{O}_{p_i}$.
There are unique morphisms $m_i\in \mathrm{Ext}^0(\mathcal{O}, \mathcal{O}_{p_i}), n_i \in \mathrm{Ext}^1(\mathcal{O}_{p_i}, \mathcal{O})$ satisfying $0\neq m_in_i=m_jn_j \in \mathrm{Ext}^1_D(\mathcal{O}_D, \mathcal{O}_D)$. Hence, since $\Phi^\partial$ identifies the cohomology of $\mathcal{O}_D$ with the cohomology of $\Phi^\partial\mathcal{O}_D$ we conclude that $\Phi^\partial m_i, \Phi^\partial n_i \neq 0$ and therefore the functor is faithful. Observing that $\mathrm{Ext}_{\mathcal{D}}^\bullet(\mathcal{O}_\mathcal{D}, \Phi^\partial \mathcal{O}_{p_i})=\mathbb{C}[0]$ and $\mathrm{Ext}^\bullet_{\mathcal{D}}( \Phi^\partial \mathcal{O}_{p_i}, \mathcal{O}_\mathcal{D})\simeq \mathrm{Ext}^{1-\bullet}_{\mathcal{D}}( \mathcal{O}_\mathcal{D}, \Phi^\partial \mathcal{O}_{p_i}) =\mathbb{C}[1]$ implies that the functor is also full. 
\end{proof}

We proceed to express the sheaf $\Phi^\partial \mathcal{O}_{p_i}$ using the generators of $D^b(\mathcal{D})$.
\begin{prop}
  Let $p_i$ be a point on $E_i\subset D$ which contracts to a stacky node $q_m$. Then 
the object $\Phi^\partial \mathcal{O}_{p_i}\in D^b(\mathcal{D})$ is quasi-isomorphic to the complex
\begin{equation}\label{eq:complexforimageofpoint}\pi_*[\mathcal{P}_m(0,r_m-i)\xrightarrow{}\mathcal{P}_m(0,r_m)] \oplus \pi_*[\mathcal{P}_{m+1}(r_m-j,-1)\xrightarrow{}\mathcal{P}_{m+1}(r_m,-1)]\xrightarrow{}\pi_*\mathcal{S}_{q_m}\oplus \pi_*\mathcal{S}_{q_m}\{ki\}[1]\end{equation}
\end{prop}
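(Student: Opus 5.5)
The plan is to use the explicit description of $\Phi^\partial\mathcal{O}_{p_i}$ from the proof of Proposition \ref{prop:fullyfaithfulbdry}. There it was computed as the sheaf supported at the stacky node $q_m$ associated to the module $M:=\mathbb{C}[u,v]/(uv,\lambda_1u^i+\lambda_2v^j)$ with $\lambda_1,\lambda_2\neq 0$ and $jl\equiv i \bmod r_m$. We will resolve $M$ using the objects $\pi_*\mathcal{P}$ and $\pi_*\mathcal{S}$ and compare the result with the complex \eqref{eq:complexforimageofpoint}.

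First, pass to the normalization. Consider the short exact sequence $0\to\mathcal{O}_\mathcal{D}\to\tilde{\mathcal{O}}\to \mathcal{O}_{q_m}\to 0$ at the node. Tensoring it with $M$ locally expresses $M$ as the kernel, or cone, of a map from $\pi_{m*}(\text{module on } \tilde{\mathcal{D}}_m)\oplus\pi_{m+1*}(\text{module on } \tilde{\mathcal{D}}_{m+1})$ to a copy of the residue field at the node. Concretely:
\begin{itemize}
\item the $u$-branch contributes $\mathbb{C}[u]/(u^i)$ (suitably twisted), which is the cokernel of $\mathcal{O}(-i\,q_{m,+})\hookrightarrow\mathcal{O}$ on $\tilde{\mathcal{D}}_m$;
\item the $v$-branch contributes $\mathbb{C}[v]/(v^j)$, which is the cokernel of $\mathcal{O}(-j\,q_{m+1,-})\hookrightarrow\mathcal{O}$ on $\tilde{\mathcal{D}}_{m+1}$.
\end{itemize}
Twisting by $\mathcal{O}(r_m q)$, the pullback of a line bundle from the coarse curve, turns these into $\mathcal{P}_m(0,r_m-i)\to\mathcal{P}_m(0,r_m)$ and $\mathcal{P}_{m+1}(r_m-j,-1)\to\mathcal{P}_{m+1}(r_m,-1)$. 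The twist is harmless because it is trivial near the node up to the identification used to define $\mathcal{P}$. The index shift $-1$ versus $0$ on the $m+1$ side must be matched against the convention that $\mathcal{P}_{m+1}(\cdot,-1)$ carries $\mathcal{O}(-q_{m+1,+})$ at the far point, where the module is unaffected.

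Next, identify the gluing terms. The module $M$ is the fiber product of the two branch modules over their common quotient at the node. The two branches of $\mathbb{C}[u]/(u^i)\oplus\mathbb{C}[v]/(v^j)$ each have a socle and a top at the node, and the relation $\lambda_1u^i+\lambda_2v^j=0$ glues them. So $M$ is the cone of the direct sum of the branch pieces mapping to two simple modules:
\begin{itemize}
\item one copy of $\mathcal{S}_{q_m}$ for the identification of the constants, since the two copies of $1$ are identified;
\item one copy $\mathcal{S}_{q_m}\{k i\}[1]$ for the relation in weight $i$, which identifies $u^i$ with $-\tfrac{\lambda_2}{\lambda_1}v^j$, i.e.\ an extension rather than a quotient, hence the shift $[1]$.
\end{itemize}
By the Remark on $-\{c\}=-\otimes\rho^{(1,l_m)}_{l_mc}$, the weight-$i$ character in the $(1,l)$ presentation is written $\{k i\}$ in the Lekili--Polishchuk convention, with $k=k_m$ and $kl\equiv 1$.

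To verify the quasi-isomorphism, compute the cohomology sheaves of \eqref{eq:complexforimageofpoint} locally using the explicit maps $x_m,y_m$. Then check that the map to the two simples is the unique nonzero one in each weight, which is fixed up to scalar by weight considerations, and that these scalars reproduce $\lambda_1,\lambda_2$.

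The main obstacle is the bookkeeping of characters and shifts, especially pinning down that the second simple is $\mathcal{S}_{q_m}\{ki\}$ in degree $-1$ rather than some other twist. For that I would first do the local equivariant computation on $[\{uv=0\}/\mu_r]$ by hand for a small case such as $\tfrac15(1,3)$, and then generalize.
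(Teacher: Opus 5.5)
Your proposal is correct and is essentially the paper's argument. The paper localizes at the node and applies the normalization sequence $\mathcal{O}_{\mathcal{D}}\to\tilde{\mathcal{O}}\to\mathcal{O}_{q_m}$ to the two-term resolution $R\{ki\}\xrightarrow{s}R$ of $M$, where $s=\lambda_1u^i+\lambda_2v^j$. This gives a short exact sequence of two-term complexes whose columns are $\Phi^\partial\mathcal{O}_{p_i}$, the two branch pieces, and $[R/(u,v)\{ki\}\xrightarrow{0}R/(u,v)]\simeq\mathcal{S}_{q_m}\oplus\mathcal{S}_{q_m}\{ki\}[1]$, which is exactly your ``tensoring with $M$'' taken in the derived sense.

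Done this way, the twist and shift of the second simple come for free because $s$ vanishes at the node, so your final check via cohomology sheaves and weight-uniqueness of the maps is unnecessary. That is fortunate, because the uniqueness fails at the ends of the chain. For $i=l$ (so $j=1$), the generators $v$ and $u^i$ of the ideal have the same weight, so $\mathrm{Ext}^1_{\mathcal{D}}(\mathbb{C}[u]/(u^i),\mathcal{S}_{q_m}\{ki\})$ is two-dimensional.
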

\begin{proof}
  The complex in question is supported at the node, so we can restrict to a neighbourhood of the node which is $\{uv=0\}$ modulo the $\mu_r$ action $\zeta\cdot (u,v)=(\zeta^k u,\zeta v)$. Sheaves on this neighbourhood are the same as $\mu_r$-equivariant $R$-modules, where $R=\mathbb{C}[u,v]/(uv)$. The claim follows from the exact sequence of two-term complexes: 
  \[\begin{tikzcd} R\{ki\} \arrow[r] \arrow[d, "\lambda_1 u^i+\lambda_2 v^j"'] & (R/u\oplus R/v)\{ki\} \arrow[r] \arrow[d] & {R/(u,v)\{ki\}} \arrow[d, "0"] \\ R \arrow[r] & (R/u\oplus R/v)\arrow[r] & {R/(u,v)} \end{tikzcd}\]
  The object $\Phi^\partial \mathcal{O}_{p_i}$ is equivalent to the leftmost column, whereas the remaining two columns form the restriction of the complex \ref{eq:complexforimageofpoint} to a neighbourhood of the node $q$.
\end{proof}

\subsubsection{The fully faithful embedding on the A-side}
By using the surgery exact triangle in $\mathcal{W}(\Sigma)$, we can describe the underlying Lagrangian of the mirror to $\Phi^\partial \mathcal{O}_{p_i}$. First of all, by \ref{rem:alllinebundles}, the mirror to $\mathcal{P}_m(0,r_m)$ which we denote by $P^+_{m,r_m+1}$ is described by applying a negative Dehn twist to $P^+_{m,1}$ (which is mirror to $\mathcal{P}_m(0,0)$) along the meridian of $C_m$.  
Therefore, upon applying $\varphi$ to the complex representing $\Phi^\partial \mathcal{O}_{p_i}$ we get the twisted complex \begin{equation}\label{eq:complexmirrortoimageofpoint} [P^+_{m,r_m-i+1}[1]\dashrightarrow P^+_{m,r_m+1}][1] \oplus [P^-_{m+1, r_m-j}[1]\dashrightarrow P^-_{m+1, r_m}][1]\dashrightarrow S_{m,-i}[2]\oplus S_{m, 0}[1]\end{equation}
\begin{rem}
Recall that the cone on a degree zero morphism $f:A\rightarrow B$ is described via a twisted complex $(A[1]\oplus B, d_f)$ with the differential having degree $1$. Our convention is to denote this differential by a dashed arrow $A[1]\dashrightarrow B$, so as to not confuse it with the genuine morphism $f$.
\end{rem}

\begin{prop}\label{prop:specialmeridian}
The complex of Lagrangians \ref{eq:complexmirrortoimageofpoint} is quasi-isomorphic in $\mathcal{W}(\Sigma^0)$ to a Lagrangian $S^1$ (with a local system) denoted $M_{m,i}$ which passes through the handles $H^-_{m,0}$ and $H^-_{m,-i}$ once. This Lagrangian $M_{m,i}$ is identified with a meridian in the special torus $T^0$.
\end{prop}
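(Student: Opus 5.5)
The plan is to simplify the twisted complex \eqref{eq:complexmirrortoimageofpoint} by iterated Lagrangian surgery in $\mathcal{W}(\Sigma^0)$. The comparison will always be made in the local model near the node $q_m$: the two cylinders $C_m, C_{m+1}$ joined by the handles $H^-_{m,a}$.

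\textbf{Step 1: the two bracketed summands.} First treat the summand $[P^+_{m,r_m-i+1}[1]\dashrightarrow P^+_{m,r_m+1}]$. By Remark \ref{rem:alllinebundles}, $P^+_{m,r_m+1}$ is obtained from $P^+_{m,1}$ by a negative Dehn twist along the meridian of $C_m$. The arcs $P^+_{m,j}$ differ from one another only in the position of their endpoint on $\partial_+C_m$ relative to the handle attaching points. The degree one morphism in the complex is the single boundary chord (or intersection point) joining the two arcs near $\partial_+ C_m$. Resolving it by surgery should produce an arc which runs from the handle region near $\frac{r_m-i}{r_m}$, around $C_m$, back to the handle region near $0$. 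Its ends are pushed against the stops of the handles $H^-_{m,-i}$ and $H^-_{m,0}$.

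The second summand $[P^-_{m+1,r_m-j}[1]\dashrightarrow P^-_{m+1,r_m}]$ is handled symmetrically on $C_{m+1}$. The gluing rule $\frac{a}{r_m}\mapsto -\frac{k_m a}{r_m}$ together with $jl\equiv i$ (equivalently $j\equiv k_m i$) is what guarantees that the endpoints land on the other ends of the same two handles $H^-_{m,0}$ and $H^-_{m,-i}$.

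\textbf{Step 2: gluing with the cocores.} Next, the two arcs obtained in Step 1 are glued to the cocores $S_{m,0}$ and $S_{m,-i}$ using the remaining differentials to $S_{m,-i}[2]\oplus S_{m,0}[1]$. A surgery of an arc with the cocore of a handle, at a boundary chord, yields an arc that passes through that handle. Performing both surgeries closes the two arcs up into one immersed, and in fact embedded, circle $M_{m,i}$. This circle crosses $H^-_{m,0}$ and $H^-_{m,-i}$ exactly once each. The nonzero constants $\lambda_1,\lambda_2$ from the proof of Proposition \ref{prop:fullyfaithfulbdry} determine the local system on $M_{m,i}$.

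To be careful, I would instead compute $\mathrm{Hom}$ from each generator $S_{m,a}$, $P^\pm$ into both sides and invoke the generation result. Alternatively, I would match the twisted complex with the one arising from the standard decomposition of the circle $M_{m,i}$ into arcs cut by the cocores, as done in \cite{lekili_auslander_2018}.

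\textbf{Step 3: identification in the special torus.} Finally, check that $M_{m,i}$ lies in $T^0$. Since $i$ lies in the I-series, the handle $H^-_{m,-i}=H^-_{r_m-i}$ is special, and $H^-_{m,0}$ is special too, since $0$ belongs to the I-series. Hence $M_{m,i}\subset T^0$. Moreover, $M_{m,i}$ crosses the two cylinders once each, so its homology class in $T^0$ is the meridian class: its intersection number with the reference longitude is one. This uses Observation \ref{ob:observation}, which ensures the special handles are nested without crossing, so the curve is not forced to wind.

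\textbf{Main obstacle.} I expect the main obstacle to be bookkeeping. The gradings and shifts, the precise positions of the $P^\pm_{m,j}$ endpoints relative to the stops, and the consistency of the index conventions ($H^+$ versus $H^-$, and the sign $-i$ versus $r_m-i$) all have to align so that both surgeries occur at the correct chords and the result is embedded. I would first verify everything in the smallest case $\frac{1}{2}(1,1)$ and the $\frac{1}{5}(1,3)$ example of Figure \ref{fig:1/5(1,3)}, and then argue in general.
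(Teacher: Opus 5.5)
Your proposal is correct and follows essentially the same route as the paper. First you resolve each bracketed cone into an arc (in $C_m$, resp.\ $C_{m+1}$) joining the ends of the handles $H^-_{m,0}$ and $H^-_{m,-i}$, using $j\equiv k_m i$ to see that both arcs end at the same two handles; then you surger these arcs with the cocores $S_{m,0}$ and $S_{m,-i}$ along the four boundary Reeb chords (one from each arc to each cocore) to close them up into $M_{m,i}$, and the paper likewise offers the $\mathcal{W}(0;3)$-functor argument from Lekili--Polishchuk as the alternative and treats the meridian identification as immediate from both crossed handles being special.
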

\begin{proof}
The first term in this complex is quasi-isomorphic to the sum of two Lagrangian arcs. The first of these arcs starts at the top of $H^-_{0,m}$ and goes up inside $C_m$ to reach the bottom of $H^-_{m,-i}$. The other arc starts at the top of $H^-_{m,0}=H^+_{m,0}$ and then goes down, in $C_{m+1}$, to reach the top of $H^-_{m,-i}=H^+_{m, ki}=H^+_{m,j}$. The morphisms to the cocores come from the four unique Reeb chords to the cocores $S_{m,-i}$ and $S_{m,0}$ which pass through a single stop, which are the morphisms $$P^+_{m, r_m+1}\rightarrow S_{m,0}[1], \qquad P^-_{m+1, r_m}\rightarrow S_{m,0}[1],\qquad P^+_{m, r_m-i+1}\rightarrow S_{m,-i}[1], \qquad P^-_{m+1, r_m-j}\rightarrow S_{m,-i}[1]$$ Applying surgery along these Reeb chords (or, alternatively, using functors from $\mathcal{W}(0;3)$ as in \cite[Section 2.1]{lekili_auslander_2018}), the result follows.
\end{proof}

\begin{eg}
Consider a node $q_m$ of type $\frac{1}{5}(2,1)\equiv \frac{1}{5}(1,3)$ with $k=2$ and $i=3, j=1$. On the right side, we have the Lagrangians $P^-_{m+1,4}$ and $P^{-}_{m+1,5}$ which are glued along the unique degree zero morphism. On the left, we have the Lagrangian $P^{+}_{m+1, 5-3+1}$ as well as $$P^+_{m, 6}:=T_{C_m}^{-1}P^+_{m,1}$$
After performing surgery along Reeb chords, the result is a Lagrangian as in the right side of Figure \ref{fig:meridian}.
\end{eg}
\begin{figure}[H]
      \input{./TikzFiles/meridiansurgery.tikz}
         \caption{\label{fig:meridian} Left: the complex of Lagrangians which is given by $\varphi$ applied to the complex representing $\Phi^\partial \mathcal{O}_{p_i}$. Right: a Lagrangian which is isomorphic to the twisted complex on the left.}
      \end{figure}

      \begin{prop}\label{prop:fullyfaithfulspecialtorus}
        Under the equivalences $\mathrm{Perf}(D)\simeq \mathcal{F}(T^0), \mathrm{Perf}(\mathcal{D})\simeq \mathcal{F}(\Sigma^0)$, the functor $\Phi^\partial$ corresponds to a functor on the Fukaya category which is induced (on the level of underlying Lagrangians) by the inclusion of the special torus $T^0$ into $\Sigma^0$. In other words, there is a commutative diagram of functors: \[\begin{tikzcd}
\mathrm{Perf}(D) \arrow[r, "\Phi^\partial", hook] \arrow[d, "\simeq"', no head] & \mathrm{Perf}(\mathcal{D}) \arrow[d, "\simeq", no head] \\
\mathcal{F}(T^0) \arrow[r, hook]                                               & \mathcal{F}(\Sigma^0)                                 
\end{tikzcd}\]
      \end{prop}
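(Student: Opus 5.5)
The plan is to check the commutativity of the square on a split-generating set of $\mathrm{Perf}(D)$ and then upgrade this to a statement about functors. By the proof of Proposition \ref{prop:fullyfaithfulbdry}, $\mathrm{Perf}(D)$ is split-generated by $\mathcal{O}_D$ together with one skyscraper sheaf $\mathcal{O}_p$ for each component of $D$. For components contracted to a stacky node $q_m$, we may take $p=p_i\in E_i$. On the A-side, the Lekili-Polishchuk equivalence (in the smooth case, as in \cite{lekili_arithmetic_2017} and \cite{hacking_homological_2023}) sends $\mathcal{O}_D$ to the reference longitude $\mathcal{V}_0\subset T^0$ and each $\mathcal{O}_p$ to a meridian of $T^0$. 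The first step is therefore to identify $\varphi(\Phi^\partial\mathcal{O}_D)$ and $\varphi(\Phi^\partial\mathcal{O}_p)$ with the images of these curves under $T^0\hookrightarrow\Sigma^0$.

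For $\mathcal{O}_D$ we have $\Phi^\partial\mathcal{O}_D=\mathcal{O}_\mathcal{D}$. Its mirror in $\Sigma^0$ can be computed from the generators $\mathcal{P}_m(0,-1)$ together with the cocores, using the exact sequences relating $\mathcal{O}_\mathcal{D}$ to $\bigoplus_m\pi_*\mathcal{O}_{\tilde{\mathcal{D}}_m}$ and the node sheaves $\pi_*\mathcal{S}_{q_m}$. After surgery along the Reeb chords this gives a longitude that crosses each node region through the handle $H^-_{m,0}$ only. Since $0$ lies in the special I-series, this handle is special, so the longitude lies in $T^0$.

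For a point $p$ not on the exceptional locus, $\Phi^\partial$ acts as the identity near $p$, and Remark \ref{rem:alllinebundles} identifies the mirror with the meridian of $C_m$, which lies in $T^0$. For $p_i\in E_i$, Proposition \ref{prop:specialmeridian} already gives the mirror as $M_{m,i}$, which passes through $H^-_{m,0}$ and $H^-_{m,-i}$. Both handles are special, so $M_{m,i}$ is a meridian of $T^0$, and it matches the Hacking-Keating meridian for $E_i$. This matching is where Observation \ref{ob:observation} enters: it guarantees that the special handles are attached in an order-preserving, non-overlapping way. Consequently the cylinder region between $H^-_{m,0}$ and $H^-_{m,-i}$ in $T^0$ is exactly the portion of $F$ corresponding to the component $E_i$.

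Next I would match morphisms. In $\mathcal{F}(T^0)$ the relevant Floer complexes between $\mathcal{V}_0$ and the meridians have one generator each, in degrees $0$ and $1$. The same intersection points persist in $\Sigma^0$, and the embedding creates no new intersections because all the curves lie in $T^0$. By Proposition \ref{prop:fullyfaithfulbdry}, the corresponding B-side Ext groups are also one-dimensional. Since all spaces are rank one, the map on cohomology agrees with the one induced by inclusion up to rescaling each generator, and the rescalings can be absorbed into the choice of local systems on $M_{m,i}$. It remains to check the $\mu^2$ compositions $m_in_i=m_jn_j$ and the higher $A_\infty$ operations. Every holomorphic polygon with boundary on curves in $T^0$ and convex corners must lie in $T^0$ by the open mapping and maximum principle, since such a polygon has boundary in $T^0$ and the complement $\Sigma^0\setminus T^0$ attaches only through the non-special handles. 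Hence the inclusion induces an $A_\infty$ functor $\mathcal{F}(T^0)\to\mathcal{F}(\Sigma^0)$ that is a strict embedding on this subcategory. Combining this with the fully faithful $\Phi^\partial$ and the intrinsic formality of the relevant $A_\infty$ structures for this generating collection (as in \cite{lekili_arithmetic_2017}), the two functors agree on generators. Uniqueness of extension to $\mathrm{Tw}$ then gives the commutative diagram.

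I expect the main obstacle to be the rigorous identification of the surgered Lagrangians with curves in $T^0$, including local systems, and the passage from agreement on objects and morphism spaces to an honest natural isomorphism of $A_\infty$ functors. First I would try to invoke the uniqueness of $A_\infty$ structures on the $\mathcal{O}_D,\mathcal{O}_{p}$ generating set, in the way \cite{lekili_arithmetic_2017} proves uniqueness for the nodal cycle. Failing that, I would compare the two sides directly: the image category with its minimal model on the B-side, and the category computed in $T^0$ on the A-side.
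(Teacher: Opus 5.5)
Your verification on generators is the paper's own proof. You send $\Phi^\partial\mathcal{O}_D=\mathcal{O}_\mathcal{D}$ to the reference longitude. You send skyscrapers away from the exceptional locus to meridians of the $C_m$ via Remark \ref{rem:alllinebundles}. You send $\Phi^\partial\mathcal{O}_{p_i}$ to $M_{m,i}$ via Proposition \ref{prop:specialmeridian}, and $M_{m,i}$ lies in $T^0$ because it uses only special handles. The paper stops at this object-level check. The proposition is claimed only ``on the level of underlying Lagrangians'', and local systems are fixed later by a global local system in Proposition \ref{prop:restrictandvarphi}. Your maximum-principle argument is correct and a useful addition: polygons with boundary on curves in $T^0$ cannot enter the non-special handles, whose free edges lie on $\partial\Sigma^0$. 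The paper uses the same input, via \cite{seidel_fukaya_2008}, in Section \ref{sec:Lagvc}.

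The step that fails is your upgrade to a natural isomorphism of $A_\infty$ functors. The $A_\infty$ structure on $\mathrm{End}(\mathcal{O}_D\oplus\bigoplus_s\mathcal{O}_{p_s})$ for a cycle of rational curves is not intrinsically formal, and \cite{lekili_arithmetic_2017} proves no uniqueness statement of that kind. There, $A_\infty$ structures on this graded algebra are classified by arithmetic genus one curves with $n$ marked points and a nonzero section of the dualizing sheaf. The formal structure corresponds to a different curve (for $n=1$, the cuspidal rather than the nodal cubic). The $n$-gon is instead singled out by specific non-vanishing higher products. Even a genuine uniqueness statement would only make the two endomorphism algebras quasi-isomorphic. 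To identify two given functors you must also control $A_\infty$ automorphisms that act trivially, or by rescalings, on cohomology; that is a Hochschild cohomology question. Relatedly, ``absorbing rescalings into local systems'' confuses re-choosing an isomorphism of objects with changing the objects themselves.

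If you want more than the paper claims, argue as follows. Both $\varphi\circ\Phi^\partial$ and the inclusion composed with $\mathrm{Perf}(D)\simeq\mathcal{F}(T^0)$ are fully faithful (the latter by your maximum principle). After a global local system they have the same essential image. So they differ by an autoequivalence of $\mathrm{Perf}(D)$ fixing $\mathcal{O}_D$ and each $\mathcal{O}_{p_s}$, and you must then identify that autoequivalence. Otherwise, simply drop the last step.
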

      \begin{proof}
        The claim follows by combining the previous propositions and using the fact that $\mathcal{F}(T^0)$ is generated by a longitude and a collection of meridians. The meridians in $T^0$ (potentially equipped with a local system) get sent to $\mathcal{O}_p \in \mathrm{Perf}(D)$ for $p\in D$. The effect of $\Phi^\partial$ on these skyscrapers was described in the proof of Proposition \ref{prop:fullyfaithfulbdry}: for the $p$'s which do not lie on the fundamental cycle, the result is still a skyscraper sheaf (now on $\mathcal{D}$) which is mirror to a meridian on $\Sigma^0$. 
        
        On the points which lie on the fundamental cycle, we know by Propositions \ref{prop:fullyfaithfulbdry}, \ref{prop:specialmeridian} that the result is a complex of sheaves in $D^b(\mathcal{D})$ which, under the equivalence $D^b(\mathcal{D})\simeq \mathcal{W}(\Sigma^0)$ is given by a twisted complex equivalent to the meridian in the special torus as in Figure \ref{fig:meridian}. Finally, since $\Phi \mathcal{O}=\mathcal{O}$, the reference longitude remains unaffected.
      \end{proof}
    \begin{rem}
    In the above equivalence, one needs to pick which points on $D$ go to the exact Lagrangian $S^1$s with trivial local system: the other points on the same rational curve component will be sent to the same circle but with nontrivial local system. We will later ensure that it is the point $-1\in \mathbb{C}^\times$ in each rational curve component that is identified with the circle with trivial local system. 
    \end{rem}
  
  \subsection{Restricting the non-special sheaves to the boundary}
  We proceed to understand the restrictions of the Ishii-Ueda sheaves $N_d$ from $D^b(\mathcal{X})$ to $\mathrm{Perf}(\mathcal{D})$.  These sheaves are in fact pushed forward from the boundary, so we can use a push-pull exact triangle to compute.

  Let $q_m$ be an orbifold point of type $\frac{1}{r}(k,1)\equiv \frac{1}{r}(1,l)$ which is also a nodal point of $\mathcal{D}$. 
  \begin{defn}
  Let $\mathcal{S}_{q_m}^j$ be the $j$-thickened skyscraper in the $v$ direction on $\mathcal{D}$, namely the sheaf supported at $q_m$ associated to the module $\mathbb{C}[u,v]/(uv,u,v^j)$. For $d$ non-special with $i_{t-1}>d>i_{t}$, denote by $B_{m,d}:=\mathcal{S}_{q_m}^{j_t}\{l^{-1}d-j_t+1\}$.
  \end{defn}
  The sheaf $B_{m,d}$ is the sheaf such that $\iota_* B_{m,d} =N^{q_m}_d$ is the exceptional sheaf in the Ishii-Ueda collection.
  \begin{prop}
    The sheaf $\mathcal{S}_q^j$ in $D^b(\mathcal{D})$ is equivalent to the cone on the unique degree zero morphism $\mathcal{P}_{m+1}(r_m-j,-1)\rightarrow \mathcal{P}_{m+1}(r_m,-1)$ and $\mathcal{S}_q^j\{s\}$ is similarly given by the cone on the unique degree zero morphism $\mathcal{P}_{m+1}(r_m-j-s,-1)\rightarrow \mathcal{P}_{m+1}(r_m-s,-1)$. Moreover, there is an exact triangle $$\iota_\mathcal{D}^*\iota_{\mathcal{D}^*}\mathcal{S}_q^j\rightarrow \mathcal{S}_q^j \rightarrow \mathcal{S}_q^j \{k+1\}[2]$$
    
\end{prop}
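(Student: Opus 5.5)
The proof is a local computation, so I would work in a neighbourhood of the node $q=q_m$. There $\mathcal{D}$ is $[\{uv=0\}/\mu_r]$ with $\zeta\cdot(u,v)=(\zeta^k u,\zeta v)$, and sheaves are $\mu_r$-equivariant modules over $R=\mathbb{C}[u,v]/(uv)$. The branch $\{u=0\}$ is the $v$-axis, which lies on $\tilde{\mathcal{D}}_{m+1}$ near $q_{m+1,-}$. On that branch $\mu_r$ acts on the local coordinate $v$ with weight $1$, matching the convention that $\zeta$ acts on $\mathcal{O}(-q_{m+1,-})$ by $\zeta$.

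\textbf{Step 1: the cone description.} I would use that $\mathcal{P}_{m+1}(a,-1)$ projects under $\pi_*$ to the pushforward of the line bundle $\mathcal{O}(a[q_{m+1,-}]-[q_{m+1,+}])$ on $\tilde{\mathcal{D}}_{m+1}$. For $0\le r-j<r$, the unique degree-zero map $\mathcal{P}_{m+1}(r-j,-1)\to\mathcal{P}_{m+1}(r,-1)$ is the composite $x_{m+1}(r-1)\circ\dots\circ x_{m+1}(r-j)$. This composite is multiplication by the $j$-th power of the local section $v$ vanishing at $q_{m+1,-}$. It is injective, and its cokernel is supported at $q_{m+1,-}$ and is locally $\mathbb{C}[v]/(v^j)$ with some character twist. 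Pushing forward to $\mathcal{D}$ gives the module $R/(u,v^j)$, which is $\mathcal{S}^j_q$ up to twist. I would fix the twist by checking it against the case $j=1$, which must give $\mathcal{S}_q$ itself, and by the convention that $\{1\}$ corresponds to shifting the $\mathcal{O}(a q_{m+1,-})$ index by $-1$. Since twisting by $\{s\}$ multiplies the generator's character by $\zeta^{s}$, shifting both indices by $-s$ gives the second claim. Uniqueness of the degree-zero morphism follows because $\mathrm{Hom}$ between these pushforwards is the space of equivariant sections $H^0(\tilde{\mathcal{D}}_{m+1},\mathcal{O}(j q_{m+1,-}))^{\mu_r}$, which is one-dimensional for $j\le r$.

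\textbf{Step 2: the triangle.} I would take the triangle $C_{\iota^*}\to\mathrm{id}\to\iota_{\mathcal{D}*}\iota_{\mathcal{D}}^*$ already used in Proposition \ref{prop:restobdry}, in its form on $\mathcal{D}$ from \cite[Lemma 2.8]{kuznetsov_serre_2021}:
\[
\mathcal{F}\otimes\mathcal{O}_{\mathcal{X}}(-\mathcal{D})|_{\mathcal{D}}[1]\to \iota^*\iota_*\mathcal{F}\to\mathcal{F}.
\]
Rotating this gives $\iota^*\iota_*\mathcal{F}\to\mathcal{F}\to\mathcal{F}\otimes\omega_{\mathcal{X}}|_{\mathcal{D}}[2]$, using $\mathcal{O}(-\mathcal{D})\simeq\omega_{\mathcal{X}}$. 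For $\mathcal{F}=\mathcal{S}^j_q$, which is supported at the node, the tensor product $\mathcal{S}_q^j\otimes\omega_{\mathcal{X}}|_{\mathcal{D}}$ only depends on the fibre of $\omega_{\mathcal{X}}$ at $q$ as a $\mu_r$-character.

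\textbf{Step 3: identifying the character.} Locally $\omega_{\mathcal{X}}$ is generated by $du\wedge dv$, which has weight $k+1$. Equivalently, the generator $uv$ of the ideal of $\mathcal{D}$ has weight $k+1$. So tensoring with $\omega_{\mathcal{X}}|_{\mathcal{D}}$ is the twist $\{k+1\}$ on node-supported modules. The main obstacle is sign and convention bookkeeping: whether the twist comes out as $\{k+1\}$ or $\{-(k+1)\}$, given the dual conventions for characters on functions versus generators and the identification $-\{c\}=-\otimes\rho^{(k,1)}_c$. I would settle this by testing the $j=1$ case against the known self-$\mathrm{Ext}$ groups of $\mathcal{S}_q$ in \cite{lekili_auslander_2018}. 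By Serre duality on $\mathcal{D}$, $\mathrm{Ext}^2$ from $\mathcal{S}_q$ to a twist of itself must match the dualizing twist. I would then fix the sign to agree with the statement.
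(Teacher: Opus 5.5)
Your proposal is correct and is essentially the paper's proof. The cone is computed locally near the node as $v^j\colon R/(u)\{j+s\}\to R/(u)\{s\}$, with cokernel $R/(u,v^j)\{s\}$. The triangle is the rotated divisorial triangle of \cite[Lemma 2.8]{kuznetsov_serre_2021}, with $\mathcal{O}(-\mathcal{D})\simeq\omega_{\mathcal{X}}$ acting on node-supported sheaves through the weight $k+1$ of $du\wedge dv$ (equivalently of $uv$).

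A few small remarks. The sign is already fixed by the convention you adopt in Step 1 (the generator of $\mathcal{M}\{c\}$ has weight $c$), so no separate check is needed. ``Serre duality on $\mathcal{D}$'' would not supply one anyway: $\mathcal{D}$ is a curve, so duality pairs $\mathrm{Ext}^i$ with $\mathrm{Ext}^{1-i}$; its dualizing sheaf $\omega_{\mathcal{X}}(\mathcal{D})|_{\mathcal{D}}$ has trivial character at the node; and $\mathcal{S}_q$ is not perfect on $\mathcal{D}$. Finally, $H^0(\mathcal{O}(jq_{m+1,-}))$ is one-dimensional only for $0\le j<r$, not $j\le r$ (it is two-dimensional at $j=r$). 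This is harmless, since in the application $j=j_t<r$.
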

\begin{proof}
    The first claim is a local computation: near the node, the morphism is given by $R/(u) \{j+s\} \xrightarrow{v^j}R/(u)\{s\}$ whose cone is $R/(u,v^j)$, where $R=\mathbb{C}[u,v]/(uv)$. 
    The second claim follows from the exact triangle of functors (as in \cite[Lemma 2.8]{kuznetsov_serre_2021}) $$\iota_\mathcal{D}^*\iota_{\mathcal{D}*}\rightarrow \mathrm{id}\rightarrow -\otimes \mathcal{O}(-\mathcal{D})|_{\mathcal{D}}[2]$$ and using the fact that $\mathcal{D}$ is anticanonical, so that $\mathcal{O}(-\mathcal{D})=\omega_{\mathcal{X}}$. The sheaf is supported near the node, where the canonical sheaf restricts to $\mathcal{O}\otimes \rho_{det}$. Tensoring with this is the same as applying the twist $\{k+1\}$.
\end{proof}
\begin{defn}\label{def:Ad}
  We define the arc $A_{m,d}\subset \Sigma^0$ to be an arc which begins at the bottom of $H^+_{m, l^{-1}d}$, goes inside the cylinder $C_{m+1}$ and ends at the top of $H^+_{m,l^{-1}d-(j-1)}$. We define the arc $A'_{m,d}$ to be a translation of $A_{m,d}$, beginning at the bottom of $H^+_{m, l^{-1}d+k+1}$, going inside the cylinder $C_{m+1}$ and ending at the top of $H^+_{m,l^{-1}d-(j-1)+k+1}$.
\end{defn}
A straightforward application of Proposition \ref{propappendix:overlap} from the Appendix implies that these two arcs do not intersect.

\begin{prop}\label{prop:mirrortoBd}
The objects $\varphi(B_{m,d})[1], \varphi(B_{m,d}\{k+1\})[1] \in \mathcal{W}(\Sigma^0)$ are equivalent to Lagrangian branes supported on $A_{m,d}, A'_{m,d}$, respectively.
\end{prop}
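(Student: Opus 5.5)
The plan is to combine the description of $\mathcal{S}_q^j\{s\}$ as a cone (the preceding proposition) with the Lekili--Polishchuk dictionary and Remark \ref{rem:alllinebundles}. This turns $\varphi(B_{m,d})$ into a cone of two arcs of type $P^-_{m+1,\bullet}$ in $\mathcal{W}(\Sigma^0)$. A single surgery along a boundary Reeb chord then produces one arc, which we identify with $A_{m,d}$.

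Write $j=j_t$ and $s=l^{-1}d-j+1$, so that $B_{m,d}=\mathcal{S}^{j}_{q_m}\{s\}$. By the preceding proposition, $B_{m,d}$ is the cone of the unique degree zero morphism
\[\mathcal{P}_{m+1}(r_m-j-s,-1)\to\mathcal{P}_{m+1}(r_m-s,-1).\]
The Lekili--Polishchuk equivalence sends $\mathcal{P}_{m+1}(a,-1)\mapsto P^-_{m+1,a}$. When $a\notin[0,r_m]$, we use Remark \ref{rem:alllinebundles} to reduce to that range by Dehn twists along the meridian of $C_{m+1}$. Hence $\varphi(B_{m,d})$ is the twisted complex
\[P^-_{m+1,r_m-j-s}[1]\dashrightarrow P^-_{m+1,r_m-s},\]
and $\varphi(B_{m,d})[1]$ is the same complex shifted by one.

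Geometrically, $P^-_{m+1,a}$ is an arc in $C_{m+1}$. It runs from the negative boundary of $C_{m+1}$, entering next to the handle labelled by $a$ on $\partial_-C_{m+1}$, out to the far end. The index $a$ records where it sits relative to the handles $H^+_{m,\bullet}$. The two arcs $P^-_{m+1,r_m-j-s}$ and $P^-_{m+1,r_m-s}$ share their far endpoints, which are isotopic in the same boundary component. The degree zero morphism between them is the unique boundary Reeb chord running along $\partial_-C_{m+1}$, passing the stops of the $j$ handles between positions $r_m-j-s$ and $r_m-s$ (a short chord).

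Performing surgery at this chord, in the same way as in the proof of Proposition \ref{prop:specialmeridian}, or equivalently via the functor from $\mathcal{W}(0;3)$ of \cite[Section 2.1]{lekili_auslander_2018}, cancels the common far ends. The result is a single arc confined near $\partial_-C_{m+1}$. It starts at one handle and ends at the other, with both endpoints at the handles themselves, so it is an arc between the bottom of one handle and the top of another.

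It remains to match indices. Using $H^+_{m,-ka}=H^-_{m,a}$ and $kl\equiv1 \pmod r$, we convert the positions $r_m-s$ and $r_m-j-s$ into the labels $H^+_{m,l^{-1}d}$ and $H^+_{m,l^{-1}d-(j-1)}$. This gives exactly $A_{m,d}$ from Definition \ref{def:Ad}. The identical computation with $s$ replaced by $s+k+1$ gives the translate $A'_{m,d}$ for $B_{m,d}\{k+1\}$, since the twist $\{k+1\}$ shifts all labels by $k+1$. The grading shift $[1]$ is fixed by checking that the surgered arc is graded with respect to the horizontal line field, using the same convention as in the meridian computation.

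I expect the main obstacle to be this index and orientation bookkeeping. One has to decide whether the endpoints land at the "top" or "bottom" of the handles and whether the label is $l^{-1}d$ or $l^{-1}d+1$; the off-by-one comes from the convention $\mathcal{P}_m(0,j-1)\leftrightarrow P^+_{m,j}$ and from $\mathcal{E}^-_m(j)\leftrightarrow E^-_{m,j-1}$. I would settle this first on the $\frac{1}{5}(1,3)$ example of Figure \ref{fig:1/5(1,3)}, and then run the general argument with the same conventions.
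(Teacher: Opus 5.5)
Your route is the paper's: write $B_{m,d}$ as a cone of two $P^-_{m+1,\bullet}$ arcs, surger, and match labels. However, you omit the one nontrivial step of its proof, and your fallback does not replace it.

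Since $\{c\}$ depends only on $c$ mod $r_m$, you should work with the residue $\bar s\in[0,r_m-1]$ of $l^{-1}d-j+1$. Then $r_m-\bar s\in[1,r_m]$ automatically, and everything hinges on $r_m-j-\bar s\ge 0$, i.e.\ $\bar s\le r_m-j$. The paper proves this. If $\bar s=r_m-j+x$ with $1\le x\le j-1$, then $l^{-1}d+1\equiv x$, so $d-(x-1)l\equiv 0 \pmod{r_m}$. This contradicts \cite[Lemma 3.9]{ishii_special_2013}: the residues of $d-al$ for $0\le a<j$ are non-special weights, hence nonzero. Geometrically, the bound says the $j$ handles jumped by $A_{m,d}$ do not straddle the gap where $P^-_{m+1,0}$ and $P^-_{m+1,r_m}$ enter. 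It is the same Ishii--Ueda input that later keeps $A_{m,d}$ off the special handles.

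Appealing to Remark~\ref{rem:alllinebundles} instead does not close the gap. If $r_m-j-\bar s<0$, the source object is a meridian Dehn twist of $P^-_{m+1,2r_m-j-\bar s}$. Its index $2r_m-j-\bar s$ exceeds $r_m-\bar s$ by $r_m-j$, rather than falling short of it by $j$. So the two arcs are no longer a pair of untwisted arcs whose handle-end indices differ by $j$, which is what your surgery picture and index matching presuppose. You would need a separate argument for what this twisted cone is. The real obstacle is therefore this range condition, not an off-by-one. (The paper invokes the remark only for the twisted object $B_{m,d}\{k+1\}$.)

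Separately, your surgery step is misdescribed. The boundary of $\Sigma^0$ does not run along $\partial_-C_{m+1}$ past the handle feet. From a gap on $C_{m+1}$ it continues along a handle edge, which carries a stop, into $C_m$. The unique degree-zero morphism between $P^-_{m+1,a}$ and $P^-_{m+1,b}$ is the short stop-free chord at their common far end. Surgery along a chord at one end joins the two opposite ends. So it is surgery at the far end that yields an arc running from one handle gap to the other and hugging the $j$ handle feet. A chord at the handle end would instead produce an arc joining the two far ends. Your text places the chord at the handle end but then states the outcome of surgering at the far end.
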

\begin{proof}
Recall that $B_{m,d}$ is equivalent to the cone on the unique degree zero morphism $\mathcal{P}_{m+1}(r_m-j-\overline{l^{-1}d-j+1},-1)\rightarrow \mathcal{P}_{m+1}(r_m-\overline{l^{-1}d-j+1},-1)$, where $\overline{l^{-1}d-j+1}$ denotes the residue modulo $r_m$. In fact, this residue satisfies $\overline{l^{-1}d-j+1}\leq r_m-j$: supposing for the sake of contradiction that $\overline{l^{-1}d-j+1}=r_m-j+x$ where $0<x\leq j$, then this would imply that $d-lj+l\equiv lx-lj$ which in turn implies that $d-(x-1)l\equiv 0$. However, by \cite[Lemma 3.9]{ishii_special_2013}, the residue of $d-(x-1)l$ mod $r_m$ is the weight of a non-special representation bigger than $d$, so it cannot be zero.

Hence, upon applying $\varphi$, we see that $\varphi(B_d)[1]$ is isomorphic to the twisted complex $$P^-_{m+1, r_m-j-\overline{l^{-1}d-j+1}}[1]\dashrightarrow P^-_{m+1,r_m-\overline{l^{-1}d-j+1}}$$
where the morphism in this twisted complex comes from the unique degree zero morphism. This is equivalent (after performing surgery, or equivalently observing a triangle giving a functor from the $A_2$ sector $\mathcal{W}(0,3)$) to the arc $A_{m,d}$ from Definition \ref{def:Ad}. The claim about $\varphi(B_d\{k+1\})$ follows along identical lines: $B_d\{k+1\}$ is equivalent to the cone on the morphism $\mathcal{P}_{m+1}(r_m-j-\overline{l^{-1}d-j+1}-k-1,-1)\rightarrow \mathcal{P}_{m+1}(r_m-\overline{l^{-1}d-j+1}-k-1,-1)$. Upon applying $\varphi$ (and recalling Remark \ref{rem:alllinebundles}), one gets a twisted complex built out of two Lagrangian arcs which is quasi-equivalent to $A'_{m,d}$.
\end{proof}

\begin{defn}\label{def:Ld}
  Consider the cores $c_{1+d+l}, c_{1+d+l-j_t l}, c_{d+l}$ and $c_{d+l-j_tl}$ of the respective handles on $\Sigma^0$ $H^-_{m,-(d+l)}, H^-_{m,-(d+l+1)}, H^-_{m,-(d-i_t+l)}, H^-_{m,-(d-i_t+l+1)}$ \footnote{If there are any duplicates, take disjoint parallel translates so that $c_{1+d+l}$ is above $c_{d+l}$, which in turn is above $c_{1+d+l-j_tl}$ which finally sits above $c_{d+l-j_tl}$}. Join these together by:
  \begin{itemize}
    \item An arc going from $\partial_- c_{1+d+l}$ down to $\partial_-c_{d+l}$
    \item An arc going from $\partial_+c_{1+d+l}$ up to $\partial_+c_{1+d+l-j_tl}$
    \item An arc going from $\partial_-c_{1+d+l-j_tl}$ down to $\partial_-c_{d+l-j_t}$
    \item An arc going from $\partial_+c_{d+l}$ up to $\partial_+c_{d+l-j_tl}$
  \end{itemize}
  The resulting Lagrangian $S^1$ is denoted $L_{m,d}\subset\Sigma^0$.
\end{defn}

\begin{prop}\label{prop:mirrortoLd}
        The object $\varphi(\iota_\mathcal{D}^* \iota_{\mathcal{D}*}) B_{m,d}\in \mathcal{W}(\Sigma^0)$ is equivalent to a Lagrangian brane supported on $L_{m,d}$ as in Definition \ref{def:Ld}.
\end{prop}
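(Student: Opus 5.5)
The plan is to use the push-pull exact triangle from the preceding proposition,
\[
\iota_\mathcal{D}^*\iota_{\mathcal{D}*}B_{m,d}\rightarrow B_{m,d}\rightarrow B_{m,d}\{k+1\}[2].
\]
Applying the equivalence $\varphi$, this identifies $\varphi(\iota_\mathcal{D}^*\iota_{\mathcal{D}*}B_{m,d})$ with the (shifted) cone of a degree-one morphism between $\varphi(B_{m,d})$ and $\varphi(B_{m,d}\{k+1\})[1]$. By Proposition \ref{prop:mirrortoBd}, these objects are, up to shift, the Lagrangian arcs $A_{m,d}$ and $A'_{m,d}$. The object is therefore a twisted complex
\[
A_{m,d}\dashrightarrow A'_{m,d},
\]
with appropriate shifts, and the task reduces to identifying the connecting morphism and performing surgery.

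First, identify the morphism $B_{m,d}\to B_{m,d}\{k+1\}[2]$ on the B-side. It is the connecting map of the Kuznetsov triangle and, by adjunction, it is nonzero; in fact $\iota_\mathcal{D}^*\iota_{\mathcal{D}*}B_{m,d}$ is not a direct sum, since $\iota_{\mathcal{D}*}B_{m,d}=N_d$ is exceptional. Locally at the node, $\mathrm{Ext}^2_\mathcal{D}(B_{m,d},B_{m,d}\{k+1\})$ should be computed from the periodic free resolution of $R/(u,v^{j})$ over $R=\mathbb{C}[u,v]/(uv)$. I expect it to be spanned by two classes, coming from the composite of the $u$-direction and $v$-direction periodicity maps, and the connecting morphism should be a combination of both with nonzero coefficients.

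On the A-side, compute $\mathrm{Hom}_{\mathcal{W}(\Sigma^0)}(A_{m,d},A'_{m,d})$ in the relevant degree. Since the arcs are disjoint (Proposition \ref{propappendix:overlap}), the morphisms are Reeb chords along $\partial\Sigma^0$. The chords of the right degree run from the endpoints of $A_{m,d}$, at the handles $H^+_{m,l^{-1}d}$ and $H^+_{m,l^{-1}d-(j-1)}$, to the corresponding endpoints of $A'_{m,d}$, shifted by $k+1$. Each such chord passes across one handle region, and this should account for the cores $c_{\bullet}$ listed in Definition \ref{def:Ld}. One then checks that the generator of the cone corresponds, via the Lekili-Polishchuk dictionary for the $P^-$ generators, to the sum of the two short chords, one at each end.

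Surgery of the arcs $A_{m,d}$ and $A'_{m,d}$ along both Reeb chords simultaneously then gives a closed curve. Concretely, one passes through the handles, turning each chord into a segment that runs along a core, and so produces a Lagrangian $S^1$. The translation convention $H^+_{m,a}=H^-_{m,-a/k}$, together with $kl\equiv 1$, matches these handles with $H^-_{m,-(d+l)}$, $H^-_{m,-(d+l+1)}$ and so on. Checking this index bookkeeping is the heart of the identification with $L_{m,d}$, including the ordering of parallel copies in the footnote when handles coincide.

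The main obstacle is showing that the connecting morphism uses \emph{both} chords nontrivially. A single chord would produce an arc rather than a circle. I would first try to settle this by a dimension or exceptionality argument rather than by an explicit chain-level computation. Proposition \ref{prop:restobdry} gives $\mathrm{End}(\iota^*N_d)=\mathbb{C}$, so the cone has one-dimensional endomorphisms in degree zero. This rules out the arc obtained from a single chord, and also the split sum, since these have larger endomorphism algebras. Local systems and gradings would then be fixed by the horizontal line field.
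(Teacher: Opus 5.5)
\textbf{Gap in the final step.} Up to the last step your outline is the paper's proof: the push-pull triangle, $\varphi$ turning $B_{m,d}$ and $B_{m,d}\{k+1\}$ into the arcs $A_{m,d}$ and $A'_{m,d}$, a two-dimensional space of degree-two morphisms realised by two Reeb chords, and surgery along both chords. The gap is in how you force the connecting morphism to use both chords.

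Proposition \ref{prop:restobdry} assumes $\mathrm{Ext}^\bullet_\mathcal{X}(B,A)=0$, and this fails for $A=B=N^{q_m}_d$. Redo its long exact sequence with $\mathrm{Ext}^\bullet_\mathcal{X}(N_d,N_d\otimes\omega_\mathcal{X})\cong\mathrm{Ext}^{2-\bullet}_\mathcal{X}(N_d,N_d)^\vee\cong\mathbb{C}[-2]$. You get $\mathrm{Ext}^\bullet_\mathcal{D}(\iota_\mathcal{D}^*N_d,\iota_\mathcal{D}^*N_d)\cong\mathbb{C}\oplus\mathbb{C}[-1]$, which is the Floer cohomology of a circle, not $\mathbb{C}$.

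The weaker fact you actually use is that the degree-zero part is one-dimensional. That is true, but it does not exclude the arc produced by surgery along a single chord, because arcs in $\mathcal{W}(\Sigma^0)$ can also have one-dimensional $HW^0$. For example, $A_{m,d}$ itself does: $\mathrm{Hom}^0(B_{m,d},B_{m,d})$ is the weight-zero part of $\mathbb{C}[v]/(v^{j_t})$, which is $\mathbb{C}$ since $j_t<r$. So your dimension argument, as written, only rules out the split sum.

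\textbf{How to repair it.} What rules out the arc is finiteness, equivalently perfectness, and this is how the paper argues. The two-term complex $[B_{m,d}\to B_{m,d}\{k+1\}[2]]\simeq\iota_\mathcal{D}^*N^{q_m}_d$ is perfect. Hence its image under $\varphi$ lies in the image of $\mathrm{Perf}(\mathcal{D})$, which Lekili--Polishchuk identify with the compact Fukaya category $\mathcal{F}(\Sigma^0)$. An arc cannot lie there, since its wrapped Floer cohomology in $\mathcal{W}(\Sigma^0)$ is infinite-dimensional (chords winding repeatedly around the boundary). Surgery along a single chord produces exactly such an arc, so both chords must appear with nonzero coefficients. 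You can fix your version either by citing perfectness in this way, or by comparing the corrected total dimension $2$ of $\mathrm{Ext}^\bullet_\mathcal{D}(\iota_\mathcal{D}^*N_d,\iota_\mathcal{D}^*N_d)$ with the infinite-dimensional $HW^\bullet$ of the single-chord arc.
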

\begin{proof}
Recall that there is an exact triangle $\iota_\mathcal{D}^* \iota_{\mathcal{D}*} B_{m,d} \rightarrow B_{m,d} \rightarrow B_{m,d}\{k+1\}[2]$. 
There is a 2-dimensional $\mathrm{Ext}^2$ space between $B_{m,d}$ and $B_{m,d}\{k+1\}$\footnote{This follows by using the local projective resolution $\dots \rightarrow R\{k+1\}\oplus R\{k+j\}\rightarrow R\{k\} \oplus R\{j\}\rightarrow R$ of $\mathcal{S}_q^j$, with the $\mathrm{Ext}^2$ group identified with $\mathrm{Hom}_R(R\{k+1\}\oplus R\{k+j\}, R/(u,v^j)\{k+1\})=\mathbb{C}\langle 1\rangle \oplus \mathbb{C}\langle v^{j-1}\rangle$}. The complex $[B_{m,d}\rightarrow B_{m,d}\{k+1\}[2]]$ is perfect since it is quasi isomorphic to $\iota_\mathcal{D}^*N^{q_m}_d$. Applying $\varphi$ to this complex, the result is quasi-isomorphic to the twisted complex \begin{equation}\label{eq:twistedcomplex}
A_{m,d}\dashrightarrow A'_{m,d}[1]
\end{equation}
with connecting differential induced by a degree $2$ morphism. There are two Reeb chords generating $HW^2(A_{m,d}, A'_{m,d})$ described by the following paths on $\partial \Sigma^0$ which traverse exactly two stops:
\begin{itemize}
\item From the top of the handle $H^+_{m,kd-j+1}=H^-_{m, -(d-l(j-1))}$ follow the Reeb flow, reaching the bottom of handle $H^-_{m,-(d-l(j-1))-1}=H^+_{m,kd-(j-1)+k}$ and afterwards the top of $H^+_{m,kd-(j-1)+k+1}$ where the Reeb chord reaches the top of $A'_{m,d}$.
\item From the bottom of the handle $H^+_{m,kd}$ follow the Reeb flow, reaching the top of $H^+_{m,kd+1}=H^-_{m,-(d+l)}$ and afterwards the bottom of $H^-_{m,-(d+l+1)}=H^+_{m,kd+1+k}$ where the Reeb chord reaches the bottom of $A'_{m,d}$.
\end{itemize}
The handles which are traversed by these Reeb chords are $H^-_{m,-(d+l)},H^-_{m,-(d+l+1)}, H^-_{m,-(d-i_t+l)}, H^-_{m,-(d-i_t+l+1)}$ since $lj\equiv i \mod r$. 
These two Reeb chords are disjoint: to see this, the only possible problematic scenario is when the Reeb chords are passing over the same boundary component of the same handle.  However, out of the four handles $H^-_{m,-(d+l)},H^-_{m,-(d+l+1)}, H^-_{m,-(d-i_t+l)}, H^-_{m,-(d-i_t+l+1)}$ the only two which could be equal are $H^-_{m,-(d+l)}$ and $H^-_{m,-(d-i_t+l+1)}$ which occurs when $i_t=1$. However, on this handle, one of the Reeb chords traverses the bottom and the other the top part of the boundary.
      
The morphism in the complex \ref{eq:twistedcomplex} is a linear combination of these two Reeb chords. However, since the resulting complex is perfect, both morphisms have to be used (if only one morphism is used, the resulting surgery would produce an arc, rather than a compact Lagrangian). Applying surgery along both Reeb chords, the result follows.
\end{proof}

\begin{eg}
Consider again a $\frac{1}{5}(2,1)\equiv \frac{1}{5}(1,3)$ point $q_m$. In Ishii-Ueda's exceptional collection, there are two sheaves $N^{q_m}_2, N^{q_m}_4$ associated to the two non-special representations: $N^{q_m}_4$ is the effect of pushing forward $\mathcal{O}_{q_m}\otimes \rho_4=\mathcal{S}_{q_m}\{3\}$ to $\mathcal{X}$, whereas $N^{q_m}_2$ is the effect of pushing forward $\mathcal{S}_{q_m}^2\{3\}=[\mathcal{O}_{q_m}\otimes \rho_4 \rightarrow \mathcal{O}_{q_m} \otimes \rho_2[1]]=[\mathcal{S}_{q_m}\{3\}\rightarrow \mathcal{S}_{q_m}\{4\}]$ to $\mathcal{X}$. The push-pull sequence is $\iota_\mathcal{D}^*N^{q_m}_2\rightarrow \mathcal{S}_{q_m}^2\{3\}\rightarrow \mathcal{S}_{q_m}^2\{1\}$, which is described below on the mirror:
\end{eg}

\begin{figure}[H]
      \input{./TikzFiles/skyscrapersurgery.tikz}
          \smash{ \begin{tikzpicture}[overlay, remember picture] 
          \node[inner sep =0] at ($(E)+(7.5cm,3cm)$) {$\simeq$}; \end{tikzpicture} }
         \caption{ \label{fig:skyscrapersurgery}Left: the two arcs mirror to the twice-thickened skyscrapers $\mathcal{S}_{q_m}^2\{3\}=B_{m,2}, \mathcal{S}_{q_m}^2\{1\}=B_{m,2}\{3\}$. Right: a Lagrangian which is isomorphic to $\iota^*N^{q_m}_2$.}
      \end{figure}

\subsection{An abstract Lefschetz fibration and its destabilization}
Following \cite[Definition 1.9]{giroux_existence_2016} and inspired by \cite[Definition 3.2]{hacking_homological_2023} we finally construct the abstract Lefschetz fibration mirror to the pair $(\mathcal{X}, \mathcal{D})$.
\begin{constr}\label{constr:Lefschetz}
The Lefschetz fibration $w_{\mathcal{X}}:W'\rightarrow \mathbb{C}$ associated to an effective log Calabi-Yau orbifold $ (\mathcal{X}, D^{orb})$ is defined to be the exact Lefschetz fibration associated to the data $$\{ \Sigma^0, (\mathfrak{N}_{q_1}, \dots, \mathfrak{N}_{q_N}, \mathcal{V}_1, \mathcal{V}_2, \dots, \mathcal{V}_n)\}$$ consisting of:
\begin{itemize}
\item A bordered Riemann surface $\Sigma^0$, which is the Lekili-Polishchuk mirror to $\mathcal{D}$. This is the reference fiber of $w_{\mathcal{X}}:W'\rightarrow \mathbb{C}$.
\item A collection of Lagrangian $S^1$s $(\mathcal{V}_1, \dots, \mathcal{V}_n)$ which are the Hacking-Keating mirrors to a full exceptional collection of line bundles $\langle \mathcal{L}_1, \dots \mathcal{L}_n\rangle=D^b(Y)$. These Lagrangians are constructed using the identification of $F$ with the special torus $T^0$ from Definition \ref{def:specialtorus}, where $F$ is the reference fiber of the Hacking-Keating mirror $w_Y:W\rightarrow \mathbb{C}$ to $(Y,D)$.
\item For each orbifold point $q_m\in \mathcal{X}$ of type $\frac{1}{r}(1,l)$, a collection of Lagrangian $S^1$s indexed by the non-special representations: $\mathfrak{N}_m:=\{ L_{m,d}\}_{d\in \{0,1,\dots, r\}\setminus I(r,l)}$ as in Definition \ref{def:Ld}. These are ordered in increasing order.
\end{itemize}
\end{constr}

  \subsubsection{Some geometric properties and intersection numbers}
  On the B-side, since the Ishii-Ueda collection is exceptional, there are the following hom-spaces: $$\begin{gathered}
  0=\mathrm{Ext}^\bullet_{\mathcal{X}}(\iota_{\mathcal{D}*}B_{m,d}, \iota_{\mathcal{D}*}B_{m,d'})=\mathrm{Ext}^\bullet_{\mathcal{D}}(\iota_\mathcal{D}^*\iota_{\mathcal{D}*}B_{m,d} ,B_{m,d'}), d'<d\\
  \mathbb{C}=\mathrm{Ext}^\bullet_{\mathcal{X}}(\iota_{\mathcal{D}*}B_{m,d}, \iota_{\mathcal{D}*}B_{m,d})=\mathrm{Ext}^\bullet_{\mathcal{D}}(\iota_\mathcal{D}^*\iota_{\mathcal{D}*}B_{m,d} ,B_{m,d})
  \end{gathered}$$
  In $\Sigma^0$, the mirror to $\iota_\mathcal{D}^*\iota_{\mathcal{D}*} B_{m,d} $ is the Lagrangian $S^1$ $L_{m,d}$, whereas the mirror to $B_{m,d}$ is the arc $A_{m,d}$. This suggests that these arcs should act as destabilizing arcs.

  \begin{prop}\label{prop:intersections}
    The arcs $A_{m,d}$ and the circles $L_{m,d}$ have the following properties:
    \begin{itemize}
      \item The Lagrangians $L_{m,d}$ and $A_{m,d'}$ do not intersect when $d'<d$. When $d=d'$, they intersect at a single point.
      \item The arcs $A_{m,d}$ are disjoint from all special meridians $M_{m,i}$. Moreover, $A_{m,d}$ is completely disjoint from $L_{m',d'}$ if $m\neq m'$.
      \item The result upon cutting $\Sigma^0$ along all the arcs $A_{m,d}$ is the special torus $T^0$, which is the same as the result upon cutting the cocores $S_{m,d}$.
      \item The meridians in the special torus, together with the $L_{m,d}$, form a homologically linearly independent collection in $H_1(\Sigma^0)$. Together with a longitude, they form a basis of $H_1(\Sigma^0)$.
    \end{itemize}
  \end{prop}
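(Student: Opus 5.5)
The plan is to reduce everything to a combinatorial bookkeeping of which handles $H^\pm_{m,\cdot}$ each Lagrangian passes through and where its endpoints lie on the boundary circles $\partial_- C_{m+1}$ and $\partial_+ C_m$. All the curves involved are built from a few standard pieces: handle cores, vertical arcs inside the cylinders $C_m, C_{m+1}$, and, for the meridians, the arcs of Proposition \ref{prop:specialmeridian}. Since $A_{m,d}$ (Definition \ref{def:Ad}) is supported in a neighbourhood of $C_{m+1}$ near the node $q_m$, the statement about $m\neq m'$ is immediate: $L_{m',d'}$ lives near the handles of $q_{m'}$, and isotoping the arcs slightly into the cylinder near the correct handles makes them disjoint. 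Everything else is local to one node, so I will fix $q_m$ of type $\frac1r(1,l)$ and write $H_a:=H^+_{m,a}$, with positions in $\mathbb{Z}/r$ recorded via $l^{-1}$.

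For the first bullet, $A_{m,d'}$ runs inside $C_{m+1}$ between the handles with indices $l^{-1}d'$ and $l^{-1}d'-(j-1)$, so it meets exactly those vertical arcs of $L_{m,d}$ in $C_{m+1}$ whose vertical span interlaces with this window. In $C_{m+1}$, $L_{m,d}$ consists of two vertical arcs joining the handle pairs $\{kd, kd+1\}$ and $\{kd-(j-1), kd-(j-1)+1\}$ (cf.\ the Reeb chords in Proposition \ref{prop:mirrortoLd}). For $d=d'$ the arc $A_{m,d}$ starts at the bottom of $H_{l^{-1}d}$ and ends at the top of $H_{l^{-1}d-(j-1)}$. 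One of the two attaching points belongs to the surgered region, so exactly one transverse crossing occurs; this matches $\mathrm{Ext}^\bullet_{\mathcal D}(\iota^*\iota_*B_{m,d},B_{m,d})=\mathbb{C}$. For $d'<d$ I will invoke Proposition \ref{propappendix:overlap} together with \cite[Lemma 3.9]{ishii_special_2013} on the ordering of $d-(x-1)l$ modulo $r$. These show that the cyclic interval of handle indices spanned by $A_{m,d'}$ is either nested inside or disjoint from the intervals spanned by the arcs of $L_{m,d}$, so after isotopy there are no intersections. I expect this interval-nesting verification, i.e.\ translating the Ishii–Ueda ordering of non-special weights into the relative position of the handles via Observation \ref{ob:observation}, to be the main obstacle.

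For the second bullet, by Observation \ref{ob:observation} the special handles occur in the same cyclic order on both sides, and a special meridian $M_{m,i}$ passes only through the special handles $H^-_{m,0}$ and $H^-_{m,-i}$ along consecutive special positions. The endpoints of $A_{m,d}$ sit at non-special handles lying strictly between two consecutive special ones, since $i_t<d<i_{t-1}$. Hence $A_{m,d}$ can be isotoped within $C_{m+1}$ to avoid the meridian's arcs.

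For the third bullet, each $A_{m,d}$ is a cocore-type arc crossing the belt region of exactly one non-special handle once, up to handle slides over other non-special handles. I will argue by induction on $d$ (from $d_{\max}$ down) that cutting along $A_{m,d}$ removes one non-special handle after sliding: the endpoints of $A_{m,d}$ differ from those of the cocore $S_{m,\cdot}$ by arcs running along the boundary through handles already cut. Cutting along all of them therefore yields the same surface as cutting all non-special cocores, namely $T^0$ by Definition \ref{def:specialtorus}.

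For the last bullet, I will count: $\mathrm{rk}\,H_1(\Sigma^0)$ equals $\mathrm{rk}\,H_1(T^0)$ plus the number of non-special handles. By the first bullet, the intersection matrix between $\{L_{m,d}\}$ and $\{A_{m,d}\}$ in $H_1(\Sigma^0)\times H_1(\Sigma^0,\partial)$ is unitriangular. The meridians and longitude pair to zero with all $A$'s by the second bullet and are a basis of $H_1(T^0)$ (Hacking–Keating). Unitriangularity then gives linear independence and the basis statement.
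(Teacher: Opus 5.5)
You have the same structure as the paper for the $m\neq m'$ case and for the last three bullets: windows of non-special handles, the same downward induction for the cutting statement, and triangularity of the pairing with the arcs $A_{m,d}$. The gap is the first bullet for $d'<d$, which is the real content of the proposition and is asserted rather than proved.

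Your non-interleaving formulation is the right test, provided nesting is allowed in both directions. $A_{m,d'}$ cuts off a half-disc of $C_{m+1}$ containing the handles at positions $kd'-a$, $0\le a<j'$. $L_{m,d}\cap C_{m+1}$ consists of two arcs, ending at the handles of weights $d+l,\ d-i_t+l$ and $d+l+1,\ d-i_t+l+1$ respectively. $L_{m,d}$ can be isotoped off $A_{m,d'}$ iff, for each of these two arcs, the window contains both or neither of its ends. The reverse nesting really occurs. For $\frac15(1,3)$, the window of $A_{m,2}$ consists of the handles of weights $2$ and $4$, which are exactly the two ends of one arc of $L_{m,4}\cap C_{m+1}$. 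That arc lies inside the half-disc, and the curves are disjoint.

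However, nothing you cite establishes non-interleaving. \cite[Lemma 3.9]{ishii_special_2013} only says that the weights $d'-al$ in the window are non-special and at least $d'$. Proposition~\ref{propappendix:overlap} only gives $k+j<r$. Neither locates the handles of weights $d+l$, $d+l+1$, $d-i_t+l$, $d-i_t+l+1$ relative to the window of a smaller $d'$, and neither shows that the windows form a nested-or-disjoint family.

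The missing input is the exceptionality of the Ishii--Ueda collection. The paper imports it from the B-side, via Serre duality and \cite[Proposition 3.10]{ishii_special_2013}, rather than deriving it from Lemma 3.9. With your formulation, the cleanest way to use it is Floer-theoretic:
\begin{itemize}
\item By adjunction, $\mathrm{Ext}^\bullet_{\mathcal D}(\iota_{\mathcal D}^*\iota_{\mathcal D*}B_{m,d},B_{m,d'})\cong\mathrm{Ext}^\bullet_{\mathcal X}(N^{q_m}_d,N^{q_m}_{d'})$, which is $0$ for $d'<d$ and $\mathbb{C}$ for $d=d'$.
\item By Propositions~\ref{prop:mirrortoBd} and \ref{prop:mirrortoLd}, this computes $HF(L_{m,d},A_{m,d'})$ up to shift.
\item For an essential closed curve and an arc on a surface in minimal position the Floer differential vanishes, so this rank equals their geometric intersection number.
\end{itemize}

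A smaller point concerns the second bullet. The condition $i_t<d<i_{t-1}$ is about weights, and $w\mapsto kw$ is not monotone on non-special weights. For $\frac15(1,3)$, $d=4$ has $t=1$, but its handle sits at position $3$, between $j_2=2$ and $j_3=5$. Moreover, non-special endpoints alone do not exclude a special handle inside the window. What you need is that every $d-al$ with $0\le a<j_t$ is non-special \cite[Lemma 3.9]{ishii_special_2013}, as in the paper. The companion fact that these weights exceed $d$ for $a>0$ \cite[Corollary 3.8]{ishii_special_2013} is what justifies ``handles already cut'' in your induction for the third bullet.
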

  \begin{proof}
  When $m\neq m'$, it follows directly by construction that $A_{m,d}$ is disjoint from $L_{m',d'}$. The rest of the first claim is a reformulation of Ishii and Ueda's proof that the $N^{q_m}_d$ form an exceptional collection. By construction, the arc $A_{m,d'}$ will intersect $L_{m,d}$ precisely when either one of the four conditions are satisfied: $$kd'-a = kd+1 \text{ or } kd-j+1 \text{ or } kd+k+1 \text{ or } kd-j+k+1$$where $0\leq a < j'$. This is equivalent to $$d'-al = d+l \text{ or } d-i+l \text { or }d+l+1 \text{ or }d-i+l+1$$
  In turn, each of these cases corresponds to the existence of a morphism between $N^{q_m}_{d'}$ and $N^{q_m}_{d+l+1}$ in $D^b(\mathcal{X})$. However, by Serre duality, these morphisms are the same as the morphisms between $N^{q_m}_d$ and $N^{q_m}_{d'}$ and we conclude by using \cite[Proposition 3.10]{ishii_special_2013}.

  For the second claim: the arc $A_{m,d}$ goes from the bottom of $H^+_{m,kd}=H^-_{m,-d}$ to the top of $H^+_{m,kd-(j-1)}=H^-_{m,-d+l(j-1)}$. In other words, it jumps over all the handles $H^-_{m,-d}, H^-_{m,-(d-l)},\dots, H^-_{m,-(d-(j-1)l)}$. By \cite[Lemma 3.9]{ishii_special_2013}, $d-al$ is non-special for $0\leq a <j$. Therefore, the arc $A_{m,d}$ jumps over $j$ non-special handles (see Figure \ref{fig:skyscrapersurgery}) and hence is disjoint from the meridians of the special torus.

  The third claim is similarly a reformulation of the fact that $N^{q_m}_d$ for $d$ non-special generate the same subcategory as $\mathcal{O}_{q_m}\otimes \rho_d$. We use the same induction argument as in \cite[Proposition 3.5]{ishii_special_2013}: first of all, for $d>l$ the arc $A_{m,d}$ is just $S_{m,-d}$. Cutting these destroys some of the non-special handles, namely $H^-_{m,-d}$ for $d>l$. We will show, by induction, that after cutting all the arcs $A_{m,d}$ for $d>d'$, the effect of cutting out $A_{m,d}$ is the same as cutting $S_{m,-d}$.
  
  Assume by induction that all handles $H^-_{m,d}$ have been cut along $S_{m,-d}$ for all $d>d'$, with the resulting surface denoted by $\Sigma^{cut}$. The arc $A_{m,d'}$ is an arc starting at the top of $H^-_{m,-(d'-l(j-1))}=H^+_{m,kd'-(j-1)}$, jumping over $j$ handles (in $C_{m+1}$) and ending at the bottom of $H^-_{m,-d'}=H^+_{m,kd'}$. By \cite[Corollary 3.8]{ishii_special_2013} each of the $d'-al, 0<a<j$ are non-special and bigger than $d'$. By the induction hypothesis, all the cocores $S_{m,-(d'-al)}$ of the handles which $A_{m,d}$ jumps over have been cut and hence $A_{m,d'}$ is isotopic to the cocore $S_{m,-d'}$ inside the cut-up surface $\Sigma^{cut}$. This completes the induction step.

  For the last claim: one can take any linear dependence relation in $H_1(\Sigma^0)$ and intersect it with $A_{m,d_{min}}$, where $d_{min}$ is the smallest non-special weight associated to the orbifold point $q_m$. By the previous two claims, this shows that $L_{m,d_{min}}$ does not appear in the relation. Inductively like this, one shows that all the coefficients in front of the $L_{m,d}$ are zero. Finally, the meridians of the special torus and the longitude are of course linearly independent. Since the rank of $H_1(\Sigma^0)$ is the rank of $H_1(T^0)$ plus the sum of the number of non-special representations of each orbifold point, the claim about being a basis of $H_1(\Sigma^0)$ follows.
  \end{proof}

  \begin{thm}\label{thm:destabilization}
  The Lefschetz fibration $w_\mathcal{X}:W'\rightarrow \mathbb{C}$ can be obtained from $w_Y:W\rightarrow \mathbb{C}$ by a sequence of Lefschetz stabilizations. Therefore, $W'$ is Liouville deformation equivalent to $W$, which in turn is symplectomorphic to the total space of the almost toric fibration associated to $(Y,D)$.
  \end{thm}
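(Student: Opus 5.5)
We argue by destabilizing $w_\mathcal{X}:W'\rightarrow\mathbb{C}$ step by step, using the arcs $A_{m,d}$ of Definition \ref{def:Ad}, until we reach the Hacking-Keating data $\{F,(\mathcal{V}_1,\dots,\mathcal{V}_n)\}$. The orbifold points are handled one at a time: since $A_{m,d}$ is disjoint from $L_{m',d'}$ for $m\neq m'$ (Proposition \ref{prop:intersections}), it suffices to treat a single point $q_m$. For that point we remove the vanishing cycles of $\mathfrak{N}_m$ in decreasing order of $d$, starting with $d_{max}$.

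At each stage the current Lefschetz fibration has ordered vanishing cycles
\[(\mathfrak{N}_{q_1},\dots,\{L_{m,d}\}_{d\le d_0},\dots,\mathcal{V}_1,\dots,\mathcal{V}_n)\]
on a cut surface $\Sigma^{cut}$. We use $A_{m,d_0}$ as the destabilizing arc for $L_{m,d_0}$, and we check three conditions.
\begin{enumerate}
\item $A_{m,d_0}$ meets $L_{m,d_0}$ transversely in exactly one point. This is the first bullet of Proposition \ref{prop:intersections}.
\item $A_{m,d_0}$ is disjoint from every other remaining vanishing cycle. The remaining $L_{m,d}$ have $d<d_0$, so $d_0>d$ and the same bullet gives disjointness. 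Each $\mathcal{V}_i$ lives in $T^0$ and is built from the longitude by Dehn twists along special meridians $M_{m,i}$. The arc $A_{m,d_0}$ avoids the special meridians by the second bullet. It also avoids the longitude, because $A_{m,d_0}$ stays inside the cylinder $C_{m+1}$ near the handles, and we may isotope the reference longitude away from it. Hence after isotopy $A_{m,d_0}$ is disjoint from each $\mathcal{V}_i$.
\item Exactness, i.e.\ $[\theta]|_{A}=0$ in $H^1(A,\partial A)$. This holds automatically for an arc, after adjusting the primitive.
\end{enumerate}

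We also need the destabilizing arc to be disjoint from the vanishing cycles that come \emph{after} $L_{m,d_0}$ in the order. Otherwise we first apply Hurwitz moves to move $L_{m,d_0}$ to the end of the list; since the arc avoids all the other cycles, this does not change them. The standard destabilization (Giroux-Pardon, \cite[Section 2]{keating_homological_2017}) then removes $L_{m,d_0}$ and cuts $\Sigma^{cut}$ along $A_{m,d_0}$ without changing the total space up to Weinstein deformation. By the third bullet and its inductive proof, cutting along $A_{m,d_0}$ is isotopic in $\Sigma^{cut}$ to cutting along the cocore $S_{m,-d_0}$, i.e.\ to deleting the non-special handle.

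After all steps the fiber is the special torus $T^0$ (third bullet again), and the remaining cycles are $\mathcal{V}_1,\dots,\mathcal{V}_n$. Under the identification $T^0\cong F$ of Construction \ref{constr:Lefschetz}, these are exactly the Hacking-Keating vanishing cycles, so the result is $w_Y$. Reversing the steps exhibits $w_\mathcal{X}$ as a sequence of Lefschetz stabilizations of $w_Y$. Stabilization preserves the total space up to Liouville deformation, which gives $W'\simeq W$. The final identification of $W$ with the almost toric fibration is \cite{hacking_homological_2023}.

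\textbf{Main obstacle.} The delicate point is step 2 for the $\mathcal{V}_i$. The disjointness of $A_{m,d}$ from special meridians and from the longitude must hold simultaneously for the Dehn-twisted curves, not just for the curves before twisting. We also need the intersection counts with the $L_{m,d}$ to survive the isotopies used to realize the cuts. Here I would draw the local handle picture near $q_m$ explicitly. The $\mathcal{V}_i$ pass only through special handles and through the cylinder region away from the non-special attaching points, while $A_{m,d}$ lives in a small strip of $C_{m+1}$ between consecutive non-special attaching points; Observation \ref{ob:observation} is what guarantees these regions are separated. Proposition \ref{propappendix:overlap} would be invoked to rule out overlaps.
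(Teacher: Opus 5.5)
Your strategy is the paper's: its proof says only that the arcs $A_{m,d}$ serve as destabilizing arcs by Proposition~\ref{prop:intersections}, and then cites \cite[Theorem 1.2]{hacking_homological_2023}. However, you run the destabilizations in the wrong order, and step (2) fails as written. The first bullet of Proposition~\ref{prop:intersections} gives $A_{m,d'}\cap L_{m,d}=\emptyset$ for $d'<d$. So an arc misses the circles of \emph{larger} index; this is the mirror of $\mathrm{Ext}^\bullet_{\mathcal{D}}(\iota_{\mathcal{D}}^*\iota_{\mathcal{D}*}B_{m,d},B_{m,d'})=0$ for $d'<d$.

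Since you remove $L_{m,d_{\max}}$ first, you need $A_{m,d_{\max}}$ to miss every $L_{m,d}$ with $d<d_{\max}$. That is the opposite inequality, and it is false in general. Already at a $\frac15(1,3)$ point, $N_4=\mathcal{O}_q\otimes\rho_4$ is a quotient of $N_2$ (see the paper's example). Hence
\[
\mathrm{Ext}^\bullet_{\mathcal{D}}(\iota_{\mathcal{D}}^*N_2,B_{m,4})=\mathrm{Ext}^\bullet_{\mathcal{X}}(N_2,N_4)\neq 0,
\]
so $HF(L_{m,2},A_{m,4})\neq 0$. Because $L_{m,2}$ is compact, $A_{m,4}$ cannot be isotoped off it. Concretely, $L_{m,2}$ runs once through the non-special handle $H^-_{m,-4}$, and $A_{m,4}$ is the cocore of that handle. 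So your very first destabilization is not available.

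The repair is to destabilize in \emph{increasing} order of $d$. The arc $A_{m,d_{\min}}$ meets $L_{m,d_{\min}}$ once and misses every $L_{m,d}$ with $d>d_{\min}$, the circles at the other orbifold points, and the $\mathcal{V}_i$. After cutting, the remaining cycles lie in the cut surface and you repeat.

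The cost is that you can no longer quote the inductive proof of the third bullet step by step. That induction cuts in decreasing order, and in increasing order the intermediate cuts are not cocore cuts: for example, $A_{m,2}$ at the $\frac15(1,3)$ point spans both non-special handles. What you need instead is that, for fixed $m$, the arcs $A_{m,d}$ are pairwise disjoint. Their spans are blocks of consecutive non-special handles, so this amounts to those blocks being nested or disjoint; it is also implicit in the third bullet's phrase ``cutting $\Sigma^0$ along all the arcs''. With that, each later arc survives in the cut surface, and the final cut is independent of the order, so it is $T^0$ by the third bullet.

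The rest of your argument is fine: the Hurwitz moves that alter only the cycle being removed, the disjointness from the $\mathcal{V}_i$ via special meridians and a longitude isotoped off the arcs, and the identification with the Hacking--Keating data. The delicate point is the ordering, not the $\mathcal{V}_i$.
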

  \begin{proof}
  Using Proposition \ref{prop:intersections}, one can use the arcs $A_d$ as a set of destabilizing arcs. The claim about the almost toric fibration is \cite[Theorem 1.2]{hacking_homological_2023}.
  \end{proof}
  \subsection{The proof of homological mirror symmetry at the large complex structure limit}
  \begin{prop}\label{prop:restrictandvarphi}
    Suppose $(Y,D)$ is at the large complex structure limit. Then, up to an autoequivalence of $\mathcal{F}(\Sigma^0)$, the composition $$D^b(\mathcal{X})\rightarrow \mathrm{Perf}(\mathcal{D})\simeq \mathcal{F} (\Sigma^0)$$ will send the exceptional objects in a full exceptional collection for $D^b(\mathcal{X})$ to objects equivalent to Lagrangian $S^1$s, equipped with the trivial local system. The exceptional objects $\Phi\mathcal{L}$ (where $\mathcal{L}$ is a line bundle on $Y$) are sent to the Hacking-Keating mirror vanishing cycles in the special torus $T^0\subset \Sigma^0$, whereas the Ishii-Ueda sheaves $N_d^{q_m}$ are sent to the Lagrangian $S^1$'s $L_{m,d}$ from \ref{def:Ld}.
  \end{prop}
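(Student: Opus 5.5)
The plan is to treat the two kinds of exceptional objects in the Ishii--Ueda semiorthogonal decomposition $D^b(\mathcal{X})=\langle \mathbf{e}_{q_1},\dots,\mathbf{e}_{q_N},\Phi D^b(Y)\rangle$ separately. Then I will check that the resulting Lagrangians are compatible, i.e.\ can be simultaneously equipped with trivial local systems after a single autoequivalence of $\mathcal{F}(\Sigma^0)$.

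\emph{The component $\Phi D^b(Y)$.} Let $\mathcal{L}$ be a line bundle in a full exceptional collection of $D^b(Y)$. The commutative square of Proposition \ref{prop:fullyfaithfulbdry} gives $\iota_{\mathcal{D}}^*\Phi\mathcal{L}\simeq \Phi^\partial(\mathcal{L}|_D)$. By \cite{hacking_homological_2023}, at the large complex structure limit, $\mathcal{L}|_D$ is mirror under the Lekili--Polishchuk equivalence for $D$ to the Hacking--Keating vanishing cycle $\mathcal{V}=\prod_s\tau_{M_s}^{\mathcal{L}\cdot D_s}\mathcal{V}_0\subset F\simeq T^0$, with trivial local system. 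The large complex structure assumption is exactly what makes the restriction of $\mathcal{L}$ to each component correspond to the meridian with trivial local system, i.e.\ the point $-1$ in each $\mathbb{C}^\times$. Proposition \ref{prop:fullyfaithfulspecialtorus} then says that $\Phi^\partial$ is realized on the A-side by the inclusion $T^0\hookrightarrow\Sigma^0$. Hence $\iota^*_\mathcal{D}\Phi\mathcal{L}$ is sent to the pushforward of $\mathcal{V}$ into $\Sigma^0$. The point to watch here is that the identification of skyscrapers at points of $E_i$ with special meridians $M_{m,i}$ (Proposition \ref{prop:specialmeridian}) carries local systems that depend on the constants $\lambda_1,\lambda_2$ in $s=\lambda_1u^i+\lambda_2v^j$. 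I will fix the autoequivalence of $\mathcal{F}(\Sigma^0)$ (a rescaling of local systems, equivalently the choice of which point of each component is sent to the trivial local system, as in the remark after Proposition \ref{prop:fullyfaithfulspecialtorus}) so that these become trivial. I will also need the longitude $\mathcal{V}_0$, mirror to $\mathcal{O}_\mathcal{D}=\Phi^\partial\mathcal{O}_D$, to carry the trivial local system.

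\emph{The Ishii--Ueda sheaves.} For each non-special $d$ at $q_m$ we have $N^{q_m}_d=\iota_{\mathcal{D}*}B_{m,d}$. Proposition \ref{prop:mirrortoLd} therefore identifies $\varphi(\iota_\mathcal{D}^*N^{q_m}_d)$ with a brane supported on $L_{m,d}$. It remains to show that the local system on $L_{m,d}$ can be taken trivial, compatibly with the normalization already chosen.

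The local system is determined by the ratio of the two coefficients of the Reeb chords in the degree $2$ morphism $B_{m,d}\to B_{m,d}\{k+1\}[2]$. On the B-side these coefficients are the components along $1$ and $v^{j-1}$ in $\mathrm{Ext}^2$. They come from the connecting map of the Kuznetsov triangle, which is canonical (essentially the local equation of $\mathcal{D}$, namely $uv$, paired with $\omega_\mathcal{X}$). I will compute this map explicitly from the local Koszul-type resolution used in the footnote to Proposition \ref{prop:mirrortoLd}, and compare it with the normalization of the special meridians.

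\emph{Normalization and the main obstacle.} Proposition \ref{prop:intersections} shows that the $L_{m,d}$, the special meridians and a longitude form a basis of $H_1(\Sigma^0)$. Rank one local systems on all of these can therefore be adjusted independently by an autoequivalence acting on $H^1(\Sigma^0;\mathbb{C}^\times)$, i.e.\ tensoring by a flat line bundle on $\Sigma^0$. Such an autoequivalence can make every holonomy trivial at once. This linear independence is the key input, and it is what justifies the phrase ``up to an autoequivalence''. The remaining worry, and the step I expect to be hardest, is that this autoequivalence must be realized on the B-side, or at least must not disturb the Hacking--Keating identification. I would try to show that the holonomies around non-special handles do not affect the Lagrangians in $T^0$. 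For the $T^0$ classes, the large complex structure limit already forces triviality, so only the $H_1$ directions dual to the $L_{m,d}$ need adjusting. Those directions are invisible to $T^0$ by the disjointness statements in Proposition \ref{prop:intersections}.
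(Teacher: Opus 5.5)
Your proposal is correct and follows essentially the same route as the paper. You restrict $\Phi\mathcal{L}$ through the square of Proposition \ref{prop:fullyfaithfulbdry}, so that it becomes iterated twists of the longitude along the special meridians of Proposition \ref{prop:fullyfaithfulspecialtorus}. You identify $\iota_\mathcal{D}^*N^{q_m}_d$ with $L_{m,d}$ via Proposition \ref{prop:mirrortoLd}. You then remove every stray holonomy at once by tensoring with a global flat line bundle on $\Sigma^0$, using that the longitude, the meridians of $T^0$ and the $L_{m,d}$ form a basis of $H_1(\Sigma^0)$ (Proposition \ref{prop:intersections}).

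The extra steps you flag are unnecessary. The basis argument supersedes the explicit Reeb-chord coefficient computation. The statement only asks for an autoequivalence of $\mathcal{F}(\Sigma^0)$, so nothing has to be realized on the B-side. The same global local system also normalizes the $T^0$ classes. The large complex structure limit only fixes which points of $D$ enter, not the holonomies on the $M_{m,i}$, which depend on $\lambda_1,\lambda_2$. This works because $H_1(\Sigma^0)$ splits as the image of $H_1(T^0)$ plus the span of the $[L_{m,d}]$, not because of disjointness.
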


  \begin{proof}
  We decompose $$D^b(\mathcal{X})=\langle \mathbf{e}_{q_1},\dots, \mathbf{e}_{q_N}, \Phi D^b(Y)\rangle$$

  Take a collection of exceptional line bundles $\mathcal{L}$ for $D^b(Y)$ as in Hacking-Keating. Then, $$\iota^*_\mathcal{D}\Phi\mathcal{L}=\Phi^\partial \iota^*_D \mathcal{L}$$However, $\iota^*_D\mathcal{L}=\prod_s T^{\mathcal{L}\cdot D_s}_{\mathcal{O}_{p_s}} \mathcal{O}$ is given by a product of spherical twists along the skyscrapers at the points $-1\in \mathbb{C}^\times\subset D_i$ applied to the structure sheaf. Therefore, \begin{equation}\label{eq:varphiofHK}
  \iota_\mathcal{D}^*\Phi \mathcal{L}=\prod_s T^{\mathcal{L}\cdot D_s}_{\Phi^\partial \mathcal{O}_{p_s}} \Phi^\partial \mathcal{O}
  \end{equation}
  Proposition \ref{prop:fullyfaithfulspecialtorus} identifies $\Phi^\partial \mathcal{O}_{p_s}$ with the meridians in the special torus $T^0$ under $\varphi$. Therefore, after applying $\varphi$ to \ref{eq:varphiofHK}, the object in $\mathcal{F}(\Sigma^0)$ will be a product of spherical twists along the special meridians $M_{m,i}$ (potentially with a nontrivial systems) applied to the longitude. By \cite[Section 17j]{seidel_fukaya_2008}, spherical twists are equivalent to Dehn twists in the Fukaya category and hence the result is a Lagrangian supported on the Hacking-Keating Lagrangian vanishing cycle mirror to $\mathcal{L}$ (potentially with a different local system).

  On the other hand, we saw already in Proposition \ref{prop:mirrortoLd} that the Lagrangian circle which is mirror to $\iota_\mathcal{D}^*\iota_{\mathcal{D}*} B_{m,d}$ is supported on $L_{m,d}$.

  However, since the $L_{m,d}$, together with the meridians and longitude of $T^0$, form a basis of $H_1(\Sigma^0)$ by Proposition \ref{prop:intersections}, by applying an autoequivalence given by a global local system on $\Sigma^0$ (these are in bijection with $\mathrm{Hom}(\pi_1(\Sigma^0), \mathbb{C}^\times)\simeq \mathrm{Hom}(H_1(\Sigma^0), \mathbb{C}^\times)$), all the potentially nontrivial local systems on $L_{m,d}$ and the meridians and longitude can be cancelled out. 
  \end{proof}
  \begin{rem}
  Note that when the complex structure of $Y$ is not the distinguished one, this result no longer holds true, since line bundles on $Y$ will not be determined simply by their intersections with the components $D_s$ of $D$.
  \end{rem}

  \begin{proof}[Proof of HMS part of Theorem \ref{thm:mainthmabstract}]
    By taking dg enhancements (which exist: see \cite{lunts_uniqueness_2010}), one can take a model of $D^b(\mathcal{X})$ as the directed subcategory of $D^b(\mathcal{D})$ with objects the ordered set of sheaves $$\{\iota^*_\mathcal{D} (N_{m,d})\}_{m,d} \text{ and } \iota^*_\mathcal{D} \Phi \mathcal{L}_1, \dots , \iota^*_\mathcal{D} \Phi \mathcal{L}_n$$ with the ordering on $N_{m,d}$ being the lexicographic one induced by the ordering on $m$ and the one on $d$. Call the resulting category $\mathcal{B}_{dg}$: by Proposition \ref{prop:restobdry}, $H^0(\mathrm{perf} \mathcal{B}_{dg})\simeq D^b(\mathcal{X})$.

    On the A-side, we take a strictly unital model of the pretriangulated Fukaya-Seidel category as in Definition \ref{def:FukayaCategory} by taking the ordered set of Lagrangian $S^1$s $$\{ L_{m,d}\}_{m,d} \text{ and } \mathcal{V}_1, \dots, \mathcal{V}_n$$ in $\Sigma^0$ and imposing directedness. We call the resulting category $\mathcal{A}$ and by definition \ref{def:FukayaCategory}, $\mathcal{F}(W', w_\mathcal{X})\simeq H^0(\mathrm{perf} \mathcal{A})$.

    Now, the $A_\infty$ equivalence $\varphi:D^b(\mathcal{D})\rightarrow \mathcal{W}(\Sigma^0)$ identifies the subcategory $\mathcal{B}_{dg}$ with its image under $\varphi$, whose triangulated closure is the same as that of $\mathcal{A}$, by Propositions \ref{prop:mirrortoLd}, \ref{prop:restrictandvarphi}. With this, the theorem follows.
  \end{proof}
\section{An explicit case study: the hypersurfaces $X_{k+1}\subset \mathbb{P}(1,1,1,k)$}\label{sec:explicitcase}
In the second half of this paper, we compare the abstract construction \ref{constr:Lefschetz} which we used to prove homological mirror symmetry at the large complex structure limit \ref{thm:mainthmabstract} to an explicit Landau-Ginzburg model associated to the family of hypersurfaces $X_{k+1}\subset \mathbb{P}(1,1,1,k)$. This example is sufficiently explicit that we can work out a mirror map. This allows us to prove HMS for a generic complex structure - this time, the strategy is to find full, strong exceptional collections on both sides and identify their endomorphism algebras.

Recall that an orbifold del Pezzo surface in this family is obtained by taking $\mathbb{P}^2$, blowing up $k+1$ points on a line (resulting in a log CY surface $Y$) and then contracting the strict transform of the line and considering the result as an orbifold.

\subsection{The left adjoint of $\Phi$}
Let $E$ be the exceptional $-k$ curve in the minimal resolution $Y$ of $X$, and let $q$ be the unique orbifold point of type $\frac{1}{k}(1,1)$ in $\mathcal{X}$. The category $D^b(\mathcal{X})$ is obtained by gluing $D^b(Y)$ and $\langle \mathcal{O}_q \otimes \rho_2, \dots, \mathcal{O}_q \otimes \rho_{k-1}\rangle$. The gluing is most efficiently described by the left adjoint $\Psi$ to the fully faithful $\Phi:D^b(Y)\rightarrow D^b(\mathcal{X})$, which is computed in \cite[Theorem 3.1]{gugiatti_full_2023}, and in particular, the case of $\frac{1}{k}(1,1)$ points in \cite[Example 3.5]{gugiatti_full_2023}. Writing $e_d:=\mathcal{O}_q\otimes \rho_d$ for the orbifold skyscraper sheaf, then $$\Psi e_d = \begin{cases}
      0, \quad d=0,\dots, k-3\\
      \mathcal{O}_{E}(-k)[1],\quad d=k-2\\
      \mathcal{O}_{E}(-k+1), \quad d=k-1
    \end{cases}$$We denote by $\iota_{E}:E \rightarrow Y$ the inclusion.

    \begin{prop}\label{prop:Psiisiso}
      The functors $\Psi$ and $\iota_{E*}$ induce isomorphisms: $$\mathrm{Ext}_{\mathcal{X}}^0(e_{k-2}[-2], e_{k-1}[-1])\xrightarrow{\Psi} \mathrm{Ext}^0_Y(\mathcal{O}_{E}(-k), \mathcal{O}_{E}(-k+1))\xleftarrow{\iota_{E*}} \mathrm{Ext}^0_{E}(\mathcal{O}(-k), \mathcal{O}(-k+1))$$ 
    \end{prop}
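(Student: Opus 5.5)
The plan is to reduce everything to a local computation near the $\frac{1}{k}(1,1)$ point $q$, to show that all three spaces are two-dimensional, and then to check that the two maps are injective.

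\textbf{Dimension count and the map $\iota_{E*}$.} First, $\mathrm{Ext}^0_{\mathcal{X}}(e_{k-2}[-2],e_{k-1}[-1])=\mathrm{Ext}^1_{\mathcal{X}}(e_{k-2},e_{k-1})$. To compute it, I would use the $\mu_k$-equivariant Koszul resolution of $\mathcal{O}_q$ on $[\mathbb{C}^2/\mu_k]$, where $\mu_k$ acts with weights $(1,1)$ on the coordinates $u,v$. The $\mathrm{Ext}^1$ between orbifold skyscrapers is then the $\mu_k$-invariant part of $V\otimes\rho_{k-1}\otimes\rho_{k-2}^{-1}$, up to dualising $V$ depending on the sign convention. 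Since $V=\rho_1^{\oplus 2}$, this is $\mathbb{C}\langle u,v\rangle\simeq\mathbb{C}^2$. I would check that the weight convention for $\rho_d$ in Ishii--Ueda/Gugiatti--Rota makes the degree-one generators land here rather than in $\mathrm{Ext}^1(e_{k-1},e_{k-2})$; the latter is forced by the fact that $\Psi$ is nonzero on both objects.

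On the other side, $\mathrm{Ext}^0_E(\mathcal{O}(-k),\mathcal{O}(-k+1))=H^0(\mathbb{P}^1,\mathcal{O}(1))\simeq\mathbb{C}^2$. For $\iota_{E*}$, I would use the adjunction $\mathrm{Hom}_Y(\iota_{E*}F,\iota_{E*}G)=\mathrm{Hom}_E(L\iota_E^*\iota_{E*}F,G)$ together with the triangle $F\otimes N_E^\vee[1]\to L\iota_E^*\iota_{E*}F\to F$. Since $\mathrm{Hom}_E(F\otimes N^\vee_E[1],G)=\mathrm{Ext}^{-1}=0$, the map $\iota_{E*}$ on $\mathrm{Hom}^0$ is an isomorphism. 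This is the standard fact that pushforward along a closed embedding is fully faithful on degree-zero homs between sheaves.

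\textbf{The map $\Psi$.} Because the dimensions agree, it is enough to show that $\Psi$ is injective on $\mathrm{Ext}^1(e_{k-2},e_{k-1})$. I would make $\Psi$ explicit: by Grothendieck duality, the left adjoint of $\Phi=\nu_*\mu^*$ is
\[\Psi=\mu_*\big(\nu^*(-)\otimes\omega_\nu\big),\]
where $\omega_\nu$ is the relative dualising sheaf of $\nu:\mathcal{Z}\to\mathcal{X}$. Near $q$, the stack $\mathcal{Z}$ is the total space of $\mathcal{O}_{\mathbb{P}^1}(-k)$ with a $\mu_k$-root structure along $E$, i.e. the stacky blowup of $\mathbb{C}^2$ at the origin quotiented by $\mu_k$.

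Pulling back the Koszul complex of $e_d$ along $\nu$, one gets that $L\nu^*e_d$ is a complex on the root-stack exceptional divisor $\mathcal{E}\simeq[\mathbb{P}^1\text{-bundle}/\mu_k]$. The two degree-one Koszul generators $u,v$ restrict to the two homogeneous coordinates, namely the tautological sections of $\mathcal{O}_{\mathcal{E}}(1)$. After applying $\mu_*$, which takes $\mu_k$-invariants and so selects the character twist, the classes $u,v\in\mathrm{Ext}^1(e_{k-2},e_{k-1})$ should go to the sections $x_0,x_1$ of $\mathcal{O}_E(1)$, which span $\mathrm{Hom}(\mathcal{O}_E(-k),\mathcal{O}_E(-k+1))$. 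Along the way I would confirm the formulas $\Psi e_{k-2}=\mathcal{O}_E(-k)[1]$ and $\Psi e_{k-1}=\mathcal{O}_E(-k+1)$ quoted from \cite{gugiatti_full_2023}.

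\textbf{Fallback via the semiorthogonal decomposition.} As an alternative check on injectivity, I would use $\Psi=\mathrm{Hom}(e_{k-2},-)$ applied to the unit $e_{k-1}\to\Phi\Psi e_{k-1}$. Its cone lies in the component generated by $\langle e_2,\dots,e_{k-1}\rangle$, so injectivity reduces to a vanishing $\mathrm{Ext}^1(e_{k-2},C)=0$ for that cone $C$. This can be read off from the exceptional collection $e_2,\dots,e_{k-1}$ and the weights, since $\mathrm{Ext}^\bullet$ between $e_a$ and $e_b$ is nonzero only when $b-a\in\{0,1,2\}$ mod $k$.

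\textbf{Main obstacle.} I expect the hardest step to be the explicit identification in the $\Psi$ paragraph: tracking the character twists and the relative dualising sheaf on the root stack $\mathcal{Z}$ carefully enough to see that the two Koszul generators go to linearly independent sections. I would carry this out in the toric chart $\mathcal{Z}\supset[\mathbb{C}^2/\mu_k]$, where $\mu_k$ acts on the blowup coordinates $(u,\,v/u)$ with weights $(1,0)$, so that everything becomes a computation with $\mu_k$-graded modules.
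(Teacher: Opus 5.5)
Your dimension count and your treatment of $\iota_{E*}$ are fine. The vanishing of negative $\mathrm{Ext}_E$ in the triangle $F\otimes N_E^\vee[1]\to L\iota_E^*\iota_{E*}F\to F$ is the same mechanism as the paper's splitting $\iota_E^*\iota_{E*}\mathcal{O}(-k)\simeq\mathcal{O}(-k)\oplus\mathcal{O}[1]$. The gap is in the step that carries the content: showing that $\Psi$ is an isomorphism on $\mathrm{Ext}^1(e_{k-2},e_{k-1})$.

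Your main route is only announced ("should go to the sections"), and the formula it starts from is the wrong adjoint. The functor $\mu_*(\nu^*(-)\otimes\omega_\nu)=\mu_*\nu^!$ is the \emph{right} adjoint $\Theta$ of $\Phi=\nu_*\mu^*$. It annihilates $e_{k-2}$ and $e_{k-1}$, because these lie in $\mathbf{e}=\ker\Theta$. The left adjoint is $\mu_*(\nu^*(-)\otimes\omega_\mu)$, where $\omega_\mu$ is the relative dualizing sheaf of the root-stack map $\mu\colon\mathcal{Z}\to Y$. Even after this correction, the twist bookkeeping on $\mathcal{Z}$, which you yourself flag as the main obstacle, is exactly the part that is not done.

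Your fallback does not work as stated either. The cone $C$ of the unit $e_{k-1}\to\Phi\Psi e_{k-1}$ is \emph{left} orthogonal to $\Phi D^b(Y)$. So it lies in $\ker\Psi$, which locally is $\langle e_0,\dots,e_{k-3}\rangle$, and not in $\langle e_2,\dots,e_{k-1}\rangle=\ker\Theta$. Indeed, since $\mathrm{Hom}^\bullet(\Phi A,e_{k-1})=0$, one gets $\mathrm{Hom}^\bullet(\Phi A,C)\simeq\mathrm{Hom}^\bullet(A,\Psi e_{k-1})$, which is nonzero for $A=\Psi e_{k-1}$.

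Moreover, the needed vanishing cannot be read off from the weights. We have $e_0\in\ker\Psi$ with $\mathrm{Ext}^2(e_{k-2},e_0)\neq 0$, and $\mathrm{Ext}^\bullet(e_{k-2},C)$ is genuinely nonzero. So you must pin down the degree in which it lives. The paper does this as follows.
\begin{enumerate}
\item Serre duality turns $\mathrm{Ext}^\bullet(e_{k-2}[-2],\mathbb{R}e_{k-1})$ into $\mathrm{Ext}^{-\bullet}(\mathbb{R}e_{k-1},e_0)^\vee$.
\item In the Koszul resolution of $e_0$, the terms $\mathcal{O}\otimes\rho_0$ and $\mathcal{O}\otimes\rho_1^{\oplus 2}$ lie in $\Phi D^b(Y)$, since $\rho_0,\rho_1$ are special. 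They therefore drop out, leaving $\mathcal{O}\otimes\rho_2[2]$.
\item Serre duality again gives $\mathrm{Ext}^\bullet(\mathcal{O},\mathbb{R}e_{k-1})$.
\item Since $\mathrm{Ext}^\bullet(\mathcal{O},e_{k-1})=0$ and $\Phi$ is fully faithful, this equals $\mathrm{Ext}^{\bullet-1}_E(\mathcal{O},\mathcal{O}(1-k))$, which is concentrated in degree $\bullet=2$.
\end{enumerate}
Hence $\mathrm{Ext}^{\le 0}(e_{k-2}[-2],\mathbb{R}e_{k-1})=0$. This is exactly the vanishing the long exact sequence of the unit triangle needs, and it is the missing idea in your argument.
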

    \begin{proof}
      The map $\iota_{E*}$ is the same as the composition of the counit with an adjunction isomorphism: \begin{equation}\label{eq:iotacounit}
    \mathrm{Ext}^0_{E}(\mathcal{O}(-k), \mathcal{O}(-k+1))\rightarrow \mathrm{Ext}^0_{E}(\iota_{E}^* \iota_{E*}\mathcal{O}(-k), \mathcal{O}(-k+1))\xrightarrow{\simeq} \mathrm{Ext}^0_Y(\iota_{E*} \mathcal{O}(-k),  \iota_{E*}\mathcal{O}(-k+1)) \end{equation}
    By \cite[Corollary 11.4]{huybrechts_fourier-mukai_2006}, since $\mathcal{O}(-k)=\iota_{E}^* \mathcal{O}(E)$, the distinguished triangle for the counit $$\mathcal{O}[1]\rightarrow \iota_{E}^* \iota_{E*} \iota_{E}^*\mathcal{O}(E)\rightarrow \iota_{E}^* \mathcal{O}(E)$$ splits, so that $\iota_{E}^*\iota_{E*} \mathcal{O}(-k)=\mathcal{O}(-k)\oplus \mathcal{O}[1]$ and the counit morphism is just projection to the first factor, inducing an isomorphism on $\mathrm{Ext}^0$ in \ref{eq:iotacounit}.

    We proceed to show that the functor $\Psi$ also induces an isomorphism. Since it is a purely local statement around the orbifold point $q$, we will from now on assume $\mathcal{X}=[\mathbb{C}^2/\frac{1}{k}(1,1)], Y=\mathrm{Tot}\,\mathcal{O}(-k)$.

    As a first step, note that there are two decompositions $$D^b[\mathbb{C}^2/\frac{1}{k}(1,1)]=\langle \underbrace{\ker \Theta}_{e_2, \dots, e_{k-1}},\Phi D^b(\mathrm{Tot}\,\mathcal{O}(-k))\rangle = \langle \Phi D^b(\mathrm{Tot}\,\mathcal{O}(-k)), \underbrace{\ker\Psi}_{e_0,\dots, e_{k-3}}\rangle$$where $\Theta$ is the right adjoint to $\Phi$ as in \cite{gugiatti_full_2023}. Next, we will right mutate $e_{k-2},e_{k-1}$ through $\Phi D^b(Y)$: this mutation will be denoted by $\mathbb{R}$.

    The counit of the adjunction produces canonical exact triangles $\mathrm{id}\rightarrow \Phi \Psi \rightarrow \mathbb{R} [1]$ which induces a long exact sequence $$\dots \rightarrow \mathrm{Ext}_{\mathcal{X}}^\bullet(e_{k-2}[-2],e_{k-1}[-1])\rightarrow \mathrm{Ext}_{\mathcal{X}}^\bullet(e_{k-2}[-2],\Phi \Psi e_{k-1}[-1])\rightarrow \mathrm{Ext}_{\mathcal{X}}^\bullet(e_{k-2}[-2],\mathbb{R}e_{k-1})\rightarrow \dots$$

    The first term in this long exact sequence is $\mathbb{C}^2[0]$, whereas the second, by adjunction, is the morphism space $\mathrm{Ext}_{\mathcal{X}}^0(\Psi e_{k-2}[-2], \Psi e_{k-1}[-1])$. By naturality of adjunction, the map $$\mathrm{Ext}_{\mathcal{X}}^\bullet(e_{k-2}[-2],e_{k-1}[-1])\rightarrow \mathrm{Ext}_{\mathcal{X}}^\bullet(e_{k-2}[-2],\Phi \Psi e_{k-1}[-1])\xrightarrow{\simeq} \mathrm{Ext}_{\mathcal{X}}^\bullet(\Psi e_{k-2}[-2], \Psi e_{k-1}[-1])$$is given by the functor $\Psi$ on morphisms.
      
    To complete the proof of the lemma, we need to show that $\mathrm{Ext}^{\leq 0}_{\mathcal{X}}( e_{k-2}[-2], \mathbb{R}e_{k-1})=0$. 
      \begin{gather*}
        \mathrm{Ext}_{\mathcal{X}}^\bullet(e_{k-2}[-2],\mathbb{R}e_{k-1})\simeq 
        \mathrm{Ext}_{\mathcal{X}}^{-\bullet}(\mathbb{R}e_{k-1},e_0)^\lor \simeq 
        \mathrm{Ext}_{\mathcal{X}}^{-\bullet}(\mathbb{R}e_{k-1},[\mathcal{O}\otimes \rho_2 \rightarrow \underbrace{\mathcal{O}^{\oplus 2}\otimes \rho_1 \rightarrow \mathcal{O}\otimes \rho_0}_{\in \mathrm{im} \Phi} ])^\lor\simeq \\
        \simeq \mathrm{Ext}_{\mathcal{X}}^{-\bullet}(\mathbb{R}e_{k-1},\mathcal{O}\otimes \rho_2[2])^\lor \simeq 
        \mathrm{Ext}_{\mathcal{X}}^{\bullet}(\mathcal{O},\mathbb{R}e_{k-1})\simeq 
        \mathrm{Ext}_{\mathcal{X}}^{\bullet}(\mathcal{O},[e_{k-1}\rightarrow \Phi\Psi e_{k-1}])\simeq \\
        \simeq \mathrm{Ext}_{\mathcal{X}}^{\bullet-1}(\mathcal{O},\Phi \Psi e_{k-1})\underbrace{\simeq}_{\Phi^{-1}} \mathrm{Ext}^{\bullet-1}_{Y}(\mathcal{O}, \Psi e_{k-1})\simeq \mathrm{Ext}^{\bullet-1}_{Y}(\mathcal{O}, \mathcal{O}_{E}(-k+1))
        \simeq \\
        \simeq \mathrm{Ext}^{\bullet-1}_{E}(\mathcal{O}, \mathcal{O}(-k+1))\begin{cases}\mathbb{C}^{k-2}, \bullet =2\\
        0,\,\mathrm{otherwise}\end{cases}
      \end{gather*}

    We are using Serre duality, the Koszul sequence, the fact that $\mathbb{R}e_{k-1}\in \Phi D^b(Y)^\perp$ and the fact that $e_{k-1}\in ^\perp\! \Phi D^b(Y)$. Moreover, we use the fact that the sheaves $\mathcal{O}\otimes \rho_0, \mathcal{O}\otimes \rho_1$ lie in the essential image of $\Phi$. This follows from \cite[Proposition 1.1]{ishii_special_2013} since $\rho_0, \rho_1$ are special representations.
      
    \end{proof}

    \begin{cor}\label{cor:naturality}
    If $\mathcal{L}, \mathcal{L}'\in D^b(Y)$ are exceptional, then the compositions on $D^b(\mathcal{X})$ involving the sheaves $e_{k-2}[-2], e_{k-1}[-1]$ and $\Phi \mathcal{L}, \Phi \mathcal{L}'$ can be computed via the compositions involving $\mathcal{O}, \mathcal{O}(1), \iota_{E}^*\mathcal{L}, \iota_{E}\mathcal{L}'$ in $D^b(E)$.
    \end{cor}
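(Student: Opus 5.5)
The corollary rests on one reduction: every relevant morphism space is carried by $\Psi$ (or by $\Phi$) into $D^b(Y)$, and each such space in $D^b(Y)$ is then pushed into $D^b(E)$ by adjunction for $\iota_E$. The collection is ordered $e_{k-2}[-2], e_{k-1}[-1], \Phi\mathcal{L}, \Phi\mathcal{L}'$, with $e_{k-2},e_{k-1}\in \mathbf{e}_q={}^\perp\Phi D^b(Y)$ in the semiorthogonal decomposition. Only three kinds of morphism spaces are nonzero, and I treat each in turn.

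First, $\mathrm{Ext}_\mathcal{X}(\Phi\mathcal{L},\Phi\mathcal{L}')\simeq \mathrm{Ext}_Y(\mathcal{L},\mathcal{L}')$, since $\Phi$ is fully faithful. Second, for the mixed spaces I use the adjunction $\Psi\dashv\Phi$:
\[\mathrm{Ext}_\mathcal{X}(e_d[-s],\Phi\mathcal{L})\simeq \mathrm{Ext}_Y(\Psi e_d[-s],\mathcal{L}).\]
Here $\Psi e_{k-2}[-2]=\mathcal{O}_E(-k)[-1]$ and $\Psi e_{k-1}[-1]=\mathcal{O}_E(-k+1)[-1]$, so $\Psi$ sends both objects to shifts of pushforwards $\iota_{E*}\mathcal{O}(-k)$ and $\iota_{E*}\mathcal{O}(-k+1)$. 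Applying the adjunction $\iota_{E*}\dashv\iota_E^!$, or dually $\iota_E^*\dashv\iota_{E*}$ after Serre duality, rewrites $\mathrm{Ext}_Y(\iota_{E*}\mathcal{O}(a),\mathcal{L})$ as $\mathrm{Ext}_E(\mathcal{O}(a),\iota_E^!\mathcal{L})$. Since $\iota_E^!\mathcal{L}=\iota_E^*\mathcal{L}\otimes\mathcal{O}_E(E)[-1]=\iota_E^*\mathcal{L}(-k)[-1]$, after twisting by $\mathcal{O}(k)$ this becomes a morphism space between $\mathcal{O}$ or $\mathcal{O}(1)$ and $\iota_E^*\mathcal{L}$ in $D^b(E)$. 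Third, the space between the two orbifold sheaves is handled directly by Proposition \ref{prop:Psiisiso}: $\Psi$ followed by the inverse of $\iota_{E*}$ identifies it with $\mathrm{Ext}^0_E(\mathcal{O}(-k),\mathcal{O}(-k+1))\simeq\mathrm{Ext}^0_E(\mathcal{O},\mathcal{O}(1))$.

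It remains to show that compositions are preserved, and this is where I expect the main difficulty. The adjunction isomorphisms are natural, so a triple composition $e_{k-2}[-2]\to e_{k-1}[-1]\to\Phi\mathcal{L}\to\Phi\mathcal{L}'$ becomes $\Psi(f)$ followed by the transposed maps in $D^b(Y)$. Naturality of $\iota_{E*}\dashv\iota_E^!$ then turns these into compositions in $D^b(E)$. The hard point is the map $\mathrm{Ext}_Y(\iota_{E*}A,\iota_{E*}B)\to\mathrm{Ext}_E(A,B)$. In general it is not an isomorphism, because $\iota_E^*\iota_{E*}$ has an extra summand, so one must check that the morphisms actually used lie in the image of $\iota_{E*}$. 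For $A=\mathcal{O}(-k)$ and $B=\mathcal{O}(-k+1)$ this is exactly the splitting $\iota_E^*\iota_{E*}\mathcal{O}(-k)=\mathcal{O}(-k)\oplus\mathcal{O}[1]$ established in the proof of Proposition \ref{prop:Psiisiso}. I would verify that the degree-$0$ projection onto the first summand is compatible with composition against $\iota_E^!\mathcal{L}$, using the same splitting. With that in place, every $\mu^2$ among these four objects is computed in $D^b(E)$. Since all the spaces involved are concentrated in a single degree, there are no higher $A_\infty$ operations to track.
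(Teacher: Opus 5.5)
Your proposal is correct and is essentially the paper's proof. You transport each morphism space through $\Psi\dashv\Phi$ and then through $\iota_{E*}\dashv\iota_E^!$, use Proposition \ref{prop:Psiisiso} for the space between the two orbifold sheaves, and get compatibility with composition from naturality of the adjunctions. That naturality already settles your ``hard point'', so no further check via the splitting is needed. Your formula $\iota_E^!=\iota_E^*\otimes\mathcal{O}_E(E)[-1]$ is the correct form of the identity the paper writes as $\iota_E^*\otimes\mathcal{O}(-2)$.

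Your closing remark about higher $A_\infty$ operations goes beyond the statement, which concerns only compositions. It is also false as stated for arbitrary exceptional $\mathcal{L},\mathcal{L}'$, whose Ext spaces need not be concentrated in a single degree.
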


    \begin{proof}
      The claims follow by naturality of adjunction and a big commutative diagram:
    \[\begin{tikzcd}
      { \mathrm{Ext}^\bullet_\mathcal{X} ( e_{k-1}[-1], \Phi \mathcal{L})\otimes \mathrm{Ext}^0_\mathcal{X} ( e_{k-2}[-2], e_{k-1}[-1])} &{}& \mathrm{Ext}^\bullet_\mathcal{X} ( e_{k-2}[-2], \Phi\mathcal{L}) \\
      { \mathrm{Ext}^\bullet_Y (\mathcal{O}_{E}(-k+1)[-1], \mathcal{L})\otimes \mathrm{Ext}^0_Y(\mathcal{O}_{E}(-k), \mathcal{O}_{E}(-k+1))} &{}& {\mathrm{Ext}^\bullet_Y( \mathcal{O}_{E}(-k)[-1], \mathcal{L})}\\
      { \mathrm{Ext}^\bullet_{E} ( \mathcal{O}(-1), \iota_{E}^!\mathcal{L})\otimes \mathrm{Ext}^0_{E}(\mathcal{O}(-2),  \mathcal{O}(-1))} &{}& {\mathrm{Ext}^\bullet_{E}( \mathcal{O}(-2), \iota_E^!\mathcal{L})} \\
      { \mathrm{Ext}^\bullet_{E} ( \mathcal{O}(1), \iota^*_{E}\mathcal{L})\otimes \mathrm{Ext}^0_{E}(\mathcal{O},  \mathcal{O}(1))} &{}& {\mathrm{Ext}^\bullet_{E}( \mathcal{O}, \iota^*_{E}\mathcal{L})} 
      \arrow["{\mu^2}", from=1-1, to=1-3]
      \arrow["{\mu^2}"', from=2-1, to=2-3]
      \arrow["{\mu^2}"', from=3-1, to=3-3]
      \arrow["{\mu^2}"', from=4-1, to=4-3]
      \arrow["{(\Psi \dashv \Phi)\otimes\Psi}"', from=1-1, to=2-1]
      \arrow["{\Psi \dashv \Phi}", from=1-3, to=2-3]
      \arrow["{(\iota_{E*}\dashv \iota_{E}^!)\otimes \iota_{E*}}", from=3-1, to=2-1]
      \arrow["{\iota_{E*}\dashv \iota_{E}^!}"', from=3-3, to=2-3]
      \arrow["{}", from=4-1, to=3-1]
      \arrow["{}"', from=4-3, to=3-3]
    \end{tikzcd}\]
    We use the fact that $\iota_{E}^!=\iota_{E}^*\otimes \mathcal{O}(-2)$. All the vertical morphisms are isomorphisms, by Proposition \ref{prop:Psiisiso}.  There is an analogous diagram, involving compositions between one of $e_{k-2}[-2]$ and $e_{k-1}[-1]$ and two objects of $D^b(Y)$.
    \end{proof}
    \subsection{A description of $D^b(X_{k+1})$ via a quiver with relations}

    \subsubsection{The McKay algebra of a $\frac{1}{k}(1,1)$ point}\label{sec:McKayalgebra}
    The material in this subsection is to a large extent borrowed from \cite[Section 2.3]{gugiatti_full_2023}. The algebra of the local orbifold piece is governed by the McKay graph, see Figure \ref{fig:quiver_1/7(1,1)}. Since the sheaves are supported only at the orbifold point, the Ext-algebra can be computed locally on the stack $[\mathbb{C}^2/\frac{1}{k}(1,1)]$. Coherent sheaves on this stack are equivalent to $G$-equivariant sheaves on $\mathbb{C}^2$, where $G\subset GL(2,\mathbb{C})$ is the cyclic group generated by the matrices $\begin{pmatrix}
    \chi^i & 0 \\ 0 & \chi^i
    \end{pmatrix}$ with $\chi^k=1$ a primitive root of unity. For each $i=0,1,\dots, k-1$ define $\rho_i$ to be the irreducible weight $i$ representation of $G$. 
    The structure sheaf of the origin admits the equivariant Koszul resolution
    $$[\mathcal{O}\otimes \rho_{2}\xrightarrow{(y,-x)^T} \mathcal{O}\otimes \rho_{1}^{\oplus 2}\xrightarrow{(x,y)} \mathcal{O}]\rightarrow \mathcal{O}_0$$ where $\rho_2=\rho_{det}$ is the determinant representation and $\mathcal{O}^{\oplus 2}\otimes \rho_1$ is the natural representation induced by $G\subset GL(2,\mathbb{C})$. The sheaf $\mathcal{O}\otimes \rho_i$ is the sheaf associated to the $G$-equivariant $\mathbb{C}[u,v]$-module which is $\mathbb{C}[u,v]$ equipped with the action $f(u,v)\mapsto \chi^i f(\chi u, \chi v)$. The Koszul resolution yields a resolution of the sheaves $e_i:=\mathcal{O}_0\otimes \rho_i$:$$[\mathcal{O}\otimes \rho_{i+2}\xrightarrow{(y,-x)^T} \mathcal{O}^{\oplus 2}\otimes \rho_{i+1}\xrightarrow{(x,y)} \mathcal{O}\otimes \rho_{i}]\rightarrow \mathcal{O}_0 \otimes \rho_i$$

    As such, we have generators $p_{1,i},p_{2,i}\in \mathrm{Ext}^1(e_i, e_{i+1})=\mathbb{C}^2$ represented by the following maps of complexes: 
    \[\begin{tikzcd}
       & \mathcal{O}\otimes \rho_{i+2} \arrow[r] \arrow[d, "(-1)^j\iota_j"] & \mathcal{O}^{\oplus 2}\otimes \rho_{i+1} \arrow[r] \arrow[d, "\pi_j"] & \mathcal{O}\otimes \rho_{i} \\
      \mathcal{O}\otimes \rho_{i+3} \arrow[r] & \mathcal{O}^{\oplus 2}\otimes \rho_{i+2} \arrow[r]                 & \mathcal{O}\otimes \rho_{i     +1}                                         &                            
    \end{tikzcd}\]

    Here, $\pi_j, j=1,2$ denotes the projection to the first respectively second factor, and $\iota_j$ denotes the inclusions $f\mapsto (0,f)$ respectively $f\mapsto (f,0)$. There are the following relations: $$p_{1,i+1}\circ p_{1,i} = -p_{2,i+1}\circ p_{2,i}, \qquad p_{1,i+1}\circ p_{2,i}=p_{2,i+1}\circ p_{1,i}=0$$
    with $p_{1,i+1}\circ p_{1,i}$ a generator of $\mathrm{Ext}^2(e_i, e_{i+2})=\mathbb{C}$. After replacing $e_i$ by $e_i[i-k]$, all the Ext groups become concentrated in degree zero, completing the description of the dg algebra associated to the orbifold skyscrapers.
      \begin{figure}[H]
    \centering
    \begin{tikzpicture}[scale=1,every path/.style={->, >=latex}]

    \node (v7) at (2,0) {};
    \node (v1) at (1.247,1.5637) {};
    \node (v2) at (-0.445,1.9499) {};
    \node (v3) at (-1.8019,0.8678) {};
    \node (v4) at (-1.8019,-0.8678) {};
    \node (v5) at (-0.445,-1.9499) {};
    \node (v6) at (1.247,-1.5637) {};

    \draw[->,gray]  (v1) edge  (v2);
    \draw[->,thick]   (v2) edge  (v3);
    \draw[->,thick]   (v3) edge node[midway, right=-2pt] {\tiny $p_2$} (v4);
    \draw[->,thick]   (v4) edge node[midway, above] {\tiny $p_2$} (v5);
    \draw[->,thick]   (v5) edge (v6);
    \draw[->,gray]   (v6) edge  (v7);
    \draw[->,gray] (v7) edge  (v1);

    \node (v9) at (1.3717,-1.72) {};
    \node (v10) at (2.2,0) {};
    \node (v11) at (1.3717,1.72) {};
    \node (v12) at (-0.4895,2.1448) {};
    \node (v8) at (-0.4895,-2.1448) {};
    \node (v14) at (-1.9821,-0.9545) {};
    \node (v13) at (-1.9821,0.9545) {};

    \draw [->, shorten >=2pt,shorten <=2pt,thick] (v8) edge  (v9);
    \draw [->, shorten >=2pt,shorten <=2pt,gray] (v10) edge (v11);
    \draw [->, shorten >=2pt,shorten <=2pt,gray] (v9) edge (v10);
    \draw [->, shorten >=2pt,shorten <=2pt,gray] (v11) edge  (v12);
    \draw [->, shorten >=2pt,shorten <=2pt,thick] (v12) edge (v13);
    \draw [->, shorten >=2.5pt,shorten <=2.5pt,thick] (v13) edge node[midway, left] {\tiny $p_1$} (v14);
    \draw[->, shorten >=2pt,shorten <=2pt ,thick ] (v14) -- node[midway, below] {\tiny $p_1$} (v8);


    \node [circle,fill,black, inner sep = 2pt] at (-1.8583,-0.8837) {};
    \node [circle,fill,black, inner sep = 2pt] at (-0.4702,-2.0256) {};
    \node [circle,fill,black, inner sep = 2pt] at (1.3093,-1.6418) {};
    \node [circle,fill,gray, inner sep = 2pt] at (2.0755,-0.0082) {};
    \node [circle,fill,gray, inner sep = 2pt] at (1.2941,1.6224) {};
    \node [circle,fill,black, inner sep = 2pt, yshift=-1.5pt] at (-0.4673,2.0473) {};
    \node [circle,fill,black, inner sep = 2pt, xshift=1pt] at (-1.892,0.9112) {};

    \draw [dashed,gray] (v5) edge (v7);
    \draw [dashed,gray] (v6) edge (v1);
    \draw [dashed,gray] (v7) edge (v2);
    \draw [dashed,gray] (v1) edge (v3);
    \draw [dashed, thick] (v2) edge (v4);
    \draw [dashed, thick] (v3) edge  node[midway,right] {\tiny $p_1^2=-p_2^2$}(v5);
    \draw [dashed, thick] (v4) edge   (v6);

    \node[gray]  at (2.334,-0.02) {$e_0$};
    \node[gray]  at (1.532,1.8141) {$e_1$};
    \node  at (-0.4917,2.3155) {$e_2$};
    \node  at (-2.1254,0.9728) {$e_3$};
    \node  at (-2.1609,-1.0674) {$e_4$};
    \node  at (-0.5319,-2.3008) {$e_5$};
    \node  at (1.4517,-1.8525) {$e_6$};
    \end{tikzpicture}
    \caption{\label{fig:quiver_1/7(1,1)} The case $\tfrac 17(1,1)$. Thick arrows are degree 1 and dashed ones are degree 2.}
    \end{figure}
    \subsubsection{The derived category of the resolution}
    An application of Orlov's blowup formula, as in \cite[Theorem 2.5]{auroux_mirror_2006}, yields a full exceptional collection: $$D^b(Y)=\langle \mathcal{O}, \mathcal{T}(-H), \mathcal{O}(H), \begin{array}{c}
      \mathcal{O}_{B_1} \\
      \mathcal{O}_{B_2} \\
      \vdots \\
      \mathcal{O}_{B_{k+1}}
      \end{array}\rangle $$
    
    The assumption on the points being distinct ensures that $\mathcal{O}_{B_i}$ is completely orthogonal to $\mathcal{O}_{B_j}$ for $i\neq j$.
    The most important feature of the composition law is that the compositions $\mathcal{T}(-H)\rightarrow \mathcal{O}(H)\rightarrow \mathcal{O}_{B_i}$ induce a surjective map from the three-dimensional vector space $\mathbb{C}^3=\mathrm{Hom}(\mathcal{T}(-H), \mathcal{O}(H))$ to the two-dimensional space $\mathrm{Hom}(\mathcal{T(-H)}, \mathcal{O}_{B_i})$. The kernel is a line in $\mathbb{C}^3$ which determines the point in $\mathbb{P}^2$ associated to the exceptional curve $B_i$.
    \subsubsection{The gluing morphism spaces}
    \begin{prop}\label{prop:gluingmorphisms}
      The morphism spaces between $e_{k-2}[-2], e_{k-1}[-1]$ and $\Phi D^b(Y)$ are all concentrated in degree $0$ and are given by \begin{gather*}\mathrm{Ext}_Y^\bullet(e_{k-2}[-2],\Phi \mathcal{O})=\mathbb{C}, \quad
      \mathrm{Ext}_Y^\bullet(e_{k-2}[-2],\Phi \mathcal{T}(-H))\simeq \mathbb{C}^3\quad\\ \mathrm{Ext}_Y^\bullet(e_{k-2}[-2], \Phi \mathcal{O}(H))\simeq \mathbb{C}^2,\quad \mathrm{Ext}_Y^\bullet(e_{k-2}[-2], \Phi \mathcal{O}_{B_i})\simeq \mathbb{C}\\
      \mathrm{Ext}_Y^\bullet(e_{k-1}[-1],\Phi \mathcal{T}(-H))\simeq \mathbb{C},\quad
        \mathrm{Ext}_Y^\bullet(e_{k-1}[-1], \Phi \mathcal{O}(H))\simeq \mathbb{C}\\
        \mathrm{Ext}_Y^\bullet(e_{k-1}[-1], \Phi \mathcal{O}_{B_i})\simeq \mathbb{C},\quad \mathrm{Ext}_Y^\bullet(e_{k-1}[-1],\Phi \mathcal{O})=0
    \end{gather*}
    \end{prop}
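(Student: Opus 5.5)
The plan is to use the adjunction $\Psi\dashv\Phi$ together with the formula for $\Psi e_d$ recalled above. For any $\mathcal{F}\in D^b(Y)$ we have
$$\mathrm{Ext}^\bullet_{\mathcal{X}}(e_{k-2}[-2],\Phi\mathcal{F})\simeq \mathrm{Ext}^\bullet_Y(\Psi e_{k-2}[-2],\mathcal{F})=\mathrm{Ext}^\bullet_Y(\mathcal{O}_E(-k)[-1],\mathcal{F}),$$
$$\mathrm{Ext}^\bullet_{\mathcal{X}}(e_{k-1}[-1],\Phi\mathcal{F})\simeq\mathrm{Ext}^\bullet_Y(\mathcal{O}_E(-k+1)[-1],\mathcal{F}).$$
Next, Grothendieck duality for $\iota_E$ gives $\iota_{E*}\dashv\iota_E^!$ with $\iota_E^!\mathcal{F}=\iota_E^*\mathcal{F}\otimes\mathcal{O}_E(E)[-1]=\iota_E^*\mathcal{F}\otimes\mathcal{O}(-k)[-1]$, exactly as used in Corollary \ref{cor:naturality}. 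This reduces both groups to cohomology on $E\simeq\mathbb{P}^1$:
$$\mathrm{Ext}^\bullet(e_{k-2}[-2],\Phi\mathcal{F})\simeq H^\bullet(E,\iota_E^*\mathcal{F}),\qquad \mathrm{Ext}^\bullet(e_{k-1}[-1],\Phi\mathcal{F})\simeq H^\bullet(E,\iota_E^*\mathcal{F}\otimes\mathcal{O}(-1)).$$
The shifts $[-1]$ and $[-1]$ cancel, so there is no degree shift. This matches the bottom row of the diagram in Corollary \ref{cor:naturality}, where the objects become $\mathcal{O}$ and $\mathcal{O}(1)$ against $\iota_E^*\mathcal{L}$.

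It then remains to compute $\iota_E^*$ of each object in the collection $\mathcal{O},\mathcal{T}(-H),\mathcal{O}(H),\mathcal{O}_{B_i}$. Here $E$ is the strict transform of a line $\ell$ through the $k+1$ blown-up points, so $E=H-\sum B_i$. We have $\mathcal{O}|_E=\mathcal{O}$ and $\mathcal{O}(H)|_E=\mathcal{O}(1)$. Since $B_i\cdot E=1$ and the intersection is transverse, $\mathcal{O}_{B_i}$ restricts to a skyscraper $\mathcal{O}_{pt}$ on $E$ in degree $0$ (Tor-independence holds because $B_i\not\subset E$). For $\mathcal{T}(-H)$, the pullback of $T_{\mathbb{P}^2}(-1)$ restricted to $E\cong\ell$ is $T_{\mathbb{P}^2}(-1)|_\ell\simeq\mathcal{O}(1)\oplus\mathcal{O}$, using $T_\ell\oplus N_\ell=\mathcal{O}(2)\oplus\mathcal{O}(1)$ and twisting by $-1$.

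The cohomology is then read off on $\mathbb{P}^1$:
\begin{itemize}
\item $H^\bullet(\mathcal{O})=\mathbb{C}$, $H^\bullet(\mathcal{O}(1)\oplus\mathcal{O})=\mathbb{C}^3$, $H^\bullet(\mathcal{O}(1))=\mathbb{C}^2$ and $H^\bullet(\mathcal{O}_{pt})=\mathbb{C}$, all in degree $0$;
\item after twisting by $\mathcal{O}(-1)$, $H^\bullet(\mathcal{O}(-1))=0$, $H^\bullet(\mathcal{O}\oplus\mathcal{O}(-1))=\mathbb{C}$, $H^\bullet(\mathcal{O})=\mathbb{C}$ and $H^\bullet(\mathcal{O}_{pt})=\mathbb{C}$.
\end{itemize}
These are exactly the claimed dimensions, and every group is concentrated in degree $0$.

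The main point needing care is the identification of $\iota_E^*\mathcal{T}(-H)$. The splitting type depends on the precise meaning of $\mathcal{T}$ in the Orlov collection, presumably the pullback of $T_{\mathbb{P}^2}$ from the blowup as in \cite{auroux_mirror_2006}. I would verify the splitting via the Euler sequence $0\to\mathcal{O}(-1)\to\mathcal{O}^3\to T(-1)\to0$ restricted to $\ell$. The sign and normalization conventions for $\iota_E^!$ (the twist by $\mathcal{O}(E)|_E=\mathcal{O}(-k)$ together with the shift $[-1]$) also need checking, since the shifts must cancel exactly.
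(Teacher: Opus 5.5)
Your proposal is correct and follows the paper's argument. The adjunction $\Psi\dashv\Phi$, followed by $\iota_{E*}\dashv\iota_E^!$ as in Corollary \ref{cor:naturality}, reduces everything to $H^\bullet(E,\iota_E^*\mathcal{F})$ and $H^\bullet(E,\iota_E^*\mathcal{F}(-1))$, and the restrictions $\iota_E^*\mathcal{T}(-H)\simeq\mathcal{O}\oplus\mathcal{O}(1)$, $\iota_E^*\mathcal{O}(H)=\mathcal{O}(1)$, $\iota_E^*\mathcal{O}_{B_i}=\mathcal{O}_{pt_i}$ then give the stated dimensions in degree $0$. Your normalization $\iota_E^!=\iota_E^*\otimes\mathcal{O}(-k)[-1]$ is the right one, since it is what makes the shifts cancel, and using the tangent--normal sequence on the line instead of the Euler sequence is an immaterial variation.
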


    \begin{proof}
    Corollary \ref{cor:naturality} shows that $\mathrm{Ext}^\bullet(e_{k-2}[-2],\mathcal{L})\simeq \mathrm{Ext}^\bullet(\mathcal{O}, \iota_E^*\mathcal{L}), \mathrm{Ext}^\bullet(e_{k-1}[-1],\mathcal{L})\simeq \mathrm{Ext}^\bullet(\mathcal{O}(1), \iota_E^*\mathcal{L})$. The rest of the computation is straightforward, using the Euler sequence $\mathcal{O}(-H)\rightarrow \mathcal{O}^{\oplus 3}\rightarrow \mathcal{T}(-H)$ and the fact that $ \iota_E^*\mathcal{O}_{B_i}=\mathcal{O}_{pt_i}$ where $pt_i$ is one of the $k+1$ points on the $-k$ curve $E\subset Y$.
    \end{proof}
      We now choose a set of generators in order to describe the directed quiver with relations for the exceptional collection.  We pick these so that $\{z=0\}$ defines the line on $\mathbb{P}^2$ which is blown up $k+1$ times, and moreover so that $\Psi p_{1,k-2}, \Psi_{2,k-2}$ are identified with $\iota^*_E x, \iota^*_E y$ under the isomorphism $\mathrm{Ext}^0_E(\mathcal{O},\mathcal{O}(1))\simeq \mathrm{Ext}^0_Y(\mathcal{O}_E(-k), \mathcal{O}_E(-k+1))$.
    \[\begin{tikzcd}
    {e_2[2-k]} \arrow[r, "{p_{1,2}}", shift left] \arrow[r, "{p_{2,2}}"', shift right] & \dots \arrow[r, "{p_{1,k-3}}", shift left] \arrow[r, "{p_{2,k-3}}"', shift right] & {e_{k-2}[-2]} \arrow[r, "{p_{1,k-2}}", shift left] \arrow[r, "{p_{2,k-2}}"', shift right] \arrow[rr, "\epsilon'"', bend right=60] & {e_{k-1}[-1]} \arrow[rr, "\tilde{\epsilon}", bend left=60] & \Phi \mathcal{O} \arrow[r, "{x_0,y_0,z_0}", shift left] \arrow[r, shift right] \arrow[r] & \Phi \mathcal{T}(-H) \arrow[r, "{x_1,y_1,z_1}", shift left] \arrow[r, shift right] \arrow[r] & \Phi \mathcal{O}(H) \arrow[r, "{r_{H,i}}"] & \Phi \mathcal{O}_{B_i}
    \end{tikzcd}\]
    \begin{prop}\label{prop:quiverwithrelsB}
      The endomorphism algebra of the exceptional collection above can be described as the algebra of the quiver with relations \begin{gather*}
        p_{1,i+1}\circ p_{1,i}=-p_{2,i+1}\circ p_{2,i},\qquad p_{1,i+1}\circ p_{2,i}=p_{2,i+1}\circ p_{1,i}=0, \qquad \epsilon '\circ p_{1,k-3}=\epsilon '\circ p_{2,k-3}=0\\
        y_1 \circ x_0 = -x_1 \circ y_0=-z\\
        z_1 \circ x_0 = -x_1 \circ z_0=y\\
        z_1 \circ y_0 = - y_1 \circ z_0=-x\\
        x_1 \circ x_0 = y_1 \circ y_0 = z_1 \circ z_0 =0\\
        r_{H,i}\circ(u_ix_1+v_iy_1)=0
      \end{gather*}
      where the coefficients $u_i,v_i$ define the point $[u_i:v_i:0]\in \mathbb{P}^2$ which is blown up corresponding to the exceptional divisor $B_i$. There are also relations for the gluing maps $\epsilon',\tilde{\epsilon}$: \begin{gather*}
        x_1\circ \tilde{\epsilon}=y_1\circ \tilde{\epsilon}=0, \qquad
        x_0\circ\epsilon'=\tilde{\epsilon}\circ p_{2,k-2}, \qquad
        y_0\circ\epsilon'=\tilde{\epsilon}\circ p_{1,k-2}
      \end{gather*}
    \end{prop}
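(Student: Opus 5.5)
The endomorphism algebra splits into three kinds of pieces, and I will compute each separately: the local orbifold piece, the piece coming from $D^b(Y)$, and the gluing morphisms $\epsilon',\tilde\epsilon$ together with their compositions.

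\emph{Local orbifold piece.} Since $\Phi$ is fully faithful and the $e_i$ are supported at $q$, this is the McKay quiver computation of Section \ref{sec:McKayalgebra}. It gives the generators $p_{1,i},p_{2,i}$ and the relations $p_{1,i+1}p_{1,i}=-p_{2,i+1}p_{2,i}$ and $p_{1,i+1}p_{2,i}=p_{2,i+1}p_{1,i}=0$. The shifts $e_i[i-k]$ place everything in degree $0$.

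\emph{The $D^b(Y)$ piece.} Full faithfulness of $\Phi$ reduces this to the Beilinson-type relations on $\mathbb{P}^2$ for $\langle \mathcal{O},\mathcal{T}(-H),\mathcal{O}(H)\rangle$. The pullbacks of these objects to the blowup keep the same Ext algebra. The $x_0,y_0,z_0$ are the three sections of $\mathcal{T}(-H)$ coming from the Euler sequence, and the $x_1,y_1,z_1$ are the contractions $\mathcal{T}(-H)\to\mathcal{O}(H)$, so the composition rule is the wedge/cross product. This yields the sign pattern $y_1x_0=-x_1y_0=-z$, and so on, with $x_1x_0=y_1y_0=z_1z_0=0$. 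For the $\mathcal{O}_{B_i}$ I will use the remark after Orlov's formula: $\mathrm{Hom}(\mathcal{T}(-H),\mathcal{O}_{B_i})$ is the quotient of $\mathbb{C}^3$ by the line defining the point $[u_i:v_i:0]$. This gives $r_{H,i}\circ(u_ix_1+v_iy_1)=0$.

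\emph{Gluing.} By Proposition \ref{prop:gluingmorphisms}, $\epsilon'$ spans $\mathrm{Hom}(e_{k-2}[-2],\Phi\mathcal{O})$ and $\tilde\epsilon$ spans $\mathrm{Hom}(e_{k-1}[-1],\Phi\mathcal{T}(-H))$. All compositions are then computed on $E\simeq\mathbb{P}^1$ via Corollary \ref{cor:naturality}. Under this corollary, $e_{k-2}[-2]\mapsto\mathcal{O}_E$ and $e_{k-1}[-1]\mapsto\mathcal{O}_E(1)$, while $\Phi\mathcal{L}\mapsto\iota_E^*\mathcal{L}$. Moreover $p_{1,k-2},p_{2,k-2}$ correspond to $x,y$ restricted to $E=\{z=0\}$.

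\begin{itemize}
\item $\epsilon'$ corresponds to the identity section $1\in H^0(\mathcal{O}_E)$.
\item $\iota_E^*\mathcal{T}(-H)\cong\mathcal{O}\oplus\mathcal{O}(1)$ restricted to the line. Here $\tilde\epsilon$ is the map $\mathcal{O}(1)\to\iota_E^*\mathcal{T}(-H)$, which is the direction normal to the line (the $z$-direction).
\item $x_0\epsilon'$ and $y_0\epsilon'$ are the restrictions of the Euler sections $\partial_x,\partial_y$ to $E$. Comparing with $\tilde\epsilon\circ p_{2,k-2}$ and $\tilde\epsilon\circ p_{1,k-2}$ inside the $3$-dimensional space $\mathrm{Hom}(\mathcal{O}_E,\iota_E^*\mathcal{T}(-H))$ gives the two stated relations, after rescaling generators.
\item $x_1\tilde\epsilon$ and $y_1\tilde\epsilon$ lie in $\mathrm{Hom}(\mathcal{O}_E(1),\mathcal{O}_E(1))=\mathbb{C}$ and vanish because $x_1,y_1$ kill the normal direction. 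This leaves only $z_1\tilde\epsilon\neq 0$.
\item $\epsilon'\circ p_{\bullet,k-3}$ lies in $\mathrm{Hom}(e_{k-3}[-3],\Phi\mathcal{O})$. This space is zero because $\Psi e_{k-3}=0$ and $\mathrm{Hom}(e_{k-3},\Phi\mathcal{O})=\mathrm{Hom}(\Psi e_{k-3},\mathcal{O})$.
\end{itemize}

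To finish, I will check completeness: the dimensions predicted by the quiver with relations must match the Ext dimensions of Proposition \ref{prop:gluingmorphisms} and of $D^b(Y)$, so that no further relations are needed. Since everything sits in degree $0$, the collection is formal and $\mu^2$ is all that needs to be determined.

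\emph{Main obstacle.} The hardest step is pinning down the gluing relations $x_0\epsilon'=\tilde\epsilon p_{2,k-2}$ and $y_0\epsilon'=\tilde\epsilon p_{1,k-2}$ with the correct pairing ($x$ versus $y$) and signs. This requires tracking the adjunction isomorphisms and the identification $\iota_E^!=\iota_E^*\otimes\mathcal{O}(-2)$ from Corollary \ref{cor:naturality} carefully. I would fix generators so that the relations hold, rather than derive the constants.
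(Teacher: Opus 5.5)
Your decomposition is the paper's: McKay relations locally, Euler-sequence/Beilinson relations for $\langle\mathcal{O},\mathcal{T}(-H),\mathcal{O}(H)\rangle$ plus the Orlov remark for $r_{H,i}$, and the gluing computed on $E$ via Corollary \ref{cor:naturality}. Your argument for $\epsilon'\circ p_{\bullet,k-3}=0$ via $\Psi e_{k-3}=0$ is correct and fills in something the paper leaves implicit.

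The gap is in the gluing step, which is the real content of the paper's proof: your identification of the maps on $E$ has tangent and normal swapped. Restricting the Euler sequence to $E=\{z=0\}$ gives $\iota_E^*\mathcal{T}(-H)\cong(\mathcal{O}_E^{2}/\mathcal{O}_E(-1))\oplus\mathcal{O}_E$. The first summand is $\mathcal{T}_E(-1)\cong\mathcal{O}_E(1)$, spanned by $\partial_x,\partial_y$ and identified with $\mathcal{O}_E(1)$ via $(a,b)\mapsto ay-bx$. The second summand is the normal direction $\partial_z$.
\begin{itemize}
\item Since $\mathrm{Hom}_E(\mathcal{O}(1),\mathcal{O})=0$, $\tilde\epsilon$ is forced to be the inclusion of the \emph{tangent} summand, not the normal one.
\item On $E$ one has $x_1(a,b,c)=-cy$, $y_1(a,b,c)=cx$ and $z_1(a,b,c)=ay-bx$. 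So it is $z_1$ that kills $\partial_z$, while $x_1,y_1$ kill the tangent summand.
\end{itemize}
Both claims in your fourth bullet are therefore false. They only cancel to give the correct $x_1\tilde\epsilon=y_1\tilde\epsilon=0\neq z_1\tilde\epsilon$; read literally, ``$\tilde\epsilon$ is the $z$-direction'' would give $z_1\tilde\epsilon=0$. Your third bullet is also inconsistent with your second. Comparing $x_0\epsilon'=\partial_x|_E$ and $y_0\epsilon'=\partial_y|_E$ with $\tilde\epsilon\circ H^0(\mathcal{O}_E(1))$ only works because the image of $\tilde\epsilon$ is the tangent summand.

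Moreover, the pairing $x_0\leftrightarrow p_{2,k-2}$, $y_0\leftrightarrow p_{1,k-2}$ cannot be arranged by ``fixing generators'' or rescaling, as your last paragraph suggests. Just before the proposition, $p_{1,k-2},p_{2,k-2}$ are tied to $\iota_E^*x,\iota_E^*y$, and $x_0,y_0$ are the Euler inclusions. With those choices the pairing is forced by the quotient map above: $\partial_x|_E\mapsto y$ and $\partial_y|_E\mapsto -x$. Only a relative sign is then left to absorb by rescaling.

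What is missing is the explicit list of $\iota_E^*x_0,\iota_E^*y_0,\iota_E^*z_0,\iota_E^*x_1,\iota_E^*y_1,\iota_E^*z_1$ in the splitting $\mathcal{O}\oplus\mathcal{O}(1)$, together with $\tilde\epsilon=(0,1)$ and $\epsilon'=1$. This is exactly what the paper's proof writes down, and with it every gluing relation is a one-line composition on $E$.
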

    \begin{proof}
    The relations for $p_{1,i}, p_{2,i}$ were described in \ref{sec:McKayalgebra}. For the rest of the relations, under the Euler sequence isomorphism $[\mathcal{O}(-H)\rightarrow \mathcal{O}^3]\simeq \mathcal{T}(-H)$, the maps $x_0,y_0,z_0$ are represented by the morphisms of chain complexes which are the three inclusions of $\mathcal{O}$ into $\mathcal{O}^3$. On the other hand, $x_1,y_1,z_1$ are represented by the morphisms of chain complexes which are given in vector form $(0,z,-y)^T, (-z,0,x)^T, (y,-x,0)^T$ as maps from $\mathcal{O}^3\rightarrow \mathcal{O}$. From this, the first set of relations readily follow.

    For the gluing compositions, by Corollary \ref{cor:naturality} we can compute by restricting to $E$. We have the splitting $\iota^*\mathcal{T}(-H)=\mathcal{O}\oplus \mathcal{O}(1)$, under which we can identify $\tilde{\epsilon}$ as $(0,1)\in \mathrm{Hom}_E(\mathcal{O}(1), \mathcal{O}\oplus \mathcal{O}(1))$, $\iota^*z_0$ with $(1,0)\in \mathrm{Hom}_E(\mathcal{O},\mathcal{O}\oplus \mathcal{O}(1))$ and $\iota^*z_1$ with $(0,1)^T\in \mathrm{Hom}(\mathcal{O}\oplus \mathcal{O}(1), \mathcal{O}(1))$. Furthermore, $\iota^*x_0$ can be identified with $(0,\iota^*y)$ and $\iota^*y_0$ with $(0,\iota^*x)$, and similarly $\iota^*x_1$ with $(\iota^*y,0)^T$ and $\iota^*y_1$ with $(\iota^*x,0)^T$. The compositions $x_1\circ \tilde{\epsilon}, y_1 \circ \tilde{\epsilon}$ are then immediately seen to be $0$ in the composition $$ \mathrm{Hom}_E(\mathcal{O} \oplus \mathcal{O}(1), \mathcal{O}(1))\otimes \mathrm{Hom}_E (\mathcal{O}(1), \mathcal{O}\oplus \mathcal{O}(1)) \rightarrow \mathrm{Hom}_E(\mathcal{O}(1), \mathcal{O}(1))$$

    Finally, the composition $x_0 \circ \epsilon', y_0 \circ \epsilon'$ are given by $$ \underbrace{\mathrm{Hom}_E(\mathcal{O}, \mathcal{O}\oplus \mathcal{O}(1))}_{\iota^*x_1 = (0, \iota^*y), \iota^*y_1=(0,\iota^*x)}\otimes \underbrace{\mathrm{Hom}_E(\mathcal{O}, \mathcal{O})}_{\epsilon'=1}\rightarrow \mathrm{Hom}_E(\mathcal{O}, \mathcal{O}\oplus \mathcal{O}(1))$$
    which are the same as the compositions $$ \underbrace{\mathrm{Hom}_E(\mathcal{O}(1), \mathcal{O}\oplus\mathcal{O}(1))}_{\tilde{\epsilon}=(0,1)}\otimes \underbrace{\mathrm{Hom}_E(\mathcal{O},\mathcal{O}(1))}_{\Psi p_{2,k-2}=y, \Psi p_{1,k-2}=x} \rightarrow \mathrm{Hom}_E(\mathcal{O}, \mathcal{O}\oplus \mathcal{O}(1))$$

    The fact that these morphisms generate the algebra is a simple consequence of Proposition \ref{prop:gluingmorphisms}.
    \end{proof}
 
\subsection{The LG model mirror to $X_{k+1}$ and its critical points}\label{sec:LGmodel}
We now switch gears and study an explicit Landau-Ginzburg model mirror to the orbifold del Pezzo surfaces in the family $X_{k+1}$ and relate it to the abstract Lefschetz fibration from \ref{constr:Lefschetz}. We will construct a collection of vanishing paths whose vanishing cycles will be used to prove homological mirror symmetry for the family $X_{k+1}$ in Section \ref{sec:Lagvc}.

The orbifold $\mathcal{X}$ admits a toric degeneration to $\mathbb{P}(1,k,k+1)$. This is most easily seen by viewing $\mathbb{P}(1,k,k+1)$ as $\mathbb{V}(x^{k+1}+zw)\subset \mathbb{P}(1,1,1,k)$ (see e.g. \cite[Section 4]{cavey_pezzo_2020}). Standard mirror constructions of toric degenerations yield the following process: we pass to the polar dual toric variety $V$ of $\mathbb{P}(1,k,k+1)$ and produce a pencil of algebraic curves on it. The pencil arises from two sections of a line bundle: the first section corresponds to the origin lattice point in the Fano polygon, and the other section corresponds to the boundary lattice points. In our case $V$ is polarized by the line bundle associated to $D_V=V_0+V_1+kV_2$ and given by the convex hull of the points $(-1,-1),(-1,k), (1,0)$:
    \begin{figure}[H]
    \noindent
    \minipage{0.48 \textwidth}
    \centering
    \begin{tikzpicture}[scale =0.5]
    \draw[step=1.0,black,thin,xshift=0cm,yshift=0cm, opacity=0.5] (-2,-2) grid (2,8);
    \node[inner sep = 0] (v2) at (1,0) {};
    \node[inner sep = 0] (v1) at (-1,-1) {};
    \node[inner sep = 0] (v3) at (-1,7) {};
    \draw[thick,red]  (v1) edge node[midway, below] {$V_1$}  (v2);
    \draw[thick,red]  (v2) edge node[midway, right] {$V_2$} (v3);
    \draw[thick,red]  (v1) edge node[midway, left=8pt, below] {$V_0$} (v3);

    \node [circle, fill, gray, inner sep = 1] at (0,0) {};
    \node [circle, fill, black, inner sep = 1] at (0,1) {};
    \node [circle, fill, black, inner sep = 1] at (0,2) {};
    \node [circle, fill, black, inner sep = 1] at (0,3) {};
    \node [circle, fill, black, inner sep = 1] at (-1,-1) {};
    \node [circle, fill, black, inner sep = 1] at (1,0) {};
    \node [circle, fill, black, inner sep = 1] at (-1,5) {};
    \node [circle, fill, black, inner sep = 1] at (-1,0) {};
    \node [circle, fill, black, inner sep = 1] at (-1,1) {};
    \node [circle, fill, black, inner sep = 1] at (-1,2) {};
    \node [circle, fill, black, inner sep = 1] at (-1,3) {};
    \node [circle, fill, black, inner sep = 1] at (-1,4) {};
    \node [circle, fill, black, inner sep = 1] at (-1,5) {};
    \node [circle, fill, black, inner sep = 1] at (-1,6) {};
    \node [circle, fill, black, inner sep = 1] at (-1,7) {};
    \end{tikzpicture}
    \endminipage
    \hfill
    \minipage{0.48\textwidth}%
    \hspace*{-3cm}
    \centering
    \begin{tikzpicture}[scale=0.5,every path/.style={->, >=latex}]
    \draw[step=1.0,black,thin,opacity=0.5,xshift=0cm,yshift=0cm] (-8,-1) grid (2,5);

    \node [circle,fill,black,inner sep =1] (v1) at (0,2) {};
    \node [circle,fill,black,inner sep =1] (v2) at (-7,0) {};
    \draw [thick,->,teal] (v1) edge (v2);

    \node [circle,fill,black,inner sep =1] (v7) at (-1,2) {};
    \node [circle,fill,black,inner sep =1] (v6) at (-1,3) {};
    \node [circle,fill,black,inner sep =1] (v3) at (-1,4) {};
    \node [circle,fill,black,inner sep =1] (v4) at (1,2) {};
    \node [circle,fill,black,inner sep =1] (v5) at (0,3) {};
    \node [circle,fill,black,inner sep =1] (v8) at (-4,1) {};
    \node [circle,fill,black,inner sep =1] (v9) at (-3,0) {};
    \draw [thick,->,teal] (v1) edge (v3);
    \draw [thick,->,teal] (v1) edge (v4);
    \draw [dashed,->,teal] (v1) edge (v5);
    \draw [dashed,->,teal] (v1) edge (v6);
    \draw [dashed,->,teal] (v1) edge (v7);
    \draw [dashed,->,teal] (v1) edge (v8);
    \draw [dashed,->,teal] (v1) edge (v9);
    \end{tikzpicture}
    \endminipage
    \caption{\label{fig:polygon} The polygon (left) and its fan (right) and also the resolution rays (dashed) in the case $k=7$.} 
    \end{figure}
    The resulting potential, for $k$ odd, has the following form: $$\mathbf{f}|_{(\mathbb{C}^\times)^2}=\frac{(y+\mathbf{q}_1)\dots (y+\mathbf{q}_{k+1})}{xy}+x+\tau_1 y+ \dots \tau_{\frac{k-1}{2}}y^{\frac{k-1}{2}}$$

    The first term comes from the left column of the polygon, the second from the lattice point $(1,0)$ and the remaining terms arise from the interior lattice points. This Laurent polynomial appears in the case $k=3, \mathbf{q}_i=1$ in \cite[Section 7.2]{oneto_quantum_2018}. 
    \begin{rem}
      When $k$ is even, there is an extra boundary lattice point $(0,\frac{k}{2})$ and hence the potential will have a different form. We choose $k$ to be odd purely for the sake of convenience. 
    \end{rem}

    The precise values of $\mathbf{q}_i, \tau_j$, if generic, will not affect the symplectic geometry of this Landau-Ginzburg model. We will take $\mathbf{q}_i\approx 1$ but all distinct, $\tau_1=1, \tau_j=0,j>1$. This suffices for our purposes. Moreover, we introduce an auxiliary $s$-parameter in front of $x$ and define: $$\mathbf{f}_s=\frac{\prod_{i=1}^{k+1} (y+\mathbf{q}_i)}{xy}+sx+y$$
    \begin{rem}
      The Newton polytope of $\mathbf{f}$ is exactly the Fano polygon described above, in other words it corresponds to a section of $\mathcal{O}(D_V)$. For $\mathbf{f}$ to be non-degenerate (in the sense of \cite{kouchnirenko_polyedres_1976}, see also \cite[Section 6]{seidel_suspending_2009}), the values of $\mathbf{q}_i$ have to be generic i.e. $P(y)=\prod (y+\mathbf{q}_i)$ must not admit double roots, since the face polynomial on the left face $V_0$ of $V$ is given by $P(y)/xy$. This non-degeneracy precludes the existence of critical points on the base locus of the pencil, which is related to the Palais-Smale condition.
    \end{rem}

    \begin{rem}
      The parameters $\mathbf{q}_i$ are Novikov parameters associated to $H^2(\mathcal{X})=\mathbb{Z}^{k+1}$, whereas the $\tau_j$ correspond to twisted sectors in the Chen-Ruan cohomology of $\mathcal{X}$. A piece of computer code made by the author which can compute the small quantum cohomology of $\mathcal{X}$ can be found \href{https://bogdan-simeonov.github.io/visualizations/}{here}.
    \end{rem}

    \subsubsection{Homogeneous coordinates and the genus of the general fiber}
    The fan of $V$ consists of the rays $(1,0), (-1,2), (-k,-2)$. It has two $\frac{1}{2}(1,1)$ singularities where $V_0$ meets $V_1, V_2$ and also a $\frac{1}{2k+2}(1,k+2)$ singularity where $V_1, V_2$ meet. Moreover, $V_0^2=\frac{k+1}{2}, V_1^2=V_2^2=\frac{1}{2k+2}, D_V^2=2k+2$. The homogeneous coordinate ring has variables $v_0, v_1, v_2$ corresponding to the toric divisors and the lattice points correspond to the following monomials: \begin{gather*}
      (0,0)\leftrightarrow v_0v_1 v_2^k,
      (1,0)\leftrightarrow v_0^2,
      (-1,i)\leftrightarrow v_1^{2i+2}v_2^{2k-2i}=\{ v_2^{2k+2}, v_2^{2k}v_1^2, \dots, v_1^{2k+2}\}
    \end{gather*}

    A general section $\Sigma$ of $\mathcal{O}(D_V)$ has Euler characteristic given by $$\int_\Sigma c_1(\mathcal{T}_\Sigma)=\int_\Sigma c_1(\mathcal{T}_V)-c_1(\mathcal{O}(D_V))=\int_\Sigma c_1(\mathcal{O}((1-k)V_2))=(1-k)\,\Sigma\cdot V_2=1-k$$
    Hence its genus will be $\frac{k+1}{2}$, which is also the number of interior lattice points in the Newton polytope. Moreover, it satisfies $\Sigma\cdot V_0=k+1, \Sigma\cdot V_1= \Sigma \cdot V_2=1$.

    Note that after resolving the $A_1$ singularity where $V_0$ meets $V_1$, introducing an exceptional divisor $V_3$, we can write $\mathbf{f}_s$ in a chart $U_{0,3}\simeq \mathbb{C}^2$ as: \begin{equation}\label{eq:chart}
      \mathbf{f}_s=P(v_3)+sv_0^2v_3+v_0v_3^2, o=v_0v_3\end{equation}
      where $o$ is the monomial section of $\mathcal{O}(D_V)$ corresponding to the origin. These two functions define a pencil on $\mathbb{C}^2$. We will be mostly working in this chart.

    \subsubsection{The total space of the Lefschetz fibration}
    So far, we have defined a pencil on a toric surface $V$. We explain how to turn this into a Lefschetz fibration.

    The pencil on $V$ has $k+1$ base points on $V_0$ and one base point each on $V_1, V_2$. We blow these up, then remove the strict transform of the toric boundary, producing a manifold $M$. This admits a Lefschetz fibration whose general fiber is a compact genus $\frac{k+1}{2}$ Riemann surface. Since $c_1$ of this fiber is nonzero, this will preclude the possibility of a $\mathbb{Z}$-grading on the Fukaya-Seidel category to be defined later.

    Instead, if we just worked in the chart $U_{0,3}$ and blew up the $k+1$ points on the axis corresponding to $V_0$, then removed the strict transform of the coordinate lines, we will get a manifold that we denote by $M^0$. This manifold admits a Lefschetz fibration induced by $\mathbf{f}$ whose general fiber is a twice-punctured genus $\frac{k+1}{2}$ Riemann surface\footnote{The punctures correspond to the pencil's base points on $V_1, V_2$}. It is a partial compactification of $(\mathbb{C}^\times)^2$. By \cite[Section 7]{evans_symplectic_2014}, the manifold $M^0$ is in fact diffeomorphic to the following manifold defined by an algebraic equation:
    $$\{zx=P(y)\}\subset \mathbb{C}^2_{x,z}\times \mathbb{C}^\times_y$$
    The $(\mathbb{C}^\times)^2$ inside $M^0$ is given by the locus where $x\neq 0$.
    In other words, $M^0$ is diffeomorphic to the $A_k$ Milnor fiber with a cylinder removed. As such, it has $H_2(M_0)=\mathbb{Z}^{k+1}$, with $k$ of the factors coming from a chain of $-2$ spheres, and another one coming from the generator of $H_2((\mathbb{C}^\times)^2)$. We will equip $M^0$ with the non-exact symplectic form as in Equation \ref{def:symplecticform}.  

    \subsubsection{Critical points and critical values}
    Now, we proceed to determine the critical points. These are all contained inside $(\mathbb{C}^\times)_{x,y}^2$ (provided that $P$ does not admit double roots). We use the variable $t$ to denote the value of $\mathbf{f}_s$.
    \begin{prop}\label{prop:criticalptsdeformation}
    As $s\rightarrow 0$ (setting $P(y)=(1+y)^{k+1}+\epsilon$ for a small $\epsilon$), the critical values become distributed as follows:
    \begin{enumerate}[label=\Roman*.]
      \item $k-2$ of the critical values are distributed near the roots of $t^{k-2}-(\frac{k-2}{k})^{k-2}\frac{1}{s}=0$.
      \item $3$ of them will be arbitrarily close to $0$
      \item Another $k+1$ critical points are equidistributed near $-1$.
    \end{enumerate}
    \end{prop}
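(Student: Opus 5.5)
The plan is to solve the critical point equations of $\mathbf{f}_s=\frac{P(y)}{xy}+sx+y$ explicitly on $(\mathbb{C}^\times)^2_{x,y}$ and then study the roots asymptotically as $s\to 0$ using a Newton polygon (Puiseux) argument in the variable $y$.

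\textbf{Step 1: reduce to one variable.} The equation $\partial_x\mathbf{f}_s=0$ gives $x^2=\frac{P(y)}{sy}$. The equation $\partial_y\mathbf{f}_s=0$ gives $x=\frac{P(y)-yP'(y)}{y^2}$. Eliminating $x$, the critical points correspond to the roots of
\[ s\,(P(y)-yP'(y))^2 = y^3P(y). \]
Here $P-yP'$ has degree $k+1$ with leading coefficient $-k$. Hence the left side has degree $2k+2$ and the right side has degree $k+4$, giving $2k+2=(k-2)+3+(k+1)$ roots in total, which matches the claimed count.

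\textbf{Step 2: simplify the critical value.} At a critical point the first equation gives $\frac{P}{xy}=sx$, so the critical value is
\[ t=2sx+y. \]

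\textbf{Step 3: separate the roots into three regimes.} The Newton polygon of the equation, viewed as a polynomial in $y$ over $\mathbb{C}((s^{1/N}))$, has three slopes.

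\begin{enumerate}[label=\Roman*.]
\item Large $y$. Balancing $sk^2y^{2k+2}\approx y^{k+4}$ gives $k-2$ roots with $y^{k-2}\approx \frac{1}{sk^2}$, so $|y|\sim s^{-1/(k-2)}\to\infty$. For these roots $x\approx -ky^{k-1}$, hence $sx\approx -\frac{y}{k}\,(sk^2y^{k-2})\approx -\frac{y}{k}$. Therefore $t\approx\frac{k-2}{k}\,y$, and $t^{k-2}$ is asymptotic to $(\frac{k-2}{k})^{k-2}$ times $\frac{1}{s}$, up to the normalising constant coming from the leading coefficient of $P-yP'$. I would track this constant carefully, or absorb it into a rescaling of $s$, to match the displayed equation exactly. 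These $k-2$ values escape to infinity and are equidistributed around a circle.
\item Small $y$. Near $y=0$ the balance $sP(0)^2\approx y^3P(0)$ gives three roots with $y^3\approx sP(0)$, so $y\sim s^{1/3}$. Then $x\sim s^{-2/3}$ and $t=2sx+y=O(s^{1/3})\to 0$.
\item $y$ near the roots of $P$. At $s=0$ the remaining $k+1$ roots are the zeros $\rho_i$ of $P$. Since $P=(1+y)^{k+1}+\epsilon$, these are $\rho_i=-1+\epsilon^{1/(k+1)}\zeta_i$ with $\zeta_i^{k+1}=-1$, so they are simple and equidistributed about $-1$. For small $s$ each perturbs to a nearby root, by Rouché or the implicit function theorem using $P'(\rho_i)\neq 0$. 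There $x\to -P'(\rho_i)/\rho_i$ is finite, so $t\to\rho_i$, giving $k+1$ critical values equidistributed near $-1$.
\end{enumerate}

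\textbf{Main obstacle.} The step I expect to need the most care is the rigorous root count: showing that exactly $k-2$, $3$ and $k+1$ roots lie in the three regimes, with none lost or shared between them. I would handle this by applying Rouché's theorem on three regions: the annulus $|y|\asymp s^{-1/(k-2)}$, the small disc $|y|\asymp s^{1/3}$, and small discs around each $\rho_i$. On each region one checks that the two dominant terms identified above beat the rest; since the three totals already sum to $2k+2$, this accounts for every root.

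It remains to confirm that every root gives a genuine critical point in $(\mathbb{C}^\times)^2$, that is, $x\neq 0$ and $y\neq 0$. This holds because $P$ has no double roots, so $P-yP'$ and $P$ do not vanish simultaneously.
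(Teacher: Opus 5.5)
Your proposal is correct and follows essentially the same route as the paper: the same elimination to $s(P-yP')^2=y^3P$, the same expression $t=2sx+y$ for the critical value, and the same three regimes $|y|\sim s^{1/3}$, $y$ near the roots of $P$, and $|y|\sim s^{-1/(k-2)}$. The paper localises the roots and derives $t\approx 3y$ and $t\approx\frac{k-2}{k}y$ somewhat informally, so your Newton polygon and Rouch\'e count tightens the same argument rather than replacing it.

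On regime I, your leading-order constant is the right one: $t^{k-2}\approx(\frac{k-2}{k})^{k-2}\frac{1}{k^2 s}$. The paper's proof drops the factor $k^2$ (it writes $y^{k-2}\approx\frac{1}{s}$), so the displayed equation in the statement is off by $k^2$. Rather than rescaling $s$, you should just record the corrected normalisation; this does not change the qualitative picture.
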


    \begin{proof}
    We provide the proof in the appendix \ref{propappendix:criticalptsdeformation}. The case when $\epsilon=0$ is illustrated in Figure \ref{fig:vanishingpaths}, where the $k+1$ critical values of type \uppercase\expandafter{\romannumeral 3} have all collided.
    \end{proof}

    \begin{rem}
    If we set $\mathbf{q}_i=1$ there will be a single, $k+1$-fold degenerate critical point on the base locus on $V_0$ as in Figure \ref{fig:sketch} below. This can be thought of as the 'big' eigenvalue of the quantum multiplication by $c_1$ on $QH^\bullet(\mathcal{X})$ when $\mathcal{X}$ is monotone. During the process of resolving the base locus, the point where the two branches meet at $V_0$ is replaced by a tower of $k$ holomorphic spheres all living in this special fiber, so in the end, the fibration has the same number of vanishing cycles.
    \end{rem}

      \begin{figure}[!h]
        \begin{tikzpicture}[scale=0.5]
        \node[inner sep = 0] (v2) at (1,0) {};
        \node[inner sep = 0] (v1) at (-1,-1) {};
        \node[inner sep = 0] (v3) at (-1,7) {};
        \draw[very thick,red!70!black]  (v1) edge node[midway, below] {$V_1$}  (v2);
        \draw[very thick,red!70!black]   (v2) edge node[midway, right] {$V_2$} (v3);
        \draw[very thick,red!70!black]   (v1) edge node[midway, left=8pt, below] {$V_0$} (v3);

      \begin{scope}[yshift=60, rotate=90]
      \draw[thick] (-0.5,0) to [out=30, in=140] (0.2,0);
      \draw[thick] (-0.5+0.1,0.05) to [out=-30, in=-140] (0.2-0.1,0+0.05);
      \end{scope}

      \begin{scope}[yshift=10, rotate=90]
      \draw[thick] (-0.5,0) to [out=30, in=140] (0.2,0);
      \draw[thick] (-0.5+0.1,0.05) to [out=-30, in=-140] (0.2-0.1,0+0.05);
      \end{scope}

      \begin{scope}[yshift=35,rotate=90]
      \draw[thick] (-0.5,0) to [out=30, in=140] (0.2,0);
      \draw[thick] (-0.5+0.1,0.05) to [out=-30, in=-140] (0.2-0.1,0+0.05);
      \end{scope}

      \draw[gray] (-0.65,4.5) to [out=0, in=-30] (-0.65,3.72);
      \draw[gray,dotted] (-0.65,4.5) to [out=230, in=130] (-0.65,3.72);

      \draw[very thick] (-1,3) .. controls (-1,3.9) and (-0.9096,4.4743) .. (-0.65,4.5) .. controls (-0.4,4.5) and (-0.25,4.15) .. (0,3.5) .. controls (0.2,2.7) and (0.35,2.15) .. (0.4,1.6) .. controls (0.4906,0.5474) and (0.4506,-0.2583) .. (0,-0.5) .. controls (-0.4326,-0.6367) and(-0.7136,0.1066) .. (-0.9,1) .. controls (-1,1.6) and (-1,2)  .. (-1,3);
      \draw[very thick] (-0.75,3) ellipse (0.25cm and 0.8cm);

      \node[black,circle,fill, inner sep=2] at (-1,3) {};
      \end{tikzpicture}
      \caption{\label{fig:sketch} Sketch of the critical point at the base locus when all $\mathbf{q}_i=1$. The base locus can be resolved after $k$ consecutive blowups at the intersection of the branch of $\Sigma_{-1}$ with intersection multiplicity $k$ with $V_0$. }
      \end{figure}
    
    We will from now on focus on this deformed LG model with $s$ arbitrarily small and real. An animation of how the the critical values move as $s$ goes from $1$ to $s\approx 0$ can be found \href{https://bogdan-simeonov.github.io/visualizations/}{here}.

    \subsection{A collection of vanishing cycles}
    In this section, we determine the vanishing cycles topologically by using a bifibration method, looking at the branch points of some fixed reference fiber by projecting to the $y$-axis and observing how they collide when we reach a critical value. Initially, we will choose a reference fiber on the real line and follow vanishing paths similar to those of Figure \ref{fig:vanishingpaths}. The diagram in Figure \ref{fig:vanishingpaths} depicts the case when all $\mathbf{q}_i=1$, whereas (as in Proposition \ref{prop:criticalptsdeformation}) we slightly deform the potential, in which case the orange critical value at $-1$ splits into $k+1$ critical values nearby. We define the vanishing paths for these $k+1$ points in a simple way: they start at the reference fiber, go slightly above the blue critical values and then go to the $k+1$ points in a cyclic manner.

    This gives us a collection of Lagrangian vanishing cycles which we label in clockwise order:\[\color{purple}\tilde{L}_{k-1}, \tilde{L}_{k-2}, \dots, \tilde{L}_2, \color{blue} P_{-1},P_0,P_1, \color{orange} B_1,\dots, B_{k+1}\] \noindent We use the suggestive notation of $B_i$ for the Lagrangian vanishing cycles in accordance with their putative mirror partner sheaves $\mathcal{O}_{B_i}$.
      \begin{rem}
        The vanishing cycles $\tilde{L}_i$ are not the mirrors to the sheaves $e_i$ on the B-side. Rather, to match them, one needs to apply a \textit{left dual} mutation which we will describe later on in this section.
      \end{rem}

      Throughout, we will pick a reference fiber whose value $\mathbf{f}=t_*$ is such that $|t_*|$ is smaller than the absolute values of the $k-2$ critical values of type \uppercase\expandafter{\romannumeral 1} as in Proposition \ref{prop:criticalptsdeformation}.

      \subsubsection{The general fiber as a branched double cover}

      Let $\Sigma^{pre}_t$ be a fiber $\mathbf{f}|_{(\mathbb{C}^\times)^2}^{-1}(t)$. Under the $y$ projection, $\Sigma^{pre}_t$ is (outside a finite set) a double cover of $\mathbb{C}^\times$ with $k+1$ branch points. However, there are another $k+1$ points, the roots of $P(y)$ (these are \textit{not} branch points), where the preimage under $y$ consists of only one point, rather than the expected two. Once these are filled in (by partially compactifying $(\mathbb{C}^\times)^2$ to $M^0$), we get an honest branched double cover of $\mathbb{C}^\times$, which is what we now consider: the general fiber in the Lefschetz fibration of $M^0$ will be denoted $\Sigma^0_t$. The $k+1$ punctures that are filled correspond to $k+1$ sections of the Lefschetz fibration arising from the $k+1$ points on the base locus on $V_0$ which are blown up. 

      Consider now the branch point equation for $\Sigma_t^{pre}$ on the open torus: it is given by $$\mathbf{f}_s=0,\qquad \partial_x\mathbf{f}_s=0 \underbrace{\implies}_{\text{ \ref{propappendix:criticalptsdeformation}}} P(y)-\frac{1}{4s}y(t-y)^2=0$$
     For $|t|\gg0$ and with $|t|$ less than the absolute values of the critical values of type \uppercase\expandafter{\romannumeral 3} (and with $s$ sufficiently small), this has a root near $0$ as well as two roots near $t$ which we call the \textit{twin branch points}\footnote{One can reason as follows: consider the roots of the equation $4sP-y(t-y)^2=0$, once compactified to $\mathbb{P}^1$. When $s=0$, there is a root at $0$, a double root at $y=t$ and a $k-2$-fold root at $\infty$. Then, perturbing $s$ by a small amount deforms the positions of the roots continuously.}. Moreover, there are another $k-2$ equidistributed branch points $y$ satisfying $|y^{k-2}- \frac{1}{4s}|^2\approx 0$. We collect this into a proposition:

      \begin{prop}\label{prop:branchpoints}
      For $s$ sufficiently small and $\frac{1}{s}\gg |t| \gg 0$ the branch points of the Riemann surface $\Sigma^0_t$ under the $y$-projection to $\mathbb{C}^\times$ are distributed as follows: 
      \begin{enumerate}[label=\Roman*.]
        \item $k-2$ are distributed close to the roots of $y^{k-2}=\frac{1}{4s}$.
        \item One is close to $0$
        \item Two are close to $t$ 
      \end{enumerate} 
      \end{prop}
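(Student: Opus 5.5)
We will locate the branch points of the $y$-projection as the roots of the branch point equation derived just above. On the open torus, $\Sigma^{pre}_t$ is cut out by $P(y)+sx^2y+xy^2 = txy$. This is a quadratic equation in $x$, namely $sy\,x^2+(y^2-ty)x+P(y)=0$. Its discriminant is $y^2(y-t)^2-4syP(y)$, so away from $y=0$ the branch points are exactly the roots of
\[
g_s(y):=4sP(y)-y(t-y)^2=0 .
\]
The roots of $P$ are not branch points. The partial compactification to $M^0$ fills in the single preimage over each such root, and $x=P(y)/(\text{stuff})$ stays single-valued there. This matches the remark made before the statement. Since $\deg P=k+1$ and $\deg y(t-y)^2=3$, the polynomial $g_s$ has exactly $k+1$ roots for $s\neq 0$. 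We must show they split as $(k-2)+1+2$ in the indicated regions.

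For the roots in bounded regions we use Rouché's theorem (equivalently, continuity of roots on $\mathbb{P}^1$ as in the footnote). Fix $t$ with $|t|\gg 0$ and $|t|\ll 1/s$.

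Near $y=0$, on a small circle $|y|=\delta$ the term $y(t-y)^2$ has size about $\delta|t|^2$. The term $4sP(y)$ is $O(s)$. Choosing $\delta$ with $s\ll\delta|t|^2$, Rouché gives exactly one root in $|y|<\delta$.

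Near $y=t$, on a circle $|y-t|=\delta'$ we have $|y(t-y)^2|\approx |t|\delta'^2$. Meanwhile $|4sP(y)|\approx 4s|t|^{k+1}$. We need $s|t|^{k}\ll \delta'^2$ with $\delta'\ll |t|$, which holds since $s|t|^{k-2}\ll 1$ in the chosen regime. This is the precise form of $|t|\ll 1/s$ that we will impose; compare the critical value scale $s^{-1/(k-2)}$ of Proposition \ref{prop:criticalptsdeformation}, below which the reference fiber is taken. Rouché then gives two roots, the twin branch points.

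For the large roots, rescale $y=s^{-1/(k-2)}w$. Dividing $g_s$ by $s^{-3/(k-2)}$ gives
\[
4w^{k+1}\bigl(1+O(s^{1/(k-2)})\bigr)-w^3\bigl(1+O(t\,s^{1/(k-2)})\bigr).
\]
The error terms are small precisely because $|t|\ll s^{-1/(k-2)}$. On an annulus around $|w|=4^{-1/(k-2)}$, away from $w=0$, this is a small perturbation of $w^3(4w^{k-2}-1)$. So Rouché on small disks around each root of $4w^{k-2}=1$ yields exactly one root per disk. This gives $k-2$ roots close to the roots of $y^{k-2}=\tfrac{1}{4s}$, and counting shows these exhaust all $k+1$ roots.

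The main obstacle is making the two-scale regime consistent. The bounds near $y=t$ require $s|t|^{k-2}\ll1$, while the separation from $0$ requires $|t|\gg \max(1,|\mathbf{q}_i|)$. We must check these simultaneously against the $\epsilon$-perturbation of $P$; the latter only shifts $P$ by a bounded amount, so it is harmless.
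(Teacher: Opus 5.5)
Your proposal is correct and follows essentially the paper's argument: the paper uses the same branch-point polynomial $4sP(y)-y(t-y)^2$ and locates its roots by continuity from $s=0$ on $\mathbb{P}^1$ (a root at $0$, a double root at $t$, a $(k-2)$-fold root at $\infty$), and your three Rouch\'e estimates together with the rescaling $y=s^{-1/(k-2)}w$ make that quantitative. Your requirement $s|t|^{k-2}\ll 1$, i.e.\ $|t|$ below the scale $s^{-1/(k-2)}$ of the type I critical values, is the right precise form of the hypothesis and matches the paper's choice of reference fiber; for $k>3$ the literal condition $|t|\ll 1/s$ would also allow $t$ beyond those critical values.
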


      Below is a schematic picture illustrating the branch points of a reference fiber $\Sigma^0_t$ for such a $t$. 

      \begin{figure}[H]
        \hspace{1cm}
\begin{tikzpicture}[scale=0.7]
        \definecolor{DarkGreen}{RGB}{1,50,32}

        \node [circle, fill, DarkGreen, inner sep = 1.5pt, label =above:$2$] (v6) at (2.6564,1.3942) {};
        \node [circle, fill, DarkGreen, inner sep = 1.5pt, label =above:$3$] (v8) at (1.7042,2.469) {};
        \node [circle, fill, DarkGreen, inner sep = 1.5pt, label =above:$4$] (v10) at (0.3616,2.9781) {};
        \node [circle, fill, DarkGreen, inner sep = 1.5pt, label =above:$5$] (v12) at (-1.0638,2.805) {};
        \node [circle, fill, DarkGreen, inner sep = 1.5pt, label =above:$6$] (v14) at (-2.2455,1.9894) {};
        \node [circle, fill, DarkGreen, inner sep = 1.5pt, label =above:$7$] (v16) at (-2.9128,0.7179) {};
        \node [circle, fill, DarkGreen, inner sep = 1.5pt, label =above:$8$] (v18) at (-2.9128,-0.7179) {};
        \node [circle, fill, DarkGreen, inner sep = 1.5pt, label =above:$9$] (v20) at (-2.2455,-1.9894) {};
        \node [circle, fill, DarkGreen, inner sep = 1.5pt, label =above:$10$] (v22) at (-1.0638,-2.805) {};
        \node [circle, fill, DarkGreen, inner sep = 1.5pt, label =above:$11$] (v24) at (0.3616,-2.9781) {};
        \node [circle, fill, DarkGreen, inner sep = 1.5pt, label =above:$12$] (v26) at (1.7042,-2.469) {};
        \node [circle, fill, DarkGreen, inner sep = 1.5pt, label =above:$13$] (v28) at (2.6564,-1.3942) {};
        \node [circle, fill, DarkGreen, inner sep = 1.5pt, label =above:$14$] (v2) at (3,0) {};

        \node [shape=rectangle, fill, DarkGreen, inner sep = 1.5pt, label =above:$0$] (v3) at (1.5,0)  {};
        \node [shape=rectangle, fill, DarkGreen, inner sep = 1.5pt, label =above:$1$] (v1) at (2.33,0) {};
        
        \node [circle, draw, red, inner sep = 1.5pt] (v4) at (6,0) {};
        \node [circle, draw, red, inner sep = 1.5pt] (v5) at (0.5,0) {};
        \node [circle, fill, DarkGreen, inner sep = 1.5pt] (v7) at (0.8,0) {}; 
        \draw[thick, opacity=0.5,->, blue!50!red] (2.2328,-1.1564) -- (2.5132,-1.3232);      
        \draw[thick, opacity=0.5,->, blue!50!red] (2.5,0) -- (2.8,0);	
        \draw[thick, orange, ->, opacity=0.5] (2.2969,-0.0913) .. controls (2.0551,-0.4367) and (1.754,-0.5798) .. (1.4559,-0.6096);
        \draw[thick, orange, opacity=0.5] (1.6027,-0.0912) .. controls (1.7132,-0.1075) and (1.8956,-0.1627) .. (2.0194,-0.2867);
        \draw[thick, orange, ->, opacity=0.5] (2.1303,-0.373) .. controls (2.228,-0.4847) and (2.2978,-0.6591) .. (2.1665,-0.9592);
        \node [shape=rectangle, fill, DarkGreen, inner sep = 1.5pt, opacity=0.5] (v3) at (1.3505,-0.6034) {};
        \node [shape=rectangle, fill, DarkGreen, inner sep = 1.5pt, opacity=0.5] (v3) at (2.0889,-1.0645) {};
      \end{tikzpicture}  
        \caption{\label{fig:branch points labelled} Branch points for $\Sigma^0_t$ when $k=15$ and $t$ is on the real line, with added numbered labels which will come in play later, with the 'twin' branch points labeled 0 and 1. The red branch points correspond to the two punctures of $\Sigma_t^0$ at $y=0$ and $y=\infty$. Note that the one at $y=0$ corresponds to the base point on $V_1$ and the one at $\infty$ to the base point on $V_2$}
      \end{figure}
      
      \subsubsection{The vanishing cycles for the critical values of type \uppercase\expandafter{\romannumeral 1}}
      Starting from the reference $t_*$, we consider the effect on the branch points by following the vanishing paths in Figure \ref{fig:vanishingpaths} that go to the $k-2$ critical values of type \uppercase\expandafter{\romannumeral 1}. These are isotopic to a combination of a rotation of $t$, followed by an outwards radial scaling. In Figure \ref{fig:branch points labelled} above, the movement associated to the radial outwards scaling of $t$ is illustrated by the purple arrows pointing towards the branch points labelled $14$ (and $13$). The movement associated to rotating $t$ is depicted by the orange transposition arrows. Both of these movements are animated \href{https://bogdan-simeonov.github.io/visualizations/}{here} and proved in detail in the Appendix \ref{propappendix:radialoutwards}. 

      We will now describe what topological classes the vanishing cycles represent in $H_1(\Sigma^0_*)$. 
      \begin{defn}
      Let $l$ denote the class in $H_1(\Sigma^0_*)$ which is represented by a curve which projects under the branched double cover to an arc connecting the twin branch points. Let $l_i,2\leq i \leq k-1$ denote the class in $H_1(\Sigma^0_*)$ which is represented by a curve which projects under the branched double cover to an arc which starts at the branch point $1$, goes down and around and ends up at the branch point labelled $i$.
      We choose the orientations so that $\langle l_i, l_j\rangle =-1, i<j$ and $\langle l,l_i\rangle=-1$.
      \end{defn} 
      The projections under the branched double cover of representatives of these classes can be visualized as follows:
      \begin{figure}[H]
        \hspace{1cm}
\begin{tikzpicture}[scale=0.7]
        \definecolor{DarkGreen}{RGB}{1,50,32}
          \node [circle, fill, DarkGreen, inner sep = 1.5pt] (v6) at (2.6564,1.3942) {};
          \node [circle, fill, DarkGreen, inner sep = 1.5pt] (v8) at (1.7042,2.469) {};
          \node [circle, fill, DarkGreen, inner sep = 1.5pt] (v10) at (0.3616,2.9781) {};
          \node [circle, fill, DarkGreen, inner sep = 1.5pt] (v12) at (-1.0638,2.805) {};
          \node [circle, fill, DarkGreen, inner sep = 1.5pt] (v14) at (-2.2455,1.9894) {};
          \node [circle, fill, DarkGreen, inner sep = 1.5pt] (v16) at (-2.9128,0.7179) {};
          \node [circle, fill, DarkGreen, inner sep = 1.5pt] (v18) at (-2.9128,-0.7179) {};
          \node [circle, fill, DarkGreen, inner sep = 1.5pt] (v20) at (-2.2455,-1.9894) {};
          \node [circle, fill, DarkGreen, inner sep = 1.5pt] (v22) at (-1.0638,-2.805) {};
          \node [circle, fill, DarkGreen, inner sep = 1.5pt] (v24) at (0.3616,-2.9781) {};
          \node [circle, fill, DarkGreen, inner sep = 1.5pt] (v26) at (1.7042,-2.469) {};
          \node [circle, fill, DarkGreen, inner sep = 1.5pt] (v28) at (2.6564,-1.3942) {};
          \node [circle, fill, DarkGreen, inner sep = 1.5pt] (v2) at (3,0) {};

          \node [rectangle, fill, DarkGreen, inner sep = 1.5pt] (v3) at (1.5,0)  {};
          \node [rectangle, fill, DarkGreen, inner sep = 1.5pt] (v1) at (2.33,0) {};

          \node [circle, draw, red, inner sep = 1.5pt] (v4) at (6,0) {};
          \node [circle, draw, red, inner sep = 1.5pt] (v5) at (0.5,0) {};
          \node [circle, fill, DarkGreen, inner sep = 1.5pt] (v7) at (0.8,0) {};
        
          \draw  (v3) edge node[midway,above=3pt] {$l$} (v1);

          \draw[purple] (v1) edge node[midway, above=1pt]{$l_{14}$} (v2);  
                  
          \draw[violet] (v1) edge node[midway, above=3pt]{$l_{9}$} (v20);
          \end{tikzpicture}   
      \caption{\label{fig:branch points topology} A collection of arcs in $\mathbb{C}^\times_y$ which lift to the classes $l, l_9, l_{14}$ under the branched double cover (depicted is $k=15$).}
      \end{figure}

      \begin{prop}\label{prop:Ljhomology}
      The vanishing cycles associated to the $k-2$ critical values of type \uppercase\expandafter{\romannumeral 1}  with reference fiber $\Sigma^0_*$ and vanishing paths as in Figure \ref{fig:vanishingpaths} are given in homology by $$[\tilde{L}_{j}]=(-1)^j((k-1-j)l+l_j)$$ 
      \end{prop}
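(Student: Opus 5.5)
The plan is to use the bifibration method set up above: the reference fiber $\Sigma^0_*$ is a branched double cover of $\mathbb{C}^\times_y$, and each vanishing cycle is the lift of a matching path between the two branch points that collide as $t$ moves along the vanishing path. First I describe the motion of the branch points along the vanishing path to the $j$-th critical value of type I. Following the description preceding Figure \ref{fig:branch points labelled} and Appendix \ref{propappendix:radialoutwards}, I isotope each path in Figure \ref{fig:vanishingpaths} to a rotation of $t$ from $t_*$ to the argument of the target critical value, followed by a radial outward scaling. The target critical value is near a root of $t^{k-2}=(\frac{k-2}{k})^{k-2}\frac1s$.

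Second, I track the branch points during these two motions. During the rotation, the twin branch points (labelled $0$ and $1$) stay near $y=t$ and rotate with it. The outer $k-2$ branch points near the roots of $y^{k-2}=\frac{1}{4s}$ stay essentially fixed. As $t$ crosses the angular position of each outer branch point $i$, the twin point $1$ exchanges with it by a half-twist (the orange transposition arrows). During the radial scaling, the twin point $1$ moves outward and collides with the outer branch point at the matching angle (the purple arrows). So the vanishing cycle for the $j$-th critical value is the lift of the arc from $1$ to branch point $j$, transported back to $t_*$. Transporting it back to $t_*$ means applying the inverse of the braid formed by the half-twists accumulated during the rotation.

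Third, I compute the homology class. Each half-twist exchanging point $1$ with an intermediate outer point acts on lifted classes by a Picard--Lefschetz formula. The class $l_i$ picks up a multiple of $l$ through the twin pair, together with a sign change coming from the orientation conventions $\langle l_i,l_j\rangle=-1$ for $i<j$ and $\langle l,l_i\rangle=-1$. The arc to the $j$-th point has to wind past the twin pair once for each of the remaining $k-1-j$ outer points swept through. Each passage contributes one copy of $l$, which gives the coefficient $k-1-j$. The sign $(-1)^j$ comes from fixing orientations consistently, since each half-twist reverses the orientation of the lifted cycle relative to the chosen basis. I would verify the formula by induction on $j$ starting from $j=k-1$, where the class is just $\pm l_{k-1}$ and no twists occur. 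The inductive step is a single additional twist, computed by the Picard--Lefschetz formula using the intersection numbers above.

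The main obstacle is the second step: rigorously controlling the order in which the twin point passes the outer points during the rotation, and ruling out any interaction with the branch point near $0$ or with the punctures. This needs the asymptotics from Proposition \ref{prop:branchpoints} to hold uniformly along the whole path. I would establish this by a perturbation argument in $s$: at $s=0$ the roots are at $0$, a double root at $t$, and at $\infty$, and continuity gives the configuration for small $s$. The bookkeeping of signs and orientations in step three is the secondary difficulty.
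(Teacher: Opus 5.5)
Your overall plan matches the paper's: use the bifibration, isotope each vanishing path to a rotation of $t$ followed by a radial scaling, transport the collision arc back to $t_*$, and read off the class. The gap is that the braid you assign to the rotation is wrong, and your third step does not follow from it.

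Take a reference value with $1\ll|t_*|\ll(4s)^{-1/(k-2)}$. The twin branch points then sit at modulus $\approx|t|$, well inside the ring of outer branch points at modulus $\approx(4s)^{-1/(k-2)}$. During the rotation the twin pair moves around an annulus inside that ring while the outer points stay essentially fixed, so twin point $1$ never exchanges with an outer point. The orange arrows in Figure \ref{fig:branch points labelled} show the two twins exchanging \emph{with each other}. This is exactly what the paper's proof uses (``transposing the two twin branch points $k-1-j$ times''), and it is the rotation statement in the Appendix following Proposition \ref{propappendix:radialoutwards}. In rescaled coordinates the twins are $y_0=t_0\pm t_0^{k/2}+\cdots$. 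Rotating $t_0$ through one sector of angle $2\pi/(k-2)$ multiplies $t_0^{k/2}$ by $e^{i\pi k/(k-2)}=-e^{2\pi i/(k-2)}$. Relative to the rigid rotation, the offset therefore changes sign, so the twins perform one half-twist about each other per sector. Reaching the critical value at the angle of branch point $j$ takes $k-1-j$ sectors.

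This is not cosmetic. On the double cover, a half-twist along an arc joining two branch points acts on $H_1$ as the Dehn twist along the lift of that arc. A half-twist of point $1$ with an outer point $i$ would therefore add multiples of the lift of the arc from $1$ to $i$, a class involving $l_i$, and never multiples of $l$ alone. So your claim that ``$l_i$ picks up a multiple of $l$ through the twin pair'' contradicts your own braid. Your count $k-1-j$ is right, but the twists you attach to it are the wrong ones.

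With the correct braid the computation is short. Undoing the rotation drags the inner end of the radial collision arc back along the circle $|y|\approx|t|$, which produces the ``down and around'' arc whose lift is $l_j$. The $k-1-j$ twin--twin half-twists act by $T_l^{\pm(k-1-j)}$, and since $\langle l,l_j\rangle=-1$ they add $\pm(k-1-j)\,l$. With the paper's orientation conventions this gives $(-1)^j\big((k-1-j)l+l_j\big)$.

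The obstacle you single out, controlling the order in which the twin passes the outer points, is therefore not the real issue. The inputs you actually need are the Puiseux/$\mathbb{Z}_{k-2}$-equivariance argument for the twin exchange and the Sturm-sequence argument for the radial part.
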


      \begin{proof}
      This is clear by our description of how the branch points move under rotation and under radial scaling. We first rotate, transposing the two twin branch points $k-1-j$ times. Then, the twin point closer to the branch point of type \uppercase\expandafter{\romannumeral 1} collides with it. Unwinding this process, the projection of the vanishing cycle to the $y$-axis will look like in Figure \ref{fig:branch points}, which is given in homology by the class $(k-1-j)l+l_j$.
      \end{proof}

      \begin{figure}[ht]
        \hspace{1cm}
\begin{tikzpicture}[scale=1]
        \definecolor{DarkGreen}{RGB}{1,50,32}

        \node [rectangle, fill, DarkGreen, inner sep = 1pt] (v3) at (1,0) {};
        \node [circle, fill, DarkGreen, inner sep = 1pt] (v1) at (1.8,0) {};
        \node [circle, fill, DarkGreen, inner sep = 1pt] (v7) at (0.3,0) {};
        \node [circle, fill, DarkGreen, inner sep = 1pt] (v4) at (0.5562,1.7119) {};
        \node [circle, fill, DarkGreen, inner sep = 1pt] (v5) at (-1.4562,1.058) {};
        \node [circle, fill, DarkGreen, inner sep = 1pt] at (-1.4562,-1.058) {};
        \node [circle, fill, DarkGreen, inner sep = 1pt] at (0.5562,-1.7119) {};
        \node [rectangle, fill, DarkGreen, inner sep = 1pt] (v2) at (1.4,0) {};

        \draw [red, thick] (v2) edge (v1);
        \draw [DarkGreen,opacity=0.5, thick](0:1.006) .. controls (9:1.1034) and (16:1.4362) .. (2:1.5051) .. controls (-15:1.718) and (-50:1.7871) .. (-72:1.8142);
        \draw [blue,opacity=0.5, thick](-1:1.4008) .. controls (-13:1.3637) and (-26:0.7207) .. (11:0.7312) .. controls (31:0.8988) and (21:1.4278) .. (4:1.4476) .. controls (-18:1.6542) and (-77:1.4515) .. (-144:1.8049);
        \draw[purple, thick] (1.0012,0.0246) .. controls (0.9985,0.2364) and (1.2497,0.2985) .. (1.3202,0.2969) .. controls (1.398,0.2984) and (1.5558,0.2378) .. (1.5789,-0.0031) .. controls (1.5849,-0.2465) and (1.2852,-0.3712) .. (1.0917,-0.3584) .. controls (0.896,-0.3485) and (0.5154,-0.1872) .. (0.5842,0.1516) .. controls (0.6594,0.5366) and (1.0347,0.5756) .. (1.1752,0.5586) .. controls (1.2898,0.5583) and (1.6531,0.416) .. (1.6738,0.0023) .. controls (1.6936,-0.554) and (1.4751,-1.0898) .. (0.2885,-0.9452) .. controls (-0.6114,-0.7418) and (-1.2925,-0.015) .. (-1.4525,1.05);

        \node [rectangle, fill, DarkGreen, inner sep = 1pt] (v3) at (1,0) {};
        \node [  circle, fill, DarkGreen, inner sep = 1pt] (v1) at (1.8,0) {};
        \node [circle, fill, DarkGreen, inner sep = 1pt] (v7) at (0.3,0) {};
        \node [circle, fill, DarkGreen, inner sep = 1pt] (v4) at (0.5562,1.7119) {};
        \node [circle, fill, DarkGreen, inner sep = 1pt] (v5) at (-1.4562,1.058) {};
        \node [circle, fill, DarkGreen, inner sep = 1pt] at (-1.4562,-1.058) {};
        \node [circle, fill, DarkGreen, inner sep = 1pt] at (0.5562,-1.7119) {};
        \node [rectangle, fill, DarkGreen, inner sep = 1pt] (v2) at (1.4,0) {};

        \node [circle, draw, red, inner sep = 1.5pt]  at (3,0) {};
          \node [circle, draw, red, inner sep = 1.5pt]  at (0,0) {};

        \end{tikzpicture}
      \caption{\label{fig:branch points} The projections under the branched double covering of the topological vanishing cycles associated to $L_{k-1}, L_{k-2}, L_{k-3}, L_{k-4}$ for $k=7$. 
      }
      \end{figure}

      For a collection of vanishing paths defining vanishing thimbles $D_i$, the Seifert pairing (see \cite[Section 3.1.3]{gugiatti_mirrors_2025}) is defined by $$\langle D_i, D_j\rangle_{Seifert}=\begin{cases}
      \langle [\partial D_i], [\partial D_j]\rangle, i<j\\
      1, \, i=j\\
      0, \, i>j
      \end{cases}$$

      Currently, the Gram matrix of the Seifert pairing for our collection of Lagrangian vanishing cycles is of the form \[\begin{bmatrix} 1 & -2 & 3 & -4 & \cdots \\ 0 & 1 & -2 & 3 & \cdots \\0 & 0 & 1 & -2 & \cdots \\0 & 0 & 0 & 1 & \ddots\end{bmatrix}\] which differs from the Gram matrix of the sheaves $\langle e_2, \dots, e_{k-1}\rangle$ under the Euler pairing. To match them, we perform a series of mutations known as passing to the \textit{left dual} (as in \cite[Definition 2.14]{auroux_mirror_2004}).

      Explicitly, this process can be described as follows: we left mutate $\tilde{L}_{k-2}$ through $\tilde{L}_{k-1}$, denoting the result by $\mathbb{L}^{(1)}\tilde{L}_{k-2}$. Then, we left mutate $\tilde{L}_{k-3}$ through $ \mathbb{L}^{(1)}\tilde{L}_{k-2}, \tilde{L}_{k-1}$, denoting this by $\mathbb{L}^{(2)}\tilde{L}_{k-3}$. We do this for all Lagrangians $\tilde{L}_i$. At the end of this process, we have a new sequence $$\mathbb{L}^{(k-3)}\tilde{L}_{2},\mathbb{L}^{(k-4)}\tilde{L}_{3}, \dots, \mathbb{L}^{(1)}\tilde{L}_{k-2},\tilde{L}_{k-1}$$

      \begin{prop}\label{prop:leftdual}The left dual to the collection $\langle \tilde{L}_{k-1},\dots, \tilde{L}_2\rangle$, which we denote by $\langle L_{2},\dots, L_{k-1}\rangle$, is described in homology (up to a choice of orientations) by \begin{gather*}
        [L_{k-1}]=l_{k-1},\qquad
        [L_{k-2}]=l_{k-2}-2l_{k-1}+l,\qquad
        [L_{i}]=l_{i}-2l_{i+1}+l_{i+2}, i<k-2\\
      \end{gather*}
        As such, its Gram matrix matches the Gram matrix of the exceptional subcollection $\langle e_2, \dots, e_{k-1}\rangle$ on the B-side.
      \end{prop}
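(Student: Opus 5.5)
The plan is to work purely in $H_1(\Sigma^0_*)$. There, left mutation of vanishing cycles is governed by the Picard--Lefschetz formula: mutating a cycle $V$ through $U$ replaces $[V]$ by $[V]-\langle [U],[V]\rangle [U]$, up to a sign depending on orientations. This is why the statement is only claimed up to orientations.

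\textbf{Step 1: the pairing.} I would fix the intersection form on the span of $l,l_2,\dots,l_{k-1}$. By the conventions in the definition of $l,l_i$, we have $\langle l_i,l_j\rangle=-1$ for $i<j$ and $\langle l,l_i\rangle=-1$. Antisymmetry gives $\langle l_i,l_i\rangle=\langle l,l\rangle=0$. Proposition \ref{prop:Ljhomology} then gives the input classes $[\tilde L_j]=(-1)^j((k-1-j)l+l_j)$.

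\textbf{Step 2: identifying the left dual.} Rather than iterating the mutations $\mathbb{L}^{(1)},\mathbb{L}^{(2)},\dots$ one at a time, I would use the standard characterization of the left dual (as in \cite[Definition 2.14]{auroux_mirror_2004}). The dual collection is the unique one whose classes pair with the original collection by a reversed identity matrix, up to signs. In the paper's notation, this means $\langle [L_i],[\tilde L_j]\rangle=\pm\delta_{ij}$, together with $[L_{k-1}]=\pm[\tilde L_{k-1}]$, since the last object is left untouched by the procedure.

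So the main computation is to check this duality for the proposed classes. Note first that $[\tilde L_{k-1}]=\pm l_{k-1}$, which is consistent with $[L_{k-1}]=l_{k-1}$. For the remaining pairings I would expand bilinearly. A useful observation is that, using only the pairing values from Step 1, each second difference $l_i-2l_{i+1}+l_{i+2}$ pairs to zero with every $l_m$ for $m<i$ and for $m>i+2$. It also pairs to zero with $l$, since its coefficients sum to $0$. The few remaining cases, $m\in\{i,i+1,i+2\}$, are then a finite check. The special case $L_{k-2}$ carries the extra $+l$, which compensates for the $l$-coefficient of $\tilde L_{k-2}$; I would check it separately.

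As a sanity check, I would also carry out the first two mutations explicitly with Picard--Lefschetz, computing $\mathbb{L}^{(1)}\tilde L_{k-2}$ and $\mathbb{L}^{(2)}\tilde L_{k-3}$, to confirm the signs and the $+l$ correction. If a uniform argument is needed, I would instead prove by descending induction on $i$ that $\mathbb{L}^{(k-1-i)}\tilde L_i$ has the stated class. For the inductive step I would use that $\tilde L_i$ differs from $\tilde L_{i+1}$ by $\pm(l+l_i\mp l_{i+1})$-type terms that are absorbed by the already-mutated cycles.

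\textbf{Step 3: the Gram matrix.} I would then compute the Seifert Gram matrix $\langle [L_i],[L_j]\rangle$ for $i<j$ by bilinearity. The outcome should be $\mp 2$ when $j=i+1$, $\pm1$ when $j=i+2$, and $0$ when $j\ge i+3$. These vanishings come from the same cancellation of second differences observed in Step 2.

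I would compare this with the Euler pairing on $\langle e_2[2-k],\dots,e_{k-1}[-1]\rangle$, using the McKay description in Section \ref{sec:McKayalgebra}. There, $\mathrm{Ext}(e_i,e_{i+1})=\mathbb{C}^2$ in degree $1$ and $\mathrm{Ext}(e_i,e_{i+2})=\mathbb{C}$ in degree $2$, and all other Exts vanish. After the shifts $e_i\mapsto e_i[i-k]$, these contribute entries $-2$, $+1$ and $0$ in the corresponding positions; I would match the signs by choosing orientations.

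\textbf{Main obstacle.} I expect the main difficulty to be sign and orientation bookkeeping. This includes the alternating signs $(-1)^j$, the direction convention for left versus right mutation, and the $L_{k-2}$ boundary case. I would settle conventions by doing the case $k=5$ completely by hand before writing the general argument.
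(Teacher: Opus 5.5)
\textbf{Gap in Step 2: you check duality with the wrong pairing.} A left dual collection is dual with respect to the Euler form $\chi$ of the Fukaya--Seidel category. Write $E_a=\tilde L_{k-a}$ for $a=1,\dots,k-2$ and $F_a=\mathbb{L}^{(a-1)}E_a$; then $\chi(E_b,F_a)=\delta_{ab}$.

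On $K_0$, $\chi$ is the bilinear extension of the unit upper-triangular Seifert matrix $S$. The form you fix in Step 1 is instead the intersection form on $H_1(\Sigma^0_*)$, which is the antisymmetrization $S-S^{T}$. The two agree strictly above the diagonal but differ on and below it, and the duality conditions involve exactly those entries.

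As a result, the identity $\langle [L_i],[\tilde L_j]\rangle=\pm\delta_{ij}$ is false for the very classes you want to prove:
\begin{itemize}
\item The form is alternating and $[L_{k-1}]=[\tilde L_{k-1}]$, so the diagonal entry for $i=k-1$ is $0$.
\item $\langle [L_{k-1}],[\tilde L_{k-2}]\rangle=\langle l_{k-1},-(l+l_{k-2})\rangle=-2$.
\item For $i<k-2$, your own second-difference computation gives $\langle l_i-2l_{i+1}+l_{i+2},\,l_{i+2}\rangle=1$, hence $\langle [L_i],[\tilde L_{i+2}]\rangle=\pm1$.
\end{itemize}
So the promised finite check for $m\in\{i,i+1,i+2\}$ would fail, and Step 2 establishes nothing.

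The fallback induction does not repair this. Saying that the differences are ``absorbed by the already-mutated cycles'' is precisely what the paper has to prove. It does so by expanding the iterated mutation $\mathbb{L}_{a_1,\dots,a_m}b$ as a signed sum over chains, observing that only $\tilde L_{k-1}$ and $\mathbb{L}^{(1)}\tilde L_{k-2}$ pair nontrivially with $\tilde L_{k-2-m}$, and counting weighted paths ($s_i=i+1$) before telescoping.

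\textbf{How to repair it.} Your duality idea works once you use $\chi$ instead of the intersection form.
\begin{itemize}
\item By Proposition~\ref{prop:Ljhomology}, $S_{ab}=(-1)^{a+b}(b-a+1)$ for $a\le b$.
\item After conjugating by $\mathrm{diag}((-1)^a)$, $S$ becomes the upper-triangular Toeplitz matrix with symbol $\sum_{m\ge 0}(m+1)x^m=(1-x)^{-2}$. Its inverse has symbol $(1-x)^{2}$.
\item Write $[F_a]=\sum_c C_{ac}[E_c]$. The conditions $\chi(E_b,F_a)=\delta_{ab}$ say exactly $SC^{T}=I$.
\item This yields $[L_i]=[\tilde L_i]+2[\tilde L_{i+1}]+[\tilde L_{i+2}]$, omitting terms with index $\ge k$.
\end{itemize}
Now substitute $[\tilde L_j]=(-1)^j((k-1-j)l+l_j)$:
\begin{itemize}
\item For $i\le k-3$ the $l$-terms cancel, because $(k-1-i)-2(k-2-i)+(k-3-i)=0$, giving $(-1)^i(l_i-2l_{i+1}+l_{i+2})$.
\item Using that $k$ is odd, $i=k-2$ gives $-(l_{k-2}-2l_{k-1}+l)$ and $i=k-1$ gives $l_{k-1}$.
\end{itemize}
This matrix inversion is a shorter route than the paper's path count. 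Once the classes are established this way, your Step 3 Gram-matrix computation is correct as stated.
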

      \begin{proof}
      We prove this claim by induction. First, $[\mathbb{L}_{\tilde{L}_{k-1}}\tilde{L}_{k-2}]=[ \tilde{L}_{k-2}]-\langle  \tilde{L}_{k-1}, \tilde{L}_{k-2}\rangle [ \tilde{L}_{k-1}]=[ \tilde{L}_{k-2}]+2[ \tilde{L}_{k-1}]=(-1)(l_{k-2}-2l_{k-1}+l)$ as claimed.

      The rest of the proof is a pleasant exercise in combinatorics. First of all, left mutation on the level of homology is given by $\mathbb{L}_a b= b-\langle a,b\rangle a$, so the consecutive left mutation through a sequence of elements $a_1,\dots, a_m$ is given by $$\mathbb{L}_{a_1,\dots, a_m} b = b+\sum_{i_1<i_2<\dots <i_s} (-1)^s\langle a_{i_1}, a_{i_2}\rangle \dots \langle a_{i_s},b\rangle a_{i_1}$$

      Suppose now as an inductive hypothesis that $[ \mathbb{L}^{(i)}\tilde{L}_{k-1-i}]=(-1)^{k-1-i}(l_{{k-1-i}}-2l_{k-i}+l_{k-i+1})$ for all $i\leq m$. Thus, for $ i>1$ we have $\langle \mathbb{L}^{(i)}\tilde{L}_{k-1-i},  \tilde{L}_{k-1-(m+1)}\rangle=0$. Moreover, $\langle  \tilde{L}_{k-1},  \tilde{L}_{k-1-(m+1)}\rangle = (-1)^{m+1}(m+2), \langle  \mathbb{L}^{(1)}\tilde{L}_{k-2},  \tilde{L}_{k-1-(m+1)}\rangle = (-1)^{m+1}(m+3)$.
      
      We now want to mutate $\tilde{L}_{k-1-(m+1)}$ past everything on its left: $$\mathbb{L}^{(m)}\tilde{L}_{k-1-m}, \mathbb{L}^{(m-1)}\tilde{L}_{k-m},\dots, \mathbb{L}^{(1)}\tilde{L}_{k-2}, \tilde{L}_{k-1}, \tilde{L}_{k-1-(m+1)}, \dots$$

      By the above remark, the term of $\mathbb{L}^{(i)}\tilde{L}_{k-1-i}$ in $\mathbb{L}^{(m+1)}\tilde{L}_{k-1-(m+1)}$ will be given by the following sum:$$\sum_{i>i_1>\dots i_s>0} (-1)^s \langle \mathbb{L}^{(i)}\tilde{L}_{k-1-i}, \mathbb{L}^{(i_1)}\tilde{L}_{k-1-i_1}\rangle \dots \langle \mathbb{L}^{(i_s)}\tilde{L}_{k-1-i_s},  \tilde{L}_{k-1-(m+1)} \rangle$$
      Therefore, we must consider all possible ways to get from $\mathbb{L}^{(i)}\tilde{L}_{k-1-i}$ to $\tilde{L}_{k-1-(m+1)}$. For a composable path of maps to contribute a nonzero term to the sum above, it must traverse either $\tilde{L}_{k-1}$ or $\mathbb{L}^{(1)}\tilde{L}_{k-2}$ penultimately.
    
      That is, we need to consider all ways to compose morphisms starting from $\mathbb{L}^{(i)}\tilde{L}_{k-1-i}$ and ending at either $\mathbb{L}^{(1)}\tilde{L}_{k-2}$ or $\tilde{L}_{k-1}$. The coefficient of $\mathbb{L}^{(i)}\tilde{L}_{k-1-i}$ appearing in $\mathbb{L}^{(m+1)} \tilde{L}_{k-1-(m+1)}$ will then be the signed, weighted sum of all these composable paths. By the inductive hypothesis, the $\mathbb{L}^{(i)}\tilde{L}_{k-1-i}$ satisfy \begin{gather*}
      \langle \mathbb{L}^{(i)}\tilde{L}_{k-1-i}, \mathbb{L}^{(i-1)}\tilde{L}_{k-1-(i-1)}\rangle =2, \qquad\langle \mathbb{L}^{(i)}\tilde{L}_{k-1-i}, \mathbb{L}^{(i-2)}\tilde{L}_{k-1-(i-2)}\rangle = 1, \\
      \langle \mathbb{L}^{(i)}\tilde{L}_{k-1-i}, \mathbb{L}^{(j)}\tilde{L}_{k-1-j}\rangle=0, \,i-j>2
      \end{gather*} Hence, starting from $\mathbb{L}^{(i)}\tilde{L}_{k-1-i}$ we are allowed either a jump to $\mathbb{L}^{(i-1)}\tilde{L}_{k-i}$ weighted by $2$ or a jump to $\mathbb{L}^{(i-2)}\tilde{L}_{k-i+1}$ weighted by $1$. Consider the following quiver, weighing morphisms from $i$ to $i-1$ by $2$ and from $i$ to $i-2$ by $1$:
      \[\begin{tikzcd}
      i & {i-1} & {i-2} & {i-3} & \dots & 1 & 0
      \arrow[from=1-1, to=1-2, "2"]
      \arrow[curve={height=-18pt}, from=1-1, to=1-3, "1"]
      \arrow[from=1-2, to=1-3, "2"]
      \arrow[curve={height=18pt}, from=1-2, to=1-4, "1"]
      \arrow[from=1-3, to=1-4, "2"]
      \arrow[curve={height=-18pt}, from=1-3, to=1-5, "1"]
      \arrow[curve={height=-18pt}, from=1-5, to=1-7, "1"]
      \arrow[from=1-6, to=1-7, "2"]
      \end{tikzcd}\]

      We denote the weighted sum of all possible paths from $i$ to $0$ by $s_i$. It is clear that these $s_i$ satisfy the recursion $$s_i=2s_{i-1}-s_{i-2},s_1=2, s_2=3$$hence this signed sum is $s_i={i+1}$.

      The relationship between this quiver and the total coefficient of $\mathbb{L}^{(i)}\tilde{L}_{k-1-i}$ in $\mathbb{L}^{(m+1)}\tilde{L}_{k-1-(m+1)}$ is the following:
      \begin{itemize}
        \item firstly, we need to consider all composable morphisms from $\mathbb{L}^{(i)}\tilde{L}_{k-1-i}$ to $\tilde{L}_{k-1}$, which are then concatenated by $\langle \tilde{L}_{k-1}, \tilde{L}_{k-1-(m+1)}\rangle = (-1)^{m+1}(m+2)$. These contribute $s_i(-1)^{m+1}(m+2)$
        \item secondly, we need to consider all composable morphisms from $\tilde{L}_{k-1-i}$ to $\mathbb{L}^{(1)}\tilde{L}_{k-2}$ which at the end are concatenated with $\langle \mathbb{L}^{(1)}\tilde{L}_{k-2}, \tilde{L}_{k-1-(m+1)}\rangle = (-1)^{m+1}(m+3)$. These contribute $s_{i-1}(-1)^{m+1}(m+3)$ 
      \end{itemize}
      Adding these up, we get a contribution of $$(-1)^i(-1)^m((m+2)(i+1)-(m+3)i)=(-1)^{m+i}(m+2-i)$$

      Therefore, \[
      [\,\mathbb{L}^{(m+1)}\tilde{L}_{k-1-(m+1)}]
      = [ \tilde{L}_{k-1-(m+1)}]+\sum_{i=0}^m(-1)^{m+i}(m+2-i)[ \mathbb{L}^{(i)}\tilde{L}_{k-1-i}]=\]
      \begin{multline*}
      =(-1)^{m+1}\bigg((m+1)l+l_{k-1-(m+1)}- \\
      \sum_{i=2}^m (m+2-i)(l_{k-1-i}-2l_{k-i}+l_{k-i+1}) - \\ 
      (m+1)(l_{k-2}-2l_{k-1}+l)-(m+2)l_{k-1}\bigg)=
      \end{multline*}
      \[=(-1)^{m+1}(l_{k-1-(m+1)}-2l_{k-1-m}+l_{k-1-(m-1)})\]
      The final equality is because the $l$ terms cancel, and moreover all the terms $l_i,i<m-1$ will telescope and cancel as well.

      \end{proof}

      \begin{rem}\label{rem:localKoszul}
        The reader should think of the relation $[L_{k-1-i}]=l_{k-1-i}-2l_{k-i}+l_{k-i+1}$ as related to the local Koszul resolution $$[\mathcal{O}\otimes \rho_{k-i+1}\rightarrow \mathcal{O}^{\oplus 2}\otimes \rho_{k-i}\rightarrow \mathcal{O}\otimes \rho_{k-1-i}]\simeq \mathcal{O}_0\otimes \rho_{k-1-i}$$
      \end{rem}
      The classes $l_i-l_{i+1}$ are represented by arcs connecting adjacent branch points. This means that $L_i$ is represented by a curve that projects to an arc which joins together branch points labelled by $i$ and $i+2$ as in Figure \ref{fig:projectionsofleftduals}. Thinking of the branch points as handle attachments on the Riemann surface $\Sigma^0_*$, the result is the same one described in Construction \ref{constr:Lefschetz}, namely the vanishing cycle $L_{k-1-i}$ is built out of the cores of the handles $H^-_{k-1-i}$, two copies of $H^-_{k-i}$ and the core of $H^-_{k+1-i}$.
      \begin{figure}[H]
      \minipage{0.48 \textwidth}
      \centering
        \begin{tikzpicture}[scale=0.15]
      \definecolor{DarkGreen}{RGB}{1,50,32}
      \def\n{3}

      \foreach \k in {0,...,\numexpr\n-1} {
          \node[circle,DarkGreen,fill,inner sep=1pt] (e'\k)
            at ({360*\k/\n}:8) {};
        }
  
        \node [circle, draw, red, inner sep = 1.5pt] (zero) at (0,0) {};
        
        \node[circle,fill,DarkGreen,inner sep = 1pt] (m) at (2.5,0) {};
        \node[rectangle,fill,DarkGreen,inner sep = 2pt, label=above:\tiny $0$] (m') at (4.7,0) {};
        \node[rectangle,fill,DarkGreen,inner sep = 2pt, label=above:\tiny $1$] (M') at (5.7,0) {};

        \draw[thick,red]
         (m') 
          to[out=-70, in=-90,looseness=1.4] ($(e'0)+  (-0.8,0) $)
          to[out=90, in=180]  ($(e'0) +(0.2,0.5)$)
          to[out=0, in=-20]  node[midway,right] {$L_3$}  (e'2)
         ;
        \draw[DarkGreen,thick, bend right=90, looseness=4,opacity=1] (e'1) to node[midway,left] {$L_2$} (e'0);
        \draw[blue!50,thick] (e'0) to node[midway,above] {\tiny $L_4$} (M');

        \foreach \k  [evaluate=\k as \kk using int(\n+1-\k)]  in {0,...,\numexpr\n-1} {
          \node[circle,DarkGreen,fill,inner sep=1pt,
                label=above:\tiny $\kk$] ()
            at ({360*(-\k)/\n}:8) {};
        }
      \end{tikzpicture}
          \endminipage
          \hfill
          \minipage{0.48\textwidth}%
          \centering
          \usetikzlibrary{backgrounds}
      \pgfdeclarelayer{base}
      \pgfsetlayers{base,background,main}

        \begin{tikzpicture}[scale=0.7,
          gap/.style={draw, very thick},
          band/.style={fill=white, draw=black, line width=0.6pt},
          line cap=round
        ]

        \def\R{1}
        \def\s {1/32*360}
        \def \epsilon{1/1.9}
        \colorlet{blue50}{blue!50}

        \begin{pgfonlayer}{base}
        \fill[gray!80] (0,0) circle (\R);
        \end{pgfonlayer}
        
        \draw[very thick] (-1,0) ellipse (3cm and 2cm);
        \fill[gray!40, even odd rule] 
        (-1,0) ellipse (3cm and 2cm)
          (0,0) circle (\R-2.5/100);

        \fill[gray!80] (-3.5,0) circle (0.1);
        \draw[black, thick] (-3.5,0) circle (0.1);

        \begin{scope}[rotate=-90]
        \foreach \i/\a/\b in {1/ \s/3*\s, 2/5*\s/7*\s, 3/9*\s/11*\s, 4/13*\s/15*\s} {
          \path (\a:\R) coordinate (rg\i a);
          \path (\b:\R) coordinate (rg\i b);
          \path ($(rg\i a)!0.5!(rg\i b)$) coordinate (rg\i);
        }

        \foreach \i/\a/\b in {1/- \s/-3*\s, 2/-5*\s/-7*\s, 3/-9*\s/-11*\s, 4/-13*\s/-15*\s} {
          \path (\a:\R) coordinate (lg\i a);
          \path (\b:\R) coordinate (lg\i b);
          \path ($(lg\i a)!0.5!(lg\i b)$) coordinate (lg\i);
        }

        \begin{pgfonlayer}{background}
        \path[band, fill=gray!40, draw=none] (rg1a) -- (lg4a) -- (lg4b) -- (rg1b) -- cycle;
        \draw[very thick] (rg1a) -- (lg4a);
        \draw[very thick] (rg1b) -- (lg4b);
        
        \draw[thick, blue50] (rg1)--(lg4);
        \draw[thick, red, opacity=1,line cap=round] 
          ($(lg4a)!0.66!(lg4b)$)--($(rg1a)!0.66!(rg1b)$);
        \draw[thick, red, opacity=1,line cap=round] 
          ($(lg4a)!0.33!(lg4b)$)--($(rg1a)!0.33!(rg1b)$);

        \path[band, fill=gray!40, draw=none] (rg2a) -- (lg3a) -- (lg3b) -- (rg2b) -- cycle;
        \draw[very thick] (rg2a) -- (lg3a);
        \draw[very thick] (lg3b) -- (rg2b);
        
        \draw[thick, red, opacity=1,line cap=round] (lg3)--(rg2);

        \path[band, fill=gray!40, draw=none] (rg3a) -- (lg2a) -- (lg2b) -- (rg3b) -- cycle;
        \draw[very thick] (rg3a) -- (lg2a);
        \draw[very thick] (lg2b) -- (rg3b);
      \end{pgfonlayer}


          \path[band,fill=gray!40,draw=none] (rg4a) -- (lg1a) -- (lg1b) -- (rg4b) -- cycle;
            \draw[very thick] (rg4a) -- (lg1a);
          \draw[very thick] (lg1b) -- (rg4b);

        \foreach \k in {-1,3,7,11,15,19,23,27,31} {
          \draw[very thick] (\k*\s:\R) arc[start angle=\k*\s, end angle=(\k+2)*\s, radius=\R];
        }

        \draw[thick,blue50]  ($(lg1a)!0.5!(lg1b)$)--($(rg4a)!0.5!(rg4b)$);
        \draw[blue50, thick,shorten <=0, shorten >=0] (rg4) to[out=-180,in=-180] (lg4);
        \draw[blue50, thick,shorten <=0, shorten >=0] (rg1) to[out=0,in=0] (lg1);
        \draw[red, thick,line cap=round] ($(lg4a)!0.33!(lg4b)$) to[out=-3000,in=-120] (lg3);
        \draw[red, thick,shorten <=0, shorten >=0] (rg2) to[out=50,in=50] ($(rg1a)!0.66!(rg1b)$);
        \end{scope}

        \path[name path=A,xshift=-70,yshift=70,rotate=90] (-3.215,-0.0583) .. controls (-2.7968,-0.3531) and (-2.2716,-0.3738) .. (-1.85,-0.0574);

        \path[name path=B,xshift=-70,yshift=70,rotate=90] (-3.215,-0.0583) .. controls (-2.6,0.3) and (-2.3,0.3) .. (-1.85,-0.0574);
        \node (v1) at (-0.3342,0.9518) {};
        \node (v2) at (0.2425,-0.873) {};

        \tikzfillbetween[of=A and B, name=filledArea]{white};
        \draw[very thick,xshift=-70,yshift=70,rotate=90] (-3.31,-0.1207) .. controls (-2.6,0.3) and (-2.3,0.3) .. (-1.7341,-0.1288);\draw[very thick, xshift=-70,yshift=70,rotate=90] (-3.2,-0.0583) .. controls (-2.7968,-0.3531) and (-2.2716,-0.3738) .. (-1.8835,-0.0674);

        \draw[red, thick] (-0.3111,0.8999) .. controls (-0.5138,1.5287) and (-2.1427,0.8128) .. (-2.1702,0.0737);
        \draw[dashed,red, thick,opacity=0.5] (-2.1625,0.0277) .. controls (-1.7367,-0.7741) and (-1.1819,-1.319) .. (0.061,-1.8635);
        \draw[red, thick] (0.1022,-1.8621) .. controls (0.5352,-1.7393) and (0.4828,-1.4808) .. (0.3102,-0.9132);
        \node[fill,circle,inner sep = 1pt] at (-0.3544,0.9788) {};
        \node[fill,circle,inner sep = 1pt] at (0.3346,-1.0088) {};
        \end{tikzpicture}
          \endminipage
          \caption{\label{fig:projectionsofleftduals} Left: the projections of the $L_i$ in our reference fiber $\Sigma^0_*$ when $k=5$. Right: the surface $\Sigma^0$ described as a torus with three boundary components, to which three handles have been attached, with $L_4$ in blue and $L_3$ in red. }
      \end{figure}

    \subsubsection{The vanishing cycles associated to critical values of types \uppercase\expandafter{\romannumeral 2} and \uppercase\expandafter{\romannumeral 3}}
    We now consider the vanishing cycles associated to the $k+1$ critical values near $-1$ and the three critical values near $0$.

    When $t$ is near the critical points around $-1$, there are still $k-2$ large branch points, one close to $0$ and the twin branch points are close to $-1$. The only branch points that can collide here are the twin branch points (this is most easily seen by adjusting $\epsilon \rightarrow 0$ at which point the branch point equation has a double root at $y=-1$). In other words, all these $k+1$ vanishing cycles are homologous and have class $l$ in homology.

    Finally, when $y$ is close to the origin, the branch point equation is approximated by the cubic $$4s(1+\epsilon)=y(y-t)^2$$
    One can use computer algebra to see how the roots of this cubic vary with $t$. 
    
    The vanishing cycles of type \uppercase\expandafter{\romannumeral 2} and  \uppercase\expandafter{\romannumeral 3} are unaffected by the $k-2$ branch points of type \uppercase\expandafter{\romannumeral 1} (as in \ref{prop:branchpoints}), and hence they sit inside a torus inside $\Sigma^0$ before the $k-2$ handles are attached (like the special torus of Section 1). Defining $a$ and $b$ to be the meridian and longitude classes as depicted in Figure \ref{fig:vanishingpathsresolution}, we conclude:

    \begin{prop}
    The vanishing cycles of type \uppercase\expandafter{\romannumeral 2} and  \uppercase\expandafter{\romannumeral 3} are given in homology by:
    \begin{itemize}
      \item $[ B_1]=[ B_2]=\dots=[ B_{k+1}]=l$
      \item $[ P_{-1}]=b-l-2a$
      \item $[ P_0]=b$
      \item $[ P_1]=b+l+2a$
    \end{itemize}
    \end{prop}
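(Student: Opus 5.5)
We plan to follow the same bifibration strategy as in Proposition \ref{prop:Ljhomology}: project the reference fiber $\Sigma^0_*$ to $\mathbb{C}^\times_y$ as a branched double cover and, along each vanishing path, track which pair of branch points collides. The vanishing cycle is then the lift of the arc traced out by the colliding pair, transported back to $t_*$. The key simplification is already noted above the statement. Along the paths to critical values of types \uppercase\expandafter{\romannumeral 2} and \uppercase\expandafter{\romannumeral 3}, the $k-2$ large branch points of type \uppercase\expandafter{\romannumeral 1} stay near the roots of $y^{k-2}=\frac{1}{4s}$ and never interact. So we only need to follow three branch points: the one near $0$ and the two twin ones. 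These, together with the punctures at $y=0$ and $y=\infty$, live in a genus one piece of $\Sigma^0_*$ with $H_1$ spanned by $l$, $a$, $b$.

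\textbf{Type \uppercase\expandafter{\romannumeral 3} (the $B_i$).} The path goes from $t_*$ to a point near $-1$ while staying slightly above the blue critical values. We argue by degenerating $\epsilon\to 0$: the branch point equation $P(y)-\frac{1}{4s}y(t-y)^2=0$ then acquires a double root at $y=-1$ exactly when $t=-1$, and this double root comes from the twin pair. For small $\epsilon$ the $k+1$ nearby critical values therefore all correspond to collisions of the twins. Since the path avoids the region near $0$, the twins do not wind around the small branch point or the puncture at $0$. Transported back, the collision arc is the segment between the twins, which lifts to $l$. Hence $[B_i]=l$ for all $i$. Different $i$ give the same class because the paths differ only by small loops around the cluster near $-1$, and these loops act trivially on $H_1$ away from the twins.

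\textbf{Type \uppercase\expandafter{\romannumeral 2} (the $P_j$).} Near $y=0$ the branch equation is approximated by the cubic $4s(1+\epsilon)=y(y-t)^2$. Its discriminant in $y$ vanishes at three values of $t$ near $0$, namely $t^3=27s(1+\epsilon)$ up to normalization, one for each cube root. As $t$ moves from $t_*$ toward each of these values, we follow the three roots and find which two collide. Two of the three paths force the small branch point to travel around the puncture at $y=0$. We expect, as in the figure defining $a$ and $b$, that the middle path gives a collision arc lifting to the longitude $b$. The two outer paths give arcs differing from it by a half-twist of the twins together with a full turn around the puncture at $0$. This adds $\pm(l+2a)$, producing $[P_{\pm1}]=b\pm(l+2a)$.

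\textbf{Consistency checks and main obstacle.} We would fix signs and orientations with two checks. First, the Picard--Lefschetz formula: moving the path for $P_{-1}$ across $P_0$ should relate $[P_{-1}]$ and $[P_1]$ by $\pm\langle b,l+2a\rangle$ times the appropriate classes. Second, the intersection numbers should reproduce the Gram matrix of $\langle\mathcal{O},\mathcal{T}(-H),\mathcal{O}(H),\mathcal{O}_{B_i}\rangle$, namely $3$, $3$, $1$ and so on. The main obstacle is the monodromy of the cubic near $0$. Here we would trace the three roots numerically along explicit paths, or reduce to the universal monodromy of $y(y-t)^2=c$ by rescaling $y=c^{1/3}Y$, $t=c^{1/3}T$. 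This shows the pattern is independent of $s$, and the resulting braid in the three-punctured plane is a standard computation.
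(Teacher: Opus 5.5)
Your proposal follows essentially the same route as the paper: the bifibration over $\mathbb{C}^\times_y$, the $\epsilon\to 0$ degeneration showing that only the twin branch points collide near $t=-1$ (so every $[B_i]=l$), and tracking the roots of the cubic $4s(1+\epsilon)=y(y-t)^2$ along the type II paths, which the paper likewise settles by a computer-algebra root-tracking read off from Figure \ref{fig:vanishingpathsresolution} rather than by a derivation. Your mechanism for the outer paths is correct and a bit more explicit than the paper: a half-twist of the twins plus a full turn of the small branch point about $y=0$, i.e.\ $b\mapsto b\pm(l+2a)$. Your first consistency check is misstated, however: moving the $P_{-1}$ path across the critical value of $P_0$ produces the mutated cycle $\tilde{P}$, whereas $P_{\pm1}$ are obtained from $P_0$ via the symmetry $(y,t)\mapsto(\zeta y,\zeta t)$, $\zeta^3=1$, of the cubic combined with transport along a large arc.
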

    \begin{figure}[H]
      \noindent
    \minipage{0.48 \textwidth}
    \centering
    \begin{tikzpicture}[scale=1.4]
    \definecolor{DarkGreen}{RGB}{1,50,32}
    \node [rectangle, fill, DarkGreen, inner sep = 1.5pt] (v3) at (1,0) {};
    \node [rectangle, fill, DarkGreen, inner sep = 1.5pt] (v1) at (1.8,0) {};

    \node [circle, draw, red, inner sep = 1.5pt] (v5) at (0,0) {};
    \node [circle, fill, DarkGreen, inner sep = 1.5pt] (v7) at (0.3,0) {};

    \draw [dashed] (v5) edge (v7);
    \draw [dashed] (v3) edge node[midway,above=10pt] {$l$} (v1);

    \draw  (1.4,0) ellipse (0.8 and 0.2);

    \draw[blue,thick,opacity=0.5]  (0.15,0) ellipse (0.3 and 0.2);
    \node[blue,thick,opacity=0.5,above=10pt] at (0.15,0) {$a$} {};

    \draw[red,thick,opacity=0.5][] (1.4,0) arc[start angle=0, end angle=180, x radius=0.6, y radius=0.2];
    \draw[red,thick,,opacity=0.5][dotted] (1.4,0) arc[start angle=0, end angle=-180, x radius=0.6, y radius=0.2];
    \node[red,thick,opacity=0.5,above=10pt] at (0.75,0) {$b$} {};

    \end{tikzpicture} 
    \endminipage
    \hfill
    \minipage{0.48\textwidth}%
    \centering
    \hspace*{-2cm}
    \begin{tikzpicture}[scale=1.4]
      \definecolor{DarkGreen}{RGB}{1,50,32}

      \node [rectangle, fill, DarkGreen, inner sep = 1.5pt] (v3) at (1,0) {};
      \node [rectangle, fill, DarkGreen, inner sep = 1.5pt] (v1) at (1.8,0) {};

      \node [circle, draw, red, inner sep = 1.5pt] (v5) at (0,0) {};
      \node [circle, fill, DarkGreen, inner sep = 1.5pt] (v7) at (0.3,0) {};

      \draw[thick,black]  (v3) edge node[midway,above] {$P_0$} (v7);

      \draw [thick,DarkGreen,opacity=0.75](1.793,-0.0081) .. controls (1.2444,1.0103-0.1) and (0.2847,-0.7406) .. (-0.6274,-0.1364) .. controls (-1.1068,0.218) and (-0.4606,0.7217) .. (0.3028,-0.0015);

      \draw [thick,red,opacity=0.75](1.793,0.0081) .. controls (1.2444,-1.0103-0.1) and (0.2847,0.7406) .. (-0.6274,0.1364) .. controls (-1.1068,-0.218) and (-0.4606,-0.7217) .. (0.3028,0.0015);

      \node[red] at (1.3962,-0.4646) {$P_1$} {};

      \node[DarkGreen] at (1.3962,0.4646) {$P_{-1}$} {};

      \node [rectangle, fill, DarkGreen, inner sep = 1.5pt] (v3) at (1,0) {};
      \node [rectangle, fill, DarkGreen, inner sep = 1.5pt] (v1) at (1.8,0) {};

      \node [rectangle, fill, DarkGreen, inner sep = 1.5pt] (v7) at (0.3,0) {};
      \end{tikzpicture} 
    \endminipage
    \caption{\label{fig:vanishingpathsresolution} The twin branch points, together with the missing branch point at $0$ and the one close to the origin are depicted on the left, together with the projections of representatives of the classes $a,b,l$. Here, the dotted lines describe branch cuts, with $a$ and $l$ contained inside one of the sheets and $b$ going in between the two sheets of the branched double cover. Arcs describing the projections of the topological vanishing cycles are described on the right.} 
    \end{figure}

    We can mutate $P_{-1}$ past $P_0$, after which it becomes $[\tilde{P}]=2b+2a+l$. In the mirror duality, we will identify $P_0, \tilde{P}, P_1$ with the sheaves $\Phi\mathcal{O},\Phi \mathcal{T}(-H),\Phi \mathcal{O}(H)$.

    \begin{prop}\label{prop:abstractandexplicit}
      The Lefschetz fibration $\mathbf{f}_s:M^0\rightarrow \mathbb{C}$, when $M^0$ is equipped with the exact symplectic structure, coincides with the abstract Lefschetz fibration from \ref{constr:Lefschetz} associated to the pair $(\mathcal{X}, \mathcal{D})$ obtained from taking $\mathbb{P}^2$ with its toric boundary, blowing up iteratively $k+1$ times a point on $z=0$ and contracting the strict transform of $z=0$. 
    \end{prop}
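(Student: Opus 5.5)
To prove the proposition, I would match the two pieces of data that determine an exact Lefschetz fibration up to deformation: the reference fiber with its boundary, and the ordered collection of vanishing cycles up to isotopy and Hurwitz moves. An abstract Lefschetz fibration in the sense of \cite[Lemma 16.9]{seidel_fukaya_2008} and Construction \ref{constr:Lefschetz} is determined by these data. So it suffices to produce a diffeomorphism $\Sigma^0_*\to\Sigma^0$ which carries the vanishing cycles of $\mathbf{f}_s$, after the mutations already performed (left dual of the $\tilde L_i$, and $P_{-1}$ mutated past $P_0$), to the Lagrangians $L_{d}$, $\mathcal{V}_1,\dots,\mathcal{V}_n$ of the construction, up to isotopy.

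\textbf{The B-side data.} I would first pin down the orbifold pair. Blowing up a point of $\{z=0\}$ iteratively $k+1$ times gives $(Y,D)$, where $D$ consists of the strict transforms of the three toric lines, with self-intersections $-k$, $1$, $1$ for the strict transform of $\{z=0\}$ and the other two lines. Contracting the $-k$ curve yields a single $\frac1k(1,1)$ node, with $l=1$. The special $I$-series is $\{1,0\}$, so the non-special weights are $2,\dots,k-1$. Hence $\Sigma^0$ is the special torus $T^0$ (a torus with the punctures of $D$) with $k-2$ extra handles attached consecutively at one node. This genus agrees with $\frac{k+1}{2}$ only after completing punctures, so I would check the Euler characteristics: the fiber $\Sigma^0_*$ is a twice-punctured genus $\frac{k+1}{2}$ surface, while $T^0$ is a torus with boundary components plus $k-2$ handles. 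Since $k$ is odd, these counts agree once the boundary components are matched with the punctures at $y=0,\infty$.

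\textbf{The diffeomorphism.} Using the branched double cover picture of Proposition \ref{prop:branchpoints}, I would identify the torus spanned by the twin branch points, the branch point near $0$ and the puncture at $0$ (classes $a,b,l$ of Figure \ref{fig:vanishingpathsresolution}) with the special torus $T^0$. Each arc between consecutive type I branch points $i,i+1$ is then a cocore of a non-special handle $H^-_{-i}$, as already indicated after Remark \ref{rem:localKoszul}. With this identification:
\begin{itemize}
\item Proposition \ref{prop:leftdual} shows that $L_i$ projects to an arc joining branch points $i$ and $i+2$. Its lift runs through the cores of $H^-_{i}$, $H^-_{i+1}$ (twice) and $H^-_{i+2}$, which is exactly the recipe of Definition \ref{def:Ld} with $j_t=1$, $l=1$.
\item The curves $P_0$, $\tilde P$, $P_1$, $B_1,\dots,B_{k+1}$ live in $T^0$. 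Their classes $b$, $2b+2a+l$, $b+l+2a$, $l$ should match the Hacking--Keating cycles mirror to $\mathcal{O},\mathcal{T}(-H),\mathcal{O}(H),\mathcal{O}_{B_i}$ restricted to $D$. These are obtained by Dehn twisting the longitude along meridians according to the degrees on the components of $D$.
\end{itemize}
I would choose the longitude to be $b$ and the meridians to be $a$ and $l$ (the last one for the component hosting the exceptional curves). I would then check that $\mathcal{O}(H)|_D$ has degrees $(0,1,1)$, or the appropriate sign convention, on the components.

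\textbf{The main obstacle.} The hard step is upgrading these homology computations to isotopy. On a punctured torus, simple closed curves are determined up to isotopy by their homology classes, so the $T^0$ part is fine once a curve is known to stay inside $T^0$. For the $L_i$, however, homology alone does not fix the isotopy class. There I would use the explicit projection arcs from Figure \ref{fig:projectionsofleftduals}: a simple arc in the base between branch points lifts to a well-defined circle. I would then argue that the left-dual mutations act on these arcs by the half-twists (arc slides) realized by the orange and purple branch-point motions of Appendix \ref{propappendix:radialoutwards}, which I would track rather than just their homology. Finally, I would check that the exact symplectic form and the horizontal line-field gradings agree, which is automatic up to deformation since both fibers are exact surfaces with the same boundary.
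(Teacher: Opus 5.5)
Your route is the same as the paper's. The paper's proof is a one-line appeal to the branched-cover description of the vanishing cycles together with Remark~\ref{rem:localKoszul}. That means: identify the double cover over a disc containing the puncture at $0$, the branch point near $0$ and the twin branch points with $T^0$; treat the remaining branch points as handles; and read off Definition~\ref{def:Ld} with $l=j_t=1$. Your B-side bookkeeping is correct: one $\frac1k(1,1)$ node, non-special weights $2,\dots,k-1$, and $\chi=-(k+1)$.

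\textbf{The homology-to-isotopy step rests on a false lemma.} $T^0$ is a \emph{thrice}-punctured torus, and there homologous simple closed curves need not be isotopic. Let $\gamma$ bound a disc containing two of the punctures. Then $\gamma$ is separating, so $T_\gamma$ acts trivially on $H_1(T^0)$. Yet a meridian $m$ passing between those two punctures has $i(m,\gamma)=2$, hence $i(T_\gamma m,m)=4$, so $m$ and $T_\gamma m$ are homologous but not isotopic. The lemma holds only for the closed or once-punctured torus. Consequently the classes $b,\;2b+2a+l,\;b+l+2a,\;l$ do not by themselves identify $P_0,\tilde P,P_1,B_i$ with the Hacking--Keating curves. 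The repair is the method you reserve for the $L_i$, applied everywhere. Work with the projected arcs of Figure~\ref{fig:vanishingpathsresolution}; arcs isotopic rel the branch points lift to isotopic circles. Realize each mutation $\mathbb{L}_a b$ as $\tau_a^{\pm1}(b)$, where $\tau_a$ is the lift of the half-twist along the projected arc of $a$. It is these half-twists that act on the arcs. The branch-point motions of the appendix only produce the arcs of the $\tilde L_j$ in the first place.

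\textbf{Other details that need correcting.}

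\emph{Handle dictionary.} The lift of an arc between two branch points is a closed curve, not a cocore. The correct dictionary is that each branch point outside the disc gives a $1$-handle on $T^0$, whose core is the lift of an arc from the disc to that point. Reading labels mod $k$, the twin branch points $0\equiv k$ and $1\equiv k+1$ must be the special handles $H^-_0,H^-_{-1}$. Indeed the twin arc lifts to $l$, the special meridian of Proposition~\ref{prop:specialmeridian}. Without this, your match with Definition~\ref{def:Ld} breaks for $d=k-2,k-1$, whose $L_{m,d}$ pass through special handles.

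\emph{The diffeomorphism.} Matching $\chi$ is not enough. The handles must reproduce the Lekili--Polishchuk attaching pattern $\frac ak\mapsto-\frac ak$. This pattern gives $\gcd(2,k)=1$ boundary circle at the stacky node, hence two boundary circles in total.

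\emph{Degrees.} $\mathcal{O}(H)|_D$ has degree $1$ on each of the three components, so total $H\cdot D=3$. This matches the three meridian twists recorded in $[P_1]-[P_0]=l+2a$; degrees $(0,1,1)$ would not.

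\emph{Non-line-bundles.} The recipe ``twist the longitude by the degrees'' applies only to line bundles, and $\mathcal{T}(-H)$ and $\mathcal{O}_{B_i}$ are not line bundles. Compare $P_{-1},P_0,P_1$ with $\mathcal{O}(-H),\mathcal{O},\mathcal{O}(H)$ before mutating. Treat the $B_i$ as mutations of line bundles, for example $\mathcal{O}_{B_i}=\mathrm{Cone}(\mathcal{O}(-B_i)\to\mathcal{O})$. These are Hurwitz moves, which your framework allows.
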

    \begin{proof}
    This follows from our description of the vanishing cycles in the previous section, as well as Remark \ref{rem:localKoszul}. 
    \end{proof}
    \subsubsection{A set of $2$-cycles associated to the vanishing cycles}

    In the computation of the Fukaya-Seidel category, it will be important to weigh the holomorphic disk contributions by their symplectic area. In order to do that, it will be helpful to relate $H_2(M^0)$ to the vanishing cycles.

    \begin{defn}\label{def:2cycles}
    Let $\Delta_{i,j}$ be a $2$-chain in $\Sigma^0_*$ which witnesses the equality $[B_i]=[B_j]\in H_1(\Sigma^0_*)$. We cap this off with vanishing thimbles, producing a $2$-cycle$$S_{i,j}:=\Delta_{i,j}-D_{B_i}+D_{B_j}$$
    In other words, this is a matching sphere between two critical values corresponding to the vanishing cycles $B_i$ and $B_j$.
    Similarly, let $C$ be a $2$-chain witnessing the equality $[P_0]+[P_{1}]=[\tilde{P}]$. This has an associated $2$ -cycle $$\overline{C}:=C-D_{P_0}-D_{P_1}+D_{\tilde{P}}$$ 
    \end{defn}

    \begin{rem}
      This class is in fact homologous to the image of the generator of $H_2((\mathbb{C}^\times)^2)$ (see \cite[Lemma 4.9]{auroux_mirror_2004}) which will lead to a non-commutative deformation of the del Pezzo surface $\mathcal{X}$. We will not concern ourselves with this in this paper.
    \end{rem}

\section{Homological mirror symmetry for $X_{k+1}$ and a mirror map}\label{sec:Lagvc}
In this section, we compute the Fukaya-Seidel category (as in Definition \ref{def:FukayaCategory}) of vanishing thimbles associated to the vanishing paths from the previous section. We show an equivalence (in families) between the derived category of $\mathcal{X}$ and this symplectic category, by constructing an explicit mirror map.

On the A-side, we take the Lefschetz fibration $\mathbf{f}_s:M^0\rightarrow \mathbb{C}$ and equip it with the symplectic form as in Definition \ref{def:symplecticform}, with the non-exactness supported near the chain of spheres $S_{1,2},\dots, S_{k,k+1}$ of Definition \ref{def:2cycles}. To ensure the Fukaya-Seidel category is well-defined, we follow \cite[Section 3]{auroux_mirror_2004}: while the total space is non-exact, each fiber is an exact symplectic manifold. Moreover, $\pi_2(\Sigma^0)=\pi_2(\Sigma^0, L)=0$ for each Lagrangian vanishing cycle $L$, which prevents bubbling phenomena. The fiber is a punctured curve, so $c_1(\Sigma^0_*)=0$ hence there is a $\mathbb{Z}$-grading. Finally, to ensure that symplectic parallel transport and hence the Lagrangian vanishing cycles are well-defined, it is enough to show that $(M^0,\omega)$ is complete and moreover $|\nabla \mathbf{f}_s|$ is bounded from below outside of a compact subset (this is a Palais-Smale type condition, see \cite[Section 6]{seidel_suspending_2009}). We verify this in the appendix \ref{prop:PalaisSmale}. The Lagrangian vanishing cycles come equipped with the unique non-trivial spin structure.
 \subsubsection{B-fields and sign conventions}
  To consider not just a real-valued deformation of the symplectic structure, but more generally a complex one, we will attach a B-field to $M^0$. This is a closed $2$-form on $M^0$. We extend the objects of the Fukaya-Seidel category to be Lagrangian vanishing cycles $L$, coupled with a connection on the trivial line bundle on $L$ whose curvature is $-2\pi iB$. Each holomorphic disk with Lagrangian boundary conditions appearing in the $A_\infty$ operations will appear with the following weight: $$(-1)^{v(u)}\mathrm{exp}(2\pi i\int_{D^2} u^*(B+i\omega)) \mathrm{hol} (\partial u)$$
  Furthermore, the curvature condition implies that $$\mathrm{hol}_{\nabla}(L)=\mathrm{exp}(-2\pi i \int_{\mathbf{D}_L} B)$$where $\mathbf{D}_L$ is the thimble associated to a Lagrangian vanishing cycle $L$.
  The sign $v(u)$ can be computed by using the same sign convention as in \cite{auroux_mirror_2006} by picking a marked point on each Lagrangian vanishing cycle which is distinct from the intersection points. For the $L_i$, the point we pick is a lift of the midpoint as in Figure \ref{fig:McKayBranchedProjection} in such a way so that all these lifts live on the same sheet of the covering.

  The B-fields that we consider will be sums of Thom forms supported near the same $-2$ spheres as in the definition of the symplectic forms $\omega$.

\subsubsection{The Lagrangian vanishing cycles}
While we have determined the vanishing cycles topologically in \ref{sec:Lagvc}, we need to take some extra care in describing them symplectically, due to the non-exactness of the symplectic form. Begin with $(M^0, \omega^{ex})$, with $\omega^{ex}=dx\wedge d\overline{x}+d \log y\wedge d\log \overline{y}+dz \wedge d\overline{z}$. In this situation, $\omega^{ex}$ is anti-invariant under complex conjugation, so by the same argument as in \cite[Section 4.2]{auroux_mirror_2004}, the Lagrangian vanishing cycles are not only smoothly but also Hamiltonian isotopic to the curves which project to arcs connecting two branch points, as described in the previous Section \ref{sec:LGmodel}. In particular, when working with $\omega^{ex}$, all of the $B_i$ are Hamiltonian isotopic to each other, and have two intersections points of degree $0$ and $1$. We want to get rid of this phenomenon, so as to achieve a strong exceptional collection. To this end, we deform slightly the symplectic form to $\omega^{\underline{\epsilon}}$ as in Definition \ref{def:symplecticform}, the effect of which is to introduce symplectic area between $B_i$ and $B_j$ and make them Hamiltonian displaceable from each other, but otherwise keeps the rest of the Lagrangian vanishing cycles unaffected (since this deformation is only supported near a collection of matching $-2$ spheres for the Lagrangian vanishing cycles $B_i$). 

\subsection{Grading and morphism spaces}
  The grading data for $\Sigma^0$, as well as the Lagrangians and the intersection points between them,  depends on the choice of a line field, in other words a section of $\mathbb{P}T_{\Sigma^0}$. We choose the horizontal line field in the planar description of the surface $\Sigma^0$, as in Section \ref{sec:LCSL}.

  \begin{defn}\label{def:prelimgradingdata} We define preliminary grading data $\alpha^{pre}$ as the collection of lifts of phase functions:
 \begin{itemize}
  \item $\alpha^{pre}_{L_i}:L_i\rightarrow \mathbb{R}$ such that $\alpha_{L_i}\equiv 2$ at the start and end of the $i$-th handle $H^-_{-i}$, $\alpha_{L_i}\equiv 1$ at the start and end of the $i+1$-st handle $H^-_{-(i+1)}$, and  $\alpha_{L_i}\equiv 0$ at the start and end of the $i+2$-th handle $H^-_{-(i+2)}$.
  \item $\alpha^{pre}_{\mathcal{V}}: \mathcal{V}_i\rightarrow \mathbb{R}$ (for $\mathcal{V}\in\{P_0,\tilde{P},P_1, B_1, \dots, B_{k+1}\}$) to be the unique lift of the phase function which is $0$ on the portion of the Lagrangian $\mathcal{V}_i$ which is on $H^-_{0}$.

  \end{itemize}
  Recall that the grading of the intersection point between two Lagrangians $c\in CF(\Lambda_1 ,\Lambda_2)$, as shown by Seidel in \cite[Example 11.20]{seidel_fukaya_2008}, is given by \begin{equation}\label{eq:Seidelgrading}
\lfloor \alpha_{\Lambda_2}(c)-\alpha_{\Lambda_1}(c)\rfloor+1\end{equation}
\begin{defn} We name the generators of the morphism spaces:
   \begin{gather*}
    CF(L_i,L_{i+1})=\mathbb{C}p_{1,i}\oplus \mathbb{C}p_{2,i}, \quad CF(L_i,L_{i+2})=\mathbb{C}g_{i} \\
    CF(P_0, \tilde{P})=\mathbb{C}x_0 \oplus \mathbb{C}y_0 \oplus \mathbb{C}z_0,\, CF(\tilde{P}, P_1)=\mathbb{C}x_1\oplus \mathbb{C}y_1 \oplus \mathbb{C}z_1,\, CF(P_0, P_1)=\mathbb{C}x \oplus \mathbb{C}y \oplus \mathbb{C}z\\
    CF(P_0, B_i)=\mathbb{C}a_i,\qquad CF(\tilde{P},B_i)=\mathbb{C}b_i\oplus \mathbb{C}b'_i,\qquad  CF(P_1, B_i)=\mathbb{C}c_i\\
    CF(L_{k-1}, P_0)=0, \qquad CF(L_{k-2}, P_0)=\mathbb{C} \epsilon'\\
      CF(L_{k-1}, \tilde{P})=\mathbb{C} \tilde{\epsilon}, \qquad CF(L_{k-2}, \tilde{P})=\mathbb{C} \tilde{\epsilon}'\oplus \mathbb{C} \tilde{x}^E \oplus \mathbb{C} \tilde{y}^E\\
      CF(L_{k-1}, P_1)= \mathbb{C} \epsilon ,\qquad CF(L_{k-2}, P_1)=\mathbb{C} x_1^E\oplus \mathbb{C} y_1^E\\
      CF(L_{k-1}, B_i)=\mathbb{C} \epsilon_i, \qquad CF(L_{k-2}, B_i)=\mathbb{C} \epsilon'_i
  \end{gather*}
  \end{defn}
\begin{figure}[H]
      \input{./TikzFiles/gradingpicture.tikz}
         \caption{\label{fig:gradingpicture} Left: the Lagrangians $L_j, L_{j+1},L_{j+2}$ and two shaded holomorphic disks, with only the handle $H^-_{-(j+2)}$ drawn.  Right: the Lagrangians $L_{k-2}, L_{k-1}$ with only the handles $H^-_0, H^-_{-1}$ drawn.}
      \end{figure}

  \end{defn}
      \begin{figure}[H]
      \minipage{0.48 \textwidth}
      \centering
          \usetikzlibrary{backgrounds}
      \pgfdeclarelayer{base}
                \pgfsetlayers{base,background,main}
\begin{tikzpicture}[transform shape,
        gap/.style={draw, very thick},
        band/.style={fill=white, draw=black, line width=0.6pt},
        line cap=round
      ]
\begin{pgfonlayer}{background}
        \draw[thick,dashed] (-1,12/10) to (1,-12/10);
        \draw[thick, dashed] (-1,16/10) to (1,-8/10) ;
        
      \path[band, draw=none] (-1,4/10) -- (-1,8/10) -- (1,-0/10) -- (1,-4/10)  -- cycle;
        \draw[thick,dashed] (-1,4/10) to (1,-4/10);
        \draw[thick, dashed] (-1,8/10) to (1,-0/10) ;

      \path[band, draw=none] (-1,-4/10) -- (-1,0) -- (1,8/10) -- (1,4/10)  -- cycle;
        \draw[thick,dashed] (-1,-4/10) to (1,4/10);
        \draw[thick, dashed] (-1,0) to (1,8/10) ;

      \end{pgfonlayer}
      
      \path[band, draw=none] (-1,-2+8/10) -- (1,-2+32/10) -- (1,-2+36/10) -- (-1,-2+12/10)  -- cycle;
        \draw[very thick] (-1,-2+12/10) to (1,-2+36/10) ;
        \draw[very thick] (-1,-2+8/10) to (1,-2+32/10);

      \draw[very thick] (-2,-1.6) node (v2) {}  to (-1,-1.6);
      \draw[very thick] (2,-1.6)  to (1,-1.6);
      \draw[very thick] (-2,2.8) node (v1) {} to (-1, 2.8) to (-1,2.4)  to (1,2.4) to (1,2.8) to (2,2.8);
      \draw[very thick] (1,2)  to (-1,2) ;


      \draw[very thick] (-1,-2+4/10)  to (-1,-2+8/10);
      \draw[very thick] (-1,-2+12/10)  to (-1,-2+16/10);
      \draw[very thick] (-1,-2+20/10)  to (-1,-2+24/10);
      \draw[very thick] (-1,-2+28/10)  to (-1,-2+32/10);
      \draw[very thick] (-1,-2+36/10)  to (-1,-2+40/10);
      \begin{scope}[xshift=2cm]
      \draw[very thick] (-1,-2+4/10)  to (-1,-2+8/10);
      \draw[very thick] (-1,-2+12/10)  to (-1,-2+16/10);
      \draw[very thick] (-1,-2+20/10)  to (-1,-2+24/10);
      \draw[very thick] (-1,-2+28/10)  to (-1,-2+32/10);
      \draw[very thick] (-1,-2+36/10)  to (-1,-2+40/10);
      \end{scope}

\draw[ green!50!black] (-1,-2+10/10) to (1,-2+34/10) ;
\draw[green!50!black] (-1,2+2/10) to (1,2+2/10) ;
\draw[green!50!black] (1,2+2/10) to[in = 50, out =0] (1,-2+34/10);
\draw[green!50!black] (-1,2+2/10) to[in = 270, out =180] (-1-2/10,2+8/10);
\draw[green!50!black] (-1,-2+10/10) to[in = 90, out =240] (-1-2/10,-1.57);

\begin{scope}[xshift=-3, yshift=-2]
\draw[ purple!50!black] (-1,-2+10.5/10) to (1,-2+34.5/10) ;
\draw[purple!50!black] (-1,2+2/10) to (1,2+2/10) ;
\draw[purple!50!black] (1,2+2/10) to[in = 50, out =0, looseness=1.3] (1,-2+34.5/10);
\draw[purple!50!black] (-1,2+2/10) to[in = 270, out =180] (-1-6/10,2+8.5/10);
\draw[purple!50!black] (-1,-2+10.5/10) to[in = 90, out =230] (-1-3/10,-1.54);
\draw[purple!50!black] (-1-3/10,2.85) to[in = 180, out =270, looseness=0.7] (1.4,2.35);
\draw[purple!50!black] (1.4,2.35) to[in = 90, out=0, looseness=0.4] (1.6,-1.54);
\draw[purple!50!black] (1.6, 2.85) to[in = 180, out = 270] (2.1, 2.3);
\draw[purple!50!black] (-1.9,2.3) to[in = 90, out=0, looseness=0.5] (-1.6,-1.54);
\end{scope}

      \draw[very thick, postaction={decorate},
          decoration={markings, mark=at position .5 with {\arrow{>>}}}] (-2,-1.6) to (-2,2.8);
      \draw[very thick,postaction={decorate},
          decoration={markings, mark=at position .5 with {\arrow{>>}}}] (2,-1.6) to (2,2.8);
      \draw[very thick,postaction={decorate},
          decoration={markings, mark=at position .5 with {\arrow{>}}}] (-2,-1.6) node (v2) {}  to (-1,-1.6);
      \draw[very thick,postaction={decorate},
          decoration={markings, mark=at position .5 with {\arrow{<}}}] (2,-1.6)  to (1.2,-1.6);
      \draw[very thick,postaction={decorate},
          decoration={markings, mark=at position .5 with {\arrow{>}}}] (-2,2.8) node (v2) {}  to (-1,2.8);
      \draw[very thick,postaction={decorate},
          decoration={markings, mark=at position .5 with {\arrow{<}}}] (2,2.8)  to (1.2,2.8);

\node[fill, black, circle, inner sep =1] at (-1.1928,2.4365) {};
\draw (-1.7,0) arc (-20:360:0.1);
\node[green!50!black] at (1.25,1.5) {$B_i$};
\node[purple!50!black] at (-1.3,1.5) {$P_1$};
\end{tikzpicture}
          \endminipage
          \hfill
          \minipage{0.48\textwidth}%
          \centering

          \usetikzlibrary{backgrounds}
      \pgfdeclarelayer{base}
                \pgfsetlayers{base,background,main}
\begin{tikzpicture}[transform shape,
        gap/.style={draw, very thick},
        band/.style={fill=white, draw=black, line width=0.6pt},
        line cap=round
      ]
\begin{pgfonlayer}{background}
        \draw[thick,dashed] (-1,12/10) to (1,-12/10);
        \draw[thick, dashed] (-1,16/10) to (1,-8/10) ;
        
      \path[band, draw=none] (-1,4/10) -- (-1,8/10) -- (1,-0/10) -- (1,-4/10)  -- cycle;
        \draw[thick,dashed] (-1,4/10) to (1,-4/10);
        \draw[thick, dashed] (-1,8/10) to (1,-0/10) ;

      \path[band, draw=none] (-1,-4/10) -- (-1,0) -- (1,8/10) -- (1,4/10)  -- cycle;
        \draw[thick,dashed] (-1,-4/10) to (1,4/10);
        \draw[thick, dashed] (-1,0) to (1,8/10) ;

      \end{pgfonlayer}
      
      \path[band, draw=none] (-1,-2+8/10) -- (1,-2+32/10) -- (1,-2+36/10) -- (-1,-2+12/10)  -- cycle;
        \draw[very thick] (-1,-2+12/10) to (1,-2+36/10) ;
        \draw[very thick] (-1,-2+8/10) to (1,-2+32/10);

      \draw[very thick] (-2,-1.6) node (v2) {}  to (-1,-1.6);
      \draw[very thick] (2,-1.6)  to (1,-1.6);
      \draw[very thick] (-2,2.8) node (v1) {} to (-1, 2.8) to (-1,2.4)  to (1,2.4) to (1,2.8) to (2,2.8);
      \draw[very thick] (1,2)  to (-1,2) ;


      \draw[very thick] (-1,-2+4/10)  to (-1,-2+8/10);
      \draw[very thick] (-1,-2+12/10)  to (-1,-2+16/10);
      \draw[very thick] (-1,-2+20/10)  to (-1,-2+24/10);
      \draw[very thick] (-1,-2+28/10)  to (-1,-2+32/10);
      \draw[very thick] (-1,-2+36/10)  to (-1,-2+40/10);
      \begin{scope}[xshift=2cm]
      \draw[very thick] (-1,-2+4/10)  to (-1,-2+8/10);
      \draw[very thick] (-1,-2+12/10)  to (-1,-2+16/10);
      \draw[very thick] (-1,-2+20/10)  to (-1,-2+24/10);
      \draw[very thick] (-1,-2+28/10)  to (-1,-2+32/10);
      \draw[very thick] (-1,-2+36/10)  to (-1,-2+40/10);
      \end{scope}


\draw[ purple!50!black] (-1,-2+10.5/10) to (1,-2+34.5/10) ;
\draw[purple!50!black] (-2,2+1/10) to (1,2+1/10) ;
\draw[purple!50!black] (1,2+1/10) to[in = 50, out =0, looseness=1.3] (1,-2+34.5/10);
\draw[purple!50!black] (-1,-2+10.5/10) to[in = 90, out =230] (-1-3/10,-1.57);
\draw[purple!50!black] (-1-3/10,2.82) to[in = 180, out =270, looseness=0.85] (1.4,2.35);
\draw[purple!50!black] (1.4,2.35) to[in = 90, out=0, looseness=0.4] (1.6,-1.6);
\draw[purple!50!black] (1.6, 2.8) to[in = 180, out = 270] (2, 2.3);
\draw[purple!50!black] (-2, 2.2) to (1.3, 2.2) to[in=90, out=0, looseness =0.4] (1.4, -1.6);
\draw[purple!50!black] (1.4, 2.8) to[in=180, out=270, looseness=0.8] (2,2+1/10);

\draw[gray] (-2,2.05) to (2,2.05);

      \draw[very thick, postaction={decorate},
          decoration={markings, mark=at position .5 with {\arrow{>>}}}] (-2,-1.6) to (-2,2.8);
      \draw[very thick,postaction={decorate},
          decoration={markings, mark=at position .5 with {\arrow{>>}}}] (2,-1.6) to (2,2.8);
      \draw[very thick,postaction={decorate},
          decoration={markings, mark=at position .5 with {\arrow{>}}}] (-2,-1.6) node (v2) {}  to (-1,-1.6);
      \draw[very thick,postaction={decorate},
          decoration={markings, mark=at position .5 with {\arrow{<}}}] (2,-1.6)  to (1.3,-1.6);
      \draw[very thick,postaction={decorate},
          decoration={markings, mark=at position .5 with {\arrow{>}}}] (-2,2.8) node (v2) {}  to (-1,2.8);
      \draw[very thick,postaction={decorate},
          decoration={markings, mark=at position .5 with {\arrow{<}}}] (2,2.8)  to (1.3,2.8);

\node[circle, fill, inner sep =1] at (1.1484,2.0503) {};
\node[circle, fill, inner sep =1] at (1.4773,2.0406) {};
\node[circle, fill, inner sep =1] at (1.6385,2.0439) {};
\draw (-1.7,0) arc (-20:360:0.1);
\node[purple!50!black] at (-1.3,-1) {$\tilde{P}$};
\node[gray] at (-1.3,1.7) {$P_0$};
\end{tikzpicture}
          \endminipage
          \caption{ Left: description of the Lagrangians $[P_1]=b+2a+l, [B_i]=l$. Right: depiction of the Lagrangians $[P_0]=b, [\tilde{P}]=2b+2a+l$. Note that we have $k-2$ extra handle bridges (in this case $k=5$) which are completely independent of these Lagrangians - the Lagrangians mirror to $\Phi D^b Y$ only use the special handles $H^-_0$ and $H^-_{-1}$}\label{fig:resolutionplanar}
      \end{figure}
We choose the labelling so that (after removing branch cuts) $p_1, \tilde{x}^E,x_1^E,x, y_0, y_1$ are all on one sheet of the double cover description of $\Sigma^0$ and $p_2, \tilde{y}^E, y_1^E,y, x_0, x_1$ are all on the other.
\begin{prop} With the grading data $\alpha^{pre}$, the intersection points between $P_0, \tilde{P}, P_1, B_i$ all have degree zero. Moreover, $$|p_{1,i}|^{\alpha^{pre}}=|p_{2,i}|^{\alpha^{pre}}=1, |g_i|^{\alpha^{pre}}=2$$
The intersection points between $L_{k-1}$ and $P_0, \tilde{P},P_1, B_i$ all have degree $1$:$$|\tilde{\epsilon}|^{\alpha^{pre}}=|\epsilon|^{\alpha^{pre}}=|\epsilon_i|^{\alpha^{pre}}=1 $$ and similarly the intersections between $L_{k-2}$ and $P_0, \tilde{P},P_1, B_i$ have degree $2$:
$$|\epsilon'|^{\alpha^{pre}}=|\tilde{\epsilon}'|^{\alpha^{pre}}= |\tilde{x}^E|^{\alpha^{pre}}=|\tilde{y}^E|^{\alpha^{pre}}=|x_1^E|^{\alpha^{pre}}=|y_1^E|^{\alpha^{pre}}=|\epsilon_i'|^{\alpha^{pre}}=2$$

\end{prop}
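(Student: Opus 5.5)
The plan is a direct computation with Seidel's formula \eqref{eq:Seidelgrading}, $\lfloor \alpha_{\Lambda_2}(c)-\alpha_{\Lambda_1}(c)\rfloor+1$, where every phase is measured against the horizontal line field on the planar model of $\Sigma^0$ (Figures \ref{fig:gradingpicture} and \ref{fig:resolutionplanar}). The key observation is that every intersection point we need lies either on a handle core or on an arc crossing a handle band. On these pieces every Lagrangian is, up to isotopy, a segment of a fixed slope. So each degree reduces to comparing the two integer lifts prescribed by $\alpha^{pre}$ at that handle and reading off the small fractional correction coming from the relative slopes there.

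The steps, in order:

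\textbf{(1) Intersections among $P_0,\tilde{P},P_1,B_i$.} By Figure \ref{fig:resolutionplanar}, all these points lie near the special handles $H^-_0$ and $H^-_{-1}$, or on the horizontal strands in the torus part. The phases are normalised to be $0$ on $H^-_0$. I will check that the phase difference at each intersection lies in $[-1,0)$, so the degree is $0$. For the strands not on $H^-_0$, I transport the phase along each curve and use that its net rotation relative to the horizontal line field is $0$, since these are the Hacking--Keating longitude-type curves, twisted only by meridians. This recovers the degree-zero statement already known for $\mathcal{F}(T^0)$ in \cite{hacking_homological_2023}.

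\textbf{(2) The degrees $|p_{j,i}|=1$ and $|g_i|=2$.} The points $p_{1,i},p_{2,i}$ lie on the two strands of $L_{i+1}$ passing through $H^-_{-(i+1)}$, where they meet $L_i$. On that handle $\alpha^{pre}_{L_i}\equiv 1$ and $\alpha^{pre}_{L_{i+1}}\equiv 2$, so the difference is $1$ plus a correction in $[-1,0)$, giving degree $1$. Similarly, $g_i$ lies on $H^-_{-(i+2)}$, where $\alpha^{pre}_{L_i}=0$ and $\alpha^{pre}_{L_{i+2}}=2$, giving degree $2$. I will confirm the direction of the fractional correction once, using the shaded disks of Figure \ref{fig:gradingpicture}: a triangle with corners $p_{1,i},p_{1,i+1},g_i$ forces $|g_i|=|p_{1,i}|+|p_{1,i+1}|$, which is consistent with the values $1,1,2$.

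\textbf{(3) Intersections of $L_{k-1},L_{k-2}$ with the $\Phi D^b(Y)$ cycles.} These occur on the special handles $H^-_0$ and $H^-_{-1}$. There $L_{k-1}$ has phase $2$ on $H^-_{-(k-1)}$, $1$ on $H^-_{-k}=H^-_0$ and $0$ on $H^-_{-1}$, while $L_{k-2}$ has phase $2,1,0$ on $H^-_{-(k-2)},H^-_{-(k-1)},H^-_0$, indices taken mod $k$. The torus Lagrangians have phase $0$ on $H^-_0$ and, by step (1), phase $0$ on $H^-_{-1}$ as well. The difference therefore gives degree $1$ for every point of $L_{k-1}$ and degree $2$ for every point of $L_{k-2}$, which is the claimed pattern. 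As a sanity check I will compare with Proposition \ref{prop:gluingmorphisms}: there, after the shifts $e_{k-1}[-1]$ and $e_{k-2}[-2]$, all gluing morphisms have degree $0$, which matches degrees $1$ and $2$ before shifting.

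\textbf{Main obstacle.} The hard part is the bookkeeping in step (3), and in step (1) for $\tilde{P}$, which wraps the torus twice. I must check that when a curve is transported from its reference handle to the intersection point, its phase returns to the same integer and not to a neighbouring one. My approach is to compute the winding of each arc relative to the horizontal line field directly in the planar picture, where the connecting arcs of Definition \ref{def:Ld} are monotone. The winding then reduces to counting half-turns around handle ends.
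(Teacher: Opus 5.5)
Your route is the paper's route. The paper's proof consists only of reading phases off Figure~\ref{fig:resolutionplanar}, using the data of Definition~\ref{def:prelimgradingdata}, and applying Seidel's formula \eqref{eq:Seidelgrading}. Your steps (1) and (2) are fine in outline. The gap is in step (3), which is where the statement has its content: your numbers there do not give what you claim.

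For $c\in CF(L,\mathcal{V})$ the degree is $\lfloor\alpha_{\mathcal V}(c)-\alpha_{L}(c)\rfloor+1$, since $L$ is the first entry. With the phases you list:
\begin{itemize}
\item A point of $L_{k-1}\cap\mathcal V$ on $H^-_0$ has $\alpha_{\mathcal V}-\alpha_{L_{k-1}}\approx 0-1$, so its degree is $-1$ or $0$.
\item A point on $H^-_{-1}$ has difference $\approx 0$, so its degree is $0$ with the correction in $[-1,0)$ you fixed in step (2).
\item Among the special handles, $L_{k-2}$ passes only through $H^-_0$, where both it and every torus cycle have phase $\approx 0$. So every point of $L_{k-2}\cap\mathcal V$ lying on a special handle has degree at most $1$, never $2$. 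This includes $\epsilon'$ and $\tilde\epsilon'$, which the paper places at the left twin branch point.
\end{itemize}
You appear to have compared the phases in the reverse order. Even in that order, $L_{k-2}$ gives $0$, not $2$.

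The missing idea is this. Steps (1) and (2) are unaffected by adding a common constant to all $\alpha^{pre}_{L_i}$, but every degree in step (3) shifts by exactly that constant. So the claim \emph{degree $1$ from $L_{k-1}$, degree $2$ from $L_{k-2}$} is a statement about the relative normalization of the two families of phase functions, transported to the actual intersection points. For instance, if the $L_i$ carried phases $0,-1,-2$ instead of $2,1,0$ on their three handles, the points on $H^-_0$ would get degrees $1$ and $2$, and nothing in (1)--(2) would change. Whatever normalization is intended, you must pin it down and then do the transport explicitly.

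Two places need particular care:
\begin{itemize}
\item The points $\tilde x^E,\tilde y^E,x_1^E,y_1^E,\epsilon_i'$ are transverse crossings on the connecting arcs of $L_{k-2}$, not on handle cores.
\item You argue that the torus cycles have phase $0$ on $H^-_{-1}$ because their net rotation is $0$. That does not follow: zero total winding is only the gradability condition for a closed curve and says nothing about the phase at an intermediate point. A curve crossing $H^-_0$ and $H^-_{-1}$ in opposite directions turns through an odd number of half-turns in between, so its phase shifts by an odd integer.
\end{itemize}
Finally, the comparison with Proposition~\ref{prop:gluingmorphisms} only confirms the target degrees; it does not show that your computation produces them.
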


\begin{proof}   
Combined with the data of the grading \ref{def:prelimgradingdata} and Figure \ref{fig:resolutionplanar}, the result follows by using Seidel's formula \ref{eq:Seidelgrading}. 
\end{proof}

Just as on the B-side, we can shift the phase functions of the Lagrangians $L_i$ by $[k-i]$. We call this modified grading data $\alpha$, with which the collection becomes strong: \label{modifiedgrading}
  \begin{cor}
    With the modified grading data $\alpha$, the morphism spaces for the Lagrangian vanishing cycles $L_i, P_0, \tilde{P},P_1, B_i$ are concentrated degree $0$. Therefore, the only nontrivial $A_\infty$ operation is $\mu^2$.
  \end{cor}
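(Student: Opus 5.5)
The corollary follows from the preceding proposition by bookkeeping: we track how the shift $[k-i]$ of the phase function on each $L_i$ changes the degree of every generator, and then use degree considerations to kill the higher operations.

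\textbf{Step 1: the effect of the shift.} Shifting the grading of $L_i$ by $[k-i]$ adds $k-i$ to $\alpha_{L_i}$. By Seidel's formula \eqref{eq:Seidelgrading}, for $c\in CF(\Lambda_1,\Lambda_2)$ the degree changes by (shift of $\Lambda_2$) minus (shift of $\Lambda_1$). Here the sign convention must mirror the B-side replacement $e_i\mapsto e_i[i-k]$, so we orient the shift on the A-side so that the resulting degrees match.

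\textbf{Step 2: the case analysis.} We go through the generators named in the definition above.
\begin{itemize}
\item For $p_{1,i},p_{2,i}\in CF(L_i,L_{i+1})$, the shifts differ by one, so the preliminary degree $1$ becomes $0$.
\item For $g_i\in CF(L_i,L_{i+2})$, the shifts differ by two, so degree $2$ becomes $0$.
\item For morphisms from $L_{k-1}$ to $P_0,\tilde P,P_1,B_i$, the shift of $L_{k-1}$ is $1$ and the targets are unshifted, so degree $1$ becomes $0$.
\item For morphisms from $L_{k-2}$ to these targets, the shift is $2$, so degree $2$ becomes $0$.
\item For morphisms from $L_i$ with $i<k-2$ to $P_0,\tilde P,P_1,B_i$, we check that the Floer complexes vanish. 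The Lagrangians $L_i$ with $i<k-2$ only pass through the handles $H^-_{-i},H^-_{-(i+1)},H^-_{-(i+2)}$ with $i+2<k$. These are non-special handles, which are disjoint from the special-torus Lagrangians (Figure \ref{fig:resolutionplanar} and Proposition \ref{prop:intersections}). So after a small isotopy these $L_i$ are disjoint from $P_0,\tilde P,P_1,B_j$.
\item Intersections among $P_0,\tilde P,P_1,B_i$ are already in degree $0$ by the proposition. For $B_i,B_j$ with $i\ne j$, the deformation $\omega^{\underline\varepsilon}$ displaces them, so there are no generators.
\end{itemize}
The directed structure of Definition \ref{def:FukayaCategory} handles all morphisms in the reverse direction.

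\textbf{Step 3: vanishing of the higher operations.} Once every morphism space in the directed category sits in degree $0$, consider $\mu^d$ for $d\neq 2$. It has degree $2-d$, so it maps degree-$0$ inputs into degree $2-d\neq0$, where the target space is zero. Hence $\mu^d=0$, and $\mu^1=0$ in particular. This gives the formality statement.

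The main obstacle is the consistency of conventions. We must make sure that the sign of the shift produces $0$ rather than $2$ or $4$ in each case. This amounts to checking, via Figure \ref{fig:gradingpicture}, that $\alpha^{pre}$ on $L_i$ decreases along handles in the way Seidel's formula requires. We must also confirm the disjointness claim for $L_i$ with $i<k-2$ from the explicit description of the cores in Definition \ref{def:Ld}.
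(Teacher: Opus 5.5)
Your proposal is correct and follows the paper's route. The corollary is exactly the preceding proposition's preliminary degrees shifted by $k-i$ on each $L_i$:
\begin{itemize}
\item $|p_{1,i}|=|p_{2,i}|=1$ and $|g_i|=2$;
\item degree $1$ out of $L_{k-1}$ and degree $2$ out of $L_{k-2}$;
\item degree $0$ among $P_0,\tilde P,P_1,B_i$.
\end{itemize}
After the shift, everything sits in degree $0$, and since $\mu^d$ has degree $2-d$, every $\mu^d$ with $d\neq 2$ vanishes. Your extra case, that $L_i$ for $i<k-2$ meets none of $P_0,\tilde P,P_1,B_j$, is left implicit in the paper: its list of generators only involves $L_{k-1}$ and $L_{k-2}$, mirroring $\Psi e_d=0$ for $d\le k-3$. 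However, Proposition \ref{prop:intersections} concerns the arcs $A_{m,d}$, not the circles $L_{m,d}$, so it does not give you this. What does give it is that the connecting arcs of these $L_i$ run in a collar between consecutive feet of the non-special handles $H^-_{-i},H^-_{-(i+1)},H^-_{-(i+2)}$. Those intervals never straddle the feet of the special handles $H^-_0$ and $H^-_{-1}$, which is where the special-torus Lagrangians enter the collar.
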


  \subsection{The mirror to the algebra of the McKay quiver}
    We start by describing the algebra of the Lagrangians $L_2, \dots, L_{k-1}$.

    \begin{prop}\label{prop:localalgebras}
      The $A_\infty$ algebra of the Lagrangians $L_2, \dots, L_{k-2}$ is equivalent to the dg algebra of the orbifold skyscrapers $e_2, \dots, e_{k-1}$.
    \end{prop}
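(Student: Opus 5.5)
Since the collection is strong with respect to the modified grading $\alpha$ (by the Corollary above), every morphism between the $L_i$ sits in degree $0$. Degree reasons then force all $\mu^d$ with $d\neq 2$ to vanish, so the $A_\infty$ algebra is formal. The B-side dg algebra of $e_2[2-k],\dots,e_{k-1}[-1]$ is likewise concentrated in degree $0$ and hence formal, as recalled in Section \ref{sec:McKayalgebra}. It is therefore enough to identify the graded vector spaces together with the product $\mu^2$ with the path algebra of the McKay quiver modulo
\[
p_{1,i+1}p_{1,i}=-p_{2,i+1}p_{2,i},\qquad p_{1,i+1}p_{2,i}=p_{2,i+1}p_{1,i}=0 .
\]

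The first step is to match the morphism spaces. The projections in Figure \ref{fig:projectionsofleftduals} show that $L_i$ projects to an arc joining branch points $i$ and $i+2$, passing through $i+1$. From this one reads off the intersections: $L_i\cap L_{i+1}$ consists of the two lifts over branch point $i+1$ and $i+2$ region, which are the generators $p_{1,i},p_{2,i}$. The intersection $L_i\cap L_{i+2}$ is the single point $g_i$ over branch point $i+2$. There are no intersections for $|i-j|>2$. This agrees with $\dim\mathrm{Ext}(e_i,e_{i+1})=2$, $\dim\mathrm{Ext}(e_i,e_{i+2})=1$, and zero otherwise, which is consistent with the Gram matrix computation in Proposition \ref{prop:leftdual}.

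Next, compute $\mu^2\colon CF(L_{i+1},L_{i+2})\otimes CF(L_i,L_{i+1})\to CF(L_i,L_{i+2})$ by counting immersed holomorphic triangles in $\Sigma^0$. In the handle picture these are the two shaded disks of Figure \ref{fig:gradingpicture}, which lie near the handle $H^-_{-(i+2)}$ on opposite sheets of the double cover. By the labelling convention, all $p_1$'s lie on one sheet and all $p_2$'s on the other. Each triangle therefore has either corners $(p_{1,i},p_{1,i+1},g_i)$ or corners $(p_{2,i},p_{2,i+1},g_i)$, and no triangle mixes the sheets. This gives
\[
p_{1,i+1}p_{2,i}=p_{2,i+1}p_{1,i}=0,\qquad p_{1,i+1}p_{1,i}=\lambda_i g_i,\qquad p_{2,i+1}p_{2,i}=\lambda'_i g_i .
\]
The weights $\lambda_i,\lambda_i'$ are signs times area and holonomy factors. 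The areas are finite, and the non-exact deformation $\eta_j$ is supported near the $B$-matching spheres, away from these handles. Consequently $\lambda_i,\lambda_i'\neq 0$.

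The main obstacle is the relative sign, namely showing $\lambda_i'=-\lambda_i$ rather than $+\lambda_i$; with the plus sign the algebra would not be isomorphic to the McKay one. I would compute it with the Auroux--Katzarkov--Orlov sign convention. The marked points are the midpoint lifts on a common sheet, and each $L_i$ carries the nontrivial spin structure. The two triangles are exchanged by the deck involution of the double cover, and this involution reverses the orientation of the boundary arcs relative to the marked points on exactly one of the three Lagrangians. This should give $(-1)^{v(u')}=-(-1)^{v(u)}$. If the sign instead comes out $+$, I would absorb it by rescaling $p_{2,i}\mapsto \sqrt{-1}\,p_{2,i}$ consistently along the chain, which is allowed since only ratios matter. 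After that, rescale the $g_i$ so that $p_{1,i+1}p_{1,i}=g_i$. Any remaining holonomy or area factors can be absorbed by rescaling the generators inductively along the chain. This yields an isomorphism with the McKay algebra, and formality of both sides then gives the $A_\infty$ equivalence.
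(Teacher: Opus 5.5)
Your argument matches the paper's proof: formality from degree-zero concentration, then exactly two lifts (one per sheet) of the single projected triangle with corners $p_{\bullet,i}$, $p_{\bullet,i+1}$ and $g_i$, which gives no mixed products, and finally an inductive rescaling of generators to normalize the constants. One correction: the relative sign is not really an obstacle, and your claim that a $+$ sign would give a non-isomorphic algebra is false (your own rescaling $p_{2,i}\mapsto\sqrt{-1}\,p_{2,i}$ fixes it, as does the paper's rescaling $p_{1,i}^{resc}=-\frac{\vartheta_{2,i}}{\vartheta_{1,i}}p_{1,i}$), so drop that remark; also note that $p_{1,i},p_{2,i}$ are the two lifts of the transverse crossing of the projected arcs, not points over a branch point.
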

    \begin{proof}
    Recall the Floer cycles involved in the computation:$$CF^0(L_i, L_{i+1})=\mathbb{C}p_{1,i}\oplus \mathbb{C}p_{2,i},\qquad CF^0(L_i, L_{i+2})=\mathbb{C}g_i$$
    We claim that there are exactly two disks, contributing with opposite signs (which can be seen visualized in Figure \ref{fig:gradingpicture}). The surface $\Sigma^0$ is given by a branched double cover of $\mathbb{C}^\times$ with $k$ branch points. The Lagrangian vanishing cycles project to arcs connecting two branch points, as in \ref{fig:McKayBranchedProjection}. 
      
      \begin{figure}[H]
      \begin{tikzpicture}[scale=2]

      \definecolor{DarkGreen}{RGB}{1,50,32}
      \usetikzlibrary{intersections}

      \coordinate (v1) at (-1.5186,3.1534) {};
      \coordinate (v3) at (-0.7788,3.4122) {};
      \coordinate (v2) at (0,3.5) {};
      \coordinate (v4) at (0.7788,3.4122) {};
      \coordinate (v5) at (1.5186,3.1534) {};

      \path[name path=arc1] (v1) to[bend left=80] (v2);
      \path[name path=arc2] (v2) to[bend left=80] (v5);
      \path[name path=arc3] (v3) to[bend left=80] (v4);

      \path[name intersections={of=arc1 and arc3, by=i1}];
      \path[name intersections={of=arc2 and arc3, by=i2}];

      \fill[gray!90!DarkGreen]
        (i1) 
          to[bend left=30] (v2)
          to[bend left=40] (i2)
          to[bend right=20] (i1)
        -- cycle;
      \fill[gray!90!DarkGreen]
        (i2) 
          to[bend right=30] ($(v2)$)
          to[bend right=40] (i1)
          to[bend left=20] (i2)
        -- cycle;
        
      \draw[DarkGreen,bend left=80, thick] (v1) to (v2);
      \draw[DarkGreen!50!red,bend left=80, thick] (v3) to (v4);
      \draw[DarkGreen!50!blue,bend left=80,thick] (v2) to (v5);

      \node [DarkGreen,circle, fill, inner sep = 1pt] (v1) at (v1) {};
      \node [DarkGreen,circle, fill, inner sep = 1pt,label=below:$g_i$] (v2) at (v2) {};
      \node [DarkGreen,circle, fill, inner sep = 1pt] (v3) at (v3) {};
      \node [DarkGreen,circle, fill, inner sep = 1pt] (v4) at (v4) {};
      \node [DarkGreen,circle, fill, inner sep = 1pt] (v5) at (v5) {};

      \node [circle, label={[font=\tiny, label distance = -3pt]above:$p_{1,j},p_{2,j}$}] (i1) at (i1) {};
      \node [circle, label={[font=\tiny,label distance = -3pt]above:$p_{1,j+1},p_{2,j+1}$}] (i2) at (i2) {};

      \end{tikzpicture}
      \caption{ \label{fig:McKayBranchedProjection} Projection of the Lagrangians $L_j, L_{j+1}, L_{j+2}$ under the branched double covering.}
    \end{figure}
    Since a disk in $\Sigma^0$ projects to a disk in $\mathbb{C}^\times$ with the projected boundary conditions, we see that it must necessarily be a lift of the shaded triangle in Figure \ref{fig:McKayBranchedProjection}. There are exactly two lifts of this triangle. Moreover, by picking a reference marker point which is a lift of the mid point in the projection of $L_j$ (resp. $L_{j+1}, L_{j+2}$), we see that the two disks contributing to $\mu^2(p_{1,j},p_{1,j+1})$ and $\mu^2(p_{2,j},p_{j+1})$ have opposite signs. There are constants associated to the symplectic area that need to be accounted for. The two disks produce two compositions $\mu^2(p_{1,i},p_{1,i+1})=\vartheta_{1,i} g_i, \mu^2(p_{2,i},p_{2,i+1})=\vartheta_{2,i}g_i$. After a rescaling  $p_{1,i}^{resc}:=-\frac{\vartheta_{2,i}}{\vartheta_{1,i}}p_{1,i}$, we can arrange that $$\mu^2(p_{1,i}, p_{1,i+1})=-\mu^2(p_{2,i}, p_{2,i+1})$$ The resulting dg algebra is clearly the same as the McKay quiver algebra (with both being formal).
  \end{proof}
  \subsection{The mirror to the derived category of $Y$}
  We now move on to compute the algebra of the rest of the Lagrangians, namely $P_0,\tilde{P},P_1$ and $B_1,\dots, B_{k+1}$. Here, $\tilde{P}$ is the result of right mutating $P_{-1}$ through $P_0$ and it satisfies $[\tilde{P}]=2b+2a+l$. In computing the compositions involving $B_i$, the non-exactness of $\omega$ will play a part, and hence it will be convenient to define some Novikov parameters.

  \begin{defn}
  We define Novikov parameters $q_{i,j}:=\exp(2\pi i [B+i\omega](S_{i,j}))$. We also define $q_C:=\exp(2\pi i [B+i\omega](\overline{C}))$. 
  \end{defn}
  We will mostly ignore the non-commutative direction and set $q_C=1$.

  \begin{prop}
  The vanishing cycles $P_0,\tilde{P},P_1$ and $B_1,\dots, B_{k+1}$ are all contained within the special torus $T^0$ which is a thrice punctured torus. In this torus, $P_0,\tilde{P},P_1$ are identified with the standard mirrors to $\mathcal{O}, \mathcal{T}(-H),\mathcal{O}(H)$ as in \cite[Proposition 3.2]{seidel_more_2001} and \cite{auroux_mirror_2006}, whereas $B_i$ are parallel, disjoint meridians which are all topologically isotopic. All holomorphic triangles between the intersection points of these vanishing cycles stay within the torus and satisfy the relations: 

  For the compositions $CF(\tilde{P},P_1)\otimes CF(P_0,\tilde{P})\rightarrow CF(P_0,P_1)$, there exists constants such that \begin{gather*}
    \mu^2(y_1,x_0)=\alpha_{x,y}z,\qquad \mu^2(x_1,y_0)=\alpha_{y,x}z\\
    \mu^2(z_1,x_0)=\alpha_{x,z}y,\qquad \mu^2(x_1,z_0)=\alpha_{z,x}y\\
    \mu^2(z_1,y_0)=\alpha_{y,z}x,\qquad \mu^2(y_1,z_0)=\alpha_{z,y}x\\
    \mu^2(x_1,x_0)=\mu^2(y_1,y_0)=\mu^2(z_1,z_0)=0
  \end{gather*}
    \begin{gather*}
      \mu^2(c_i,x)=\beta_{x,i}a_i, \qquad
      \mu^2(c_i,y)=\beta_{y,i}a_i, \qquad
      \mu^2(c_i,z)=0
    \end{gather*}
    \begin{gather*}
      \mu^2( b_i,x_0)= \mu^2(b_i,y_0)=\mu^2( b'_i,z_0)=0, \qquad \mu^2(b_i,z_0)=\gamma_{z,i}a_i, \qquad \mu^2( b'_i,x_0)=\gamma_{x,i} a_i, \qquad \mu^2( b'_i,y_0)=\gamma_{y,i} a_i
    \end{gather*}
    \begin{gather*}
      \mu^2(c_i,x_1)=\eta_{x,i}b_i, \qquad \mu^2(c_i,y_1)=\eta_{y,i}b_i,\qquad
      \mu^2(c_i,z_1)=\eta_{z,i} b'_i
    \end{gather*}

    These satisfy: \begin{gather*}
      \displaystyle\frac{\quad \frac{\beta_{x,i} }{\beta_{y,i}} \quad}{ \quad \frac{\beta_{x,j}}{\beta_{y,j}}\quad}=\displaystyle\frac{\quad \frac{\eta_{y,i} }{\eta_{x,i}} \quad}{ \quad \frac{\eta_{y,j}}{\eta_{x,j}}\quad}=\displaystyle\frac{\quad \frac{\gamma_{y,i} }{\gamma_{x,i}} \quad}{ \quad \frac{\gamma_{y,j}}{\gamma_{x,j}}\quad}=q_{i,j}, \qquad \frac{\alpha_{x,y}\alpha_{y,z}\alpha_{z,x}}{\alpha_{y,x}\alpha_{z,y}\alpha_{x,z}}=-1
    \end{gather*}

  \end{prop}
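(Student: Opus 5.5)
The plan is to reduce everything to the thrice-punctured special torus and then compare with the known computations of Seidel and Auroux--Katzarkov--Orlov for $\mathbb{P}^2$ and its blowups.

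\textbf{Step 1: the cycles live in $T^0$.} The classes $[P_0]=b$, $[\tilde P]=2b+2a+l$, $[P_1]=b+l+2a$ and $[B_i]=l$ only involve arcs between the twin branch points, the branch point near $0$ and the puncture at $y=0$. By Proposition \ref{prop:branchpoints} and the description of the type II and III vanishing cycles, these cycles can be isotoped off the $k-2$ large branch points of type I. Realized as curves projecting to arcs as in Figure \ref{fig:vanishingpathsresolution}, they therefore lie in the preimage of a disk containing only the twin points, the point near $0$ and the puncture. This preimage is the special torus $T^0$ of Definition \ref{def:specialtorus}, with three boundary components.

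\textbf{Step 2: holomorphic triangles stay in $T^0$.} I would use the same projection argument as in Proposition \ref{prop:localalgebras}. A holomorphic triangle in $\Sigma^0$ projects to a polygon in $\mathbb{C}^\times_y$ whose boundary lies on the projected arcs. Those arcs avoid the type I branch points, so the projected region cannot enclose them. By the maximum principle and the open mapping property, the triangle lifts inside $T^0$. Alternatively, $T^0\subset\Sigma^0$ is an incompressible subsurface, and any immersed polygon with boundary on curves in $T^0$ is contained in $T^0$ by a Gauss--Bonnet and convexity argument on the attached handles.

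\textbf{Step 3: identify the configuration and compute $\mu^2$.} Inside $T^0$, draw $P_0,\tilde P,P_1$ explicitly using Figure \ref{fig:resolutionplanar}. These are curves of slopes giving intersection numbers $3,3,3$, which is exactly Seidel's configuration for $\mathcal{O},\mathcal{T}(-H),\mathcal{O}(H)$ on the thrice-punctured torus \cite{seidel_more_2001}. The products $\mu^2$ among $x_0,y_0,z_0,x_1,y_1,z_1$ are then counted by the triangles in Seidel's picture, which is the standard $\mathbb{P}^2$ Beilinson quiver.

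There is exactly one triangle for each pair of distinct letters and none for equal letters. This gives the $\alpha$ relations, with the vanishing $\mu^2(x_1,x_0)=\mu^2(y_1,y_0)=\mu^2(z_1,z_0)=0$. The ratio $\alpha_{x,y}\alpha_{y,z}\alpha_{z,x}/(\alpha_{y,x}\alpha_{z,y}\alpha_{x,z})=-1$ should come from two facts:
\begin{itemize}
\item the six triangles glue into a 2-chain whose class is $\overline{C}$ (up to thimbles), so the area and B-field weights cancel once $q_C=1$;
\item the remaining sign comes from the spin-structure and marker conventions of \cite{auroux_mirror_2006}.
\end{itemize}

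Each $B_i$ is a meridian of class $l$ meeting $P_0$ once, $\tilde P$ twice and $P_1$ once. Enumerating the triangles with one side on $B_i$ is then a finite check, as in the $\mathbb{P}^2$-blowup computation of \cite{auroux_mirror_2006}. This step also identifies which products vanish, namely $\mu^2(c_i,z)=0$, $\mu^2(b_i,x_0)=0$, and so on.

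\textbf{Step 4: the Novikov ratios (the main obstacle).} The real difficulty is the ratio identities involving $q_{i,j}$, because the cycles $B_i$ are only displaced from one another by the non-exact deformation $\omega^{\underline\varepsilon}$. To handle this, I would take the two triangles contributing $\beta_{x,i}$ and $\beta_{y,i}$, together with the corresponding pair for $j$. The alternating sum of these four triangles, capped off by the thimbles $D_{B_i}$ and $D_{B_j}$, is homologous to the matching sphere $S_{i,j}$ of Definition \ref{def:2cycles}.

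Since each disk is weighted by $\exp(2\pi i\int u^*(B+i\omega))$ times the holonomy, and $\mathrm{hol}(B_i)=\exp(-2\pi i\int_{D_{B_i}}B)$, the combined weight equals $q_{i,j}$, with the signs cancelling in pairs. The same argument applies to the $\eta$ and $\gamma$ ratios. What needs care here is making sure the boundary of the combination matches $\Delta_{i,j}$ correctly, including orientations.
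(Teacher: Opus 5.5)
Your proposal is correct. For the containment claims and the $\alpha$- and $\beta$-relations it is essentially the argument the paper relies on. The paper quotes a maximum principle (\cite[Lemma 7.5]{seidel_fukaya_2008}) where you use the $y$-projection and the open mapping theorem. It also imports the $\alpha,\beta$ relations, including the triple ratio $-1$ and the $\beta$ cross-ratio $q_{i,j}$, directly from \cite[Section 4.3]{auroux_mirror_2006} with $q_F=0$, $q_C=1$, whereas you re-sketch that computation via $\overline{C}$ and $S_{i,j}$.

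You genuinely diverge on the $\eta$ and $\gamma$ cross-ratios, where you propose to redo the matching-sphere area computation for each family. The paper instead enumerates triangles only far enough to know which products are nonzero, notably the degenerate triangle at $z_0$ giving $\gamma_{z,i}\neq 0$ (and likewise $\eta_{z,i}\neq 0$). It then uses associativity of $\mu^2$, which holds because all these generators sit in degree $0$. For example, $\mu^2(c_i,\mu^2(y_1,z_0))=\mu^2(\mu^2(c_i,y_1),z_0)$ gives $\alpha_{z,y}\beta_{x,i}=\eta_{y,i}\gamma_{z,i}$, and the three analogous identities yield
\[
\frac{\eta_{y,i}}{\eta_{x,i}}=\frac{\alpha_{z,y}}{\alpha_{z,x}}\frac{\beta_{x,i}}{\beta_{y,i}},\qquad \frac{\gamma_{y,i}}{\gamma_{x,i}}=\frac{\alpha_{y,z}}{\alpha_{x,z}}\frac{\beta_{x,i}}{\beta_{y,i}}.
\]
The $\alpha$'s cancel in the cross-ratios, so both reduce to the $\beta$ one.

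The associativity route gives two things for free: the correct pairing ($\eta_y/\eta_x$ and $\gamma_y/\gamma_x$ against $\beta_x/\beta_y$) and the correct exponent ($q_{i,j}$ rather than $q_{i,j}^{-1}$). It also makes explicit that the three families of constants are not independent.

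Your direct route is uniform and needs neither associativity nor $\gamma_{z,i},\eta_{z,i}\neq 0$, and it does go through. In each family the two triangles share both of their $B_i$-vertices and run along complementary arcs of $B_i$. This is visible in the universal cover of the filled-in torus, where $P_0,\tilde P,P_1,B_i$ become lines of classes $(1,0),(2,3),(1,3),(0,1)$. Their sides on the other two cycles cancel between $i$ and $j$, because $B_i$ and $B_j$ are small displacements of each other, so the four-triangle combination is exactly $\pm\Delta_{i,j}$. The cost is that you must do this orientation bookkeeping three separate times, which the associativity argument avoids.
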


  \begin{proof}
    To show that all triangles are contained in the torus, one applies \cite[Lemma 7.5]{seidel_fukaya_2008}. The proof of the relations is essentially the same as in \cite[Section 4.3]{auroux_mirror_2006}, except that the torus there is compact, whereas our torus has three punctures hence corresponds to taking the toric boundary on $\mathbb{P}^2$ rather than a smooth elliptic curve. Moreover, our coordinate $z$ on $\mathbb{P}^2$ corresponds to their coordinate $y$. Setting $q_F=0, q_C=1$, one recovers the relations for the constants $\alpha$ and $\beta$ from \cite[Section 4.3]{auroux_mirror_2006}. For the remaining constants $\gamma$ and $\eta$, we can enumerate (topologically) the holomorphic triangles for example by using the branched double cover model: 
    \begin{figure}[H]
      \minipage{0.48 \textwidth}
      \centering
          \begin{tikzpicture}[scale=2]
      \definecolor{DarkGreen}{RGB}{1,50,32}

      \node [rectangle, fill, DarkGreen, inner sep = 2pt,label=above:$a_i \,b_i$] (v3) at (1,0) {};
      \node [rectangle, fill, DarkGreen, inner sep = 2pt,label=above:$b'_i$] (v1) at (1.8,0) {};

      \node [circle, red, inner sep = 1.5pt] (v5) at (0,0)  {};
      \node [circle, fill, DarkGreen, inner sep = 1.5pt,label=below:$z$] (v7) at (0.3,0) {};
        
        
      \draw[thick,DarkGreen!50!orange] (v3) edge node[midway,above=0pt] {$B_i$} (v1);
      \draw[thick,DarkGreen!50!blue]  (v3) edge node[midway,above] {$P_0$} (v7);

      \node [circle, fill, DarkGreen, inner sep = 1.5pt] (v3) at (1,0)  {};
      \node [circle, fill, DarkGreen, inner sep = 1.5pt] (v1) at (1.8,0) {};
      \node [circle, red, draw, inner sep = 1.5pt] (v5) at (0,0) {};
      \node [circle, fill, DarkGreen, inner sep = 1.5pt] (v7) at (0.3,0) {};
     \node at (0.47,-0.1) {\tiny {$\stackrel{x_0}{y_0}$}} {};
     \node at (1,-0.1) {\tiny {$z_0$}} {};
      
      \fill[DarkGreen, opacity=0.2]  plot[smooth cycle, tension=.7] coordinates {(1.7896,-0.0243) (1.738,-0.1432) (1.6774,-0.2374) (1.5854,-0.3294) (1.48,-0.4012) (1.3655,-0.4573) (1.177,-0.4977) (0.9863,-0.4798) (0.8405,-0.4371) (0.7283,-0.3608) (0.652,-0.2801) (0.6071,-0.1073) (0.5734,-0.0355) (0.5533,-0.0108) (0.5779,-0.0018) (0.9234,-0.0015) (1,0) (1.2,0) (1.5,0) (1.6,0) (1.8,0) (1.7798,-0.0429) (1.7903,-0.0232)};
     \draw[thick,DarkGreen] (1.8,0) node (v4) {} .. controls (1.563,-0.7178) and (0.6431,-0.5523) .. (0.6219,-0.1603) .. controls (0.5269,0.1774) and (0.3497,0.1091) .. (-0.0031,-0.0947) .. controls (-0.286,-0.3118) and (-0.3975,0.2053) .. (-0.0662,0.2646) .. controls (0.1841,0.3141) and (0.7383,0.2876) .. (1,0) node (v2) {};

\node[DarkGreen] at (0.9,-0.7) {$\tilde{P}$};
\end{tikzpicture}
          \endminipage
          \hfill
          \minipage{0.48\textwidth}%
          \centering
          \begin{tikzpicture}[scale=2]
          \definecolor{DarkGreen}{RGB}{1,50,32}

          \node [rectangle, fill, DarkGreen, inner sep = 1pt,label=above:$z_1$] (v1) at (1.8,0) {};
          \node [circle, DarkGreen, inner sep = 1.5pt,label=below:$ c_i$] at (1.6,0) {};
          \node [rectangle, fill, DarkGreen, inner sep = 1pt,label=below:$ \,\,b'_i$]  at (1.8,0) {};
          \node [circle, draw, red, inner sep = 1.5pt] (v5) at (0,0)  {};
          \node [circle, fill, DarkGreen, inner sep = 1.5pt,label=below:] (v7) at (0.3,0) {};
          
          \fill[DarkGreen, opacity=0.2] plot[smooth cycle, tension=.7] coordinates {(1.2,0) (1.3,0) (1.5,0) (1.7,0) (1.8,0) (1.8,0) (1.6929,-0.1483) (1.6212,-0.2255) (1.5075,-0.2921) (1.3854,-0.3114) (1.2578,-0.2895) (1.1474,-0.2405) (1.0072,-0.1568) (0.8683,-0.0585) (0.7549,0.0278) (0.6757,0.0883) (0.5752,0.1672) (0.4688,0.2436) (0.4447,0.2549) (0.4896,0.2537) (0.5702,0.2319) (0.6566,0.2054) (0.7383,0.1777) (0.8258,0.1317) (0.8961,0.0902) (0.9571,0.0373) (0.9683,0.0309) (1,0) (1.1,0)};
          
          \draw [thick,DarkGreen!50!red,opacity=0.75](1.793,0.0081) .. controls (1.2444,-1.0103-0.1) and (0.3,1.1) .. (-0.5,0.3) .. controls (-0.8,0) and (-0.2,-0.7) .. (0.3028,0.0015);
            \draw[thick,DarkGreen] (1.8,0) .. controls (1.563,-0.7178) and (0.6431,-0.5523) .. (0.6219,-0.1603) .. controls (0.5269,0.1774) and (0.3497,0.1091) .. (-0.0031,-0.0947) .. controls (-0.286,-0.3118) and (-0.3975,0.2053) .. (-0.0662,0.2646) .. controls (0.1841,0.3141) and (0.7383,0.2876) .. (1,0) node (v2) {};
          
          \node[DarkGreen!50!red] at (1.3807,-0.36) {\tiny $P_1$} {};
          \node[DarkGreen] at (1,-0.55) {\tiny $\tilde{P}$} {};
          \node at (0.6,0.3) {\tiny $x_1,y_1$} {};
          \node [circle, fill, DarkGreen, inner sep =1.5pt,label=above:] at (0.3,0) {};
          \node [rectangle, fill, DarkGreen, inner sep = 1pt,label=above:$z_1$] (v1) at (1.8,0) {};
          \node [rectangle, fill, DarkGreen, inner sep = 1pt,label=above:$b_i$] (v3) at (1,0) {};
           \draw[thick,DarkGreen!50!orange] (v3) edge node[midway,above=0pt] {$B_i$} (v1);
          \end{tikzpicture}

          \endminipage
          \label{fig:PtildBi}\caption{Left: projections of triangles contributing to compositions $P_0\rightarrow \tilde{P}\rightarrow B_i$. Right: the same, but for $\tilde{P}\rightarrow P_1 \rightarrow B_i$.}
      \end{figure}
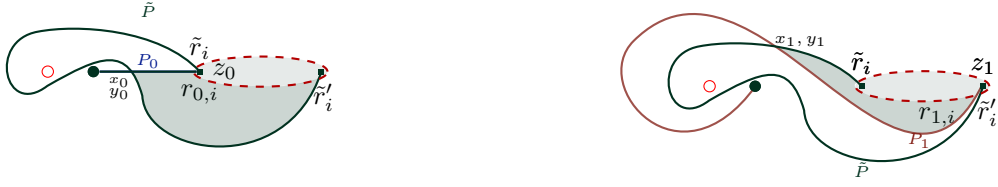
      Each of the shaded triangles has two lifts. Moreover, there is a disk which, in the topological model above, is the constant disk at $z_0$. The cross-ratios are all the same, by using the associativity of $\mu^2$:\begin{gather*}
      \mu^2(c_i,\mu^2(y_1,z_0))=\alpha_{z,y}\beta_{x,i}a_i=\mu^2(\mu^2(c_i,y_1),z_0)=\eta_{y,i}\gamma_{z,i}a_i\implies \alpha_{z,y}\beta_{x,i}=\eta_{y,i}\gamma_{z,i}
    \end{gather*}
    Similarly, \begin{gather*}
      \alpha_{y,z}\beta_{x,i}=\eta_{z,i}\gamma_{y,i}, \quad
      \alpha_{z,x}\beta_{y,i}=\eta_{x,i}\gamma_{z,i},\quad
      \alpha_{x,z}\beta_{y,i}=\eta_{z,i}\gamma_{x,i}
    \end{gather*}
    so that $$\frac{\alpha_{z,y}}{\alpha_{z,x}}\frac{\beta_{x,i}}{\beta_{y,i}}=\frac{\eta_{y,i}}{\eta_{x,i}}, \qquad
    \frac{\alpha_{y,z}}{\alpha_{x,z}}\frac{\beta_{x,i}}{\beta_{y,i}}=\frac{\gamma_{y,i}}{\gamma_{x,i}}$$
  \end{proof}

  One should interpret the compositions $\tilde{P}\rightarrow P_1\rightarrow B_i$ in the same way as on the B-side: the kernel of $\mu^2(c_i,-)$ is given by $$\mu^2(c_i, \eta_{y,i}x_1 -\eta_{x,i}y_1)=0$$ which can be used to define $k+1$ points $[\eta_{y,i}:-\eta_{x,i}:0]\in \mathbb{P}^2$. These constants are nonzero, so we can identify the $k+1$ points using the cross-ratio property as $\{-\frac{\eta_{y,1}}{\eta_{x,1}},-\frac{\eta_{y,1}}{\eta_{x,1}} q_{1,2},\dots, -\frac{\eta_{y,1}}{\eta_{x,1}}q_{1,k+1} \}\subset \mathbb{C}^\times$. 
  \subsection{The gluing morphisms and their composition law}

  We have shown that the algebra of the Lagrangians $\langle L_2,\dots, L_{k-1}\rangle$ matches the algebra of the orbifold skyscrapers $\langle e_2[2-k], \dots, e_{k-1}[-1]\rangle$. Moreover, we have shown that the algebra of $\langle P_0, \tilde{P},P_1,B_1,\dots, B_{k+1}\rangle$, depending on the cohomology class of the symplectic form $\omega$, corresponds to the algebra of $D^b(Y)=\langle D^b(\mathbb{P}^2), \mathcal{O}_{B_1},\dots, \mathcal{O}_{B_{k+1}}\rangle$ for a complex structure on $Y$ determined by the composition law in the Fukaya category. The last piece that remains is verifying that these two subcategories are glued in the same way. 
  
      \begin{prop}
      There are nonzero constants such that
        \begin{gather*}
          \mu^2(x_1,\tilde{\epsilon})=\mu^2(y_1,\tilde{\epsilon})=\mu^2(b_i,\tilde{\epsilon})=0\\
         \mu^2(z_1,\tilde{\epsilon})=\epsilon, \qquad \mu^2(b'_i,\tilde{\epsilon})=\alpha_{\tilde{\epsilon}, b'_i}\epsilon_i, \qquad \mu^2(c_i,\epsilon)=\alpha_{\epsilon, c_i}\epsilon_i
        \end{gather*}
       \begin{gather*}
          \mu^2(\tilde{\epsilon},p_{1,k-2})=\alpha_{p_1,\tilde{\epsilon}} \tilde{x}^E,\qquad \mu^2(\tilde{\epsilon},p_{2,k-2})=\alpha_{p_2,\tilde{\epsilon}} \tilde{y}^E \\
          \mu^2(\epsilon,p_{1,k-2})=\alpha_{p_1,\epsilon} x_1^E,\qquad \mu^2(\epsilon,p_{2,k-2})=\alpha_{p_2,\epsilon} y_1^E\\
          \mu^2(\epsilon_i,p_{1,k-2})=\lambda_{1,i} \epsilon'_i, \qquad \mu^2(\epsilon_i,p_{2,k-2})=\lambda_{2,i} \epsilon'_i
        \end{gather*}
        These constants satisfy $$\displaystyle\frac{\quad \frac{\lambda_{1,i} }{\lambda_{2,i}} \quad}{ \quad \frac{\lambda_{1,j}}{\lambda_{2,j}}\quad}=q_{i,j}$$
      \end{prop}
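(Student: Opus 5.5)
The argument follows the pattern of the earlier proposition on $P_0,\tilde{P},P_1,B_i$. We enumerate the holomorphic triangles topologically in the branched double cover model of $\Sigma^0_*$, and then pin down the constants using associativity of $\mu^2$. All the relevant morphisms sit in degree $0$ for the grading data $\alpha$. Moreover, $\mu^2$ is the only nontrivial operation, so every relation is a statement about counts of immersed triangles weighted by $(-1)^{v(u)}\exp(2\pi i\int u^*(B+i\omega))\,\mathrm{hol}(\partial u)$.

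The first step is to locate the supports. $L_{k-1}$ projects to the arc from branch point $1$ to branch point $k-1$ (class $l_{k-1}$). Thus its intersections with $P_0,\tilde{P},P_1,B_i$ all lie over the twin branch point $1$, or near the handle $H^-_{-1}$ in the planar model of Figure \ref{fig:resolutionplanar}. Using Figure \ref{fig:gradingpicture} (right), we draw the projections of $L_{k-2},L_{k-1}$ together with those of $\tilde{P},P_1,B_i$, and list the embedded triangles with one corner at $\tilde{\epsilon}$, $\epsilon$ or $\epsilon_i$.

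For the vanishing relations $\mu^2(x_1,\tilde\epsilon)=\mu^2(y_1,\tilde\epsilon)=\mu^2(b_i,\tilde\epsilon)=0$, we argue by projection. Any triangle would project to a triangle in $\mathbb{C}^\times_y$ with corners at the projections of $x_1,y_1$ (near $y=0$) and at the branch point $1$, bounded by the projected arcs. No such region exists without crossing the puncture at $y=0$ or a branch cut on the wrong sheet. For the same reason, $b_i$ and $\tilde\epsilon$ lie at distinct points over the twin pair, with no region between them bounded by the arcs.

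Next come the nonzero compositions. $\mu^2(z_1,\tilde\epsilon)$, $\mu^2(b'_i,\tilde\epsilon)$ and $\mu^2(c_i,\epsilon)$ are each computed by a single small triangle near branch point $1$; a constant or near-constant disk gives the first. After rescaling $\epsilon$, we may take $\mu^2(z_1,\tilde\epsilon)=\epsilon$. The compositions with $p_{1,k-2},p_{2,k-2}$ are lifts of one triangle in the projection, namely the one bounded by $L_{k-2},L_{k-1}$ and the arc of $\tilde{P}$, $P_1$ or $B_i$. This triangle has two lifts, one on each sheet. By our sheet labelling, $p_1$ pairs with $\tilde{x}^E$ and $x_1^E$, and $p_2$ pairs with $\tilde{y}^E$ and $y_1^E$. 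That explains why each $p_j$ goes to exactly one generator. For $B_i$, both lifts land on the single point $\epsilon'_i$, giving $\lambda_{1,i}$ and $\lambda_{2,i}$.

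The main obstacle is the cross-ratio identity. Computing the areas directly is awkward, because $\omega^{\underline{\varepsilon}}$ is non-exact only near the matching spheres $S_{i,j}$. So we would instead argue by flux, or alternatively by associativity. The two triangles contributing $\lambda_{1,i}$ and $\lambda_{2,i}$ differ by a $2$-chain in $\Sigma^0_*$ whose boundary contains a copy of $B_i$. Comparing the same pair for $B_j$, the difference of the difference chains is $\Delta_{i,j}$ plus pieces on $L_{k-1},L_{k-2}$. Capping with thimbles gives $S_{i,j}$, and the curvature condition $\mathrm{hol}_\nabla(L)=\exp(-2\pi i\int_{\mathbf{D}_L}B)$ absorbs the thimble and boundary contributions. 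The ratio of ratios is therefore $\exp(2\pi i[B+i\omega](S_{i,j}))=q_{i,j}$.

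As a consistency check, and as an alternative if the flux bookkeeping is messy, we would use associativity. Expand $\mu^2(\mu^2(c_i,\epsilon),p_{j,k-2})=\mu^2(c_i,\mu^2(\epsilon,p_{j,k-2}))$, together with $\mu^2(c_i,x_1^E)$ and $\mu^2(c_i,y_1^E)$ computed from triangles proportional to $\eta_{x,i}$ and $\eta_{y,i}$. This yields $\lambda_{1,i}/\lambda_{2,i}$ as a fixed multiple of $\eta_{y,i}/\eta_{x,i}$ (up to the $i$-independent constants $\alpha_{p_j,\epsilon}$). The earlier proposition then gives the ratio $q_{i,j}$.
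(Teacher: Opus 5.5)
Your proposal is essentially the paper's argument. The paper also enumerates the triangles as lifts of regions in the branched double cover projection. It obtains the cross-ratio for the $\lambda$'s ``identically to the one for the constants $\beta$'', which is the area/flux comparison in which the double difference of the two lifts for $B_i$ and $B_j$ has boundary $B_i-B_j$ and caps off with thimbles to $S_{i,j}$.

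One caveat concerns your associativity fallback. $\mu^2(c_i,x_1^E)$ and $\mu^2(c_i,y_1^E)$ are not directly triangles weighted by $\eta_{x,i},\eta_{y,i}$, and the paper in fact derives them from this very proposition, so that route as written is circular. To make it non-circular you would need to factor $x_1^E,y_1^E$ through $\tilde{\epsilon}'$ (not part of this statement), using $\mu^2(y_1,\tilde{\epsilon}')\propto x_1^E$, $\mu^2(x_1,\tilde{\epsilon}')\propto y_1^E$ and $\mu^2(b_i,\tilde{\epsilon}')\propto \epsilon'_i$. This shows $\mu^2(c_i,x_1^E)\propto \eta_{y,i}$ (not $\eta_{x,i}$), and hence $\lambda_{1,i}/\lambda_{2,i}\propto \eta_{y,i}/\eta_{x,i}$.
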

      \begin{proof}
        The proof follows again by examining the configuration of the projections of the Lagrangians in the branched double cover. The argument for the cross-ratios is identical to the one for the constants $\beta$. The disks contributing to $\mu^2(\tilde{\epsilon}, -)$ are shown below on the left, whereas the disk contributing to $\mu^2(c_i, \epsilon)$ corresponds to the constant one seen on the right\footnote{ The disk is constant when $\omega$ is exact, but need not be in the non-exact setting.  }.
      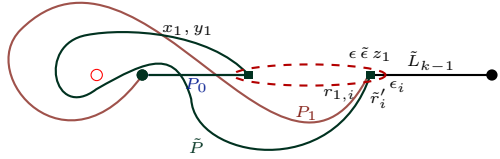
\begin{figure}[H]
              \minipage{0.48 \textwidth}
      \centering
          \begin{tikzpicture}[scale=2]
      \definecolor{DarkGreen}{RGB}{1,50,32}

      \node [rectangle, fill, DarkGreen, inner sep = 1.5pt,label=above:] (v3) at (1,0) {};
      \node [circle, draw, red, inner sep = 1.5pt] (v5) at (0,0)  {};
      \node [circle, fill, black, inner sep = 1.5pt] (v6) at (2.4,-1)  {};
      \node [circle, fill, DarkGreen, inner sep = 1.5pt,label=below:] (v7) at (0.3,0) {};   
      
      \draw[thick] (2.3886,-1.0209) .. controls (3.3935,-0.2637) and (2.3922,0.8284) .. (2.3956,0.1027) .. controls (2.3502,-0.4312) and (1.8756,-0.9266) .. (1.0034,-0.016);
	\node at (2.9852,0.0992) {$L_{k-2}$};
      \draw [thick,DarkGreen!50!red,opacity=0.75](1.793,0.0081) .. controls (1.2444,-1.0103-0.1) and (0.3,1.1) .. (-0.5,0.3) .. controls (-0.8,0) and (-0.2,-0.7) .. (0.3028,0.0015);
        \draw[thick,DarkGreen] (1.8,0) .. controls (1.563,-0.7178) and (0.6431,-0.5523) .. (0.6219,-0.1603) .. controls (0.5269,0.1774) and (0.3497,0.1091) .. (-0.0031,-0.0947) .. controls (-0.286,-0.3118) and (-0.3975,0.2053) .. (-0.0662,0.2646) .. controls (0.1841,0.3141) and (0.7383,0.2876) .. (1,0) node (v2) {};
        \draw[thick] (1.8,0) edge node[midway,above=-2.5pt] {\tiny $L_{k-1}$} (2.6,0);
        \node[circle, fill, inner sep = 1.5pt] at (2.6,0) {};
      
      \node[DarkGreen!50!red] at (1,-0.25) {\tiny $P_1$} {};
      \node [circle, fill, DarkGreen, inner sep =1.5pt,label=above:] at (0.3,0) {};
      \node [rectangle, fill, DarkGreen, inner sep = 1.5pt,label=above:] at (1,0) {};
      \node [rectangle, fill, DarkGreen, inner sep = 1.5pt,label=above:$\epsilon\, \tilde{\epsilon}$] at (1.8,0) {};
    
      \node at (2.53,-0.1) {\tiny$p_1,p_2$};
      \node at (1.45,-0.55) {\tiny$\tilde{x}^E, \tilde{y}^E$};
      \node at (1.45,-0.17) {\tiny$x_1^E, y_1^E$};
      
      \fill[DarkGreen, opacity=0.2]  plot[smooth cycle, tension=.7] coordinates {(2.3745,-0.012) (1.9637,-0.0035) (1.8272,0.0028) (1.7125,-0.1091) (1.6507,-0.2003) (1.5065,-0.2856) (1.4036,-0.3121) (1.3624,-0.315) (1.4036,-0.3562) (1.4624,-0.3915) (1.5389,-0.4327) (1.6683,-0.4945) (1.786,-0.518) (1.892,-0.518) (2.045,-0.4945) (2.1685,-0.4121) (2.2568,-0.3091) (2.3098,-0.2061) (2.3392,-0.1473) (2.3774,-0.0178)};
      \fill[DarkGreen, opacity=0.2]  plot[smooth cycle, tension=.7] coordinates {(2.3714,0.0017) (2.0535,0.0037) (1.83,-0.0041) (1.7384,-0.1233) (1.6897,-0.2109) (1.5767,-0.3286) (1.5329,-0.3624) (1.481,-0.4034) (1.5027,-0.4148) (1.5102,-0.4317) (1.5389,-0.4327) (1.6683,-0.4945) (1.786,-0.518) (1.892,-0.518) (2.045,-0.4945) (2.1685,-0.4121) (2.2568,-0.3091) (2.3098,-0.2061) (2.3392,-0.1473) (2.3774,-0.0178)};
      
      \end{tikzpicture}
          \endminipage
          \hfill
          \minipage{0.48\textwidth}%
          \centering
          \begin{tikzpicture}[scale=2]
      \definecolor{DarkGreen}{RGB}{1,50,32}
       \draw[thick,DarkGreen!50!orange] (1,0) edge node[midway,above=0pt] {$B_i$} (1.8,0);
      \node [rectangle, fill, DarkGreen, inner sep = 1.5pt,label=above:] (v3) at (1,0) {};
      \node [rectangle, fill, DarkGreen, inner sep = 1.5pt,label=below:\tiny$c_i\,\,\,\,\,\,\,\,\,\,\,\,\,\,\,\,\,\,$] (v1) at (1.8,0) {};
       \node [rectangle, fill, DarkGreen, inner sep = 1.5pt,label=below:\tiny $\,\,\,\,\,b_i'$]  at (1.8,0) {};
       \node [rectangle, DarkGreen, inner sep = 1.5pt,label=above:\tiny $\epsilon\,\tilde{\epsilon}\,z_1$]  at (1.8,0) {};
       \node [rectangle, DarkGreen, inner sep = 1.5pt,label=below:\tiny $\,\,\,\,\,\,\,\,\,\,\,\,\,\,\,\,\epsilon_i$]  at (1.8,0.05) {};
      \node [circle, draw, red, inner sep = 1.5pt] (v5) at (0,0)  {};
      \node [circle, fill, DarkGreen, inner sep = 1.5pt,label=below:] (v7) at (0.3,0) {};

      \draw [thick,DarkGreen!50!red,opacity=0.75](1.793,0.0081) .. controls (1.2444,-1.0103-0.1) and (0.3,1.1) .. (-0.5,0.3) .. controls (-0.8,0) and (-0.2,-0.7) .. (0.3028,0.0015);
        \draw[thick,DarkGreen] (1.8,0) .. controls (1.563,-0.7178) and (0.6431,-0.5523) .. (0.6219,-0.1603) .. controls (0.5269,0.1774) and (0.3497,0.1091) .. (-0.0031,-0.0947) .. controls (-0.286,-0.3118) and (-0.3975,0.2053) .. (-0.0662,0.2646) .. controls (0.1841,0.3141) and (0.7383,0.2876) .. (1,0) node (v2) {};
        \draw[thick] (1.8,0) edge node[midway,above=-2.5pt] {\tiny $L_{k-1}$} (2.6,0);
        \node[circle, fill, inner sep = 1.5pt] at (2.6,0) {};
      
      \draw[thick,DarkGreen!50!blue]  (v3) edge node[midway,below=-2.5pt] {\tiny $P_0$} (v7);
      \draw[thick,DarkGreen]  (v3) edge node[midway,below=20pt] {\tiny $\tilde{P}$} (v7);
      \node[DarkGreen!50!red] at (1.3807,-0.25) {\tiny $P_1$} {};
      \node at (0.6,0.3) {\tiny $x_1,y_1$} {};
      \node [circle, fill, DarkGreen, inner sep =1.5pt,label=above:] at (0.3,0) {};
      \node [rectangle, fill, DarkGreen, inner sep = 1.5pt,label=above:] at (1,0) {};
      \node [rectangle, fill, DarkGreen, inner sep = 1.5pt,label=above:] at (1.8,0) {};
    
    
      \end{tikzpicture}

          \endminipage \caption{Projections of the Lagrangian vanishing cycles $L_{k-1}, P_0, \tilde{P},P_1,B_i$ under the branched double cover.}
      \end{figure}
      \end{proof}

      It remains to compute the compositions of type $L_{k-2}\rightarrow M\rightarrow N$ where $M,N \in \{ P_0, \tilde{P}, P_1, B_i\}$. A lot of this has been implicitly done already: most of the morphisms in $CF(L_{k-2},M)$ can be factorized into a morphism in $CF(L_{k-2}, L_{k-1})$ followed by a morphism $CF(L_{k-1}, M)$. For these types of morphism, determining the composition reduces, by associativity, to the previous computations. The only morphisms that do not factorize are $\epsilon'\in CF(L_{k-2}, P_0)$ and $\tilde{\epsilon}'\in CF(L_{k-2}, \tilde{P})$.

      \begin{prop}
        There are nonzero constants such that \begin{gather*}
          \mu^2(x_1,\tilde{\epsilon}')=\alpha_{\epsilon, x_1} y_1^E, \qquad \mu^2(y_1,\tilde{\epsilon}')=\alpha_{\epsilon, y_1} x_1^E\\ 
          \mu^2(b_i,\tilde{\epsilon}')=\alpha_{\epsilon, b_i}\epsilon'_i, \qquad \mu^2(b'_i,\tilde{\epsilon}')=0, \qquad \mu^2(a_i,\epsilon')=\alpha_{\epsilon, a_i}\epsilon'_i\\
          \mu^2(x,\epsilon')=\alpha_{\epsilon,x} x_1^E, \qquad \mu^2(y,\epsilon')=\alpha_{\epsilon,y} y_1^E, \qquad \mu^2(z,\epsilon')=0\\
          \mu^2(x_0,\epsilon')=\alpha_{\epsilon, x_0} \tilde{y}^E, \qquad \mu^2(y_0,\epsilon')=\alpha_{\epsilon,y_0} \tilde{x}^E, \qquad \mu^2(z_0,\epsilon')=\tilde{\epsilon}' 
        \end{gather*}
      \end{prop}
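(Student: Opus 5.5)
The plan is to argue exactly as for the previous two propositions: enumerate holomorphic triangles topologically through the branched double cover model of $\Sigma^0_*$, then fix the remaining coefficients by associativity of $\mu^2$. Since all morphism spaces are concentrated in degree $0$ for the grading $\alpha$, every $\mu^2$ between generators is a count of immersed triangles. Each such triangle projects under the $y$-projection to a triangle in $\mathbb{C}^\times_y$ bounded by the projected arcs of $L_{k-2}$, $P_0$, $\tilde{P}$, $P_1$, $B_i$, so the enumeration has two parts. First, I will draw the projection of $L_{k-2}$ (the arc joining branch points $k-2$ and $k$, passing near $1$) together with the projections of $P_0,\tilde{P},P_1,B_i$ from Figure \ref{fig:vanishingpathsresolution}. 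Second, for each pair of generators I will list the projected triangles and decide which lifts have corners at the prescribed lifts of the intersection points. Here I use the sheet convention fixed above: $x$-type generators on one sheet and $y$-type on the other.

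\textbf{Triangles through $\epsilon'$.} For $\mu^2(z_0,\epsilon')=\tilde{\epsilon}'$ I expect a constant (or, for non-exact $\omega$, small) triangle, since $\epsilon'$, $z_0$ and $\tilde{\epsilon}'$ should sit at the same branch point on the $P_0$/$\tilde{P}$ end. This is analogous to the constant disk at $z_0$ in the previous proposition, and I normalise $\tilde{\epsilon}'$ so the coefficient is $1$. The compositions $\mu^2(x_0,\epsilon')$, $\mu^2(y_0,\epsilon')$, $\mu^2(x,\epsilon')$, $\mu^2(y,\epsilon')$ and $\mu^2(a_i,\epsilon')$ come from the shaded region between $L_{k-2}$, $P_0$ and the target Lagrangian. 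The swap $x_0\mapsto \tilde{y}^E$, $y_0\mapsto\tilde{x}^E$ reflects the sheet interchange. I get $\mu^2(z,\epsilon')=0$ because $z$ lies on the $P_0$ side near the branch point by $0$, where no triangle with an $L_{k-2}$ edge closes up.

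\textbf{Triangles through $\tilde{\epsilon}'$.} For the $\tilde{\epsilon}'$ relations I would mostly avoid new disk counts and use associativity instead. Write $\tilde{\epsilon}'=\mu^2(z_0,\epsilon')$. Then, for example,
\[
\mu^2(x_1,\tilde{\epsilon}')=\mu^2(\mu^2(x_1,z_0),\epsilon')=\alpha_{z,x}\,\mu^2(y,\epsilon')=\alpha_{z,x}\alpha_{\epsilon,y}\,y_1^E,
\]
and similarly
\[
\mu^2(y_1,\tilde{\epsilon}')\propto x_1^E,\qquad \mu^2(b_i,\tilde{\epsilon}')=\gamma_{z,i}\,\alpha_{\epsilon,a_i}\,\epsilon'_i,\qquad \mu^2(b'_i,\tilde{\epsilon}')=0,
\]
the last because $\mu^2(b'_i,z_0)=0$. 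The structure constants here come from the previous propositions. This reduces everything to the $\epsilon'$ compositions, and the $\alpha$'s of the statement are then defined accordingly. I still need the nonvanishing of $\alpha_{\epsilon,x}$, $\alpha_{\epsilon,y}$, $\alpha_{\epsilon,a_i}$ and of $\alpha_{\epsilon,x_0}$, $\alpha_{\epsilon,y_0}$. For each I will exhibit exactly one projected triangle with a unique admissible lift, so no cancellation can occur.

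\textbf{Expected obstacle.} The hard step is the geometric one: checking that the projected picture of $L_{k-2}$ against $P_0$ really has these intersection points and triangles and no others. $L_{k-2}$ winds through the twin branch points and the handle region, and there is an extra contribution $l$ in $[L_{k-2}]$. I would first try to restrict to the special torus plus the handles $H^-_0,H^-_{-1}$ using \cite[Lemma 7.5]{seidel_fukaya_2008}, as was done for the triangles among $P_0,\tilde{P},P_1,B_i$. I would then cross-check the resulting relations against those of Proposition \ref{prop:quiverwithrelsB}, namely $x_0\circ\epsilon'=\tilde{\epsilon}\circ p_{2,k-2}$, $y_0\circ\epsilon'=\tilde{\epsilon}\circ p_{1,k-2}$ and $\epsilon'\circ p_{i,k-3}=0$. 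Via the previous proposition these force $\mu^2(x_0,\epsilon')\propto\tilde{y}^E$ and $\mu^2(y_0,\epsilon')\propto\tilde{x}^E$, consistent with the disk count.
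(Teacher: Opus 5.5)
Your proposal is correct. Its geometric core is the paper's: a topological count of triangles through the branched double cover, with $\epsilon'$, $z_0$ and $\tilde\epsilon'$ all sitting at the left twin branch point, which is the one fact the paper's short proof records. You differ in how you obtain the $\tilde\epsilon'$ relations. The paper treats $\epsilon'$ and $\tilde\epsilon'$ as the two generators that do not factor through $L_{k-1}$ and refers both to the disk enumeration of the previous propositions. You instead observe that once the constant triangle gives $\mu^2(z_0,\epsilon')=\tilde\epsilon'$, the generator $\tilde\epsilon'$ does factor, through $P_0$. Then $\mu^2(x_1,\tilde\epsilon')$, $\mu^2(y_1,\tilde\epsilon')$, $\mu^2(b_i,\tilde\epsilon')$ and $\mu^2(b'_i,\tilde\epsilon')$ follow by strict associativity (everything is in degree $0$, so $\mu^1=\mu^3=0$) from the earlier relations $\mu^2(x_1,z_0)=\alpha_{z,x}y$, $\mu^2(y_1,z_0)=\alpha_{z,y}x$, $\mu^2(b_i,z_0)=\gamma_{z,i}a_i$ and $\mu^2(b'_i,z_0)=0$.

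This buys you three things. It halves the disk counting. It exhibits the new constants as products, e.g. $\alpha_{\epsilon,x_1}=\alpha_{z,x}\alpha_{\epsilon,y}$, with nonvanishing inherited from the earlier proposition. And it makes consistency with the B-side automatic.

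The price is that $\mu^2(z_0,\epsilon')$ must be exactly a multiple of $\tilde\epsilon'$. By associativity and the previous proposition,
\[
\mu^2(b'_i,\tilde x^E)=\alpha_{\tilde\epsilon,b'_i}\lambda_{1,i}\alpha_{p_1,\tilde\epsilon}^{-1}\epsilon'_i\neq 0,
\]
and likewise for $\tilde y^E$. Since the ratios $\lambda_{1,i}/\lambda_{2,i}$ differ by the factors $q_{i,j}\neq 1$, any component of $\mu^2(z_0,\epsilon')$ along $\tilde x^E$ or $\tilde y^E$ would make $\mu^2(b'_i,\tilde\epsilon')\neq 0$ for some $i$. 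So your count for $\mu^2(z_0,\epsilon')$ must explicitly rule out triangles with output $\tilde x^E$ or $\tilde y^E$. Alternatively, for the purposes of Theorem \ref{thm:mainthm} you can simply redefine $\tilde\epsilon':=\mu^2(z_0,\epsilon')$, which only requires this element to lie outside $\mathrm{span}(\tilde x^E,\tilde y^E)$.

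Two small corrections. First, the sheet convention fixed in the paper is not \emph{$x$-type versus $y$-type}. It puts $p_1,\tilde x^E,x_1^E,x,y_0,y_1$ on one sheet and $p_2,\tilde y^E,y_1^E,y,x_0,x_1$ on the other. So $\mu^2(x_0,\epsilon')\propto\tilde y^E$ and $\mu^2(y_0,\epsilon')\propto\tilde x^E$ are not a sheet interchange. They just express that each of the two lifts of the projected triangle (corners at the left twin branch point, at $P_0\cap\tilde P$ and at $L_{k-2}\cap\tilde P$) lies on a single sheet. Second, the relations of Proposition \ref{prop:quiverwithrelsB} can only serve as a consistency check. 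Using them to force the A-side relations would be circular, since matching the two algebras is the purpose of the computation.
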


      \begin{proof}
        The enumeration of the disks follows the same strategy as the previous propositions. We note that the intersection point $\tilde{\epsilon}'$ happens at the left twin branch point.
      \end{proof}

      \begin{rem}
        The fact that $\mu^2(\epsilon', x_0)$ is a multiple of $\tilde{y}^E$ and not of $\tilde{x}^E$ (and similarly for $y_0, x_1, y_1$) is due to the fact that they are on different sheets of the branched covering. On the B-side, this is related to the fact that, as in \ref{prop:quiverwithrelsB}, $\iota^*x_0=(0, \iota^*y), \iota^*x_1=(\iota^*y,0)^T$.
      \end{rem}

      \begin{cor}\label{cor:algebraofLagrangians}
        The endomorphism algebra of the exceptional collection of Lagrangian vanishing cycles $$\langle L_2, \dots, L_{k-1}, P_0, \tilde{P}, P_1, B_1, \dots, B_{k+1}\rangle$$ is generated by the elements $x_0,y_0,z_0,x_1,y_1,z_1, c_i, 1\leq i \leq k+1$ as well as $p_{1,j},p_{2,j}, 2\leq j \leq k-2$ and $\epsilon', \tilde{\epsilon}$. They satisfy the relations
        \begin{gather*}
        \mu^2(p_{1,i+1}, p_{1,i})=\frac{\vartheta_{1,i}}{\vartheta_{2,i}}\mu^2(p_{2,i+1}, p_{2,i}),\quad \mu^2(p_{1,i+1}, p_{2,i})=\mu^2(p_{2,i+1},p_{1,i})=0,\quad \mu^2(\epsilon ', p_{1,k-3})=\mu^2(\epsilon ', p_{2,k-3})=0\\
        \mu^2(y_1, x_0) = \frac{\alpha_{x,y}}{\alpha_{y,x}}\mu^2(x_1, y_0),\quad 
        \mu^2(z_1, x_0) = \frac{\alpha_{z,x}}{\alpha_{x,z}}\mu^2(x_1, z_0),\quad 
        \mu^2(z_1, y_0) = \frac{\alpha_{z,y}}{\alpha_{y,z}} \mu^2(y_1, z_0)\\
        \mu^2(x_1 , x_0) = \mu^2(y_1, y_0) = \mu^2(z_1, z_0) =  \mu^2(x_1, \tilde{\epsilon})=\mu^2(y_1, \tilde{\epsilon})= \mu^2(c_i,\eta_{y,i}x_1-\eta_{x,i}y_1)=0\\
       \mu^2(x_0, \epsilon')=\frac{\alpha_{\epsilon, x_0}}{\alpha_{p_2,\tilde{\epsilon}}} \mu^2(\tilde{\epsilon}, p_{2,k-2}),\qquad  \mu^2(y_0, \epsilon')=\frac{\alpha_{\epsilon, y_0}}{\alpha_{p_1, \tilde{\epsilon}}} \mu^2(\tilde{\epsilon}, p_{1,k-2})\\
      \displaystyle\frac{\quad \frac{\eta_{y,i} }{\eta_{x,i}} \quad}{ \quad \frac{\eta_{y,j}}{\eta_{x,j}}\quad}=q_{i,j}, \quad  \frac{\alpha_{x,y}\alpha_{y,z}\alpha_{z,x}}{\alpha_{y,x}\alpha_{z,y}\alpha_{x,z}}=-1 
      \end{gather*}
      \end{cor}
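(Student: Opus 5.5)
This corollary assembles the previous propositions, so the plan is to collect and organize them rather than compute anything new.

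First, fix the strong exceptional collection $\langle L_2,\dots,L_{k-1},P_0,\tilde P,P_1,B_1,\dots,B_{k+1}\rangle$ with the modified grading data $\alpha$. By the corollary following the preliminary grading computation, every Floer complex between distinct members is concentrated in degree $0$. Hence $\mu^1=0$, all $\mu^{d}$ with $d\ge 3$ vanish for degree reasons, and the directed endomorphism algebra is an ordinary graded algebra whose product is $\mu^2$.

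\emph{Generation.} I would go through the named generators of each morphism space and express each one as a $\mu^2$-product of the proposed generators, with nonzero coefficients.
\begin{itemize}
\item For the McKay part, $g_i=\vartheta_{1,i}^{-1}\mu^2(p_{1,i+1},p_{1,i})$ by Proposition \ref{prop:localalgebras}.
\item Inside $T^0$, the elements $x,y,z$ arise from $\mu^2$ of $x_0,y_0,z_0$ with $x_1,y_1,z_1$, using that the $\alpha$'s are nonzero.
\item Also inside $T^0$, $a_i=\beta_{x,i}^{-1}\mu^2(c_i,x)$, $b_i=\eta_{x,i}^{-1}\mu^2(c_i,x_1)$ and $b_i'=\eta_{z,i}^{-1}\mu^2(c_i,z_1)$.
\item For the gluing part, $\epsilon=\mu^2(z_1,\tilde\epsilon)$ and $\epsilon_i=\alpha_{\epsilon,c_i}^{-1}\mu^2(c_i,\epsilon)$.
\item Also in the gluing part, the classes $\tilde x^E,\tilde y^E,x_1^E,y_1^E$ come from composing $\tilde\epsilon$ or $\epsilon$ with $p_{1,k-2}$ and $p_{2,k-2}$.
\item Finally, $\tilde\epsilon'=\mu^2(z_0,\epsilon')$ and $\epsilon'_i=\lambda_{1,i}^{-1}\mu^2(\epsilon_i,p_{1,k-2})$.
\end{itemize}
Morphisms out of $L_j$ with $j<k-2$ into the $\Phi D^b(Y)$ part factor through $p$'s. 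More precisely, they vanish: the only nonzero gluing spaces are from $L_{k-2}$ and $L_{k-1}$, which follows from the intersection pattern in Construction \ref{constr:Lefschetz}, since $L_j$ for $j<k-2$ misses the special handles.

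\emph{Relations.} The relations are read off directly.
\begin{itemize}
\item The McKay relations come from Proposition \ref{prop:localalgebras}.
\item The relations among $x_0,\dots,z_1$, the vanishing of $\mu^2(c_i,\eta_{y,i}x_1-\eta_{x,i}y_1)$, and the cross-ratio relations come from the proposition on the torus $T^0$.
\item The relations involving $\tilde\epsilon$ come from the first gluing proposition.
\item The relations for $\mu^2(x_0,\epsilon')$ and $\mu^2(y_0,\epsilon')$ come from comparing the last proposition with the expressions of $\tilde x^E,\tilde y^E$ as products $\mu^2(\tilde\epsilon,p_{j,k-2})$. This gives exactly the stated ratios $\alpha_{\epsilon,x_0}/\alpha_{p_2,\tilde\epsilon}$ and the analogous ratio for $y_0$.
\item The vanishing $\mu^2(\epsilon',p_{j,k-3})=0$ holds because $CF(L_{k-3},P_0)=0$. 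I would check this from the homology classes and the handle description, since $L_{k-3}$ does not pass through the handles $H^-_0,H^-_{-1}$ used by $P_0$.
\end{itemize}

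\emph{Completeness.} The main obstacle is showing these relations generate all relations, i.e.\ that no additional ones hold. I would do this by a dimension count. I would compare the dimensions of the quotient path algebra, computed from the listed relations, with the ranks of the Floer complexes, which are the intersection numbers listed in the definition of generators. These ranks match the Ext dimensions computed in Proposition \ref{prop:gluingmorphisms}, and on the B-side Proposition \ref{prop:quiverwithrelsB} shows that the analogous relations present the algebra. Since the relations have the same shape after rescaling generators, the dimensions agree, and any further relation would force a dimension drop contradicting nonvanishing of the computed products.
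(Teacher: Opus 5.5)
Your proposal is correct and matches the paper, which gives no separate argument for this corollary. There it is read off from the degree-zero concentration of the Floer complexes, Proposition \ref{prop:localalgebras}, the proposition on the special torus $T^0$, and the two gluing propositions, exactly as you do (including $\mu^2(\epsilon',p_{j,k-3})=0$ from $CF(L_{k-3},P_0)=0$). Your completeness step goes beyond the statement, which only asserts that the listed relations hold, and corresponds to the comparison with Proposition \ref{prop:quiverwithrelsB} made in the proof of Theorem \ref{thm:mainthm}; if you keep it, phrase it as the quotient path algebra surjecting onto an algebra of the same dimension (the total Floer rank), rather than as a contradiction with nonvanishing of products.
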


      We are now ready to complete the proof of Theorem \ref{thm:mainthm}.
      \begin{proof}[Proof of Theorem \ref{thm:mainthm}]
        The equivalence of categories follows from the previous Corollary \ref{cor:algebraofLagrangians}, as well as the description of the algebra on the B-side as in Proposition \ref{prop:quiverwithrelsB} and a suitable rescaling of the generators. An explicit choice of rescaling is the following: \begin{gather*}
        p_{1,i}^{resc}:= -\frac{\vartheta_{2,i}}{\vartheta_{1,i}}p_{1,i},\quad 2\leq i \leq k-3, \qquad
        p_{1,k-2}^{resc}:=\frac{\alpha_{p_1,\tilde{\epsilon}}}{\alpha_{\epsilon, y_0}}\frac{\alpha_{\epsilon, x_0}}{\alpha_{p_2, \tilde{\epsilon}}}p_{1,k-2}\\
        \tilde{\epsilon}^{resc}:=\frac{\alpha_{p_2,\tilde{\epsilon}}}{\alpha_{\epsilon,x_0}}\tilde{\epsilon},\quad
        x_1^{resc}:=\frac{\eta_{x,1}}{\eta_{y,1}}x_1,\quad y_1^{resc}:=-\frac{\alpha_{y,x}}{\alpha_{x,y}}\frac{\eta_{y,1}}{\eta_{x,1}}y_1,\quad z_1^{resc}:=\frac{\alpha_{y,z}}{\alpha_{z,y}}\frac{\alpha_{x,y}}{\alpha_{y,x}}\frac{\eta_{x,1}}{\eta_{y,1}}z_1
        \end{gather*} with the other generators remaining unchanged.     \end{proof}     
\newpage
\section*{Appendix}
\setcounter{section}{5}
\subsection{Special representations and modular arithmetic}
          Consider the special $I$-series $I(r,l)$ (which is defined in \cite[Definition 2.5]{gugiatti_full_2023}).
      \begin{prop}\label{propappendix:overlap}
      For any non-special $d$ with $i_t > d > i_{t-1}$, the associated $j_t$ satisfies $k+j_t<r$.
      \end{prop}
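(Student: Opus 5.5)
The plan is to translate everything into statements about the Hirzebruch--Jung data $b_1,\dots,b_n$ of $\frac{r}{l}=[b_1,\dots,b_n]$, using the two recursions for the $I$- and $J$-series. I read the hypothesis as $d$ lying strictly between two consecutive terms of the descending $I$-series, i.e.\ $i_{t-1}>d>i_t$, as in the definition of $B_{m,d}$. I read $k$ as the residue $l^{-1}\bmod r$, so $kl\equiv 1 \bmod r$ and $1\le k\le r-1$.

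\emph{Step 1: identify $k$ inside the $J$-series.} By \cite[Lemma 2]{wunram_reflexive_1987}, $j_n\equiv l^{-1}i_n=k\bmod r$, since $i_n=1$. The $J$-series is strictly increasing from $j_0=0$ to $j_{n+1}=r$, so $0<j_n<r$, which forces $j_n=k$. The target inequality $k+j_t<r$ therefore becomes
\[ j_t< j_{n+1}-j_n . \]

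\emph{Step 2: monotonicity of consecutive differences.} From $j_{s+1}=b_sj_s-j_{s-1}$ we get
\[ j_{s+1}-j_s=(j_s-j_{s-1})+(b_s-2)j_s , \]
so the differences $j_{s+1}-j_s$ are nondecreasing in $s$, because every $b_s\ge 2$. Moreover, if $b_s\ge 3$ then $j_{s+1}-j_s> j_s$. Symmetrically, the recursion $i_{s-1}=b_si_s-i_{s+1}$ gives
\[ i_{s-1}-i_s=(i_s-i_{s+1})+(b_s-2)i_s . \]
Since $i_n-i_{n+1}=1$, induction downward yields
\[ i_{t-1}-i_t=1+\sum_{u=t}^{n}(b_u-2)\,i_u . \]

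\emph{Step 3: use the gap condition.} A non-special $d$ with $i_{t-1}>d>i_t$ exists only if $i_{t-1}-i_t\ge 2$. In particular $t\le n$, because the last gap $i_n-i_{n+1}=1$ contains no integer. By the formula in Step 2, there is then some $u$ with $t\le u\le n$ and $b_u\ge 3$. Combining this with Step 2 and the monotonicity of the $J$-series gives
\[ r-k=j_{n+1}-j_n\ \ge\ j_{u+1}-j_u\ >\ j_u\ \ge\ j_t , \]
which is exactly the required inequality.

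The main obstacle is bookkeeping rather than ideas. One must get the index conventions right: which $b$ appears in each recursion, and whether the gap is labelled by $t$ or $t-1$. One must also check the identification $j_n=k$ against the paper's convention for $k$ (the residue with $kl\equiv1$, as in the $\frac1r(k,1)\equiv\frac1r(1,l)$ presentation). I would first sanity-check all of this on $\frac15(1,3)$. There $k=2$, the $I$-series is $5,3,1,0$ and the $J$-series is $0,1,2,5$. The non-special weights are $d=4$ (with $t=1$, $j_1=1$) and $d=2$ (with $t=2$, $j_2=2$). In both cases $k+j_t\le 4<5$, as claimed.
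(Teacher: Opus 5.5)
Your proof is correct, and it takes a different route from the paper. The paper first reduces to the smallest non-special weight $d$. That weight sits in the lowest nontrivial gap of the $I$-series, so it has the largest $j_t$. Since $1,\dots,d-1$ are then all special, the tail of the continued fraction is a (possibly empty) run of $2$'s. The paper runs the recursion backwards from $j_{n+1}=r$, $j_n=k$ to get $j$'s of the form $(a+1)k-ar$. It then shows by contradiction that $j_t\geq r-k$ would force one more $b=2$, making $d$ special. You skip both the reduction and the explicit computation. Instead you use two structural facts. First, the consecutive differences of the $J$-series are nondecreasing and exceed $j_u$ wherever $b_u\geq 3$. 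Second, $i_{t-1}-i_t=1+\sum_{u\geq t}(b_u-2)i_u$. Together these give $r-k=j_{n+1}-j_n>j_t$ for every relevant $t$ at once.

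Your version is shorter, uniform in $t$, and insensitive to index bookkeeping. This matters, because the paper's printed indices are shifted by one. The run should read $i_n=1,\dots,i_{n-d+2}=d-1$, with $b_n=\dots=b_{n-d+3}=2$. The extremal value is then $j_{n-d+2}=(d-1)k-(d-2)r$. On $\frac15(1,3)$ the printed formula instead gives $j_{n-1}=2k-r=-1$. Once corrected, the paper's approach does have one advantage: it produces this explicit extremal value. A minor presentational point: your Step 1 uses strict monotonicity of the $J$-series, which you only justify through the difference identity in Step 2. State that identity first.
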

      \begin{proof}
      Since $j_{n+1}<j_n<\dots < j_0$ is increasing, it is enough to show this for the smallest non-special $d$ which has the biggest associated $j_t$.

      The smallest non-special $d$ is the least integer such that $i_n=1,i_{n-1}=2,\dots, i_{n-d+1}=d-1$ are all special, but $d$ is not. This implies that $b_n=\dots = b_{n-d+2}=2$ and moreover that $j_{n}=k, j_{n-1}=2k-r, \dots, j_{n-(d-1)}=dk-(d-1)r$. Recall the recursion $$j_{n-d+2}= b_{n-(d-1)}j_{n-(d-1)}-j_{n-d}$$

      Now, if $j_{n-(d-1)}\geq r-k$ then we would have a contradiction: it would imply that $2j_{n-d+1}-j_{n-d+2}=(d+1)k-dr\geq 0$ so that $b_{n-d+1}=2$ and $j_{n-d}=(d+1)k-dr$ and $i_{n-d}=d$ is special. Hence, $j_{n-(d-1)}< r-k$ and thus since the J series is decreasing, also $j_{n-d}<r-k$.

      \end{proof}
\subsection{Lemmata on the critical points and the dynamics of the branch points}

      \begin{prop*}[Proof of Proposition \ref{prop:criticalptsdeformation}]\label{propappendix:criticalptsdeformation}
     As $s\rightarrow 0$ (setting $P(y)=(1+y)^{k+1}+\epsilon$ for a small $\epsilon$), the critical values become distributed as follows:
    \begin{enumerate}[label=\Roman*.]
      \item $k-2$ of the critical values are distributed near the roots of $t^{k-2}-(\frac{k-2}{k})^{k-2}\frac{1}{s}=0$.
      \item $3$ of them will be arbitrarily close to $0$
      \item Another $k+1$ critical points are equidistributed near $-1$.
    \end{enumerate}
    Moreover, all of the critical points are non-degenerate and when $s$ is real, only two of the critical points are in $\mathbb{R}_+^2$, one of which is of type \uppercase\expandafter{\romannumeral 2} and the other of type \uppercase\expandafter{\romannumeral 3}.
      \end{prop*}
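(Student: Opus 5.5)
Write $\mathbf{f}_s=\frac{P(y)}{xy}+sx+y$ on $(\mathbb{C}^\times)^2$ with $P(y)=(1+y)^{k+1}+\epsilon$. The plan is to eliminate $x$ from the critical point equations, reduce to a single polynomial equation in $y$, and then do an asymptotic (Newton polygon) analysis of its roots as $s\to 0$.

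The equation $\partial_x\mathbf{f}_s=0$ reads $sx^2=P(y)/y$. At a critical point this gives $P/(xy)=sx$, hence
\[ \mathbf{f}_s = 2sx+y . \]
Next, $\partial_y\mathbf{f}_s=0$ reads $\frac{P'(y)y-P(y)}{xy^2}+1=0$, so $x=\frac{P-yP'}{y^2}$. Substituting into $sx^2=P/y$ yields
\begin{equation*} s\,(P(y)-yP'(y))^2 = y^3P(y), \end{equation*}
a polynomial of degree $2k+3$ in $y$. One should check that its leading term does not cancel, and that $y=0$ is not a root since $P(0)\neq 0$. This gives $2k+3=(k-2)+3+(k+1)$ critical points, which matches the claimed count.

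I then analyse the roots as $s\to 0$ in three regimes.

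\textbf{Small-$|y|$ regime.} For $|y|$ small, $P-yP'\approx P(0)=1+\epsilon$, so $y^3\approx s(1+\epsilon)^2$. This gives three roots $y\sim s^{1/3}$, and then $x=(P-yP')/y^2\sim s^{-2/3}$. The critical value is $t=2sx+y=O(s^{1/3})\to 0$. These are type II.

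\textbf{Near $y=-1$.} When $\epsilon=0$, $P$ vanishes to order $k+1$ at $-1$ and $P-yP'$ vanishes to order $k$. Setting $u=1+y$, the equation becomes $s\,u^{2k}(\dots)^2= -u^{k+1}(1+\dots)$. After dividing, the remaining balance forces $u\to 0$ only through the $\epsilon$-perturbation. With $\epsilon\neq 0$, the roots near $u=0$ are governed by $P\approx u^{k+1}+\epsilon$, giving $k+1$ roots near the solutions of $u^{k+1}=-\epsilon+O(s)$. Their critical values are $t\approx y+2sx\approx -1$. These are type III.

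\textbf{Large-$|y|$ regime.} Here $P\approx y^{k+1}$ and $P-yP'\approx -ky^{k+1}$, so $sk^2y^{2k+2}\approx y^{k+4}$, i.e. $y^{k-2}\approx 1/(sk^2)$. This gives the remaining $k-2$ roots, which tend to infinity. Then $x\approx -ky^{k-1}$, and
\[ t=2sx+y\approx y\bigl(1-2sk\,y^{k-2}\bigr)\approx y\Bigl(1-\frac{2}{k}\Bigr)=\frac{k-2}{k}\,y . \]
Hence $t^{k-2}\approx\bigl(\frac{k-2}{k}\bigr)^{k-2}\frac{1}{sk^2}$. This agrees with the statement up to the normalising constant. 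I expect to need to recheck the constant, for instance by rescaling or by tracking lower-order terms; that constant only affects the radius, not the distribution.

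\textbf{Non-degeneracy.} In each regime the roots above are simple roots of the eliminated polynomial, because the Newton polygon segments give distinct leading-order roots. I would argue that a simple root of the eliminant corresponds to a non-degenerate critical point, via the standard fact that the Hessian determinant is proportional to the derivative of the eliminant along the curve $\partial_x\mathbf{f}_s=0$.

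\textbf{Reality count (main obstacle).} The hardest step is the final claim: that for real $s>0$ exactly two critical points lie in $\mathbb{R}_+^2$. I would count positive real roots of $s(P-yP')^2-y^3P$ together with the sign condition $x=(P-yP')/y^2>0$:

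1. In the small regime, $y^3=s(1+\epsilon)^2$ has exactly one positive real root, and there $x>0$. This gives one type II point.
2. In the large regime, positive $y$ requires $x\approx -ky^{k-1}<0$, so these points are excluded.
3. Near $y=-1$, the roots have $y\approx -1+u$ with $u$ small, so $y<0$ whenever the root is real. Hence this regime does not obviously produce a point in $\mathbb{R}_+^2$.

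This conflicts with the statement's claim of a type III point in $\mathbb{R}_+^2$. It suggests that the intended convention is different: perhaps $\mathbb{R}_+^2$ is meant in other coordinates (such as the chart $(v_0,v_3)$), or the sign of $\mathbf{q}_i$ differs. I would first reconcile the coordinates using the chart in equation \eqref{eq:chart}, and then apply Descartes' rule of signs to the eliminant restricted to the correct real quadrant.
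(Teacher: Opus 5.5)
For items I--III you follow the paper's own route. It uses the same elimination to $(P-yP')^2=\frac1s Py^3$ with $t=2sx+y$, and the same three regimes, obtaining $t\approx 3y$ near $0$ and $t\approx\frac{k-2}{k}y$ at infinity just as you do. Your constant $y^{k-2}\approx 1/(sk^2)$ is the correct one. The paper's proof writes $|y^{k-2}-\frac1s|\approx 0$ and loses the factor $k^2$, which, as you say, only rescales the radius. One slip is yours: the eliminant has degree $2k+2$, with leading term $sk^2y^{2k+2}$. Also $(k-2)+3+(k+1)=2k+2$, not $2k+3$; this is exactly the total the paper gets from Kouchnirenko.

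For non-degeneracy your route differs, and it is the more elementary one. The paper invokes Kouchnirenko's theorem: the total multiplicity on the torus is $2\,\mathrm{vol}(\mathrm{Newt}\,\mathbf{f}_s)=2k+2$ by Pick, which is valid because $P$ has distinct roots. It then concludes from the existence of $2k+2$ distinct critical points. You instead check directly that every root of the eliminant is simple. Both arguments rest on the same asymptotic separation of the roots. Your ``standard fact'' is correct once made precise. Set $F_1=P-sx^2y=-x^2y\,\partial_x\mathbf{f}_s$ and $F_2=P-yP'-xy^2=-xy^2\,\partial_y\mathbf{f}_s$. At a critical point the Jacobian of $(F_1,F_2)$ is $\mathrm{diag}(-x^2y,-xy^2)$ times the Hessian. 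With $R(y)=F_1\bigl(\frac{P-yP'}{y^2},y\bigr)$, that Jacobian determinant equals $y^2R'(y)$.

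On the final claim, the conflict you found is real, and the fault lies in the statement, not in your coordinates. Reinterpreting in the chart \eqref{eq:chart} cannot help, since there $x=v_0$ and $y=v_3$.

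\textbf{What the paper's proof shows.} It counts globally in $y$. The polynomial $Q(y)=\frac1s Py^3-(P-yP')^2$ is negative at $y=0$ and for $y\gg 0$, and positive on a bounded interval of $\mathbb{R}_+$ when $s>0$ is small. Descartes's rule of signs then allows at most two positive roots; for small $s$ the coefficient signs are $-,0,+,\dots,+,-,\dots,-$. So there are exactly two positive roots, which the paper itself describes as ``one very close to $0$ and the other very big''. That is one root of type II and one of type I. The ``type III'' in the statement is a misprint, since type III roots have $y\approx -1$.

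\textbf{The sign of $x$.} The paper never checks it, and your observation $x\approx -ky^{k-1}<0$ at the large root is correct. Taken literally, only the type II point lies in the open positive quadrant.

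\textbf{The corrected statement.} Exactly two critical points have real $y>0$, one of type II and one of type I. Both have real positive critical values, $t\approx 3y$ and $t\approx\frac{k-2}{k}y$ respectively. This is consistent with Figure \ref{fig:vanishingpaths}, where one blue and one purple critical value sit on the positive real axis.

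\textbf{How to finish.} Rather than searching for another convention, complete your own regime count, which proves exactly this. For real $\epsilon$ the eliminant has real coefficients, and your three regimes exhaust its $2k+2$ roots. The small regime therefore contains exactly one real root, the one near the real cube root of $s(1+\epsilon)^2$. The large regime contains exactly one real root, because $k-2$ is odd. Both of these are positive, while any real root near $-1$ is negative.
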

      \begin{proof}
        We write $$\mathbf{f}=\frac{P(y)}{xy}+sx+g(y)$$We consider the equations on the open torus. In fact, by the non-degeneracy of $P$ (for generic $\mathbf{q}_i$), there will not be any critical points on the toric boundary of $V$ so that suffices.

        We have the system $$\mathbf{f}=t, \partial_x\mathbf{f}=0, \partial_y \mathbf{f}=0$$with $t$ denoting the critical value, the first two of which imply that $$2P=xy(t-g), P=sx^2y$$ and hence that $2sx=t-g$. In particular, $$P-\frac{1}{4s} y (g-t)^2=0$$The equation $\partial_y \mathbf{f}$ tells us that $$g'(y)xy^2-P+yP'=0 \implies P=yP'+xy^2g'(y)$$
        We now set $g=y$. We see from $P-yP'=xy^2$ together with $x^2=\frac{P}{sy}$ that the critical points satisfy the polynomial in $y$: \begin{equation}\label{eq:critpointpoly}
        (P-yP')^2=\frac{1}{s}Py^3
        \end{equation}In particular, restricting to small $y$ satisfying $\frac{1}{s}\gg|y^{2k+2}|$, the roots of this are close to the roots of $Py^3$, hence there are three roots close to $y=0$ and $k+1$ roots  close to $-\mathbf{q}_1, \dots, -\mathbf{q}_{k+1}$. We will now assume that the $\mathbf{q}_i$ are close to $1$ so that $P=(1+y)^{k+1}+\epsilon$ for some small $\epsilon$. The other $k-2$ roots are equidistributed with $|y^{k-2}- \frac{1}{s}|\approx 0$.

        It remains to show that the associated critical values $t$ have a similar distribution to the solutions of Equation \ref{eq:critpointpoly}. First, $$t=2sx+y=2s\frac{P-yP'}{y^2}+y$$ Thus, when $y$ is close to $-\mathbf{q}_i$ then also $t$ is close to  $-\mathbf{q}_i$. 
        
        Secondly, notice that since $P=(1+y)^{k+1}+\epsilon$, $$xy^2=P-yP'=P\frac{1-ky}{1+y}+\epsilon (k+1)\frac{y}{y+1}=sx^2y(-k+\frac{k+1}{y+1})+\epsilon (k+1)(1-\frac{1}{y+1})$$ hence after dividing by $xy$ we get 
        \begin{equation}\label{eq:xfromy}
        y=sx(-k+\frac{k+1}{y+1})+\frac{1}{xy}(k+1)\epsilon(1-\frac{1}{y+1})
        \end{equation}Thus, when $|y^{k-2}- \frac{1}{s}|\approx 0$ and so $|y|$ is very big, then $|y+ksx|\approx 0$ and thus $ t$ is close to  $\frac{k-2}{k}y$. 
        
        Finally, when $y$ is close to $0$ then since $\frac{1}{xy}=\frac{sx}{P}$, from Equation \ref{eq:xfromy} we conclude that $|y-sx|\approx 0$ thus $t=2sx+y$ is very close to $3y$. 

        For non-degeneracy, we appeal to Kouchnirenko's formula \cite{kouchnirenko_polyedres_1976} which in our example states that the sums of the multiplicities of the critical points of $\mathbf{f}_s$ on $(\mathbb{C}^\times)^2 $ is equal to twice the volume of the Newton polytope of $\mathbf{f}_s$: $$\sum \mu_p = 2!\mathrm{vol}(\mathrm{Newt}(\mathbf{f}_s))$$
        By Pick's formula, $2\mathrm{vol}(\mathrm{Newt}(\mathbf{f}))=2i+b-2=(k+1)+( k+2+1)-2=2k+2$. Since there are $2k+2$ critical points, they must all have multiplicity $1$ i.e. they are non-degenerate. Note that we need the Laurent polynomial $\mathbf{f}_s$ to be Newton non-degenerate to apply Kouchnirenko's formula, which in turn requires $P(y)$ to have distinct roots. Otherwise, we might have a degenerate critical point at infinity. 

        Finally, we find which of the critical points are real. The polynomial in equation \ref{eq:critpointpoly} takes negative values for $y=0$ and also for $y\gg 0$. (strictly speaking, we are only considering $y\in \mathbb{C}^\times$ but for the purpose of understanding the behaviour of this polynomial, we can take $y=0$.) Moreover, it is positive for $y$ in a bounded interval $I\subset \mathbb{R}_+$, provided $\frac{1}{s}>\sup_I \frac{(P(y)-P'(y))^2}{P(y)}$. There are thus at least two real roots, but also by Descartes' rule of signs, there can be at most two, so we conclude there are exactly two. In fact, one of these is very close to $0$ and the other is very big, since they must be outside of the compact interval $I$. 
      \end{proof}
      \begin{prop}\label{propappendix:radialoutwards}
      Suppose $\frac{1}{s}\gg |t| \gg 0$ and $t$ is moving in a straight line radially outwards to a critical value at large radius. The result of this on the level of the branch points is the following: the twin branch points (labelled $0$ and $1$ in Figure \ref{fig:branch points labelled}) are moving radially outward towards one of the equidistributed branch points labelled $2,3,\dots, k-1$, without changing their relative position. This continues until the twin branch point with the larger absolute value collides with it.
      \end{prop}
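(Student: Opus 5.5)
We study the branch point equation $Q_t(y):=4sP(y)-y(t-y)^2=0$ from Proposition \ref{prop:branchpoints} for $t=\rho e^{i\theta}$ with $\theta$ fixed and $\rho$ increasing from $|t_*|$ towards the modulus of a type I critical value. From Proposition \ref{propappendix:criticalptsdeformation}, the critical values at large radius lie near the roots of $t^{k-2}=(\tfrac{k-2}{k})^{k-2}\tfrac1s$, and the corresponding critical points have $y$ near a root of $y^{k-2}=\tfrac1s$ with $t\approx \tfrac{k-2}{k}y$. We therefore work in the regime where $|y|$ and $|t|$ are large but $\ll 1/s$ in the appropriate sense, and rescale $y=tw$. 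Since $P(y)=y^{k+1}(1+O(1/y))$, the equation becomes
\[
4s\,t^{k-2}w^{k+1}\bigl(1+O(1/|t|)\bigr)=w(1-w)^2 .
\]
The key parameter is $\lambda:=4st^{k-2}$. Along the radial path, $\arg\lambda$ is fixed and $|\lambda|$ increases from essentially $0$ (since $|t_*|^{k-2}\ll 1/s$) to a value of order $1$, where the critical value sits.

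First, we locate the twin branch points. For $\lambda$ small, the roots near $w=1$ are $w=1\pm\sqrt{\lambda}\,(1+O(\lambda))$, so the twins are $y=t\bigl(1\pm\sqrt\lambda+\dots\bigr)$. Their midpoint is $t$, which moves radially outward. Their separation $2t\sqrt\lambda$ has constant argument $\theta+\tfrac12\arg\lambda$, since $\arg\lambda=(k-2)\theta+\arg 4s$ is constant. Thus the relative position (the direction of the segment joining the twins) is unchanged, and the pair is translated and scaled along the ray. The remaining $k-2$ roots satisfy $\lambda w^{k-2}\approx 1$, that is, $y^{k-2}\approx \tfrac1{4s}$, and they stay essentially fixed as $|t|$ grows. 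This matches type I of Proposition \ref{prop:branchpoints}. The root near $0$ stays near $0$.

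Second, we identify the collision. A collision of branch points is a double root of $Q_t$, which occurs exactly at a critical value. On the reduced polynomial $\lambda w^{k+1}=w(1-w)^2$, a double root of $\lambda w^{k}-(1-w)^2$ forces the derivative condition $k\lambda w^{k-1}=-2(1-w)$. Dividing these gives $w=\tfrac{k}{k-2}$, hence $y=\tfrac{k}{k-2}t$, consistent with $t\approx\tfrac{k-2}{k}y$. This root has modulus greater than $1$, so it is the twin $t(1+\sqrt\lambda)$ (with the appropriate branch of $\sqrt\lambda$) that merges with the outer root $y\approx (4s)^{-1/(k-2)}\zeta$, where $\zeta$ is the root of unity picked out by $\theta$. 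Hence the collision is between the twin of larger absolute value and one of the type I branch points labelled $2,\dots,k-1$.

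Third, we check that no other collision happens earlier. We show that for $|\lambda|$ below the critical value $\lambda_c$, the polynomial $\lambda w^k-(1-w)^2$ has no double root, because its discriminant vanishes only at $\lambda^{\,}$ with $\lambda\,(\tfrac{k}{k-2})^{k}=(\tfrac{2}{k-2})^2$, a circle of radius $|\lambda_c|$. We also check that the root near $0$ and the other outer roots stay separated. Finally, we control the $O(1/|t|)$ and $O(\epsilon)$ corrections by Rouché's theorem on small discs around each approximate root.

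The main obstacle is the uniformity of these estimates along the whole path. In particular, near $\lambda\approx\lambda_c$ the approximation $w=1\pm\sqrt\lambda$ breaks down, and the statement that the twins' direction is unchanged only holds up to small error. We would handle this by continuity of the root configuration in $\lambda$, using the exact reduced model and then perturbing.
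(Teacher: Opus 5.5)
Your rescaling $y=tw$, $\lambda=4st^{k-2}$ gives the same model as the paper's ($\lambda=t_0^{k-2}$, $y_0=t_0w$). However, the step that carries the content of the proposition is missing. Your small-$\lambda$ expansion only shows that the twins are aligned with $t$ at the \emph{start} of the path. The inference ``$w_c=\tfrac{k}{k-2}$ has modulus greater than $1$, so it is the twin $t(1+\sqrt\lambda)$ that merges'' does not follow, because the outer root also has $|w|>1$. Continuity of the roots in $\lambda$ alone cannot tell you which two of the $k$ roots of $\lambda w^k-(1-w)^2$ come together at $\lambda_c$. Nor can it show that the twins keep their relative position for all intermediate $\lambda$, and that is exactly what the proposition asserts.

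What you have not used is that $\lambda$ is real and positive along this path. The double-root condition you derived singles out one value, $\lambda_c=\tfrac{4(k-2)^{k-2}}{k^k}>0$. This is a single point of the $\lambda$-plane, not a circle; the circle lives in the $t$-plane. So a ray in $t$ ending at a type I critical value has $\arg\lambda=0$, and $\lambda$ sweeps $(0,\lambda_c)$.

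With $\lambda$ real, the model is a real polynomial and the roots can be tracked exactly. The paper does this by using the symmetry $(y_0,t_0)\mapsto(\zeta y_0,\zeta t_0)$, $\zeta^{k-2}=1$, to reduce to $t_0>0$. It then shows by monotonicity on $(0,t_0]$ that the smaller twin stays in $(0,t_0)$. Finally, it uses Sturm/Descartes to get exactly three real roots until the larger twin meets the far root.

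In your variables this is a one-line computation. Positive roots solve $\phi(w)=\lambda$ with $\phi(w)=(1-w)^2/w^k$, and $\phi'(w)=(1-w)\bigl((k-2)w-k\bigr)/w^{k+1}$. So $\phi$ decreases on $(0,1)$, increases on $(1,\tfrac{k}{k-2})$ up to $\lambda_c$, and decreases afterwards. Hence for $\lambda\in(0,\lambda_c)$ there are exactly three positive roots $w_1<1<w_2<\tfrac{k}{k-2}<w_3$. As $\lambda\to0$, $w_{1,2}\to1$ (these are the twins) and $w_3\to\infty$ (the outer root). Simple real roots of a real family can only leave the real axis through a collision. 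Therefore all three stay on the ray $\arg y=\arg t$ in this order, and at $\lambda_c$ it is $w_2$ and $w_3$ that merge. Your perturbation argument then transfers this to the actual branch-point polynomial.

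A minor point: the twins are $w=1\pm\sqrt\lambda+\tfrac{k}{2}\lambda+\dots$, so their midpoint is $t(1+\tfrac{k}{2}\lambda+\dots)$ rather than $t$. This is harmless once $\lambda$ is real.
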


      \begin{proof}
        We summarize the proof in three steps:
        \begin{itemize}
          \item We approximate and replace our polynomial by a simpler one in the form $y_0^k-(y_0-t_0)^2$
          \item We notice the new polynomial has a $\mathbb{Z}_{k-2}$-equivariant property so that we can restrict to only studying the case when $t$ is real.
          \item We use a Sturm sequence argument to show that when $t$ is real and increases towards the critical value, the two twin branch points will live on the real line (or arbitrarily close to it), moving towards to a far away branch point which is also arbitrarily close to being real. The smaller of the twin branch points remains less than the bigger one, whereas the bigger of the twins hits the far away branch point as we move $t$ radially to the critical point.
        \end{itemize}

        Consider again the branch point equation: $(1+y)^{k+1}+\epsilon=P=\frac{1}{4s}y(y-t)^2$. 
        Divide everything by $r^{k+1}$ where $r^{k-2}=\frac{1}{4s}$, resulting in:$$(\frac{y}{r}+\frac{1}{r})^{k+1}+\frac{\epsilon}{r^{k+1}}=\frac{y}{r}(\frac{y}{r}-\frac{t}{r})^2$$

        Denote $y_0:=\frac{y}{r}, t_0:= \frac{t}{r}$. The roots of this polynomial are arbitrarily closely approximated by the roots of $$y_0^{k+1}=y_0(y_0-t_0)^2$$since we can take $s$ as small as we want. Hence, we will have a root very close to $0$, and $k-2$ other roots close to the roots of \begin{equation}\label{eq:infinitesimalapprox}
          y_0^k=(y_0-t_0)^2
        \end{equation}

        We notice the following property of this polynomial: if $(y_0,t_0)$ is a solution, then $(\zeta_{k-2}y_0, \zeta_{k-2}t_0)$ is also a solution, where $\zeta_{k-2}^{k-2}=1$. We will show that for a given range of real $t_0$, the twin branch points (or their infinitesimal approximations) are collinear. By a Sturm sequence argument we will show that \ref{eq:infinitesimalapprox} has three real roots, two of them corresponding to the 'twin' branch points and one corresponding to one of the equidistributed branch points. We will show that, as $t_0$ increases, the bigger twin branch point will hit the far away real one. By the $\zeta_{k-2}$ property, the real case suffices, as it will imply the case when $t_0$ is in any of the rays $\zeta_{k-2}\mathbb{R}_+$.

        So, as our first step, fix $t_0$ real and let $h=y_0^k-(y_0-t_0)^2$. We will show this has three real roots. Firstly, $h'(y_0)=ky_0^{k-1}-2(y_0-t_0)>0$ for $ y_0\in (0,t_0]$ so it is strictly increasing. Moreover, $h(0)<0, h(t_0)>0$ so there is exactly one real root here and it happens just before $t_0$. This is the stable root - the one that does not collide with another branch point as we increase $t_0$.

        Now let's write the Sturm sequence for $h$: $$h_0=h, h_1=h'=ky_0^{k-1}-2y_0+2t_0, h_2=\frac{k-2}{k}y_0^2-2t_0\frac{k-1}{k}y_0+t_0^2$$
        We continue the Sturm sequence by finding the remainder $h_3=Ay_0+B$ in the division $$h_1=qh_2-h_3$$

        Since the roots of $h_2$ are $t_0, \frac{k}{k-2}t_0$, then $$A=-\frac{h_1(t_0\frac{k}{k-2})-h_1(t_0)}{t_0\frac{2}{k-2}}=-\frac{k-2}{k}\bigg(t_0^{k-2}k((\frac{k}{k-2})^{k-1}-1)-2\frac{k}{k-2}+2\bigg)$$which can also be written as $$-\bigg(t_0^{k-2}k(\frac{\lambda^{k-1}-1}{\lambda-1})-2\bigg), \lambda = \frac{k}{k-2}$$
        and $$B=-\frac{t_0\frac{k}{k-2}h_1(t_0)-t_0h_1(t_0\frac{k}{k-2})}{\frac{2}{k-2}t_0}=-2t_0+\frac{k^2}{2}((\frac{k}{k-2})^{k-2}-1)(t_0)^{k-1}$$
        We have that $$h_0(0)=-t_0^2<0,h_1(0)=2t_0>0,h_2(0)=t_0^2>0$$Moreover, $$h_3(0)=B$$which is negative for $(1/t_0)^{k-2}>(\frac{k}{k-2}^{k-2}-1)\frac{k^2}{4}$ and positive if the opposite holds. The double point occurs when there is equality: we denote this by $t_{double}$, at which $h_3(0)=0$. In particular, for $$\frac{1}{t_0}>\frac{k}{k-2}(\frac{k^2}{4})^{1/k-2}$$ the inequality $h_3(0)<0$ holds. It moreover implies that $A>0, B<0$.

        By Descartes rule of signs, the number of real roots of $y_0^k-(y_0-t_0)^2$ is either $3$ or $1$. For $t_0\in (0, t_{double})$ we can observe the Sturm sequence: $$\begin{gathered}
        h_0(0)<0, h_1(0)>0, h_2(0)>0, h_3(0)<0,h_4=?
        \end{gathered}$$
        By Sturm's theorem, the number of real roots is equal to the number of sign changes in the above sequence, which is at least 2. We conclude it is equal to 3 for $t_0\in (0, t_{double})$. Thus, we see that there are always three real roots with multiplicity in this range, and the double root occurs when the two bigger real roots (namely, the bigger of the two twin branch points and the far away real branch point) join together, as claimed.
      \end{proof}

      \begin{prop}
      Starting at a value of $t$ such that $\frac{1}{s}\gg |t| \gg 0$, and then rotating it along the path $t_\theta = te^{-i\theta}, \theta\in [0,\frac{2\pi}{k-2}]$, the two twin branch points will interchange position, with their phase difference changing by approximately $\pi+\frac{2\pi}{k-2}$. Hence, the monodromy around such a circle will interchange the twin branch points $k$ times.
      \end{prop}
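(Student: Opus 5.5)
The plan is to work in the rescaled coordinates from the proof of Proposition \ref{propappendix:radialoutwards}. Put $r^{k-2}=\frac{1}{4s}$, $y_0=y/r$, $t_0=t/r$. The branch point equation $(1+y)^{k+1}+\epsilon=\frac{1}{4s}y(y-t)^2$ then becomes an arbitrarily small perturbation of $y_0^{k+1}=y_0(y_0-t_0)^2$. The assumption $\frac{1}{s}\gg|t|\gg 0$ means we take $|t_0|$ small, in particular $|t_0|<t_{double}$. Along the whole circle $t_\theta=te^{-i\theta}$ no critical value is crossed, because the critical values of type \uppercase\expandafter{\romannumeral 1} have strictly larger modulus by Proposition \ref{prop:criticalptsdeformation}. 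So the branch points move by an isotopy without collisions, and it is enough to track the two twins on the model equation $h(y_0)=y_0^k-(y_0-t_0)^2$.

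The key computation is a local expansion of the twins near $t_0$. Write $y_0=t_0+w$. The equation becomes $w^2=(t_0+w)^k$. For $|w|\ll|t_0|$ this reads $w^2=t_0^k(1+O(w/t_0))$, so the two twins are
\begin{equation*}
y_0^{\pm}=t_0\pm t_0^{k/2}\bigl(1+O(|t_0|^{k/2-1})\bigr).
\end{equation*}
I would make this rigorous with Rouch\'e's theorem on the disk $|w|\le 2|t_0|^{k/2}$. For small $|t_0|$, the term $(t_0+w)^k-t_0^k$ is dominated by $w^2-t_0^k$ on the boundary circle, so there are exactly two roots inside. The remaining $k-2$ roots lie at distance of order $1$ from the origin (near $y_0^{k-2}\approx 1$ when $t_0$ is small), and the stable root lies near $0$. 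The twins therefore stay in a tiny disk around $t_0$, separated from all other branch points for every $\theta$. The perturbation from the model equation to the true one is handled by the same Rouch\'e estimate, since $s$ can be taken as small as needed.

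With these estimates the statement follows. The difference of the twins is $y_0^+-y_0^-=2t_0^{k/2}(1+o(1))$, which is a continuous branch along the path. Substituting $t_0=|t_0|e^{-i\theta}$ shows that its phase changes by $-\frac{k}{2}\theta$ up to an $o(1)$ error. At $\theta=\frac{2\pi}{k-2}$ this gives $\frac{k\pi}{k-2}=\pi+\frac{2\pi}{k-2}$ in absolute value, which is the claimed phase change.

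For the consistency check, I would use the $\mathbb{Z}_{k-2}$-symmetry $(y_0,t_0)\mapsto(\zeta_{k-2}y_0,\zeta_{k-2}t_0)$. It shows that after rotating by $\frac{2\pi}{k-2}$ the whole configuration is the original one rotated rigidly. Since the twins' joining segment rotated by the extra $\pi$, the twins have swapped labels.

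Over the full circle $\theta\in[0,2\pi]$ the twin difference turns by $k\pi$, which is $k$ half-twists of the pair. The monodromy is then the $k$-th power of the half-twist exchanging the twins. This assumes the twins' small disk encloses no other branch point, which the Rouch\'e step guarantees. The monodromy therefore interchanges the twins $k$ times.

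The main obstacle is the uniformity of the approximation along the whole path. The error terms must stay small compared with $|t_0|^{k/2}$ uniformly in $\theta$, including the error in passing from the true branch point equation to the model. This needs the bounds $|t_0|\ll 1$ and $s\ll|t_0|^{\text{large}}$ to be chosen compatibly. I would first fix $|t_0|$ and then shrink $s$, and verify the Rouch\'e inequality on the fixed circle $|w|=2|t_0|^{k/2}$ with explicit constants.
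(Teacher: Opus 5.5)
Your proposal is correct and follows essentially the paper's route --- the rescaled model $y_0^k=(y_0-t_0)^2$, the Puiseux expansion $y_0^{\pm}=t_0\pm t_0^{k/2}+\dots$ of the twins, and the sign flip of the branch of $t_0^{k/2}$ under rotation by $\frac{2\pi}{k-2}$. The differences are that the paper reads the flip off the real ratio $y_0/t_0$ on the rays $\mathbb{R}_{\geq 0}\zeta_{k-2}$, whereas you track $\arg(y_0^+-y_0^-)$ in a fixed frame, which is what actually produces the figures $\pi+\frac{2\pi}{k-2}$ and $k\pi$, and your Rouch\'e estimate supplies rigor the paper omits. One caution: over the full circle the twins' small disk itself travels once around $y=0$, enclosing the branch point near $0$ and the puncture, so the monodromy is not literally the $k$-th power of a local half-twist; what your computation (and the statement) establishes is that the twins' relative rotation is $k\pi$.
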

      \begin{proof}
      Consider again the infinitesimal approximation $h=y_0^k-(y_0-t_0)^2$. We showed that when $t_0$ is on the ray $\mathbb{R}_{\geq 0} \zeta_{k-2}$, there are exactly three roots of this polynomial on this ray: the twin branch points and one farther away. As such, we can consider the ratios $y_0/t_0$ where $y_0$ is one of these three roots - this is a real quantity.

      Centering at $y=t_0$, there are Puiseux expansions for the roots closest to $t_0$ (the 'twin branch points'), namely $$y_0=t_0\pm t_0^{k/2}+\frac{k}{2}t_0^{k-1}+\dots$$ For $t_0<1$ real, the ratios are $1\pm t_0^\frac{k-2}{2}+O(t_0^{k-1})$ - one slightly above 1 and one slightly below. As we rotate $t_0$ by $-\frac{2\pi}{k-2}$, the twin branch points will flip because of the square root producing the $\pm$ sign ambiguity: the one that had $y_0/t_0>1$ will now have $y_0/t_0<1$ and vice versa.

      The outcome is that for each $1/(k-2)$ rotation, the two twin branch points switch, in fact in a specified manner: the initially bigger branch point goes over the smaller one.
      \end{proof}

      \subsection{The Palais-Smale condition}

      \begin{prop}{\label{prop:PalaisSmale}}
        The symplectic manifold $(M^0, \omega)$ is complete and the gradient of $\mathbf{f}_s$ is bounded from below outside of a compact subset, for distinct values of $\mathbf{q}_i$ and $s>0$ real.
      \end{prop}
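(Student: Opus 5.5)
The plan is to treat completeness and the gradient bound separately. The underlying manifold $M^0=\{zx=P(y)\}\subset \mathbb{C}^2_{x,z}\times\mathbb{C}^\times_y$ is a closed submanifold of $\mathbb{C}^2\times\mathbb{C}^\times$. The exact part $\omega^{ex}=dx\wedge d\overline{x}+d\log y\wedge d\log\overline{y}+dz\wedge d\overline{z}$ is the restriction of a complete flat metric on $\mathbb{C}^2\times\mathbb{C}^\times$, in the cylindrical coordinate $\log y$. A closed submanifold of a complete Riemannian manifold, with the induced metric, is complete. The correction $\sum\varepsilon_i\eta_i$ is compactly supported near the chain of $-2$ spheres. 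So for small $\varepsilon_i$ the metric $g_{\omega^{\underline\varepsilon}}$ is uniformly equivalent to the induced one and stays complete. Outside a compact set the two metrics coincide, so the gradient estimate only has to be proved for the flat induced metric.

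For the gradient, I would write $\mathbf{f}_s=\frac{z}{y}+sx+y$ as the restriction to $M^0$ of the ambient function $F=z/y+sx+y$. The norm of $\nabla(\mathbf{f}_s|_{M^0})$ is the norm of the projection of $\nabla F$ onto $TM^0$. That projection equals $|\nabla F|\sin\theta$, where $\theta$ is the angle between $\nabla F$ and the normal $\nabla G$, with $G=zx-P(y)$. So it suffices to bound $|\nabla F\wedge\nabla G|/|\nabla G|$ from below away from a compact set. In the flat coordinates $(x,z,w=\log y)$ we have
\[\nabla F=(s,\ e^{-w},\ y-z/y),\qquad \nabla G=(z,\ x,\ -yP'(y)).\]
I would then show that $|\nabla F\wedge\nabla G|\geq c|\nabla G|$ outside a compact set. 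To do this I would split the ends of $M^0$ into regimes according to which of $|x|,|z|,|y|,|y|^{-1}$ goes to infinity. On each end one coordinate pair dominates:
\begin{itemize}
\item For $|x|\to\infty$ with $y$ bounded, $z=P(y)/x\to0$, and the $(x,w)$ minor is dominated by $s\cdot(-yP')-z(\ldots)$, so it suffices that $y$ stays away from critical points of $P$. Where $yP'(y)$ becomes small, the $(x,z)$ minor $sx-e^{-w}z\approx sx$ is large.
\item For $|z|\to\infty$, the term $z/y$ dominates the $w$-derivative.
\item For $|y|\to\infty$ or $|y|\to0$, the terms $y$ and $z/y$ respectively dominate, and I compare them using $z=P(y)/x$.
\end{itemize}

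The tameness of the Laurent polynomial will be used in the following form. By the Kouchnirenko non-degeneracy recorded after Proposition \ref{prop:criticalptsdeformation} (which needs distinct $\mathbf{q}_i$), $\mathbf{f}_s$ has no critical points at infinity along any face of the Newton polygon. Quantitatively, on each end the leading face polynomial has nonvanishing logarithmic gradient. For the left face $V_0$, where $P(y)/xy$ dominates, this is exactly the condition that $P$ has simple roots. That is the reason the partial compactification along $x=0$ (the $z$ direction) is needed and why distinctness of the $\mathbf{q}_i$ enters: near a root $-\mathbf{q}_i$ with $x\to0$, the chart coordinate $z$ replaces $x$ and $\partial_z F=1/y$ stays bounded away from $0$. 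The condition $s>0$ ensures that the face coming from $sx$ is present, so the $x$-end is controlled.

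The main obstacle is the mixed end $x\to0$, $z$ bounded, $y\to-\mathbf{q}_i$. Here $M^0$ is not the torus, and the face argument must be redone in the $(z,y)$ chart. I would handle it by noting that near $y=-\mathbf{q}_i$ the surface is locally a graph $x=P(y)/z$ when $z\neq0$, or $y=y(x,z)$ by the implicit function theorem since $P'(-\mathbf{q}_i)\neq0$. In that graph coordinate $\mathbf{f}_s=z/y+sx+y$ has derivative in $z$ roughly $1/y\neq0$. That bound is uniform as $|z|\to\infty$ is excluded, and the remaining direction $|z|\to\infty$ falls into the $z/y$-dominated case above. This patches the regimes together and gives a uniform lower bound on $|\nabla\mathbf{f}_s|$ outside a compact set.
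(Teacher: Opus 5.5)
Your completeness argument is fine and coincides with the paper's. The gap is in the gradient estimate, and a finer case split cannot close it. For the metric $|dx|^2+|dz|^2+|d\log y|^2$ the asserted lower bound is false.

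Work in your flat coordinates, with $\nabla F=(s,1/y,y-z/y)$ and $\nabla G=(z,x,-yP'(y))$. Their $2\times 2$ minors are
\[
m_{xz}=sx-\frac{z}{y},\qquad m_{xw}=-syP'(y)-yz+\frac{z^2}{y},\qquad m_{zw}=\frac{P(y)-yP'(y)}{y}-xy .
\]
For real $y\to+\infty$, take $z=\tfrac12\bigl(y^2+\sqrt{y^4+4sy^2P'(y)}\bigr)\sim\sqrt{s(k+1)}\,y^{(k+2)/2}$ and $x=P(y)/z\sim y^{k/2}/\sqrt{s(k+1)}$. This is a curve in $M^0$ leaving every compact set. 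Along it:
\begin{itemize}
\item $m_{xw}\equiv 0$, since $z$ is a root of $z^2-y^2z-sy^2P'=0$;
\item $|m_{xz}|=O(y^{k/2})$;
\item $m_{zw}=-ky^k(1+o(1))$, using $k\ge 3$;
\item $|\nabla G|=(k+1)y^{k+1}(1+o(1))$.
\end{itemize}
Hence
\[
|\nabla \mathbf{f}_s|=\frac{|\nabla F\wedge\nabla G|}{|\nabla G|}=\frac{k}{(k+1)\,y}\bigl(1+o(1)\bigr)\longrightarrow 0 .
\]
Numerically, for $k=3$ and $s=1$ this is about $0.0076$ at $y=100$ and $0.00075$ at $y=1000$.

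This is exactly the regime your third bullet mishandles. All of $|x|,|z|,|y|$ blow up together, and $|z/y|\gg|y|$, so $y$ does not dominate. A large $|\nabla F|$ is useless here because $\nabla F$ becomes almost parallel to $\nabla G$.

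Kouchnirenko non-degeneracy does not rescue this. It controls the logarithmic gradient $(x\partial_x\mathbf{f}_s,\,y\partial_y\mathbf{f}_s)=(m_{xz},-m_{zw}/x)$ on the torus, and that is indeed of size $y^{k/2}$ along this curve, which runs along the face spanned by $y^k/x$ and $sx$. Converting to a metric that is flat in $x$ and $z$ costs factors of $|x|$, and those destroy the bound. Conversely, the region you single out as the main obstacle ($x\to 0$, $z$ bounded, $y\to-\mathbf q_i$) is relatively compact in $M^0$, not an end.

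The paper's proof does not avoid this either. It bounds the cross term $\bigl(|A|^2-|A+s\bar z|^2\bigr)/|\nabla G|^2$ by $\tfrac12|s|^2$ using $\bigl||A|-|A+s\bar z|\bigr|\le s|z|$. That inequality does not control the difference of squares when $|A|$ is large. Since the total equals $s^2+(\text{a nonnegative term})+(\text{cross term})$ and tends to $0$ along the curve above, the cross term there is at most $-s^2+o(1)$.

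So the proposition must be weakened, or the metric changed at infinity, before any proof can work. Along the curve $|\mathbf{f}_s|\sim\sqrt{s}\,\tfrac{k+2}{\sqrt{k+1}}\,y^{k/2}\to\infty$. It therefore does not rule out a lower bound on $\mathbf{f}_s^{-1}(K)$ for compact $K\subset\mathbb{C}$, which is what well-definedness of parallel transport actually needs. That weaker statement would require its own argument.
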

      \begin{proof}
        When the $\mathbf{q}_i$ are distinct, we can realize $M^0$ as a smooth hypersurface $\{zx=P(y)\}\subset \mathbb{C}^2\times \mathbb{C}^\times$. The fact that $M^0$ is complete is due to the fact that $(M^0, \omega^{ex})$ is complete, since it is just a hypersurface in $\mathbb{C}^2\times \mathbb{C}^\times$ with a complete metric, and $\omega$ coincides with $\omega^{ex}$ outside of a compact subset.  We verify the Palais-Smale condition by brute force: we need to show that $|\nabla \mathbf{f}_s|^2$ is bounded from below outside of a compact subset, so we will again use the metric induced by $\omega^{ex}$. We have that $$\nabla^{M^0} \mathbf{f}_s = \nabla \mathbf{f}_s - \frac{\langle \nabla \mathbf{f}_s,\nabla g \rangle}{|\nabla g|^2} \nabla g$$ where $\nabla$ denotes the gradient in the ambient $\mathbb{C}^2\times \mathbb{C}^\times$ and $g={xz-P(y)}$ is the function defining $M^0$. As such, $$|\nabla^{M^0}\mathbf{f}_s|^2=|\nabla \mathbf{f}_s|^2-\frac{\langle \nabla \mathbf{f}_s, \nabla g \rangle}{|\nabla g|^2}$$
        which is given explicitly as the non-negative quantity \begin{equation}\label{eq:explicitgradient}|s|^2+\frac{1}{|y|^2}+|y-\frac{z}{y}|^2-\frac{|s\overline{z}+\frac{\overline{x}}{y}  - |y|^2 (1-\frac{z}{y^2})\overline{P'(y)}|^2}{|x|^2+|z|^2+|yP'(y)|^2}\geq 0$$In particular, for $s=0$ we have that $$\frac{1}{|y|^2}+|y-\frac{z}{y}|^2-\frac{|\frac{\overline{x}}{y}  - |y|^2 (1-\frac{z}{y^2})\overline{P'(y)}|^2}{|x|^2+|z|^2+|yP'(y)|^2}\geq 0\end{equation}

        We want to show the gradient \ref{eq:explicitgradient} (for $s\neq 0$) is bounded from below outside of a compact set, so we may take $x,z\gg0$, in other words we look at $M^0\setminus (M^0\cap \mathbb{D}\times \mathbb{D} \times A)$ where $A$ is a compact annulus. Since $xz=P(y)$, this implies that also $y\gg 0$. We compute: \begin{multline}
          |s|^2+\frac{1}{|y|^2}+|y-\frac{z}{y}|^2-\frac{|s\overline{z}+\frac{\overline{x}}{y}  - |y|^2 (1-\frac{z}{y^2})\overline{P'(y)}|^2}{|x|^2+|z|^2+|yP'(y)|^2}\geq \\ |s|^2+\big (\frac{1}{|y|^2}+|y-\frac{z}{y}|^2-\frac{|\frac{\overline{x}}{y}  - |y|^2 (1-\frac{z}{y^2})\overline{P'(y)}|^2}{|x|^2+|z|^2+|yP'(y)|^2}\big)+\big(\frac{|\frac{\overline{x}}{y}  - |y|^2 (1-\frac{z}{y^2})\overline{P'(y)}|^2}{|x|^2+|z|^2+|yP'(y)|^2}-\frac{|s\overline{z}+\frac{\overline{x}}{y}  - |y|^2 (1-\frac{z}{y^2})\overline{P'(y)}|^2}{|x|^2+|z|^2+|yP'(y)|^2}\big)\geq \\
          |s|^2 +\underbrace{\big(\frac{|\frac{\overline{x}}{y}  - |y|^2 (1-\frac{z}{y^2})\overline{P'(y)}|^2 - |s\overline{z}+\frac{\overline{x}}{y}  - |y|^2 (1-\frac{z}{y^2})\overline{P'(y)}|^2}{|x|^2+|z|^2+|yP'(y)|^2}\big)}_{\text{ bounded in norm by} \frac{1}{2}|s|^2}\geq \frac{1}{2}|s|^2
        \end{multline}
        The last inequality is due to the fact that, for $|x|,|y|,|z|\gg 0$, we have by the reverse triangle inequality: $$|\frac{|\frac{\overline{x}}{y}  - |y|^2 (1-\frac{z}{y^2})\overline{P'(y)}|^2 - |s\overline{z}+\frac{\overline{x}}{y}  - |y|^2 (1-\frac{z}{y^2})\overline{P'(y)}|^2}{|x|^2+|z|^2+|yP'(y)|^2}|^2\leq |s|^2 \frac{|z|^2}{|x|^2+|z|^2+|yP'(y)|^2}$$ and finally $$\frac{1}{|\frac{x}{z}|^2+1+|\frac{yP'(y)}{z}|^2}\leq \frac{1}{2} \iff |\frac{yP'(y)}{z}|^2+|\frac{x}{z}|^2\geq 1$$Since $xz=P$, then $|\frac{yP'(y)}{z}|^2=|x|^2|\frac{yP'(y)}{P(y)}|^2$ which for $|x|,|y|\gg 0$ is asymptotically given by $|x|^2\gg 0$ because $yP'(y)$ and $P(y)$ are both monic polynomials of degree $k+1$. We can conclude that the gradient is bounded from below,
      \end{proof}



\newpage
\sloppy
\printbibliography
\end{document}